\newtheorem{theorem}{Theorem}[section]
\newtheorem{lemma}[theorem]{Lemma}
\newtheorem{proposition}[theorem]{Proposition}
\newtheorem*{theorem*}{Theorem}
\theoremstyle{definition}
\newtheorem{definition}[theorem]{Definition}
\newtheorem{remark}[theorem]{Remark}
\newtheorem{notation}[theorem]{Notation}
\newtheorem{example}[theorem]{Example}
\newtheorem{question}[theorem]{Question}
\numberwithin{equation}{section}
\newcommand{\htopol}{{\text{\rm h}}_{\text{\rm top}}}
\newcommand{\im}{{\rm im}}
\newcommand{\id}{{\rm id}}
\newcommand{\cB}{{\mathcal B}}
\newcommand{\cW}{{\mathcal W}}
\newcommand{\cD}{{\mathcal D}}
\newcommand{\cE}{{\mathcal E}}
\newcommand{\cC}{{\mathcal C}}
\newcommand{\cU}{{\mathcal U}}
\newcommand{\cM}{{\mathcal M}}
\newcommand{\cN}{{\mathcal N}}
\newcommand{\cO}{{\mathcal O}}
\newcommand{\cF}{{\mathcal F}}
\newcommand{\cX}{{\mathcal X}}
\newcommand{\cY}{{\mathcal Y}}
\newcommand{\Cb}{{\mathbb C}}
\newcommand{\Zb}{{\mathbb Z}}
\newcommand{\Tb}{{\mathbb T}}
\newcommand{\Qb}{{\mathbb Q}}
\newcommand{\Rb}{{\mathbb R}}
\newcommand{\Nb}{{\mathbb N}}
\newcommand{\sD}{{\mathscr D}}
\newcommand{\sE}{{\mathscr E}}
\newcommand{\tr}{{\rm tr}}
\newcommand{\GL}{{\rm GL}}
\newcommand{\rh}{{\rm h}}
\newcommand{\ddet}{{\rm det}}
\newcommand{\odd}{{\rm odd}}
\newcommand{\even}{{\rm even}}
\begin{document}

\title{Entropy, Determinants, and $L^2$-Torsion}

\author{Hanfeng Li}
\author{Andreas Thom}
\address{\hskip-\parindent
H.L., Department of Mathematics, Chongqing University,
Chongqing 401331, China.
Department of Mathematics, SUNY at Buffalo,
Buffalo, NY 14260-2900, U.S.A.}
\email{hfli@math.buffalo.edu}

\address{\hskip-\parindent
A.T., 	Mathematisches Institut,
Univ. Leipzig,
PF 100920,
04009 Leipzig, Germany.}
\email{thom@math.uni-leipzig.de}

\date{March 5, 2013.\\
{\it 2010 Mathematics Subject Classification.} Primary 37B40, 37A35, 22D25, 58J52. \\
{\it Keywords and phrases.} Entropy, amenable group, Fuglede-Kadison determinant, $L^2$-torsion}

\begin{abstract}
We show that for any amenable group $\Gamma$ and any $\Zb \Gamma$-module $\cM$ of type FL with vanishing Euler characteristic, the entropy of the natural $\Gamma$-action on the Pontryagin dual of ${\cM}$ is equal to the $L^2$-torsion of $\cM$. As a particular case, the entropy of the principal algebraic action associated with the module $\Zb \Gamma / \Zb \Gamma f$ is equal to the logarithm of the Fuglede-Kadison determinant of $f$ whenever $f$ is a non-zero-divisor in $\Zb \Gamma$. This confirms a conjecture of Deninger.
As a key step in the proof we provide a general Szeg\H{o}-type approximation theorem for the Fuglede-Kadison determinant on the group von Neumann algebra of an amenable group.

As a consequence of the equality between $L^2$-torsion and entropy, we show that the $L^2$-torsion of a non-trivial amenable group with finite classifying space vanishes. This was conjectured by L\"uck. Finally, we establish a Milnor-Turaev formula for the $L^2$-torsion of a finite $\Delta$-acyclic chain complex.
\end{abstract}

\maketitle

\tableofcontents

\section{Introduction} \label{S-introduction}

This article is concerned with the three interacting topics: \emph{entropy, determinants}, and \emph{$L^2$-torsion}.
Let $\Gamma$ be a countable discrete amenable group (Section~\ref{SS-amenable group}) and let
$$\ddet_{\cN \Gamma} \colon \cN \Gamma \to \Rb_{\geq 0}, \quad \ddet_{\cN \Gamma}f = \exp \left( \int_{\Rb} \log(t) d \mu_{|f|}(t) \right)$$
be the Fuglede-Kadison determinant (Section~\ref{SS-determinant}), defined on the group von Neumann algebra $\cN \Gamma$.
Let $f$ be a non-zero-divisor in the integral group ring $\Zb \Gamma$ of $\Gamma$ and denote the left ideal generated by $f$ in $\Zb \Gamma$ by $\Zb \Gamma f$. The Pontryagin dual $X_f$ of the quotient $\Zb \Gamma / \Zb \Gamma f$ is a compact abelian group and admits a natural continuous $\Gamma$-action. We call such an action a \emph{principal algebraic action}. In 2006, Deninger \cite{Deninger06} started a program to compute the entropy of principal algebraic actions of a countable discrete amenable group in terms of the Fuglede-Kadison determinant. Classical results by Yuzvinski\u\i \ \cite{yup} for $\Gamma = \Zb$ and by Lind-Schmidt-Ward \cite{LSW, Schmidt} for $\Gamma=\Zb^d$ showed that the entropy of the $\Gamma$-action on $X_f$, which we denote by $\rh(X_f)$, is equal to $\log\ddet_{\cN \Gamma}f$, which can be identified with the logarithm of the Mahler measure of $f$. Deninger conjectured that this equality extends to all amenable groups.
In order to prove this for general amenable groups, Deninger \cite{Deninger06} developed important new techniques and confirmed this equality assuming that $\Gamma$ is finitely generated and of polynomial growth, and that $f$ is positive in $\cN \Gamma$ and invertible in $\ell^1(\Gamma)$. Later Deninger-Schmidt \cite{DS} showed the equality in the case $\Gamma$ is amenable and residually finite and $f$ is invertible in $\ell^1 (\Gamma)$.
The first author has weakened the assumptions and proved the equality between the logarithm of the Fuglede-Kadison determinant and the entropy of the associated principal algebraic action for all $f \in \Zb\Gamma$ invertible in $\cN\Gamma$ \cite{Li}.

More generally, one may want to study the action of $\Gamma$ on the Pontryagin dual $\widehat{\cM}$ of an arbitrary $\Zb \Gamma $-module $\cM$. We call such an action an \emph{algebraic action}. In the case $\Gamma=\Zb^d$, the results of Lind-Schmidt-Ward were enough to determine the entropy of \emph{any} algebraic $\Zb^d$-action in terms of determinants \cite[\S 4]{LSW}. This was essentially due to the fact that $\Zb[\Zb^d]$-modules have a tractable structure theory and decompose nicely. For general amenable groups, this fails to be true. Following Serre, we say that a left $\Zb \Gamma$-module $\cM$ is of type FL if it admits a finite free resolution:
\begin{align*}
0 \to (\Zb\Gamma)^{d_k} \rightarrow \cdots \rightarrow (\Zb\Gamma)^{d_1} \rightarrow (\Zb\Gamma)^{d_0}  \rightarrow \cM\rightarrow 0.
\end{align*}
Note that modules of the form $\cM=\Zb \Gamma/\Zb \Gamma f$, for $f$ being a non-zero-divisor in $\Zb \Gamma$, are of type FL since the sequence
$$0 \to \Zb \Gamma \stackrel{f}{\to} \Zb \Gamma \to \cM \to 0$$
is exact. If the Euler characteristic (see Section~\ref{SS-euler}) $\chi(\cM) = \sum_i (-1)^i d_i$ vanishes, then the $L^2$-torsion $\rho^{(2)}(\cM)$ of $\cM$ -- a real number -- can be defined, see Section~\ref{SS-torsion}. The $L^2$-torsion is a natural secondary invariant for modules of type FL and is defined in terms of the Fuglede-Kadison determinants of the Laplace operators associated with a resolution as above.
The relation between determinants and torsion is classical and has found many nice applications in topology and algebra. Reidemeister torsion and Whitehead torsion are indispensable tools in algebraic topology \cite{Cohen, Milnor66, Turaev, turaevbook}. $L^2$-torsion was first introduced in \cite{carmat, LR} and has further enlarged the range of applications, see \cite[Chapter 3]{Luck} for an overview and a detailed description. Through its relationship with the analytic Ray-Singer torsion it is related to interesting problems in analysis and geometry.
In Section~\ref{S-torsion} we give a self-contained account on $L^2$-torsion, and show that it can be viewed as a completely classical torsion theory -- much in the spirit of classical Reidemeister torsion.
The discussion is based on the ring $\cN \Gamma^{\Delta}$ introduced by Haagerup-Schultz \cite{HS}.

For any $\Zb\Gamma$-module $\cM$ of type FL with $\chi(\cM)=0$, we establish the equality of the entropy of the action of $\Gamma$ on $\widehat \cM$ and the $L^2$-torsion of $\cM$. This is our main result. More precisely, we show the following result; see the definitions in Section~\ref{S-preliminary}.

\begin{theorem} \label{T-torsion}
Let $\Gamma$ be a countable discrete amenable group and let $\cM$ be a left $\Zb\Gamma$-module of type FL$_k$ for some $k\in \Nb$ with $\dim_{\cN\Gamma}(\cN\Gamma\otimes_{\Zb\Gamma}\cM)=0$. Let $\cC_* \to \cM$ be a partial resolution of $\cM$ by based finitely generated free left $\Zb\Gamma$-modules as in \eqref{E-resolution 0}. Then, we have
\begin{align} \label{E-entropy torsion1}
 (-1)^k\rh(\widehat{\cM})\ge (-1)^k \rho^{(2)}(\cC_*).
\end{align}
If furthermore $\cM$ is of type FL with $\chi(\cM)=0$ and $\cC_*\rightarrow \cM$ is a resolution of $\cM$ by finitely generated free left $\Zb\Gamma$-modules as in \eqref{E-resolution 2}, then
\begin{align} \label{E-entropy torsion2}
\rh(\widehat \cM) = \rho^{(2)}(\cM).
\end{align}
\end{theorem}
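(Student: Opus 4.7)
The plan is to prove the inequality \eqref{E-entropy torsion1} via a Følner-type lower bound construction on the entropy of $\widehat{\cM}$, and then to derive the equality \eqref{E-entropy torsion2} by combining \eqref{E-entropy torsion1} applied at two successive parities of $k$.

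First, Pontryagin duality identifies $\widehat{\cM}$ with the closed $\Gamma$-invariant subgroup of $(\Tb^\Gamma)^{d_0}$ annihilated by the transpose of the first differential $d_1\colon\cC_1\to\cC_0$. By Section~\ref{SS-torsion}, $\rho^{(2)}(\cC_*)$ is an alternating sum of logarithms of Fuglede--Kadison determinants of the combinatorial Laplacians $\Delta_i = d_i^*d_i + d_{i+1}d_{i+1}^*$, and the hypothesis $\dim_{\cN\Gamma}(\cN\Gamma\otimes_{\Zb\Gamma}\cM) = 0$ ensures that every $\Delta_i$ is of determinant class in the relevant sense, so the right-hand side is well-defined.

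The heart of the argument is a Szegő-driven lower bound on entropy. Fixing a Følner sequence $\{F_n\}$ for $\Gamma$, the Szegő-type approximation theorem announced in the introduction should identify $\log\ddet_{\cN\Gamma}\Delta_i$ with the limit $\lim_n |F_n|^{-1}\log\det(p_{F_n}\Delta_i p_{F_n})$, where $p_{F_n}$ is the orthogonal projection of $\ell^2(\Gamma)^{d_i}$ onto $\ell^2(F_n)^{d_i}$. On the dynamical side, I would construct an $(F_n,\epsilon)$-separated subset of $\widehat{\cM}$ by lifting lattice-like points from the finite-dimensional restriction of $\widehat{\cM}$ to coordinates indexed by $F_n$; the number of such lifts is controlled below by the finite truncation of $\Delta_1$, corrected---via an induction on $k$ that splits the resolution into the short exact sequence $0 \to \im(d_1)\to\cC_0\to\cM\to 0$---by the truncations of the higher $\Delta_i$. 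Passing to the limit along $\{F_n\}$ and invoking Szegő produces $(-1)^k\rh(\widehat{\cM}) \geq (-1)^k\rho^{(2)}(\cC_*)$, with the alternating sign tracking the parity of the splitting. I expect the hardest step to be precisely this translation of operator-theoretic content into combinatorial counting of separated points: one must lift from $F_n$-truncated data to genuine elements of $\widehat{\cM}$ at only subexponential cost in $|F_n|$, while working with relative quantities throughout in order to circumvent the fact that $\widehat{\cC_0}$ itself has infinite entropy.

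For the equality \eqref{E-entropy torsion2}, the FL hypothesis with $\chi(\cM) = 0$ permits applying \eqref{E-entropy torsion1} with both $k$ and $k+1$ (for any sufficiently large $k$) by simply truncating a full finite free resolution at different lengths. The two resulting inequalities carry opposite signs on $\rh(\widehat{\cM})$, and since $\chi(\cM) = 0$ forces the torsions of the two partial resolutions to coincide in the limit (the extra term contributed by the next stage vanishes), they sandwich $\rh(\widehat{\cM})$ between two equal quantities and yield \eqref{E-entropy torsion2}.
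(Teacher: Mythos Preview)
Your deduction of \eqref{E-entropy torsion2} from \eqref{E-entropy torsion1} is fine and is exactly what the paper does: regard a length-$k$ resolution as a length-$(k{+}1)$ partial resolution with $\cC_{k+1}=0$, apply \eqref{E-entropy torsion1} at both parities, and squeeze.

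The gap is in your plan for \eqref{E-entropy torsion1}. You propose a direct F{\o}lner counting argument, producing a \emph{lower} bound on $\rh(\widehat{\cM})$ in terms of truncated Laplacian determinants, with an ``induction on $k$'' via the short exact sequence $0\to\im(\partial_1)\to\cC_0\to\cM\to 0$. Two problems. First, \eqref{E-entropy torsion1} is a lower bound only when $k$ is even; for odd $k$ it is an \emph{upper} bound on entropy, and nothing in your sketch explains how a separated-set construction yields that. Second, the induction is not set up: $\im(\partial_1)$ is not in general of type FL, you cannot take its Pontryagin dual and feed it back into the same machine, and you give no mechanism by which the higher $\Delta_i$ enter the count of separated points in $\widehat{\cM}$ with the correct alternating signs. ``Corrected by the truncations of the higher $\Delta_i$'' is precisely the step that needs an idea.

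The paper's route is quite different and supplies exactly that idea. It first proves Theorem~\ref{T-main} (entropy $=\log\ddet_{\cN\Gamma}f$ for square $f$ with $\ker f=\{0\}$, and $\rh(X_f)\le\log\ddet_{\cN\Gamma}f$ for rectangular injective $f$), and then reduces Theorem~\ref{T-torsion} to it by an algebraic packaging of the resolution. Concretely, one forms block matrices
\[
g_j=\begin{pmatrix} f_j & 0 & 0 & \cdots\\ f_{j-1}^* & f_{j-2} & 0 & \cdots\\ 0 & f_{j-3}^* & f_{j-4} & \cdots\\ \vdots & & & \ddots\end{pmatrix}
\]
built from alternating $f_i$ and $f_i^*$; then $g_1=f_1$ presents $\widehat{\cM}$, $g_j^*g_j$ is block-diagonal with blocks $f_{i+1}^*f_{i+1}+f_if_i^*$, and exactness of the resolution gives $\Gamma$-equivariant short exact sequences $1\to X_{g_j}\to X_{g_j^*g_j}\to X_{g_{j+1}}\to 1$. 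Yuzvinski\u{\i} additivity plus Theorem~\ref{T-main} applied to the positive matrices $g_j^*g_j$ yields $\rh(X_{g_j})+\rh(X_{g_{j+1}})=\sum_{i\le j}\log\ddet_{\cN\Gamma}(f_i^*f_i+q_{f_i})$ exactly, and the single \emph{inequality} $\rh(X_{g_k})\le\tfrac12\rh(X_{g_k^*g_k})$ from Theorem~\ref{T-main} then telescopes to \eqref{E-entropy torsion1}. The Szeg\H{o} theorem and the F{\o}lner counting you mention are used, but one level down, inside the proof of Theorem~\ref{T-main}; the passage from there to the alternating torsion sum is purely algebraic via the $g_j$ and the addition formula, not via an induction on the resolution length.
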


We expect that Equality \eqref{E-entropy torsion2} in Theorem~\ref{T-torsion} will have numerous applications in both directions. Even though the aim of Deninger's program was to provide tools to compute the entropy of particular actions, it turns out that the final outcome is just as useful to compute the $L^2$-torsion of particular $\Zb\Gamma$-modules. Note that the different sides of the equation between entropy and $L^2$-torsion are of completely different nature. There is for example a priori no reason to think that the $L^2$-torsion of a $\Zb \Gamma$-module is always non-negative; for the entropy, this is obvious from the definition. On the other hand, it is known that the right side of the equation in the case $\Gamma=\Zb^d$ is often given by polylogarithms and special values of $L$-functions, and is related to regulators from algebraic geometry \cite{deningerregulator}. This arithmetic property of the possible values of the $L^2$-torsion is well-studied for $\Gamma = \Zb^d$ \cite{mixedmot}, but remains largely unexplored for non-commutative $\Gamma$. Neither in the commutative nor in the non-commutative case, this property is expected a priori for the possible values of the entropy.

A confirmation of the conjectured equality of the entropy of a principal algebraic action and the logarithm of the Fuglede-Kadison determinant arises as a special case of Theorem~\ref{T-torsion}. For $f \in M_{d'\times d}(\Zb \Gamma)$, we denote by $\ker f \subseteq (\ell^2(\Gamma))^{d}$ the kernel of left-multiplication with $f$, see Section~\ref{SS-group rings}.

\begin{theorem} \label{T-main} Let $\Gamma$ be a countable discrete amenable group and
let $f\in M_{d'\times d}(\Zb \Gamma)$ with $\ker f=\{0\}$. Let $X_f$ be the Pontryagin dual of $(\Zb \Gamma)^{1 \times d} / (\Zb \Gamma)^{1 \times d'} f$ with its natural $\Gamma$-action. Then
$$\rh(X_f)\le \log \ddet_{\cN\Gamma}f.$$
If furthermore $d'=d$, then
$$\rh(X_f)=\rh(X_{f^*})=\log \ddet_{\cN\Gamma}f.$$
\end{theorem}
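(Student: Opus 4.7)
The plan is to derive Theorem~\ref{T-main} directly from Theorem~\ref{T-torsion} applied to the module $\cM := (\Zb\Gamma)^{1\times d}/(\Zb\Gamma)^{1\times d'}f$, whose Pontryagin dual is $X_f$ by construction. For the inequality, take the tautological partial resolution
\begin{equation*}
\cC_*:\quad 0\longrightarrow(\Zb\Gamma)^{d'}\xrightarrow{\;\cdot f\;}(\Zb\Gamma)^{d}\longrightarrow\cM\longrightarrow 0
\end{equation*}
equipped with the standard bases, which exhibits $\cM$ as a module of type FL$_1$. To verify the remaining hypothesis, observe that in the finite von Neumann algebra $\cN\Gamma$, rank-nullity for $\cN\Gamma$-dimensions together with the polar-decomposition identity (left and right ranges of $f$ have equal $\cN\Gamma$-dimension) shows that the kernel of right multiplication by $f$ on $(\ell^2\Gamma)^{1\times d'}$ has dimension $d'-d+\dim_{\cN\Gamma}\ker f$, so that
$$\dim_{\cN\Gamma}(\cN\Gamma\otimes_{\Zb\Gamma}\cM)=d-d'+(d'-d+\dim_{\cN\Gamma}\ker f)=\dim_{\cN\Gamma}\ker f=0.$$

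Next, unwind the definition of the $L^2$-torsion from Section~\ref{SS-torsion} for this two-term $\Delta$-acyclic based complex. The only nontrivial differential is $\cdot f$, and the associated Laplacians $ff^*$ and $f^*f$ share the same non-zero spectral data, so the torsion sum collapses to $\rho^{(2)}(\cC_*)=\log\ddet_{\cN\Gamma}f$. Applying inequality \eqref{E-entropy torsion1} of Theorem~\ref{T-torsion} with $k=1$ (the sign $(-1)^k$ flips the inequality into the desired direction) then gives
$$\rh(X_f)\le\rho^{(2)}(\cC_*)=\log\ddet_{\cN\Gamma}f,$$
which is the first assertion of the theorem.

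When $d'=d$, the displayed identity forces the kernel of right multiplication by $f$ on $(\ell^2\Gamma)^{1\times d}$ to have zero $\cN\Gamma$-dimension, and since $\cN\Gamma$ carries a faithful trace this upgrades to genuine injectivity of $\cdot f$ on $(\Zb\Gamma)^{1\times d}$. Thus $\cC_*$ becomes an honest length-one free resolution of $\cM$ with $\chi(\cM)=d-d=0$, and equality \eqref{E-entropy torsion2} of Theorem~\ref{T-torsion} yields $\rh(X_f)=\rho^{(2)}(\cM)=\log\ddet_{\cN\Gamma}f$. Applying the same argument to $f^*$ (whose kernel also vanishes in the square case, since $\dim_{\cN\Gamma}\ker f=\dim_{\cN\Gamma}\ker f^*$ when $d'=d$) gives $\rh(X_{f^*})=\log\ddet_{\cN\Gamma}f^*$, and the identity $\rh(X_f)=\rh(X_{f^*})$ then follows from the standard property $\ddet_{\cN\Gamma}f=\ddet_{\cN\Gamma}f^*$ of the Fuglede-Kadison determinant. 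Since the whole argument is a formal consequence of Theorem~\ref{T-torsion} once the dimension bookkeeping and the definitional collapse of $\rho^{(2)}(\cC_*)$ to $\log\ddet_{\cN\Gamma}f$ have been carried out, there is no substantive obstacle beyond Theorem~\ref{T-torsion} itself.
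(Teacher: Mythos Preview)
Your derivation of Theorem~\ref{T-main} from Theorem~\ref{T-torsion} is formally valid: for the two-term based partial resolution $\cC_1\xrightarrow{\cdot f}\cC_0\to\cM\to 0$, the hypothesis $\ker f=\{0\}$ is exactly the condition $\dim_{\cN\Gamma}(\cN\Gamma\otimes_{\Zb\Gamma}\cM)=0$ (this is Lemma~\ref{L-Euler characteristic}), and since $q_f=0$ one has $\rho^{(2)}(\cC_*)=\tfrac12\log\ddet_{\cN\Gamma}(f^*f)=\log\ddet_{\cN\Gamma}f$; the rest of your bookkeeping in the square case is correct as well (Proposition~\ref{P-zero divisor} gives injectivity of $\cdot f$).

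The problem is circularity. In the paper's logical order, Theorem~\ref{T-torsion} is \emph{deduced from} Theorem~\ref{T-main}: look at the proof in Section~\ref{SS-entropy-torsion}, where the inequality $\rh(X_{g_k})\le\tfrac12\sum_{i=1}^k\log\ddet_{\cN\Gamma}(f_i^*f_i+q_{f_i})$ is obtained by invoking Theorem~\ref{T-main} applied to the rectangular matrix $g_k$. So your argument, read inside this paper, assumes what it is trying to prove. The introduction says explicitly that one \emph{could} prove Theorem~\ref{T-torsion} directly, but the paper does not do so; absent an independent proof of Theorem~\ref{T-torsion}, your route is not available.

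For comparison, the paper's own proof of Theorem~\ref{T-main} is substantially more hands-on. It first settles the positive case $f\ge 0$ (Lemma~\ref{L-positive}) by combining the Szeg\H{o}-type approximation Theorem~\ref{T-approximate} with the new entropy formula of Theorem~\ref{T-approximate solution formula for entropy}. For general $f$, it builds the short exact sequence of Lemma~\ref{L-exact}, applies Yuzvinski\u{\i}'s addition formula (Lemma~\ref{L-addition}) to get $\rh(X_{f^*f})=\rh(X_J)+\rh(X_f)$, and uses the key comparison $\rh(X_f)\le\rh(X_J)\le\rh(X_{f^*})$ of Lemma~\ref{L-f < J} (itself resting on Peters' formula and the counting Lemma~\ref{L-f < f*} together with the spectral estimate of Proposition~\ref{P-measure weak convergence}). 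These are the genuine new ingredients; your proposal bypasses all of them by leaning on a theorem that, in this paper, already encapsulates them.
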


Note that the case $\ker f \neq \{0\}$ is more pathological.
Indeed, for $f\in M_{d'\times d}(\Zb\Gamma)$ with $\ker f\neq \{0\}$, one has $\rh(X_f)=\infty$ \cite[Theorem 4.11]{CL}. Similarly, in the context of Theorem~\ref{T-torsion}, we have that $\dim_{\cN \Gamma} (\cN \Gamma \otimes_{\Zb \Gamma} \cM) \neq 0$ or $\chi(\cM) \neq 0$ implies $\rh(\widehat{\cM})= \infty$, see Remark~\ref{R-vanishdim}.

One can prove Theorem~\ref{T-torsion} directly, but we choose to prove Theorem~\ref{T-main} first, because all the technical difficulties arise already in this simpler special case. We then use Theorem~\ref{T-main} to establish Theorem~\ref{T-torsion}.

\vspace{0.2cm}

Theorem~\ref{T-torsion} turns out to be very useful for the computation of the $L^2$-torsion for specific modules. Throughout, $B\Gamma$ denotes a CW-complex, whose homotopy groups but the first one are all trivial and $\pi_1(B\Gamma)=\Gamma$. Such a space exists and is unique up to homotopy equivalence, see \cite[Section I.4]{Brown} where the notation $K(\Gamma,1)$ is used to emphasize that $B\Gamma$ is an Eilenberg-MacLane space. We say that there is \emph{a finite model for $B\Gamma$} if we can choose it to be a CW-complex with finitely many cells. If the group $\Gamma$ has a finite model for $B \Gamma$, then the trivial $\Zb \Gamma$-module $\Zb$ is of type FL \cite[Proposition VIII.6.3]{Brown}. Indeed, the cellular chain complex of the universal covering space of $B \Gamma$ is easily seen to be a finite free resolution of  $\Zb$. If $\Gamma$ is infinite and the trivial $\Zb \Gamma$-module $\Zb$ of type FL, then its $L^2$-torsion can be defined. It is called the \emph{$L^2$-torsion} of $\Gamma$ and denoted by $\rho^{(2)}(\Gamma)$.
Theorem~\ref{T-torsion} implies the following result:

\begin{theorem} \label{T-Luck conjecture}
If $\Gamma$ is a non-trivial amenable group such that the trivial $\Zb \Gamma$-module $\Zb$ is of type FL, then $\rho^{(2)}(\Gamma)=0.$
\end{theorem}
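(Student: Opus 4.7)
The plan is to apply Theorem~\ref{T-torsion} to the trivial $\Zb\Gamma$-module $\cM=\Zb$ and to show that the corresponding algebraic action has vanishing entropy. First I would note that a finite non-trivial group $\Gamma$ cannot satisfy the hypothesis: for such $\Gamma$ the trivial module $\Zb$ has infinite projective dimension over $\Zb\Gamma$ (witnessed, e.g., by the non-vanishing of Tate cohomology $\widehat{H}^{n}(\Gamma,\Zb)$ for all $n\in\Zb$), so $\Zb$ is not of type FL. Consequently the assumption forces $\Gamma$ to be infinite amenable, which is precisely the setting in which $\rho^{(2)}(\Gamma)$ is defined.

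Next I would verify the two hypotheses of Theorem~\ref{T-torsion}, namely $\dim_{\cN\Gamma}(\cN\Gamma\otimes_{\Zb\Gamma}\Zb)=0$ and $\chi(\Zb)=0$. Both follow from the Cheeger--Gromov theorem that every $L^2$-Betti number $b_i^{(2)}(\Gamma)$ of an infinite amenable group vanishes. Indeed, for any finite free resolution $\cC_*\to\Zb$ the $i$-th homology of $\cN\Gamma\otimes_{\Zb\Gamma}\cC_*$ has $\cN\Gamma$-dimension $b_i^{(2)}(\Gamma)=0$, from which
\[
\dim_{\cN\Gamma}(\cN\Gamma\otimes_{\Zb\Gamma}\Zb)=b_0^{(2)}(\Gamma)=0,\qquad \chi(\Zb)=\sum_i(-1)^i d_i=\sum_i(-1)^i b_i^{(2)}(\Gamma)=0.
\]

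With the hypotheses in place, Theorem~\ref{T-torsion} yields $\rho^{(2)}(\Gamma)=\rho^{(2)}(\Zb)=\rh(\widehat{\Zb})$. The Pontryagin dual $\widehat{\Zb}$ is the circle $\Tb$, and since $\Gamma$ acts trivially on $\Zb$ the induced action on $\Tb$ is also trivial. The topological entropy of a trivial action of an infinite amenable group on any compact metric space is zero: for any finite open cover $\cU$ of $\Tb$ and any F{\o}lner sequence $\{F_n\}$ one has $\bigvee_{\gamma\in F_n}\gamma^{-1}\cU=\cU$, so the exponential growth rate $\log N(\bigvee_{\gamma\in F_n}\gamma^{-1}\cU)/|F_n|$ vanishes. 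Therefore $\rh(\widehat{\Zb})=0$, and the theorem follows.

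There is really no serious obstacle beyond Theorem~\ref{T-torsion} itself: once that result is available, the present theorem reduces to the two routine verifications above (vanishing of $L^2$-Betti numbers and triviality of the dual action), each of which is a well-known fact about infinite amenable groups. The point of interest is conceptual rather than technical: L{\"u}ck's vanishing conjecture becomes, via Theorem~\ref{T-torsion}, the almost tautological statement that a trivial $\Gamma$-action on $\Tb$ carries no entropy.
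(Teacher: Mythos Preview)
Your proposal is correct and follows essentially the same approach as the paper: both reduce to Theorem~\ref{T-torsion} applied to $\cM=\Zb$ and the observation that the trivial action of an infinite amenable group on $\widehat{\Zb}=\Tb$ has zero topological entropy.

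The only difference lies in how the hypotheses of Theorem~\ref{T-torsion} are checked. You invoke the Cheeger--Gromov vanishing of $L^2$-Betti numbers for infinite amenable groups to obtain $\dim_{\cN\Gamma}(\cN\Gamma\otimes_{\Zb\Gamma}\Zb)=b_0^{(2)}(\Gamma)=0$ and $\chi(\Zb)=\sum_i(-1)^i b_i^{(2)}(\Gamma)=0$. The paper instead argues in the reverse order and stays internal: since the entropy of the trivial action is zero (hence finite), Remark~\ref{R-vanishdim} forces $\chi(\Zb)=0$, and then Lemma~\ref{L-Euler characteristic} gives $\dim_{\cN\Gamma}(\cN\Gamma\otimes_{\Zb\Gamma}\Zb)=\chi(\Zb)=0$. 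This avoids importing the Cheeger--Gromov theorem and makes the argument entirely self-contained within the paper's framework. Your route is perfectly valid, but the paper's is slightly more economical in that the needed input (finite entropy $\Leftrightarrow$ $\chi=0$) has already been established along the way to Theorem~\ref{T-torsion}.
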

This confirms a conjecture of L\"uck, see Conjecture 9.24 in \cite{Luck02}, Conjecture 11.3 in \cite{Luck} and the remark after Corollary 1.11 in \cite{lsweg}.
Conjecture 9.24 in \cite{Luck02} and Conjecture 11.3 in \cite{Luck} talk about the $L^2$-torsion $\rho^{(2)}(\tilde{X})$ of the universal covering space $\tilde{X}$ of an aspherical connected closed manifold $X$ or more generally an aspherical connected finite CW-complex $X$ whose fundamental group $\Gamma$ contains a non-trivial normal amenable subgroup. In such case $\Gamma$ must be infinite and one can take $X$ as $B\Gamma$. It follows that $\rho^{(2)}(\tilde{X})=\rho^{(2)}(\Gamma)$.

Wegner \cite{weg} proved Theorem~\ref{T-Luck conjecture} for elementary amenable groups using the structure theory of this class of groups. Our method is completely different and gives this result as part of a much larger picture.

The distinction between having a finite model for $B\Gamma$ and having the trivial $\Zb\Gamma$-module $\Zb$ of type FL is rather subtle.
If $\Gamma$ has a finite model for $B\Gamma$, then $\Gamma$ is finitely presented \cite[Corollary 3.1.17]{Geoghegan} and the trivial left $\Zb\Gamma$-module $\Zb$ is of type FL \cite[Proposition VIII.6.3]{Brown}. Conversely, Eilenberg-Ganea \cite{EG} and Wall \cite{Wall65, Wall66} proved that if
$\Gamma$ is finitely presented and the trivial left $\Zb\Gamma$-module $\Zb$ is of  type FL, then $\Gamma$ must have a finite model for $B\Gamma$ \cite[Theorem VIII.7.1]{Brown}. Note that Bestvina and Brady \cite{BB} have constructed examples of groups $\Gamma$ for which the trivial $\Zb\Gamma$-module $\Zb$ is of type FL but which are not finitely presented. These examples are not amenable. At the same time, \cite[Corollary 1.2]{krop} says that every elementary amenable group $\Gamma$ for which the trivial $\Zb\Gamma$-module $\Zb$ is of type FL has a finite model for $B\Gamma$.

\vspace{0.2cm}

In order to prove Theorem~\ref{T-main}, we show a Szeg\H{o}-type approximation theorem for the Fuglede-Kadison determinant on the group von Neumann algebra of an amenable group.
We prove a general approximation theorem for the Fuglede-Kadison determinant by determinants of finite-dimensional matrices, arising from a F\o lner approximation of $\Gamma$. The most classical such theorem was proved by Szeg\H{o} \cite{Szego} for Toeplitz matrices and generalized as follows \cite[Theorem 2.7.14]{Simon}.

\begin{theorem*}
Let $f$ be an essentially bounded $\Rb_{\ge 0}$-valued measurable function on the unit circle $S^1$. Then,
$$\exp\left( \int_{S^1} \log f(z) d\mu(z) \right) = \lim_{n \to \infty} (\det(D_n))^{1/n},$$
where $\mu$ denotes the Haar probability measure on $S^1$ and $D_n$ denotes the $n \times n$-matrix with entries $(D_n)_{i,j} = \int_{S^1} f(z) z^{i-j} d\mu(z)$.
\end{theorem*}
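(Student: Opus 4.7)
The plan is to realize $D_n$ as the compression $P_n M_f P_n$ of the multiplication operator $M_f$ on $L^2(S^1,\mu)$ to $\spn\{1,z,\ldots,z^{n-1}\}$. Under the Fourier isomorphism $L^2(S^1,\mu)\cong\ell^2(\Zb)$ this identifies $M_f$ with an element of $\cN\Zb=L^\infty(S^1)$ and $D_n$ with its compression to the F\o lner set $F_n=\{0,\ldots,n-1\}\subset\Zb$. Since $\ddet_{\cN\Zb}(M_f)=\exp\int\log f\,d\mu$, the theorem is precisely the F\o lner-type approximation $\ddet_{\cN\Zb}(M_f)=\lim_n(\det D_n)^{1/|F_n|}$ of the Fuglede--Kadison determinant by classical finite-dimensional determinants --- the commutative prototype for the non-commutative results of the paper.

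The first major step is moment convergence: $\tfrac1n\tr(D_n^k)\to\int f^k\,d\mu$ for every $k\in\Nb$. For a trigonometric polynomial $f$ of degree $m$ this is elementary, since the Hankel-type factors $(I-P_n)M_fP_n$ and $P_nM_f(I-P_n)$ have rank at most $m$ and are supported in a fixed-size corner; expanding $(P_nM_fP_n)^k$ around $P_nM_{f^k}P_n$ (whose trace is exactly $n\int f^k\,d\mu$) produces only $O(1)$ boundary corrections. Uniform approximation of a general $f\in L^\infty$ by trigonometric polynomials then extends moment convergence to all bounded $f$. Because $\spec(D_n)\subset[0,\|f\|_\infty]$, this implies weak-$*$ convergence of the empirical spectral measures $\nu_n=\tfrac1n\sum_i\delta_{\lambda_i(D_n)}$ to the pushforward $f_*\mu$.

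For the upper bound $\limsup_n\tfrac1n\log\det D_n\le\int\log f\,d\mu$ I would truncate from below: for $\varepsilon>0$ set $f_\varepsilon:=\max(f,\varepsilon)$, so $D_n(f_\varepsilon)\ge D_n(f)\ge 0$ as matrices and hence $\det D_n(f)\le\det D_n(f_\varepsilon)$ by monotonicity of the determinant on the positive cone. Since $\log$ is continuous and bounded on $[\varepsilon,\|f\|_\infty]$, the weak-$*$ convergence of the previous step yields $\tfrac1n\log\det D_n(f_\varepsilon)\to\int\log f_\varepsilon\,d\mu$, and dominated convergence as $\varepsilon\searrow 0$ finishes this side.

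The matching lower bound is the main obstacle, because weak-$*$ convergence of $\nu_n$ does not by itself rule out eigenvalues of $D_n$ clustering near $0$. The classical route is via Fej\'er--Riesz: first assume $f$ is a strictly positive trigonometric polynomial and factor $f=|g|^2$ with $g$ an outer analytic polynomial. A direct computation gives $T_n(f)=T_n(g)^*T_n(g)+R_n$ where $R_n$ is supported in a fixed corner of size $\deg g$; because $T_n(g)$ is triangular with constant diagonal entry $g(0)$, one has $\det(T_n(g)^*T_n(g))=|g(0)|^{2n}$, and the hypothesis $f\ge\delta>0$ makes $R_n$ a bounded-rank perturbation contributing only a multiplicative factor bounded above and below. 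Hence $\tfrac1n\log\det T_n(f)\to 2\log|g(0)|=\int\log f\,d\mu$, by Jensen's formula for the outer $g$. For general $f\ge 0$ in $L^\infty$, monotonicity $\det D_n(h)\le\det D_n(f)$ whenever $0\le h\le f$ reduces the lower bound to approximating $\int\log f\,d\mu$ by $\int\log h\,d\mu$ over strictly positive trigonometric polynomials $h\le f$, a standard outer-function exercise. This Fej\'er--Riesz step is an essentially commutative tool with no evident analogue over a non-commutative amenable group, which is precisely what forces the fundamentally different approximation strategy developed in the sequel.
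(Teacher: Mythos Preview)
Your identification of $D_n$ with the compression $f_{F_n}$ for $F_n=\{0,\dots,n-1\}\subset\Zb$ is exactly right, and your upper bound via the truncation $f_\varepsilon=\max(f,\varepsilon)$ together with weak convergence of spectral measures is correct and in fact coincides with the ``easy half'' of the paper's argument (this is essentially Lemma~\ref{L-approximate invertible} combined with $D_n(f)\le D_n(f_\varepsilon)$). The Fej\'er--Riesz computation for a nonnegative trigonometric polynomial is also fine: with $f=|g|^2$ and $g$ outer one indeed gets $\det D_n(f)\ge |g(0)|^{2n}=\exp\bigl(n\int\log f\bigr)$.

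The reduction step for the lower bound, however, has a genuine gap. You claim that for general $f\in L^\infty_{\ge 0}$ one can approximate $\int\log f\,d\mu$ from below by $\int\log h\,d\mu$ with $h$ a trigonometric polynomial satisfying $0\le h\le f$. This fails: take $f=1+\mathbf{1}_E$ with $E\subset S^1$ a closed set of positive measure and empty interior. Then $\int\log f=|E|\log 2>0$, but any continuous $h$ with $h\le f$ satisfies $h\le 1$ on the dense open set $S^1\setminus E$, hence $h\le 1$ everywhere and $\int\log h\le 0$. So pointwise approximation from below by trigonometric polynomials cannot recover the geometric mean in general. The classical route around this (which you may have had in mind under ``outer-function exercise'') does not go through pointwise minorants; it uses instead the variational identity $\det D_n/\det D_{n-1}=\min_{p\ \mathrm{monic},\ \deg p=n-1}\int |p|^2 f\,d\mu$ together with the elementary Jensen estimate $\int|p|^2 f\ge\exp\int\log(|p|^2 f)\ge\exp\int\log f$ for monic $p$, which yields $(\det D_n)^{1/n}\ge\exp\int\log f$ directly.

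The paper's proof of the general Theorem~\ref{T-approximate} (of which the present statement is the case $\Gamma=\Zb$, $d=1$) takes a completely different route that avoids both Fej\'er--Riesz and orthogonal polynomials. The key observation is the Gantmacher--Kre\u\i n inequality (Lemma~\ref{L-determinant is strong submultiplicative}): for positive $g$ and finite sets $F_1,F_2$ one has $\det(g_{F_1\cup F_2})\det(g_{F_1\cap F_2})\le\det(g_{F_1})\det(g_{F_2})$. This makes $F\mapsto\log\det(g_F)$ strongly subadditive and right-invariant, so by an Ornstein--Weiss type lemma (Lemma~\ref{L-strong subadditive to limit}) the F\o lner limit equals the infimum over \emph{all} $F\in\cF(\Gamma)$. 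Since Lemma~\ref{L-approximate invertible} already gives $\ddet_{\cN\Gamma}(g+\varepsilon)=\lim_F(\det(g+\varepsilon)_F)^{1/|F|}$, one can now interchange the two infima $\inf_{\varepsilon>0}$ and $\inf_F$, which immediately yields both inequalities at once. For $\Gamma=\Zb$ this specializes to Fekete submultiplicativity $\det D_{m+n}\le\det D_m\det D_n$ and the swap $\inf_\varepsilon\inf_n=\inf_n\inf_\varepsilon$. The payoff is clear: Gantmacher--Kre\u\i n is a pure positive-matrix fact that makes no reference to the commutative harmonic analysis on $S^1$, and this is exactly what allows the argument to go through for every amenable $\Gamma$.
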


Following Deninger \cite[Section 2]{Deninger09}, we interpret this result as an approximation of the Fuglede-Kadison determinant on the group von Neumann algebra of the group $\Zb$ by determinants associated with F{\o}lner sets $\{1,\dots,n\} \subseteq \Zb$, see Example~\ref{Ex-determinant}. Our generalization holds for every amenable group $\Gamma$, every F{\o}lner sequence, and every positive element in the group von Neumann algebra of $\Gamma$. We refer to Section~\ref{SS-amenable group} for the necessary definitions.

\begin{theorem} \label{T-approximate}
Let $\Gamma$ be a countable discrete amenable group. Let $g\in M_d(\cN\Gamma)$ be positive.
Then
\begin{align} \label{E-approximate}
 \ddet_{\cN\Gamma} g=\inf_{F\in \cF(\Gamma)}(\det(g_F))^{\frac{1}{|F|}}=\lim_F (\det(g_F))^{\frac{1}{|F|}}.
\end{align}
\end{theorem}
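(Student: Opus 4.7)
The plan is to establish two complementary bounds: a Szeg\H{o}-type pointwise lower bound $(\det g_F)^{1/|F|}\ge \ddet_{\cN\Gamma} g$ valid for \emph{every} finite $F\subseteq\Gamma$, and an upper bound $\limsup_F (\det g_F)^{1/|F|}\le \ddet_{\cN\Gamma} g$ along any F{\o}lner net. Together these force the infimum and the F{\o}lner limit to coincide with $\ddet_{\cN\Gamma} g$.

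For the lower bound I first reduce to the invertible case $g\ge\epsilon\cdot 1$, so that $\log g$ is bounded. The key analytic input is the integral representation
\[
\log x=\int_0^\infty\Big(\frac{1}{1+t}-\frac{1}{x+t}\Big)\, dt\qquad(x>0),
\]
which by Borel functional calculus lifts to $\log g$ and $\log g_F$, with the integrals converging absolutely. Applying the normalized tracial states $\tau_d\otimes\tau$ on $M_d(\cN\Gamma)$ and $\tau_F:=\frac{1}{|F|d}\Tr$ on $M_{|F|d}(\Cb)$ and subtracting produces
\[
\tau_F(\log g_F)-(\tau_d\otimes\tau)(\log g)=\int_0^\infty\!\Big((\tau_d\otimes\tau)((g+t)^{-1})-\tau_F((g_F+t)^{-1})\Big)\, dt.
\]
The operator-theoretic Schur complement inequality $(PAP|_{PH})^{-1}\le PA^{-1}P|_{PH}$, applied with $A=g+t$ and $P=P_F\otimes I_d$, combined with translation-invariance of the diagonal entries $\langle(g+t)^{-1}(\delta_s\otimes e_i),\delta_s\otimes e_i\rangle$, shows that each integrand is pointwise nonnegative. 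Multiplying by $d$ and exponentiating yields $(\det g_F)^{1/|F|}\ge \ddet_{\cN\Gamma} g$ when $g$ is invertible. For general positive $g$ I apply the inequality to $g+\epsilon$ and let $\epsilon\downarrow 0$, using continuity $\det(g_F+\epsilon)\downarrow\det g_F$ and monotone convergence $\ddet_{\cN\Gamma}(g+\epsilon)\downarrow\ddet_{\cN\Gamma} g$.

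For the upper bound along F{\o}lner nets I compare the spectral distributions $\mu_g$ of $g$ (with respect to $\tau_d\otimes\tau$) and $\mu_{g_F}$ of $g_F$ (with respect to $\tau_F$); both are supported in $[0,\|g\|]$. The method of moments reduces weak convergence $\mu_{g_F}\to\mu_g$ to showing $\tau_F(g_F^k)\to(\tau_d\otimes\tau)(g^k)$ for each $k\ge 0$. Expanding $(P_F g P_F)^k$ as $P_F g^k P_F$ plus error terms each containing at least one internal factor $1-P_F$, translation-invariance gives $\frac{1}{|F|d}\Tr(P_F g^k P_F)=(\tau_d\otimes\tau)(g^k)$ exactly, while each error term contributes $O(|\partial F|/|F|)$, which vanishes by the F{\o}lner property. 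Since $\log$ is upper semicontinuous on $[0,\|g\|]$ (extended to $t=0$ by $-\infty$) and bounded above by $\log\|g\|$, the one-sided Portmanteau theorem produces
\[
\limsup_F\int\log t\, d\mu_{g_F}(t)\le\int\log t\, d\mu_g(t),
\]
whence $\limsup_F(\det g_F)^{1/|F|}\le\ddet_{\cN\Gamma} g$.

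Combining the two bounds closes the argument: $\limsup\le\ddet_{\cN\Gamma} g\le\inf\le\liminf$ along any F{\o}lner net, so the limit exists and coincides with both the infimum and $\ddet_{\cN\Gamma} g$. The principal obstacle is the Szeg\H{o}-type lower bound: it must hold for \emph{arbitrary} finite subsets rather than merely F{\o}lner ones, and its proof rests on the operator-theoretic Schur complement inequality together with the regularization $g\mapsto g+\epsilon$ needed to reach the non-invertible case. The upper bound is analytically delicate only near $t=0$ where $\log$ diverges, but that singularity is absorbed by the one-sided Portmanteau estimate against the upper semicontinuous integrand.
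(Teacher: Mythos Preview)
Your proof is correct and takes a genuinely different route from the paper's.

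The paper argues as follows. First, using the Gantmacher--Kre\u{\i}n inequality $\det(g_{\cX\cup\cY})\det(g_{\cX\cap\cY})\le\det(g_\cX)\det(g_\cY)$, the map $F\mapsto\log\det(g_F)$ is shown to be strongly subadditive and right-translation invariant; an Ornstein--Weiss style lemma then yields $\lim_F=\inf_F$ whenever the function is everywhere finite. Second, for invertible $g+\varepsilon$ one has Deninger's equality $\ddet_{\cN\Gamma}(g+\varepsilon)=\lim_F(\det((g+\varepsilon)_F))^{1/|F|}$ (which, like your upper bound, rests on the weak convergence $\mu_{g,F}\to\mu_g$). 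The theorem follows by interchanging the two infima over $\varepsilon>0$ and over $F\in\cF(\Gamma)$.

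Your argument bypasses subadditivity and the Ornstein--Weiss lemma entirely. The pointwise Szeg\H{o} inequality $(\det g_F)^{1/|F|}\ge\ddet_{\cN\Gamma}g$, valid for \emph{every} finite $F$, is obtained directly from the resolvent/Schur-complement inequality $(P_F(g+t)P_F)^{-1}\le P_F(g+t)^{-1}P_F$ together with the exact identity $\frac{1}{|F|}\Tr(P_F(g+t)^{-1}P_F)=\tr_{\cN\Gamma}((g+t)^{-1})$; this is a cleaner path to the infimum half of the statement. Your upper bound is the same Portmanteau/weak-convergence ingredient the paper invokes (there only in the invertible case, whereas you apply it once and for all to the upper-semicontinuous integrand $\log t$). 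What the paper's approach buys is the additional structural fact that $\log\det g_F$ is strongly subadditive, which it reuses later (Proposition~3.5); what your approach buys is a shorter, more self-contained argument that avoids Lemmas~3.2 and~3.3 altogether.

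One small point of care: your sentence ``each error term contributes $O(|\partial F|/|F|)$'' is literally accurate only when $g\in M_d(\Cb\Gamma)$ has finite support. For general $g\in M_d(\cN\Gamma)$ the Fourier coefficients $\hat g(t)$ are merely $\ell^2$-summable, and the moment convergence $\tr_F(g_F^k)\to\tr_{\cN\Gamma}(g^k)$ then comes from a dominated-convergence estimate of the type $\frac{1}{|F|}\|Q_Fg P_F\|_2^2=\sum_t\|\hat g(t)\|^2\cdot\frac{|F\setminus t^{-1}F|}{|F|}\to 0$. This is the content of the L\"uck/Dodziuk--Mathai/Schick result the paper cites, so the conclusion is correct; just be aware that the ``boundary'' heuristic needs this extra step in the non-finitely-supported case.
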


For $d=1$, this is a positive answer to a question of Deninger \cite[Question 6]{Deninger09}.
To the best of our knowledge, this result is even new for $\Gamma=\Zb^2$ and $d=1$. A Szeg\H{o}-type result for essentially bounded, measurable matrix-valued functions on $S^1$ was known \cite[Theorem 2.13.5]{Simon}.

\vspace{0.2cm}

We can use Theorem~\ref{T-torsion} to define the \emph{torsion} of a countable $\Zb \Gamma$-module $\cM$ to be the entropy of its Pontryagin dual; we denote the torsion of $\cM$ by $\rho(\cM)$. If $\cM$ is finitely presented, this number is finite only if $\dim_{\cN \Gamma}(\cN \Gamma \otimes_{\Zb \Gamma} \cM)=0$ (see Remark \ref{R-vanishdim}) and thus can been understood as a natural secondary invariant for $\Zb \Gamma$-modules.
We now can study the $L^2$-torsion of a $\Delta$-acyclic chain complex  $\cC_*$ (see Section~\ref{S-torsion}) and prove a Milnor-Turaev formula for $L^2$-torsion. Since the $L^2$-torsion turns out to be a homotopy invariant of the chain complex $\cC_*$, it is natural to expect that it can be expressed in terms of the homology $H_*(\cC_*)$ of the chain complex. We show:

\begin{theorem} \label{T-milnorturaev} Let $\Gamma$ be a countable discrete amenable group.
Let $\cC_*$ be a chain complex of finitely generated free left $\Zb \Gamma$-modules of finite length. Assume that $\cC_*$ is $\Delta$-acyclic (defined in Section~\ref{SS-newview}) or, equivalently, that $\ell^2 (\Gamma) \otimes_{\Zb \Gamma} \cC_*$ is weakly acyclic (see Section~\ref{SS-torsion}).
Then $\rho(H_i(\cC_*))< \infty$ for all $i \in \Zb$ and
$$\rho^{(2)}(\cC_*) = \sum_{i \in \Zb} (-1)^i \rho(H_i(\cC_*)).$$
\end{theorem}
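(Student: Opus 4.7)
The plan is to induct on the length $n$ of the chain complex $\cC_*$: the base case $n=2$ reduces directly to Theorem~\ref{T-main}, while the inductive step uses additivity of both $L^2$-torsion and entropy. When $n=2$, write $\cC_* = (0 \to \cC_1 \xrightarrow{\partial_1} \cC_0 \to 0)$. The $\Delta$-acyclicity together with additivity of $\cN\Gamma$-dimension force $\rank\cC_0=\rank\cC_1=d$, and if $\partial_1$ is expressed as right multiplication by a matrix $f \in M_d(\Zb\Gamma)$, the vanishing of the $L^2$-kernel forces $\ker f = \{0\}$. Hence $H_1(\cC_*)=0$, $\widehat{H_0(\cC_*)} = X_f$, and Theorem~\ref{T-main} in the square case gives $\rh(X_f) = \log\ddet_{\cN\Gamma}f$, which matches the definition of $\rho^{(2)}(\cC_*)$ from Section~\ref{S-torsion}. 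This simultaneously establishes the formula and the finiteness $\rh(\widehat{H_i(\cC_*)}) < \infty$.

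For the inductive step ($n \geq 3$), I would produce a short exact sequence of $\Delta$-acyclic chain complexes
\[
0 \to \cC'_* \to \cC_* \to \cC''_* \to 0,
\]
where $\cC'_*$ is concentrated in the top two degrees $\{n-1,n\}$ (with $\cC'_n=\cC_n$ and $\cC'_{n-1}$ a lift of $\im(\partial_n)$) and $\cC''_*$ has length at most $n-1$. Two additivities then combine: $L^2$-torsion is additive on short exact sequences of weakly acyclic complexes, giving $\rho^{(2)}(\cC_*) = \rho^{(2)}(\cC'_*) + \rho^{(2)}(\cC''_*)$; and on the entropy side, the long exact sequence
\[
\cdots \to H_i(\cC'_*) \to H_i(\cC_*) \to H_i(\cC''_*) \to H_{i-1}(\cC'_*) \to \cdots
\]
together with additivity of entropy on short exact sequences of compact $\Gamma$-actions (valid for amenable $\Gamma$ when all entropies are finite) yields
\[
\sum_i (-1)^i \rh(\widehat{H_i(\cC_*)}) = \sum_i (-1)^i \rh(\widehat{H_i(\cC'_*)}) + \sum_i (-1)^i \rh(\widehat{H_i(\cC''_*)}).
\]
Applying the inductive hypothesis to $\cC'_*$ (reducing to the base case after resolving any non-free term) and to $\cC''_*$ then finishes the induction.

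The main obstacle is that the natural decomposition produces non-free $\Zb\Gamma$-modules such as $\im(\partial_n)$, so the inductive hypothesis must be formulated in a broader setting than chain complexes of finitely generated free modules; this is handled by the extended $L^2$-torsion framework built on the Haagerup--Schultz ring $\cN\Gamma^\Delta$ in Section~\ref{S-torsion}. A second obstacle is controlling $\rh(\widehat{H_i(\cC_*)}) < \infty$ at every inductive step, since entropy additivity fails in the presence of infinite values: finiteness propagates from the base case (where $\log\ddet_{\cN\Gamma}f$ is finite) through the long exact sequence, and in delicate spots one can invoke the inequality in Theorem~\ref{T-torsion} applied to finite partial free resolutions of individual homology modules to bound the relevant entropies from above.
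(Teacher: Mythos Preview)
Your induction-on-length strategy is not the paper's approach, and the inductive step as you describe it does not go through. To get a short exact sequence $0 \to \cC'_* \to \cC_* \to \cC''_* \to 0$ of chain complexes with $\cC'_*$ concentrated in degrees $\{n-1,n\}$ and $\cC'_n=\cC_n$, the module $\cC'_{n-1}$ must be a \emph{submodule} of $\cC_{n-1}$, so ``a lift of $\im(\partial_n)$'' can only mean $\im(\partial_n)$ itself. Over $\Zb\Gamma$ this is in general neither free nor a direct summand, so $\cC''_{n-1}=\cC_{n-1}/\im(\partial_n)$ is not free either, and you have left the category for which $\rho^{(2)}$ is defined. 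The Haagerup--Schultz framework of Section~\ref{S-torsion} does not rescue this: the point of passing to $\cN\Gamma^\Delta$ is to make the \emph{target} of the Whitehead torsion map algebraic, but the source is still a chain complex of finitely generated free $\Zb\Gamma$-modules, and the additivity formula \eqref{E-additive} is only available in that setting. Replacing $\cC'_*$ by a free resolution of $\im(\partial_n)$ destroys the length bound, so the induction does not terminate as described. The base case, by contrast, is fine.

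The paper avoids induction on length altogether. It reruns the machinery of Section~\ref{SS-entropy-torsion}: for each $j$ one forms the block matrix $g_j$, and weak acyclicity still forces $\ker(f_{j+1}^*f_{j+1}+f_jf_j^*)=\{0\}$. The new point is that when $\cC_*$ is not exact, the module $\cM_{J_j}$ from Lemma~\ref{L-short exact} strictly contains the image of $g_{j+1}$, and the quotient is exactly $H_j(\cC_*)$; dualizing gives a short exact sequence $1\to X_{J_j}\to X_{g_{j+1}}\to \widehat{H_j(\cC_*)}\to 1$, which together with $1\to X_{g_j}\to X_{g_j^*g_j}\to X_{J_j}\to 1$ yields $\rh(X_{g_j})+\rh(X_{g_{j+1}})=\rh(X_{g_j^*g_j})+\rho(H_j(\cC_*))$. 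The finiteness of each $\rho(H_j(\cC_*))$ is then immediate from $\rh(X_{g_j^*g_j})=\log\ddet_{\cN\Gamma}(g_j^*g_j)<\infty$, and the alternating-sum identity follows exactly as in the proof of Theorem~\ref{T-torsion}.
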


We expect that Theorem~\ref{T-milnorturaev} will have interesting applications in algebraic topology. It shows that $L^2$-torsion can be thought of as a generalized Euler characteristic where the r\^{o}le of the ordinary ($L^2$-)Betti numbers is played by the torsion of the homology groups.

\vspace{0.2cm}

Recently, the entropy theory has been extended to actions of countable sofic groups \cite{Bowen10, KL11}, which include all countable amenable groups and countable residually finite groups. The analogue of Theorem~\ref{T-main} for countable residually finite groups has been established for some special cases in \cite{Bowen11, BL, KL11}, though the general case is still open.

\vspace{0.2cm}

The paper is organized as follows:
Section~\ref{S-preliminary} deals with preliminaries on notation and gives a brief introduction to
group rings, the Fuglede-Kadison determinant, amenable groups, entropy, the Euler characteristic, and $L^2$-torsion.

Section~\ref{S-approximation} contains a brief history of the approximation results of the Fuglede-Kadison determinant and our first main result: Theorem~\ref{T-approximate}, the approximation theorem for the Fuglede-Kadison determinant. This section also contains a uniform estimate of the spectral measure near zero in case of a non-vanishing determinant, Proposition~\ref{P-measure weak convergence}.

Section~\ref{S-entaction} contains our second main result: Theorem~\ref{T-main}, the computation of the entropy of principal algebraic actions in terms of the Fuglede-Kadison determinant. This section is the most technical one. We give a new formula for the entropy of a finitely generated algebraic action, this is Theorem~\ref{T-approximate solution formula for entropy}. In Section~\ref{SS-positive}, we give a proof of Theorem~\ref{T-main} in the positive case. Finally, we prove the general case of Theorem~\ref{T-main} on the basis of a formula for entropy that was obtained by Peters, Theorem~\ref{T-Peters}.

The first part of Section~\ref{S-entropy-torsion} is concerned with the proof of Theorem~\ref{T-torsion}. In Section~\ref{SS-application to torsion} we give applications of Theorem~\ref{T-torsion}. We prove Theorem~\ref{T-Luck conjecture} and show in Theorem~\ref{T-infinite index} that the $L^2$-torsion of every $\Zb \Gamma$-module of type FL, which is finitely generated as an abelian group, vanishes, if $\Gamma$ contains $\Zb$ as a subgroup of infinite index. Section~\ref{SS-milnorturaev} contains the definition of torsion for general $\Zb\Gamma$-modules. There, we prove the Milnor-Turaev formula; this is Theorem~\ref{T-milnorturaev}.

Section~\ref{S-torsion} is a self-contained introduction to $L^2$-torsion, assuming only the classical work of Milnor~\cite{Milnor66}. We review the definition of Whitehead torsion. After introducing the Haagerup-Schultz algebra we define $L^2$-torsion and show its main properties.
The paper ends with acknowledgments.

\section{Preliminaries} \label{S-preliminary}

Throughout this paper $\Gamma$ will be a countable discrete group with the identity element $e$.
For any set $\cX$ and $d\in \Nb$, we write $\cX^{d\times 1}$ (resp. $\cX^{1\times d}$) for the elements of $\cX^d$ written as column (resp.~row) vectors.
For any set $\cX$, we denote by $\ell^2(\cX)$  the Hilbert space of all complex-valued  square-summable functions on $\cX$.

For a Hilbert space $H$, we denote by $B(H)$ the algebra of bounded linear operators on $H$, and
by $\|T\|$ the operator norm of $T$ for each $T\in B(H)$.

\subsection{Group rings} \label{SS-group rings}

For a unital ring $R$, the {\it group ring} $R\Gamma$ is the set of finitely supported functions
$f: \Gamma\rightarrow R$, written as $f=\sum_{s\in \Gamma}f_ss$, with addition and multiplication defined by
$$ \sum_{s\in \Gamma}f_ss+\sum_{s\in \Gamma}g_ss=\sum_{s\in \Gamma}(f_s+g_s)s \quad \mbox{and} \quad (\sum_{s\in \Gamma}f_ss)(\sum_{t\in \Gamma}g_tt)=\sum_{s, t\in \Gamma}f_sg_tst.$$

The group $\Gamma$ has two commuting unitary representations $l$ and $r$ on $\ell^2(\Gamma)$, called the \emph{left regular representation} and the \emph{right regular representation} respectively and defined by
$$ (l_sx)_t=x_{s^{-1}t} \quad \mbox{and} \quad (r_sx)_t=x_{ts}$$
for $s, t\in \Gamma$ and $x\in \ell^2(\Gamma)$. The {\it group von Neumann algebra} $\cN\Gamma$ is defined as the sub-$*$-algebra of $B(\ell^2(\Gamma))$ consisting of elements commuting with the image of $r$. See \cite[Section V.7]{Takesaki} for detail. Via the left regular representation $l$, we identify $\Cb\Gamma$ as a sub-$*$-algebra of $\cN\Gamma$.
Consider the anti-linear isometric involution $x\mapsto x^*$ on $\ell^2(\Gamma)$ defined by
$$(x^*)_s=\overline{x_{s^{-1}}}$$
for all $s\in \Gamma$ and $x\in \ell^2(\Gamma)$. Then, $\ell^2 (\Gamma)$ is also a right $\cN\Gamma$-module with
$$xf:=(f^*x^*)^*$$
for all $x\in \ell^2(\Gamma)$ and $f\in \cN\Gamma$. This allows an identification of $\cN \Gamma$ with the von Neumann algebra generated by the right regular representation, however, we do not need this fact.

For $d', d\in \Nb$, we think of elements of $M_{d'\times d}(\cN\Gamma)$ as bounded linear operators from $(\ell^2(\Gamma))^{d\times 1}$ to $(\ell^2(\Gamma))^{d'\times 1}$. There is a canonical trace $\tr_{\cN\Gamma}$ on $M_d(\cN\Gamma)$ defined
by
$$\tr_{\cN\Gamma}f=\sum_{j=1}^d\left<f_{j,j}e, e\right>$$
for $f=(f_{j,k})_{1\le j, k\le d}\in M_d(\cN\Gamma)$. One has $\tr_{\cN\Gamma}(fg)=\tr_{\cN\Gamma}(gf)$ for all $f, g\in M_d(\cN\Gamma)$.
Furthermore, $\tr_{\cN\Gamma}$ is {\it faithful} in the sense that $\tr_{\cN\Gamma}(f^*f)>0$ for every nonzero $f\in M_d(\cN\Gamma)$.

For a finitely generated projective left $\cN\Gamma$-module $\cM$, take $q\in M_d(\cN\Gamma)$ for some $d\in \Nb$ such that $q^2=q$
and $(\cN\Gamma)^{1 \times d}q$ is isomorphic to $\cM$ as a left $\cN\Gamma$-module. Then the {\it dimension}
$\dim_{\cN\Gamma}\cM$ of $\cM$ is defined as
$$\dim_{\cN\Gamma}\cM=\tr_{\cN\Gamma}q,$$
and is independent of the choice of $q$. For a general left $\cN\Gamma$-module $\cM$, its dimension $\dim_{\cN\Gamma}\cM$ is defined as
the supremum of $\dim_{\cN\Gamma}\cM'$ for $\cM'$ ranging over finitely generated projective submodules of $\cM$ \cite[Section 6.1]{Luck}. This generalized dimension was introduced by L\"uck. It has found numerous applications in the computation of $L^2$-invariants. We refer to \cite[Section 6.1]{Luck} for its basic properties.

\subsection{Fuglede-Kadison determinant} \label{SS-determinant}

For a Hilbert space $H$, we say $T\in B(H)$ is {\it positive}, written as
$T\ge 0$, if $\left<Tx, y\right>=\left<x, Ty\right>$ and $\left<Tx, x\right>\ge 0$ for all $x, y\in H$.

Let $f\in M_{d'\times d}(\cN\Gamma)$. Then $\ker f$ is a closed linear subspace of $(\ell^2(\Gamma))^{d\times 1}$ invariant under the right regular representation of $\Gamma$. Thus the orthogonal projection $q_f$ from $(\ell^2(\Gamma))^{d\times 1}$ onto $\ker f$ lies in $M_d(\cN\Gamma)$.

Let $f\in M_d(\cN\Gamma)$ be positive. Then there exists a unique Borel measure $\mu_f$, called the {\it spectral measure of $f$}, on the interval $[0, \|f\|]$
satisfying
\begin{align} \label{E-spectral measure}
 \int_0^{\|f\|}p(t)\, d\mu_f(t)=\tr_{\cN\Gamma}(p(f))
\end{align}
for every one-variable real-coefficients polynomial $p$. In particular, $\mu_f([0, \|f\|])=d$. From Theorems 5.2.2 and 5.2.8 of \cite{KR1} one has
$\mu_f(\{0\})=\tr_{\cN\Gamma}(q_f)$. Since $\tr_{\cN\Gamma}$ is faithful,
$\mu_f(\{0\})>0$ if and only if $\ker f\neq \{0\}$.

Let $f\in M_{d'\times d}(\cN\Gamma)$. Set $|f|=(f^*f)^{1/2}\in M_d(\cN\Gamma)$ \cite[page 248]{KR1}. Then $f$ and $|f|$ have the same operator norm, and $|f|$ is positive. The {\it Fuglede-Kadison determinant} $\det_{\cN\Gamma}f$
of $f$ \cite{FK} \cite[Section 3.2]{Luck} is defined as
\begin{align} \label{E-definition of determinant}
 \ddet_{\cN\Gamma}f=\exp\left(\int_{0}^{\|f\|}\log t\, d\mu_{|f|}(t)\right)\in \Rb_{\ge 0}.
\end{align}

One may describe $\det_{\cN\Gamma}f$ without referring to $|f|$. When $f\in M_d(\cN\Gamma)$ is positive, one has
$|f|=f$, and hence
\begin{align} \label{E-determinant}
 \ddet_{\cN\Gamma}f=\exp\left(\int_0^{\|f\|}\log t\, d\mu_f(t)\right).
\end{align}
For general $f\in M_{d'\times d}(\cN\Gamma)$, one has
\begin{align} \label{E-determinant square root}
 \ddet_{\cN\Gamma}f=\left(\ddet_{\cN\Gamma}(f^*f)\right)^{1/2}.
\end{align}

We remark that the Fuglede-Kadison determinant used in \cite{Luck} is a modified one, excluding the point $0$ in the integral of \eqref{E-definition of determinant}, i.e.  $\exp\left(\int_{0+}^{\|f\|}\log t\, d\mu_{|f|}(t)\right)$. Similar to \eqref{E-determinant square root}, this modified determinant is equal to $\left(\ddet_{\cN\Gamma}(f^*f+q_f)\right)^{1/2}$.

Among the properties of  $\ddet_{\cN\Gamma}$ established by Fuglede and Kadison in \cite[Section 5]{FK}, we mainly need the following

\begin{theorem}\label{T-FK}
Let $d\in \Nb$ and $f, g\in M_d(\cN\Gamma)$. The following hold:
\begin{enumerate}
\item  $\ddet_{\cN\Gamma}(fg)=\ddet_{\cN\Gamma} f\cdot \ddet_{\cN\Gamma}g$.

\item $\ddet_{\cN\Gamma}f=\ddet_{\cN\Gamma}(f^*)$.

\item If $f\ge 0$, then $\ddet_{\cN\Gamma}f=\inf_{\varepsilon>0}\ddet_{\cN\Gamma}(f+\varepsilon)$.

\item If $\ker f\neq \{0\}$, then $\mu_{f^*f}(\{0\})>0$, and hence $\ddet_{\cN\Gamma}f=0$.
\end{enumerate}
\end{theorem}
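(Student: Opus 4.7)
The plan is to treat the items in the order (4), (3), (2), (1): the first three are direct consequences of spectral theory and the trace property, while multiplicativity (1) is the substantive assertion and the main obstacle.

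For (4), since $\ker f\subseteq\ker(f^*f)$ the projection $q_{f^*f}$ dominates $q_f$; faithfulness of $\tr_{\cN\Gamma}$ together with the identification $\mu_{f^*f}(\{0\})=\tr_{\cN\Gamma}(q_{f^*f})$ noted in the text would give $\mu_{f^*f}(\{0\})>0$, and the resulting atom forces $\int\log t\,d\mu_{f^*f}(t)=-\infty$, so $\ddet_{\cN\Gamma}(f^*f)=0$ and \eqref{E-determinant square root} yields $\ddet_{\cN\Gamma}f=0$. For (3), the measure $\mu_{f+\varepsilon}$ is the pushforward of $\mu_f$ under $t\mapsto t+\varepsilon$, so
$$\log\ddet_{\cN\Gamma}(f+\varepsilon)=\int_0^{\|f\|}\log(t+\varepsilon)\,d\mu_f(t);$$
the integrand decreases monotonically to $\log t$ as $\varepsilon\downarrow 0$ with uniform upper bound $\log(\|f\|+1)$, and monotone convergence gives the infimum (with value $-\infty$ allowed). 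For (2), cyclicity of $\tr_{\cN\Gamma}$ gives $\tr_{\cN\Gamma}(p(f^*f))=\tr_{\cN\Gamma}(p(ff^*))$ for every polynomial $p$ with $p(0)=0$, so $\mu_{f^*f}$ and $\mu_{ff^*}$ agree on $(0,\|f\|^2]$; polar decomposition $f=u|f|$ with $u\in M_d(\cN\Gamma)$ a partial isometry satisfies $1-u^*u=q_{f^*f}$ and $1-uu^*=q_{ff^*}$, and $\tr_{\cN\Gamma}(u^*u)=\tr_{\cN\Gamma}(uu^*)$ matches the atoms at $0$. Pushing forward by $t\mapsto\sqrt{t}$ then gives $\mu_{|f|}=\mu_{|f^*|}$, and (2) follows from the definition.

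For (1), the main obstacle, I would follow the Fuglede-Kadison strategy of reducing to a ``$\log\ddet=\tr\log$'' identity for invertibles and then approximating. First, for positive invertible $h\in M_d(\cN\Gamma)$, \eqref{E-determinant} reduces to $\log\ddet_{\cN\Gamma}h=\tr_{\cN\Gamma}(\log h)$ since $\log$ is continuous on $\spec(h)\subseteq(0,\infty)$. Second, for invertible $f,g\in M_d(\cN\Gamma)$, combine this with \eqref{E-determinant square root} to reduce multiplicativity to the additive identity
$$\tr_{\cN\Gamma}\log((fg)^*(fg))=\tr_{\cN\Gamma}\log(f^*f)+\tr_{\cN\Gamma}\log(g^*g);$$
prove this by connecting $1$ to $f$ and to $g$ through smooth paths $h_t$ of invertibles, differentiating $t\mapsto\tr_{\cN\Gamma}\log(h_t^*h_t)$, and using cyclicity of $\tr_{\cN\Gamma}$ to show that the derivative $2\re\tr_{\cN\Gamma}(h_t^{-1}h_t')$ is additive along composition of paths; equivalently, use the contour representation $\log h=\frac{1}{2\pi i}\oint\log\lambda\,(\lambda-h)^{-1}d\lambda$ and differentiate under the integral sign. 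Third, extend to arbitrary $f,g$ by approximation: apply the invertible case to $|f|+\varepsilon$ and $|g|+\varepsilon$ (absorbing the partial isometries from polar decomposition, whose contribution to the determinant is $0$ or $1$ depending on whether their kernel vanishes) and let $\varepsilon\downarrow 0$ using (3). Fourth, the degenerate cases in which $\ddet_{\cN\Gamma}f=0$ or $\ddet_{\cN\Gamma}g=0$ are handled by the same approximation, with both sides of (1) converging to $0$.

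The hard step is the second one: the additivity of $\frac{d}{dt}\tr_{\cN\Gamma}\log(h_t^*h_t)$ along factorizations is the genuine content of multiplicativity, and its verification requires a careful interplay between the holomorphic functional calculus and the cyclicity of $\tr_{\cN\Gamma}$. A secondary difficulty is ensuring in the third step that singular contributions at the bottom of the spectrum do not obstruct the passage to the limit $\varepsilon\downarrow 0$, which is precisely the content of the continuity statement (3) applied uniformly in the approximation.
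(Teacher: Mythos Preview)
The paper does not prove Theorem~\ref{T-FK}; it is quoted from Fuglede and Kadison \cite[Section~5]{FK} without argument. Your sketch is in fact a faithful outline of the original Fuglede--Kadison strategy: items (4), (3), (2) follow from spectral theory and trace cyclicity exactly as you indicate, and for (1) the reduction to the invertible case via $\log\ddet_{\cN\Gamma}h=\tr_{\cN\Gamma}(\log h)$ for positive invertible $h$, followed by a deformation argument, is precisely their approach.

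One step deserves more care than your outline suggests. In your ``third step'' for (1), passing from invertibles to general $f,g$ (say with $\ker f=\ker g=\{0\}$, so the polar parts $u,v$ are unitary), you need $\ddet_{\cN\Gamma}\big(u(|f|+\varepsilon)v(|g|+\varepsilon)\big)\to\ddet_{\cN\Gamma}(fg)$ as $\varepsilon\downarrow 0$; but this perturbation is not of the form $h\mapsto h+\varepsilon$, so (3) does not apply directly. The clean fix is to first establish the intermediate case $\ddet_{\cN\Gamma}(ha)=\ddet_{\cN\Gamma}h\cdot\ddet_{\cN\Gamma}a$ for $h$ \emph{invertible} and $a$ arbitrary: from (2) this reduces to $\ddet_{\cN\Gamma}(haa^*h^*)=(\ddet_{\cN\Gamma}h)^2\ddet_{\cN\Gamma}(aa^*)$, which follows by applying the invertible case to $h^*(aa^*+\varepsilon)h$ and squeezing via operator monotonicity ($0\le A\le B\Rightarrow\ddet_{\cN\Gamma}A\le\ddet_{\cN\Gamma}B$) together with (3). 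With this one-sided multiplicativity in hand you can peel off the factors one at a time and the limit is justified. As written, your step three has a genuine gap at exactly this point.
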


We also need the following result
of L\"{u}ck \cite[Theorem 3.14.(3)]{Luck}
\begin{align} \label{E-symmetry}
\ddet_{\cN\Gamma}(f^*f+q_f)=\ddet_{\cN\Gamma}(ff^*+q_{f^*})
\end{align}
for every $f\in M_{d'\times d}(\cN\Gamma)$.

\begin{example} \label{Ex-determinant}
It is instructing to consider the case $\Gamma=\Zb^d$ for $d\in \Nb$. Denote by $\lambda$ the Haar probability measure on the $d$-dimensional torus
$\Tb^d$. Fourier transform makes everything transparent: $\ell^2(\Zb^d) = L^2(\Tb^d,\lambda)$, $\cN(\Zb^d) = L^{\infty}(\Tb^d,\lambda)$, and
$$ \tr_{\cN(\Zb^d)} f=\int_{\Tb^d}f(z)\, d\lambda(z), \quad \ddet_{\cN(\Zb^d)}f=\exp \left(\int_{\Tb^d}\log |f(z)|\, d\lambda(z) \right)$$
for all $f\in \cN(\Zb^d)$.
\end{example}

\subsection{Amenable groups} \label{SS-amenable group}

Denote by $\cF(\Gamma)$ the set of all nonempty finite subsets of $\Gamma$.
For $K\in \cF(\Gamma)$ and $\delta > 0$ write $\cB (K,\delta )$ for the collection
of all $F\in \cF(\Gamma)$ such that $|\{t\in F: Kt\subseteq F \}| \ge(1-\delta) |F|$.
The group $\Gamma$ is called {\it amenable} if $\cB(K, \delta)$ is nonempty for every $(K, \delta)$ \cite[Section 4.9]{CC}.
Let $\Gamma$ be a countable discrete amenable group.

The pairs $(K,\delta )$ form a net $\Lambda$ where
$(K',\delta' ) \succ (K,\delta )$ means that $K' \supseteq K$ and $\delta' \leq \delta$.
For a $\Rb$-valued function $\varphi$ defined on $\cF(\Gamma)$, we say
$\varphi(F)$ converges to $c\in \Rb$ as the nonempty finite subset $F$ of $\Gamma$ becomes more and more left invariant, denoted by $\lim_F \varphi(F)=c$,  if for any
$\varepsilon>0$, there exist $K\in \cF(\Gamma)$ and $\delta>0$ such that
$$ |\varphi(F)-c|<\varepsilon$$
for all $F\in \cB(K, \delta)$. Similarly, we define $\varphi(F)$ converges to $-\infty$ or $+\infty$ as the nonempty finite subset $F$ of $\Gamma$ becomes more and more left invariant. In general, we define
\[ \limsup_{F} \varphi(F) := \lim_{(K,\delta )\in\Lambda} \sup_{F\in \cB (K,\delta )} \varphi (F). \]

Let $d\in \Nb$. For $F\in \cF(\Gamma)$, we denote by $\iota_F$ the embedding
$(\ell^2(F))^{d\times 1}\rightarrow (\ell^2(\Gamma))^{d\times 1}$ and by $p_F$ the orthogonal projection
$(\ell^2(\Gamma))^{d\times 1}\rightarrow (\ell^2(F))^{d\times 1}$. For $g\in M_d(\cN\Gamma)$, we set
\begin{align} \label{E-g_F}
 g_F:=p_F\circ g\circ \iota_F\in B((\ell^2(F))^{d\times 1}).
\end{align}

For many purposes, properties of $g \in M_d(\cN \Gamma)$ can be captured by properties of $g_F$ for $F \in \cF(\Gamma)$, as $F$ becomes more and more invariant. The following striking result was proved by Elek \cite{Elek03}.

\begin{lemma} \label{L-Elek}
Let $g\in M_d(\Cb\Gamma) \subseteq B((\ell^2(\Gamma))^{d \times 1})$. Then
$$\tr_{\cN\Gamma} (q_g) = \lim_{F} \frac{ \dim_{\Cb} \ker(g_F) }{|F|} = \lim_{F} \frac{ \dim_{\Cb} \left( \ker g \cap (\ell^2(F))^{d \times 1}  \right)}{|F|},$$
where $q_g$ denotes the orthogonal projection from $(\ell^2(\Gamma))^{d\times 1}$ onto $\ker g$.
In particular, if $\ker g\neq \{0\}$, then $\ker g \cap (\Cb\Gamma)^{d\times 1}\neq \{0\}$.
\end{lemma}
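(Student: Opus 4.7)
The plan is to compare the empirical eigenvalue measure
\[
\mu^F := \frac{1}{|F|}\sum_{\lambda\in\spec(g_F^*g_F)}\delta_\lambda
\]
(counted with multiplicity) with the spectral measure $\mu_{g^*g}$ of $g^*g\in M_d(\cN\Gamma)$, and to match their atoms at $\{0\}$. Note that $\mu^F(\{0\})=\dim_\Cb\ker(g_F)/|F|$ since $\ker g_F=\ker(g_F^*g_F)$, while $\mu_{g^*g}(\{0\})=\tr_{\cN\Gamma}(q_{g^*g})=\tr_{\cN\Gamma}(q_g)$ since $\ker(g^*g)=\ker g$. My first step is to establish weak convergence $\mu^F\to\mu_{g^*g}$ on $[0,\|g\|^2]$. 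Because $g\in M_d(\Cb\Gamma)$ has finite support $K\subseteq\Gamma$, a direct boundary estimate gives, for every polynomial $p$ of degree $k$,
\[
\bigl|\tfrac{1}{|F|}\tr(p(g_F^*g_F)) - \tr_{\cN\Gamma}(p(g^*g))\bigr| \le C_{p}\,\frac{|F\setminus\{t\in F:(KK^{-1})^{k}t\subseteq F\}|}{|F|},
\]
which vanishes as $F$ becomes left invariant, so moment convergence on compact supports implies weak convergence.

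Applying the Portmanteau theorem to the closed set $\{0\}$ yields the upper bound
\[
\limsup_F\frac{\dim_\Cb\ker(g_F)}{|F|}=\limsup_F\mu^F(\{0\})\le\mu_{g^*g}(\{0\})=\tr_{\cN\Gamma}(q_g).
\]
Combined with the trivial inclusion $\ker g\cap(\ell^2(F))^{d\times 1}\subseteq\ker g_F$, the two equalities asserted in the lemma both reduce to the matching lower bound $(\star)$:
\[
\liminf_F\frac{\dim_\Cb(\ker g\cap(\ell^2(F))^{d\times 1})}{|F|}\ge\tr_{\cN\Gamma}(q_g).
\]

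The hard part is $(\star)$. The strategy exploits that $\ker g$ is a closed right $\cN\Gamma$-submodule of $(\ell^2(\Gamma))^{d\times 1}$ of $\cN\Gamma$-dimension $\tr_{\cN\Gamma}(q_g)$, since $g\in M_d(\cN\Gamma)$ commutes with the right regular representation. Given $\varepsilon>0$, approximate $q_g\in M_d(\cN\Gamma)$ in the trace $2$-norm $\|\cdot\|_2=\tr_{\cN\Gamma}(\cdot^*\cdot)^{1/2}$ by $h\in M_d(\Cb\Gamma)$ of finite support $L\subseteq\Gamma$ with $\|h-q_g\|_2<\varepsilon$. For $s$ in the interior $F^\circ:=\{s\in F:Ls\subseteq F\}$ and $1\le j\le d$, the vector $h\delta_{j,s}\in(\ell^2(F))^{d\times 1}$ is finitely supported, and $gq_g=0$ combined with right-$\Gamma$-equivariance yields the mean bound $\sum_{j,s}\|g(h\delta_{j,s})\|^2=O(|F^\circ|\varepsilon^2)$. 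A singular-value perturbation argument applied to $x\mapsto gx$ restricted to $\spn\{h\delta_{j,s}\}$ inside the finite-dimensional Hilbert space $(\ell^2(F))^{d\times 1}$ then extracts an exact subspace of $\ker g\cap(\ell^2(F))^{d\times 1}$ of dimension at least $(\|h\|_2^2-O(\varepsilon))|F^\circ|$. Since $\|h\|_2^2\to\|q_g\|_2^2=\tr_{\cN\Gamma}(q_g)$ as $\varepsilon\to 0$ and $|F^\circ|/|F|\to1$ along the F\o lner net, $(\star)$ follows. The ``in particular'' clause is then immediate: if $\ker g\ne\{0\}$ then $\tr_{\cN\Gamma}(q_g)>0$ by faithfulness of $\tr_{\cN\Gamma}$, so for some sufficiently invariant $F$ we have $\ker g\cap(\ell^2(F))^{d\times 1}\ne\{0\}$, and any nonzero element of this finite-dimensional subspace lies in $(\Cb\Gamma)^{d\times 1}$.
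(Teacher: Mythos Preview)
The paper does not give its own proof of this lemma; it simply records it as a result of Elek and cites \cite{Elek03}. So there is nothing to compare against, and the question is only whether your argument stands on its own.

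Your reduction is fine: weak convergence of $\mu^F$ to $\mu_{g^*g}$ is correct, the Portmanteau upper bound $\limsup_F\mu^F(\{0\})\le\mu_{g^*g}(\{0\})$ is correct, and you are right that together with the inclusion $\ker g\cap(\ell^2(F))^{d\times1}\subseteq\ker g_F$ everything reduces to the lower bound $(\star)$.

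The gap is in your proof of $(\star)$. Approximating $q_g$ by $h\in M_d(\Cb\Gamma)$ does give you finitely supported vectors $h\delta_{j,s}\in(\ell^2(F))^{d\times1}$ on which $g$ is small in the Hilbert--Schmidt sense, but a ``singular-value perturbation argument'' cannot manufacture \emph{exact} kernel vectors from approximate ones. Concretely: if $T=g|_W$ with $W=\spn\{h\delta_{j,s}\}$, the bound $\|T\|_{HS}^2=O(|F^\circ|\varepsilon^2)$ tells you that most singular values of $T$ are small, but it says nothing about how many of them are \emph{zero}; $T$ could be injective with all singular values of order $\varepsilon$. Small singular values of the restriction $g|_W$ do not correspond to kernel directions of $g$ inside $(\ell^2(F))^{d\times1}$. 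In effect, your argument would need to already know that $\ker g$ meets $(\ell^2(F))^{d\times1}$ in a large subspace, which is exactly what $(\star)$ asserts. (The dimension estimate $\dim W\approx\|h\|_2^2|F^\circ|$ is also not justified: $\dim W=\rank((h^*h)_{F^\circ})$, which is again a quantity of the type you are trying to control.)

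What actually makes the lower bound work is an algebraic feature special to compressions of \emph{positive} elements: for $a=g^*g\ge0$ one has $\ker a_F=\ker a\cap(\ell^2(F))^{d\times1}$ exactly (compare the proof of Lemma~\ref{L-invertible}), so $\dim\ker((g^*g)_F)=\dim(\ker g\cap(\ell^2(F))^{d\times1})$ with no error term. From this one sees that $F\mapsto\rank((g^*g)_F)=\rank(g|_F)$ is right-invariant and strongly subadditive (image of a union is contained in the sum of images, and $T(V_1\cap V_2)\subseteq T(V_1)\cap T(V_2)$), so Lemma~\ref{L-strong subadditive to limit} gives existence of the limit as an infimum. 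Identifying that infimum with $d-\tr_{\cN\Gamma}(q_g)$ then requires the genuine input from Elek's paper; it does not follow from the soft weak-convergence/perturbation reasoning you sketch.
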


\subsection{Entropy} \label{SS-entropy}
We recall briefly the definition of entropy for actions of amenable groups. For more detail, see \cite{JMO, OW, Walters}.
Let $\Gamma$ a countable discrete amenable group.

Consider a continuous action of $\Gamma$ on a compact metrizable space $X$. For each finite open cover $\cU$ of $X$
and $F\in \cF(\Gamma)$, denote by $N(\cU)$ the minimal cardinality of subcovers of $\cU$ and by $\cU^F$ the cover of $X$ consisting of $\bigcap_{s\in F}s^{-1}U_s$ for all maps $F\rightarrow \cU$ sending $s$ to $U_s$. By the Ornstein-Weiss lemma \cite[Theorem 6.1]{LW}, the limit $\lim_F\frac{\log N(\cU^F)}{|F|}$ exists for every finite open cover $\cU$ of $X$. The {\it topological entropy} of the action $\Gamma\curvearrowright X$, denoted by $\htopol(X)$, is defined as
$$\htopol(X) = \sup_{\cU}\lim_F\frac{\log N(\cU^F)}{|F|}$$ for $\cU$ ranging over finite open covers of $X$.

For any measurable and measure-preserving action of $\Gamma$ on a probability measure space $(X, \cB, \mu)$, one can also define the measure-theoretic entropy, denoted by $\rh_{\mu}(X)$. We omit the definition, and just mention that the variational principle says that for any continuous action of
$\Gamma$ on a compact metrizable space $X$, one has $\htopol(X)=\sup_\mu \rh_\mu(X)$ for $\mu$ ranging over all the $\Gamma$-invariant Borel probability measures on $X$.

Let $\Gamma$ act on a compact metrizable group $X$ by (continuous) automorphisms. It is a theorem of Deninger that the topological entropy $\htopol(X)$ and the measure-theoretic entropy $\rh_\nu(X)$ for the Haar probability measure $\nu$ of $X$ coincide \cite[Theorem 2.2]{Deninger06}.  We call this common value {\it the entropy of this action}, and denote it by $\rh(X)$.

\subsection{Euler characteristic} \label{SS-euler}

Let $R$ be a unital ring and $k \in \Nb$. A left $R$-module $\cM$ is said to be of {\it type FL${}_k$} \cite[page 193]{Brown} if there exists a partial resolution $\cC_* \to \cM$ by finitely generated free left  $R$-modules of the form:
\begin{align} \label{E-resolution 0}
 \cC_k\overset{\partial_k}\rightarrow \cdots \overset{\partial_2}\rightarrow \cC_1\overset{\partial_1}\rightarrow \cC_0\rightarrow \cM\rightarrow 0,
\end{align}
i.e. this is an exact sequence of left $R$-modules and each $\cC_j$ for $0\le j\le k$ is a finitely generated free left $R$-module.

We say that a left $R$-module is of {\it type FL} \cite[page 199]{Brown} if, for some $k$, it admits a resolution  $\cC_*\rightarrow \cM$ by finitely generated free left $R$-modules of the form
\begin{align}  \label{E-resolution 2}
0 \to \cC_k \overset{\partial_k}\rightarrow \cdots \overset{\partial_2}\rightarrow \cC_1 \overset{\partial_1}\rightarrow \cC_0 \rightarrow \cM\rightarrow 0.
\end{align}

Now we assume that
\begin{align} \label{E-rank}
\mbox{ the free left } R\mbox{-modules } R^k \mbox{ and } R^l \mbox{ are non-isomorphic for distinct $k,l \in \Nb$}.
\end{align}
For any left $R$-module of type FL, its {\it Euler characteristic} $\chi(\cM)$ is defined as $\sum_{j=0}^k(-1)^jd_j$ for any resolution $\cC_*\rightarrow \cM$ by finitely generated free left $R$-modules as in \eqref{E-resolution 2}, where $d_j$ is the rank of $\cC_j$. The assumption \eqref{E-rank} and the generalized Schanuel's lemma \cite[Lemma VIII.4.4]{Brown} imply that $\chi(\cM)$ does not depend on the choice of the resolution.

Note that every field satisfies the condition \eqref{E-rank}. For any discrete group $\Gamma$, using the unital ring homomorphism $\Zb\Gamma\rightarrow \Qb$ sending $\sum_{s\in \Gamma}f_ss$ to $\sum_{s\in \Gamma}f_s$
one concludes that $\Zb\Gamma$ also satisfies the condition \eqref{E-rank}.

\subsection{$L^2$-torsion} \label{SS-torsion}

The definition of $L^2$-torsion is due to Carey-Mathai \cite{carmat, carmat2} and L\"uck-Rothenberg \cite{LR}. All technical ingredients and properties are developed in great detail in \cite[Chapter 3]{Luck}, see also \cite{BCFM,CFM}. We concentrate on what is called \emph{cellular} or \emph{combinatorial} $L^2$-torsion. Even in the combinatorial setup, the study of $L^2$-torsion involves complicated functional analysis.

For a finitely generated free left $\Zb\Gamma$-module $\cC$ with rank $d$, by choosing an ordered basis of $\cC$, we may identify $\ell^2(\Gamma)\otimes_{\Zb\Gamma}\cC$ with
the Hilbert space $(\ell^2(\Gamma))^{d\times 1}$. Though the inner product depends on the choice of the ordered basis of $\cC$, the resulting topology
on $\ell^2(\Gamma)\otimes_{\Zb\Gamma}\cC$ is independent of the choice of the ordered basis. For a (not necessarily exact) chain complex $\cC_*$ of finitely generated free left $\Zb\Gamma$-modules of the form
$$ \cC_k\overset{\partial_k}\rightarrow \cdots \overset{\partial_2}\rightarrow \cC_1\overset{\partial_1}\rightarrow \cC_0\overset{\partial_0}\rightarrow  0,$$
we say that the chain complex $\ell^2(\Gamma)\otimes_{\Zb\Gamma}\cC_*$, i.e.
\begin{align} \label{E-resolution 3}
\ell^2(\Gamma)\otimes_{\Zb\Gamma}\cC_k\overset{1\otimes \partial_k}\rightarrow \cdots \overset{1\otimes \partial_2}\rightarrow \ell^2(\Gamma)\otimes_{\Zb\Gamma}\cC_1\overset{1\otimes \partial_1}\rightarrow \ell^2(\Gamma)\otimes_{\Zb\Gamma}\cC_0\overset{1\otimes \partial_0}\rightarrow  0,
\end{align}
is {\it weakly acyclic} \cite[Definition 3.29]{Luck} if $\ker(1\otimes \partial_j)$ is equal to the closure of $\im(1\otimes \partial_{j+1})$ for all $0\le j< k$. In such case, we choose an ordered basis for each $\cC_j$, and identify $\cC_j$ with $(\Zb\Gamma)^{1\times d_j}$.
For each $1\le j\le k$, let $f_j\in  M_{d_j\times d_{j-1}}(\Zb\Gamma) $ such that $\partial_j(x)=xf_j$ for all $x\in (\Zb\Gamma)^{1\times d_j} $.
The {\it $L^2$-torsion} of $\cC_*$ is defined as \cite[Definition 3.29]{Luck}
\begin{align} \label{E-torsion}
\rho^{(2)}(\cC_*):=\frac12 \sum_{j=1}^k(-1)^{j+1}\log \ddet_{\cN\Gamma}(f_j^*f_j+q_{f_j}),
\end{align}
where $q_{f_j}$ denotes the orthogonal projection from $(\ell^2(\Gamma))^{d_{j-1}\times 1}$ onto $\ker f_j$. For a  chain complex $\cC_*$ of finitely generated free left $\Zb\Gamma$-modules of the form
$$ 0\rightarrow \cC_k\overset{\partial_k}\rightarrow \cdots \overset{\partial_2}\rightarrow \cC_1\overset{\partial_1}\rightarrow \cC_0\overset{\partial_0}\rightarrow  0,$$
the weak acyclicity
of the chain complex $\ell^2(\Gamma)\otimes_{\Zb\Gamma}\cC_*$ also requires that $1\otimes \partial_k$ is injective.

If a left $\Zb\Gamma$-module $\cM$ is of type FL and some resolution $\cC_*\rightarrow \cM$ of $\cM$ by finitely generated free left $\Zb\Gamma$-modules as in \eqref{E-resolution 2} is weakly acyclic, we define the {\it $L^2$-torsion} of $\cM$ to be $\rho^{(2)}(\cM):=\rho^{(2)}(\cC_*)$.
The results in \cite[Chapter 3]{Luck} imply
that when $\Gamma$ satisfies the determinant condition (Definition~\ref{D-determinant condition}), $\rho^{(2)}(\cM)$ does not depend on the choice of
the resolution $\cC_*$ and the ordered basis for each $\cC_j$.
In Section~\ref{S-torsion} we shall give a more algebraic proof of this fact.
In the case $\Gamma$ is amenable, this follows directly from Theorem~\ref{T-torsion}.

\section{Approximation of the determinant} \label{S-approximation}

\subsection{Review of known results} \label{SS-review}

The approximation properties of the Fuglede-Kadison determinant (and its ancestors) by determinants of associated matrices have attracted a lot of attention over the last century.

The most interesting applications arise if for $f \in \Zb \Gamma$ with $\Gamma$ residually finite, $\det_{\cN \Gamma}f$ can be approximated by determinants of the image of $f$ in the group ring of finite quotient groups of $\Gamma$ \cite[Question 13.52]{Luck}. Unfortunately, positive results are rare and only known in the simplest cases, where they are already non-trivial to prove. Denote by $\pi_n \colon \Cb[\Zb] \to \Cb[\Zb/n\Zb]$ the natural map induced by reduction modulo $n$. Schmidt showed that, in terms of the notation of Section~\ref{SS-determinant} and Example~\ref{Ex-determinant},
$$\lim_{n \to \infty} \ddet_{\cN(\Zb/n\Zb)} \left(\pi_n(f) + q_{\pi_n(f)}\right)= \ddet_{\cN \Zb} f$$
for elements in the group ring $\bar\Qb[\Zb]$ of $\Zb$ with coefficients in the field of algebraic numbers \cite[Lemma 21.8]{Schmidt}, see also
\cite[Lemma 13.53]{Luck}.
The proof relies on a theorem of Gelfond in number theory. Later, L\"uck gave an example of some element in $\Cb[\Zb]$ for which the corresponding approximation fails \cite[Example 13.69]{Luck}. The corresponding number theoretic results for $\Gamma=\Zb^d$ have been
identified and are open, see \cite[Section 9]{LSV10}. For the most recent progress in the case $\Gamma=\Zb^d$, see \cite{LSV11}. Given L\"uck's example, the approximation of the Fuglede-Kadison determinant by determinants of matrices was considered to be difficult in general. General approximation results with respect to finite quotients exist -- and are easy to prove -- if one assumes invertibility in the universal group $C^{\ast}$-algebra \cite[Theorem 7.1]{KL11}.
For $\Gamma$ being a finitely generated residually finite group and $f\in \Zb\Gamma$ being a generalized Laplace operator, the approximation result was proved by Lyons \cite{Lyons05, Lyons10}.

The approximation of the Fuglede-Kadison determinant by restrictions on F\o lner sets has an even longer history and dates -- in the case $\Gamma=\Zb$ -- back to Szeg\H{o} \cite{Szego}. Szeg\H{o}'s original result assumed invertibility of the positive function $f \in L^{\infty}(S^1)$, which makes computations much easier. However, Szeg\H{o}'s Theorem was extended \cite[Theorem 2.7.14]{Simon} to \emph{all} non-negative essentially bounded functions on $S^1$. We refer to Simon's book \cite[Chapter 3]{Simon} for the complete history. This opened up the possibility and expectation that the approximation by restriction on F\o lner sets could behave much better than one would expect at first. Theorem~\ref{T-approximate} confirms this expectation and generalizes this result from the case $\Gamma = \Zb$ to all countable discrete amenable groups.

\subsection{Approximation using F\o lner sequences}

In the rest of this section $\Gamma$ will be a countable discrete amenable group.
We prove four lemmas in order to prepare for the proof of Theorem~\ref{T-approximate}. First of all, we need the following
simple observation of Deninger \cite[Lemma 3]{Deninger09}.

\begin{lemma} \label{L-invertible}
Let $g\in M_d(\cN\Gamma)$ be positive.
Suppose that $\ker g\cap (\Cb\Gamma)^{d\times 1}=\{0\}$.
Then, for every $F\in \cF(\Gamma)$, $g_F \colon (\ell^2(F))^{d\times 1} \to (\ell^2(F))^{d\times 1}$ is  invertible.
\end{lemma}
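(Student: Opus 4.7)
The plan is to exploit the positivity of $g$ together with the finite-dimensionality of $(\ell^2(F))^{d\times 1}$. Since $F$ is finite, the space $(\ell^2(F))^{d\times 1}$ is finite-dimensional, so it suffices to show that $g_F$ is injective.

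Suppose $x \in (\ell^2(F))^{d\times 1}$ satisfies $g_F x = 0$. Unpacking the definition $g_F = p_F \circ g \circ \iota_F$, this means $p_F(g(\iota_F x)) = 0$, i.e., the vector $g(\iota_F x) \in (\ell^2(\Gamma))^{d\times 1}$ is orthogonal to the subspace $(\ell^2(F))^{d\times 1}$. In particular,
\[
\langle g(\iota_F x),\, \iota_F x \rangle = 0,
\]
since $\iota_F x$ lies in $(\ell^2(F))^{d\times 1}$.

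Now I invoke positivity of $g$: writing $g = g^{1/2} g^{1/2}$ with $g^{1/2} \in M_d(\cN\Gamma)$ self-adjoint, the vanishing above gives $\|g^{1/2}(\iota_F x)\|^2 = 0$, hence $g^{1/2}(\iota_F x) = 0$ and therefore $g(\iota_F x) = 0$. In other words, $\iota_F x \in \ker g$. But $F$ is finite, so $\iota_F x \in (\Cb F)^{d\times 1} \subseteq (\Cb\Gamma)^{d\times 1}$, and the hypothesis $\ker g \cap (\Cb\Gamma)^{d\times 1} = \{0\}$ forces $\iota_F x = 0$, hence $x = 0$.

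This argument is entirely routine; I do not anticipate any obstacle. The only thing to be a bit careful about is the passage from $\langle g y, y\rangle = 0$ to $gy = 0$, which is the standard fact that a positive operator has the same kernel as its square root.
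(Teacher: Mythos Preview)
Your proof is correct and is exactly the standard argument: positivity turns the compressed kernel condition $p_F g\iota_F x=0$ into the genuine kernel condition $g\iota_F x=0$, after which the hypothesis applies. The paper does not give its own proof of this lemma but attributes it to Deninger, whose argument is the same as yours.
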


The key observation for us is the following classical result of Gantmacher and Kre\u{\i}n \cite[page 96]{GK},
see also \cite{KK, JB}. Sometimes this result is attributed to Hadamard-Fischer-Koteljanskii. For convenience, we include a proof of this lemma following \cite{KK}.

\begin{lemma} \label{L-determinant is strong submultiplicative}
Let $\cX$ and $\cY$ be finite sets. Let $g\in B(\ell^2(\cX\cup \cY))$ be positive and invertible.
For any nonempty finite subset $\cE$ of $\cX\cup \cY$, define $g_\cE=p_\cE \circ g\circ \iota_\cE\in B(\ell^2(\cE))$, where $p_\cE$ denotes the orthogonal projection $\ell^2(\cX\cup \cY)\rightarrow \ell^2(\cE)$
and $\iota_\cE$ denotes the embedding $\ell^2(\cE)\rightarrow \ell^2(\cX\cup \cY)$.
Set $\det(g_{\emptyset})=1$.
Then
$$ \det (g_{\cX\cup \cY})\cdot \det (g_{\cX\cap \cY})\le \det (g_\cX)\cdot \det (g_\cY).$$
\end{lemma}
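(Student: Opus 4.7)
The strategy is to reduce the determinant inequality to an operator inequality via Schur complements, and then verify the operator inequality by direct block-matrix computation. Write $A := \cX \cap \cY$, $B := \cX \setminus \cY$, $C := \cY \setminus \cX$, so that $\cX \cup \cY = A \sqcup B \sqcup C$, and decompose $g$ into $3 \times 3$ blocks $g_{EF}$ ($E,F \in \{A,B,C\}$) with respect to $\ell^2(\cX\cup\cY) = \ell^2(A) \oplus \ell^2(B) \oplus \ell^2(C)$. Positivity and invertibility of $g$ descend to each of $g_{AA}$, $g_\cX$, $g_\cY$, so all determinants appearing below are strictly positive. The boundary cases $B = \emptyset$ or $C = \emptyset$ make the inequality a triviality, and $A = \emptyset$ reduces to Fischer's inequality $\det(g_{\cX \cup \cY}) \le \det(g_\cX)\det(g_\cY)$, so one may assume $A, B, C$ are all nonempty.

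First, I would apply the classical Schur complement determinant identity
$$\det\begin{pmatrix} P & Q \\ Q^* & R \end{pmatrix} = \det(P) \cdot \det(R - Q^* P^{-1} Q)$$
twice: once to $g_{\cX \cup \cY}$ with $P = g_\cX$ and $M := \begin{pmatrix} g_{AC} \\ g_{BC} \end{pmatrix}$, giving $\det(g_{\cX\cup\cY}) = \det(g_\cX) \cdot \det(g_{CC} - M^* g_\cX^{-1} M)$, and once to $g_\cY$ with $P = g_{AA}$, giving $\det(g_\cY) = \det(g_{AA}) \cdot \det(g_{CC} - g_{CA} g_{AA}^{-1} g_{AC})$. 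Both Schur complements on the right are positive invertible matrices, and $0 < T_1 \le T_2$ implies $\det(T_1) \le \det(T_2)$ (since $T_2^{-1/2} T_1 T_2^{-1/2} \le I$), so the stated inequality is reduced to the operator inequality
$$M^* g_\cX^{-1} M \ge g_{CA} g_{AA}^{-1} g_{AC}.$$

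To establish this, I would use the block-inversion formula for $g_\cX$ with respect to its $A \sqcup B$ decomposition, with Schur complement $S := g_{BB} - g_{BA} g_{AA}^{-1} g_{AB} > 0$, and expand the quadratic form $M^* g_\cX^{-1} M$. After collecting terms, the cross contributions collapse into the key identity
$$M^* g_\cX^{-1} M - g_{CA} g_{AA}^{-1} g_{AC} = N S^{-1} N^*, \qquad N := g_{CB} - g_{CA} g_{AA}^{-1} g_{AB},$$
whose right-hand side is manifestly positive semi-definite via the factorization $N S^{-1} N^* = (N S^{-1/2})(N S^{-1/2})^*$. The main obstacle is the careful bookkeeping that identifies the difference on the left as exactly the positive term on the right; once this algebraic identity is secured, monotonicity of the determinant on positive matrices immediately delivers the lemma.
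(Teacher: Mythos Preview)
Your proof is correct and follows essentially the same Schur-complement approach as the paper. The paper organizes the computation slightly more economically by taking the Schur complement of $g_{\cX\cup\cY}$ with respect to the intersection block $g_{\cX\cap\cY}$ first and then applying Fischer's inequality to the resulting $2\times 2$ block matrix, thereby avoiding the explicit block inversion of $g_\cX$ and the algebraic verification of your identity $M^*g_\cX^{-1}M - g_{CA}g_{AA}^{-1}g_{AC} = NS^{-1}N^*$.
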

\begin{proof} First of all, we note that
for a positive definite matrix
$$t= \left(\begin{matrix}a & b \\ b^* & c \end{matrix} \right) \in M_{n+m}(\Cb)$$
we have that $a \in M_n (\Cb)$ and $c \in M_m(\Cb)$ are invertible. Indeed, if $t \geq \varepsilon$ for some constant $\varepsilon>0$, then $a,c \geq \varepsilon$. Moreover, we have:
\begin{equation} \label{E-detineq}
\det(t) = \det(a) \det(c - b^*a^{-1}b) \leq \det(a) \det(c).
\end{equation}
Indeed, the equality in \eqref{E-detineq} follows from
$$\left(\begin{matrix}1 & 0 \\ -b^*a^{-1} & 1 \end{matrix} \right)\left(\begin{matrix}a & b \\ b^* & c \end{matrix} \right) \left(\begin{matrix}1 & -a^{-1}b \\ 0 & 1 \end{matrix} \right) = \left(\begin{matrix}a & 0 \\ 0 & c - b^*a^{-1}b \end{matrix} \right)
$$ and the inequality in \eqref{E-detineq} follows since $c \geq c-b^*a^{-1}b \geq 0$.
Consider the positive definite matrix
$$g_{\cX \cup \cY} = \left(\begin{matrix}a & b & c\\ b^* & d & e \\ c^* & e^* & f \end{matrix} \right)$$
with
$$g_{\cX\cap \cY} = a, \quad g_{\cX} = \left(\begin{matrix}a & b \\ b^* & d \end{matrix} \right), \mbox{ and} \quad g_{\cY} = \left(\begin{matrix}a & c\\  c^* & f \end{matrix} \right).$$
Now, the equality {\small
$$\left(\begin{matrix}1 & 0 & 0\\ -b^*a^{-1} & 1 & 0 \\ -c^*a^{-1} & 0 & 1 \end{matrix} \right) \left(\begin{matrix}a & b & c\\ b^* & d & e \\ c^* & e^* & f \end{matrix} \right) \left(\begin{matrix}1 & -a^{-1 }b & -a^{-1}c\\ 0 & 1 & 0 \\ 0 & 0 & 1 \end{matrix} \right)= \left(\begin{matrix}a & 0 & 0\\ 0 & d-b^*a^{-1}b & e -b^*a^{-1} c\\ 0 & e^* - c^*a^{-1}b & f -c^*a^{-1}c \end{matrix} \right)$$}
shows that
$$\det(g_{\cX \cup \cY}) = \det(g_{\cX \cap \cY}) \cdot \det\left(\begin{matrix}d-b^*a^{-1}b & e -b^*a^{-1} c\\  e^* - c^*a^{-1}b & f -c^*a^{-1}c \end{matrix} \right).$$
But, using \eqref{E-detineq} three times, we see
\begin{eqnarray*}
\det\left(\begin{matrix}d-b^*a^{-1}b & e -b^*a^{-1} c\\  e^* - c^*a^{-1}b & f -c^*a^{-1}c \end{matrix} \right) &\leq& \det(d-b^*a^{-1}b) \cdot \det (f -c^*a^{-1}c) \\
&=&  \frac{\det(g_{\cX})}{\det(g_{\cX \cap \cY})} \cdot \frac{\det(g_{\cY})}{\det(g_{\cX \cap \cY})}.
\end{eqnarray*}
This finishes the proof.
\end{proof}

We also need the following result from \cite{JMO}, see Definitions 2.2.10 and 3.1.5, Remark 3.1.7, and Proposition 3.1.9 of \cite{JMO}.
For convenience, we include a proof following \cite{Danilenko}.

\begin{lemma} \label{L-strong subadditive to limit}
Let $\varphi$ be a $\Rb$-valued function defined on $\cF(\Gamma)\cup \{\emptyset\}$ such that
\begin{enumerate}
\item $\varphi(\emptyset)=0$;

\item $\varphi(Fs)=\varphi(F)$ for all $F\in \cF(\Gamma)$ and $s\in \Gamma$;

\item $\varphi(F_1\cup F_2)+\varphi(F_1\cap F_2)\le \varphi(F_1)+\varphi(F_2)$ for all $F_1, F_2\in \cF(\Gamma)$.
\end{enumerate}
Then
$$\lim_F\frac{\varphi(F)}{|F|}=\inf_{F\in \cF(\Gamma)}\frac{\varphi(F)}{|F|}.$$
\end{lemma}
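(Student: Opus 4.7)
My approach is the standard amenable-group proof for right-invariant, strongly subadditive (submodular) set functions, following the pattern of \cite{JMO, Danilenko}.

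First I observe that the cardinality function $|\cdot|$ satisfies (1)--(3) (in fact with the equality $|F_1\cup F_2|+|F_1\cap F_2|=|F_1|+|F_2|$), so for any $\alpha\in\Rb$ the shifted function $\varphi-\alpha|\cdot|$ still satisfies (1)--(3). Write $c:=\inf_{F\in\cF(\Gamma)}\varphi(F)/|F|\in[-\infty,\infty)$. When $c>-\infty$, subtracting $c|\cdot|$ reduces the claim to proving $\lim_F\varphi(F)/|F|=0$ under the additional assumption $\varphi\geq 0$ with $\inf_F\varphi(F)/|F|=0$; the case $c=-\infty$ will be handled at the end by a parallel argument. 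In this normalized setting, applying (3) to disjoint $F_1,F_2$ together with $\varphi(\emptyset)=0$ gives the disjoint subadditivity $\varphi(F_1\sqcup F_2)\leq\varphi(F_1)+\varphi(F_2)$; iterating over singletons and invoking right-invariance (2) yields the uniform a priori bound $\varphi(F)\leq|F|\varphi(\{e\})$ for every $F\in\cF(\Gamma)$.

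Fix $\varepsilon>0$ and choose $F_0\in\cF(\Gamma)$ with $\varphi(F_0)<\varepsilon|F_0|$. The crucial input is the Ornstein--Weiss quasi-tiling theorem for the amenable group $\Gamma$: there exist finitely many shapes $A_1,\dots,A_m\in\cF(\Gamma)$ (which I may arrange to satisfy $\varphi(A_j)/|A_j|<\varepsilon$ by taking them sufficiently left-invariant) together with $K\in\cF(\Gamma)$ and $\delta>0$, such that every $F\in\cB(K,\delta)$ admits pairwise disjoint right-translates $A_{j_i}s_i\subseteq F$ with remainder $R:=F\setminus\bigsqcup_i A_{j_i}s_i$ satisfying $|R|<\varepsilon|F|$. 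Iterated disjoint subadditivity, right-invariance, the choice $\varphi(A_{j_i})<\varepsilon|A_{j_i}|$, and the singleton bound $\varphi(R)\leq|R|\varphi(\{e\})$ combine to give
\[
\varphi(F)\leq\sum_i\varphi(A_{j_i}s_i)+\varphi(R)=\sum_i\varphi(A_{j_i})+\varphi(R)\leq\varepsilon\sum_i|A_{j_i}|+\varepsilon|F|\varphi(\{e\})\leq\varepsilon|F|\bigl(1+\varphi(\{e\})\bigr).
\]
Dividing by $|F|$ and letting $\varepsilon\to 0$ yields $\limsup_F\varphi(F)/|F|\leq 0$, while $\liminf_F\varphi(F)/|F|\geq 0$ is immediate from $\inf=0$. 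Hence $\lim_F\varphi(F)/|F|=0=c$ in the normalized case, and subtracting back proves the general $c>-\infty$ case. When $c=-\infty$, replacing the bound on $F_0$ by $\varphi(F_0)<-M|F_0|$ for arbitrarily large $M$ and running the same tiling computation gives $\varphi(F)/|F|\leq-M(1-\varepsilon)+\varepsilon\varphi(\{e\})$, which forces the limit to $-\infty$.

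The main obstacle --- and the only place where amenability is essentially used --- is producing the almost-disjoint cover of $F$ by right-translates of a finite collection of highly invariant F\o lner shapes. This is exactly the content of the Ornstein--Weiss quasi-tiling theorem; once it is in hand, the rest of the argument is a direct manipulation of the submodular inequality together with right-invariance and the a priori bound $\varphi(F)\leq|F|\varphi(\{e\})$.
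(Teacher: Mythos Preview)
There is a genuine circularity in your argument. You assert that the Ornstein--Weiss tiles $A_1,\dots,A_m$ ``may be arranged to satisfy $\varphi(A_j)/|A_j|<\varepsilon$ by taking them sufficiently left-invariant'', but the statement that sufficiently invariant sets have $\varphi$-density close to the infimum is exactly the conclusion you are trying to prove. The quasi-tiling theorem (as stated in Lemma~\ref{L-quasitile}) only guarantees that the tiles lie in $\cB(K,\varepsilon)$ for a prescribed pair $(K,\varepsilon)$; it does not let you impose additional constraints such as $\varphi(A_j)/|A_j|<\varepsilon$. From the normalized hypothesis $\inf_F\varphi(F)/|F|=0$ you only know that \emph{some} single $F_0$ has small density, and this $F_0$ need not be invariant at all --- so it cannot serve as an Ornstein--Weiss tile, nor can you manufacture a F{\o}lner sequence of small-density sets from it. The mention of $F_0$ in your argument is in fact never used; the work is carried entirely by the unjustified density bound on the $A_j$.

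The paper's proof (following Danilenko) sidesteps this trap by exploiting submodularity more fully than mere disjoint subadditivity. It first derives from (3) the weighted inequality $\varphi(F)\le\sum_j\lambda_j\varphi(F_j)$ whenever $1_F=\sum_j\lambda_j 1_{F_j}$, and then applies it to the \emph{exact} overlapping decomposition $1_F=\frac{1}{|K|}\sum_{s:\,Ks\cap F\neq\emptyset}1_{Ks\cap F}$ for the fixed set $K$ with $\varphi(K)/|K|$ near the infimum. No invariance of $K$ is required; the ``tiles'' $Ks\cap F$ are subsets of translates of $K$, most of which equal $Ks$ when $F$ is sufficiently invariant, and the contribution of the boundary pieces is controlled by the crude bound $\max_{\emptyset\neq K'\subseteq K}\varphi(K')$. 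This weighted-cover argument is what replaces quasi-tiling here and is precisely the step your proof is missing.
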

\begin{proof}
We show first that if $F, F_1, \dots, F_m\in \cF(\Gamma)$ and $\lambda_1, \dots, \lambda_m\in (0, 1]$ with $1_F=\sum_{j=1}^m\lambda_j1_{F_j}$, then
\begin{align} \label{E-limit inf}
 \varphi(F)\le \sum_{j=1}^m\lambda_j\varphi(F_j).
 \end{align}
Consider the partition of $F$ generated by $F_1, \dots, F_m$. Take
$$\emptyset= K_0\varsubsetneq K_1\varsubsetneq\cdots \varsubsetneq K_n=F$$
such that $K_i\setminus K_{i-1}$ is an atom of this partition for each $1\le i\le n$.
For each $1\le i\le n$ take $s_i\in K_i\setminus K_{i-1}$.
For any $1\le i\le n$ and $1\le j\le m$, either $F_j\cap (K_i\setminus K_{i-1})=\emptyset$
or $K_{i-1}\cup(K_i\cap F_j)=K_i$, and hence from the condition (3) one always has
\begin{align} \label{E-limit inf 1}
1_{F_j}(s_i)(\varphi(K_i)-\varphi(K_{i-1}))\le 1_{F_j}(s_i)(\varphi(K_i\cap F_j)-\varphi(K_{i-1}\cap F_j)).
\end{align}
Also note that for any $1\le i\le n$ and $1\le j\le m$, $s_i\not \in F_j$ if and only if $K_i\cap F_j=K_{i-1}\cap F_j$.
Thus, for any $1\le j\le m$, listing all the $1\le i\le n$ with $s_i\in F_j$ in increasing order as $i_1<i_2<\dots <i_k$ for some $1\le k\le n$ and setting $i_0=0$, we have
\begin{align} \label{E-limit inf 3}
\lefteqn{\sum_{i=1}^n1_{F_j}(s_i)(\varphi(K_i\cap F_j)-\varphi(K_{i-1}\cap F_j))} \hspace*{10mm}\\
\nonumber \hspace*{10mm} &=\sum_{l=1}^k(\varphi(K_{i_l}\cap F_j)-\varphi(K_{i_l-1}\cap F_j))\\
\nonumber \hspace*{10mm}&= \sum_{l=1}^k(\varphi(K_{i_l}\cap F_j)-\varphi(K_{i_{l-1}}\cap F_j))\\
\nonumber \hspace*{10mm}&= \varphi(K_{i_k}\cap F_j)-\varphi(K_0\cap F_j)\\
\nonumber \hspace*{10mm}&=\varphi(K_n\cap F_j)-\varphi(K_0\cap F_j)\\
\nonumber \hspace*{10mm}&=\varphi(F_j)-\varphi(\emptyset)=\varphi(F_j).
\end{align}
Therefore
\begin{eqnarray*}
\varphi(F)&=&\sum_{i=1}^n(\varphi(K_i)-\varphi(K_{i-1}))\\
&=&\sum_{i=1}^n(\sum_{j=1}^m\lambda_j1_{F_j}(s_i))(\varphi(K_i)-\varphi(K_{i-1}))\\
&=&\sum_{j=1}^m\lambda_j\sum_{i=1}^n1_{F_j}(s_i)(\varphi(K_i)-\varphi(K_{i-1}))\\
&\overset{\eqref{E-limit inf 1}}\le& \sum_{j=1}^m\lambda_j\sum_{i=1}^n1_{F_j}(s_i)(\varphi(K_i\cap F_j)-\varphi(K_{i-1}\cap F_j))\\
&\overset{\eqref{E-limit inf 3}}=&\sum_{j=1}^m\lambda_j\varphi(F_j)
\end{eqnarray*}
as desired.

To prove the lemma, it suffices to show
\begin{align} \label{E-limit inf 2}
\limsup_F\frac{\varphi(F)}{|F|}\le \frac{\varphi(K)}{|K|}
\end{align}
for every $K\in \cF(\Gamma)$. Let $K\in \cF(\Gamma)$. By the condition (2) we  may assume that $e\in K$.
Denote by $C$ the maximum of $\varphi(K')$ for $K'$ ranging through the nonempty subsets of $K$.
For any $F\in \cF(\Gamma)$ we have
$$1_F=\frac{1}{|K|}\sum_{Ks\cap F\neq \emptyset}1_{Ks\cap F},$$
and hence by \eqref{E-limit inf} we get
\begin{align*}
\varphi(F)&\le \frac{1}{|K|}\sum_{Ks\cap F\neq \emptyset}\varphi(Ks\cap F)\\
&=\frac{1}{|K|}\sum_{Ks\subseteq F}\varphi(Ks\cap F)+\frac{1}{|K|}\sum_{Ks\varsubsetneq F, Ks\cap F\neq \emptyset}\varphi(Ks\cap F)\\
&\le \frac{\varphi(K)}{|K|}\cdot |\{ s \in \Gamma \mid Ks \subseteq F\}| +\frac{C}{|K|}\cdot |\{s\in \Gamma: Ks\varsubsetneq F, Ks\cap F\neq \emptyset\}|.
\end{align*}
When $F\in \cF(\Gamma)$ becomes more and more left invariant, we have $\frac{1}{|F|}|\{s\in \Gamma: Ks \subseteq F\}|\to 1$ and $\frac{1}{|F|}|\{s\in \Gamma: Ks\varsubsetneq F, Ks\cap F\neq \emptyset\}|\to 0.$
It follows that \eqref{E-limit inf 2} holds.
\end{proof}

The following result of Deninger \cite[Theorem 3.2]{Deninger06} is also needed for the proof. Though he proved it only for the case $d=1$, his argument works for general $d\in \Nb$. For $\Gamma = \Zb^n$ and $d=1$, the result goes back to work by Linnik \cite{linnik}, see also the work of Helson
and Lowdenslager \cite{hellow}. For a proof see Remark~\ref{R-lemma invertible}.

\begin{lemma} \label{L-approximate invertible}
Let $g\in M_d(\cN\Gamma)$ be positive and invertible.
Then
$$ \ddet_{\cN\Gamma}g=\lim_F (\det(g_F))^{\frac{1}{|F|}}.$$
\end{lemma}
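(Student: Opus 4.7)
The plan is to exploit the invertibility of $g$ to expand $\log g$ and $\log g_F$ as operator-valued power series that converge uniformly in $F$, and thereby reduce the lemma to a F\o lner-type moment convergence statement.

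First I set up the expansion. Positivity and invertibility give $cI \leq g \leq CI$ for some $0 < c \leq C$ (one may take $C = \|g\|$). Setting $h := I_d - g/C \in M_d(\cN\Gamma)$ yields $0 \le h$ and $\|h\| \leq 1 - c/C < 1$. By Lemma~\ref{L-invertible} each $g_F$ is invertible, and $cI_F \leq g_F \leq CI_F$ forces $h_F := p_F h \iota_F = I_F - g_F/C$ to satisfy $\|h_F\|\leq\|h\|$ uniformly in $F$. The Mercator series $\log(I-x) = -\sum_{n\geq 1}x^n/n$ therefore converges in norm both for $h$ and for every $h_F$, and gives
\[
\tr_{\cN\Gamma}(\log g) = d\log C - \sum_{n\ge 1}\tfrac{1}{n}\tr_{\cN\Gamma}(h^n),\qquad \tfrac{\Tr(\log g_F)}{|F|} = d\log C - \sum_{n\ge 1}\tfrac{1}{n}\cdot \tfrac{\Tr(h_F^n)}{|F|}.
\]
Because $\log\ddet_{\cN\Gamma}g = \tr_{\cN\Gamma}(\log g)$ and $\log\det g_F = \Tr(\log g_F)$, the lemma reduces to proving $\Tr(h_F^n)/|F|\to\tr_{\cN\Gamma}(h^n)$ for every $n\ge 1$ with an $n$-summable error.

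Next I handle the moment convergence. Write $P = \iota_F p_F$ and $Q = I - P$ in $B((\ell^2(\Gamma))^{d\times 1})$, so that $h_F^n$ identifies with $(PhP)^n$ on $PH$. Right-translation invariance of $h^n\in M_d(\cN\Gamma)$ yields the exact identity $\Tr(Ph^nP) = |F|\tr_{\cN\Gamma}(h^n)$. A telescoping computation gives
\[
Ph^nP - (PhP)^n = \sum_{k=0}^{n-1}(PhP)^k\,PhQh^{n-k-1}P,
\]
and estimating each summand by $|\Tr(XY)|\leq \|X\|_{HS}\|Y\|_{HS}$ together with $\|AB\|_{HS}\leq\|A\|_{\op}\|B\|_{HS}$ (splitting as $X = (PhP)^kPhQ$ and $Y = h^{n-k-1}P$, and using $\|P\|_{HS}=\sqrt{|F|d}$) yields
\[
|\Tr(Ph^nP) - \Tr((PhP)^n)|\leq n\,\|h\|^{n-1}\,\|PhQ\|_{HS}\,\sqrt{|F|d}.
\]
Everything therefore reduces to the single boundary $L^2$-estimate $\|PhQ\|_{HS}^2 = o(|F|)$.

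The main obstacle will be this boundary estimate, which is exactly where amenability enters. Setting $\hat h_{jk}(r) := \langle h_{jk}\delta_e,\delta_r\rangle$, right-translation invariance gives $\langle h_{jk}\delta_t,\delta_s\rangle = \hat h_{jk}(st^{-1})$, so
\[
\|PhQ\|_{HS}^2 = \sum_{j,k=1}^d\sum_{s\in F,\,t\notin F}|\hat h_{jk}(st^{-1})|^2.
\]
The Plancherel-type bound $\sum_{r,j,k}|\hat h_{jk}(r)|^2 \leq d^2\|h\|^2$ lets me pick, for any $\varepsilon>0$, a finite $K\subseteq\Gamma$ whose tail contribution is below $\varepsilon$. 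Splitting the sum by whether $st^{-1}\in K$: the $K$-part is bounded by $d^2\|h\|^2\cdot|\{s\in F : K^{-1}s\not\subseteq F\}|$, which is $o(|F|)$ by the F\o lner property, while the complementary part is at most $\varepsilon|F|$. Since $\varepsilon$ is arbitrary, $\|PhQ\|_{HS}^2/|F|\to 0$. Feeding this back into the series representation and summing the resulting geometric series (convergent because $\|h\|<1$) yields
\[
\bigl|\tfrac{\Tr(\log g_F)}{|F|} - \tr_{\cN\Gamma}(\log g)\bigr| \leq \frac{\sqrt d\,\|PhQ\|_{HS}}{(1-\|h\|)\sqrt{|F|}}\longrightarrow 0,
\]
and exponentiating gives $(\det g_F)^{1/|F|}\to\ddet_{\cN\Gamma}g$. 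Invertibility of $g$ is essential: it is precisely what makes $\|h\|<1$ and renders the Mercator series uniformly convergent in $F$, which is why the general positive case of Theorem~\ref{T-approximate} will require a substantially different argument.
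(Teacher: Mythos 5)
Your proof is correct, and it takes a genuinely more self-contained route than the paper. The paper dispatches this lemma in Remark~\ref{R-lemma invertible}: it first establishes weak convergence of the spectral measures $\mu_{g,F}\to\mu_g$ by quoting the approximation theorem of L\"uck, Dodziuk--Mathai and Schick for polynomial moments (i.e.\ $\tr_F(p(g_F))\to\tr_{\cN\Gamma}(p(g))$) together with Stone--Weierstra\ss, and then observes that invertibility confines all the measures $\mu_{g,F}$ to a common interval $[\varepsilon,\|g\|]$ on which $t\mapsto\log t$ is continuous and bounded, so weak convergence immediately gives convergence of $\int\log t\,d\mu_{g,F}$. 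You bypass the measure-theoretic packaging entirely: invertibility gives $\|h\|<1$ for $h=I-g/C$, so $\log g$ and $\log g_F$ admit norm-convergent Mercator expansions, and the lemma reduces to moment convergence with an $n$-uniform, summable error. You then prove that moment convergence from scratch — the telescoping identity $Ph^nP-(PhP)^n=\sum_k(PhP)^kPhQh^{n-k-1}P$, the Cauchy--Schwarz bound in the Hilbert--Schmidt norm, and the F\o lner boundary estimate showing that the Hilbert--Schmidt norm of $PhQ$ is $o(|F|^{1/2})$ — which is essentially a quantitative reproof of the cited L\"uck/Dodziuk--Mathai/Schick result for the operators $h^n$. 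So the underlying mechanism (a F\o lner boundary estimate controlling the defect of $\tr_F$ from being multiplicative) is the same, but your argument is self-contained and yields an explicit rate $O\bigl(\|PhQ\|_{{\rm HS}}/|F|^{1/2}\bigr)$, whereas the paper's is shorter given the quoted black box and generalizes more directly to the non-invertible case via upper semicontinuity (Lemma~\ref{L-measure weak convergence}). All the individual steps check out: the identity $\Tr(Ph^nP)=|F|\tr_{\cN\Gamma}(h^n)$ from right-invariance, the splitting of the boundary sum according to whether $st^{-1}$ lies in a finite set $K$ capturing most of the $\ell^2$-mass of the kernel of $h$, and the geometric summation using $\|h_F\|\le\|h\|<1$.
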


We are ready to prove Theorem~\ref{T-approximate}.

\begin{proof}[Proof of Theorem~\ref{T-approximate}]
Let $h\in M_d(\cN\Gamma)$ be positive such that $\ker h\cap (\Cb\Gamma)^{d\times 1}=\{0\}$.
Define $\varphi: \cF(\Gamma)\cup \{\emptyset\}\rightarrow \Rb$ by
$\varphi(F)=\log \det (h_F)$, where we set $\det (h_{\emptyset})=1$. Then $\varphi(\emptyset)=0$ and $\varphi(Fs)=\varphi(F)$ for
all $F\in \cF(\Gamma)$ and $s\in \Gamma$. Denote $\{1, \dots, d\}$ by $\Delta_d$.
By Lemmas~\ref{L-invertible} $h_F\in B((\ell^2(F))^{d\times 1})=B(\ell^2(F\times \Delta_d))$ is positive and invertible for every $F\in \cF(\Gamma)$.
For any $F_1, F_2\in \cF(\Gamma)$, taking $\tilde{h}=h_{F_1\cup F_2}\in B(\ell^2((F_1\cup F_2)\times \Delta_d)$,
in terms of the notation in Lemma~\ref{L-determinant is strong submultiplicative} we have $\tilde{h}_{F_j\times \Delta_d}=h_{F_j}$ for $j=1, 2$ and
$\tilde{h}_{(F_1\cap F_2)\times \Delta_d}=h_{F_1\cap F_2}$.
Thus by Lemma~\ref{L-determinant is strong submultiplicative} we have
$\varphi(F_1\cup F_2)+\varphi(F_1\cap F_2)\le \varphi(F_1)+\varphi(F_2)$ for all $F_1, F_2\in \cF(\Gamma)$. Then by Lemma~\ref{L-strong subadditive to limit} we have
\begin{align*}
 \lim_F \frac{\log \det(h_F)}{|F|}=\inf_{F\in \cF(\Gamma)}\frac{\log \det(h_F)}{|F|},
\end{align*}
equivalently,
\begin{align} \label{E-inf=limit}
 \lim_F (\det(h_F))^{\frac{1}{|F|}}=\inf_{F\in \cF(\Gamma)}(\det(h_F))^{\frac{1}{|F|}}.
\end{align}

For any $\varepsilon>0$, since $g+\varepsilon$ is invertible in $\cN\Gamma$, by Lemma~\ref{L-approximate invertible} we have
\begin{align} \label{E-limit invertible}
\ddet_{\cN\Gamma} (g+\varepsilon)= \lim_F (\det((g+\varepsilon)_F))^{\frac{1}{|F|}}.
\end{align}
From Theorem~\ref{T-FK} we have
\begin{eqnarray*}
\ddet_{\cN\Gamma} g&=&\inf_{\varepsilon>0}\ddet_{\cN\Gamma} (g+\varepsilon) \\
&\overset{\eqref{E-limit invertible}}=& \inf_{\varepsilon>0}\lim_F (\det((g+\varepsilon)_F))^{\frac{1}{|F|}} \\
&\overset{\eqref{E-inf=limit}}=& \inf_{\varepsilon>0}\inf_{F\in \cF(\Gamma)} (\det((g+\varepsilon)_F))^{\frac{1}{|F|}} \\
&=& \inf_{F\in \cF(\Gamma)} \inf_{\varepsilon>0} (\det((g+\varepsilon)_F))^{\frac{1}{|F|}} \\
&=& \inf_{F\in \cF(\Gamma)} (\det(g_F))^{\frac{1}{|F|}},
\end{eqnarray*}
establishing the first equality in \eqref{E-approximate}.

If $\ker g\cap (\Cb\Gamma)^{d\times 1}=\{0\}$, then taking $h=g$ in \eqref{E-inf=limit} we get the second equality in \eqref{E-approximate}.
Thus assume $gx=0$ for some nonzero $x\in (\Cb\Gamma)^{d\times 1}$. Since $\ker g\neq \{0\}$, by Theorem~\ref{T-FK} we have $\ddet_{\cN\Gamma}(g)=0$. Denote by $K$ the support
of $x$ as a $\Cb^{d\times 1}$-valued function on $\Gamma$. Then $K\in \cF(\Gamma)$. Let $F\in \cB(K, 1/2)$. Then we can find some $s\in F$ such that $Ks\subseteq F$.
Now $xs$ is a nonzero element of $(\ell^2(F))^{d\times 1}$, and $g(xs)=(gx)s=0$. It follows that $g_F(xs)=p_F(g(xs))=0$. Thus $g_F$ is not injective, and hence
$\det(g_F)=0$. In particular, $\lim_F(\det(g_F))^{\frac{1}{|F|}}=0=\ddet_{\cN\Gamma}g$.
\end{proof}

\begin{remark} \label{R-twisted}
Denote by $S^1$ the unit circle in the complex plane. A {\it normalized unitary 2-cocycle} of $\Gamma$ is a map $\alpha: \Gamma \times \Gamma \rightarrow S^1$, such that
$$\alpha(s_1, s_2)\alpha(s_1s_2, s_3) = \alpha(s_1, s_2s_3)\alpha(s_2, s_3), \quad \alpha(s_1,e) = \alpha(e,s_1)=1,\quad \forall  s_1, s_2, s_3 \in \Gamma.$$
Let $\alpha$ be a normalized unitary 2-cocycle of $\Gamma$. Then one has the twisted left and right unitary representations $l_\alpha$ and $r_\alpha$ of $\Gamma$ on $\ell^2(\Gamma)$ defined by
$$ (l_{\alpha, s}x)_t=\alpha(s, s^{-1}t)x_{s^{-1}t} \mbox{ and } (r_{\alpha,s}x)_t=\alpha(ts, s^{-1})x_{ts}$$
for $s, t\in \Gamma$ and $x\in \ell^2(\Gamma)$. (These twisted representations are not representations, but satisfy $l_{\alpha, s_1}l_{\alpha, s_2}=\alpha(s_1,s_2)l_{\alpha, s_1s_2}$ and $r_{\alpha, s_1}r_{\alpha, s_2}=\alpha(s_2^{-1}, s_1^{-1})r_{\alpha, s_1s_2}$ for all $s_1, s_2\in \Gamma$.)
The {\it twisted group von Neumann algebra} $\cN_\alpha\Gamma$ is defined as the sub-$*$-algebra of $B(\ell^2(\Gamma))$ consisting of elements commuting with $r_\alpha$.  The {\it twisted group algebra} $\Cb_{\alpha} \Gamma$ is the ring with underlying vector space $\Cb \Gamma$ and associative multiplication determined by $s \cdot t := \alpha(s,t) st$ for $s, t\in \Gamma$.
Via $l_\alpha$, we identify $\Cb_\alpha\Gamma$ as a sub-$*$-algebra of $\cN_\alpha\Gamma$.
Taking $s_1=s_3=s$ and $s_2=s^{-1}$ one obtains $\alpha(s, s^{-1})=\alpha(s^{-1}, s)$ for all $s\in \Gamma$.
Then one still has the trace $\tr_{\cN_\alpha\Gamma}$ and the Fuglede-Kadison determinant $\ddet_{\cN_\alpha\Gamma}$ defined, in exactly the same way as $\tr_{\cN\Gamma}$ and $\ddet_{\cN\Gamma}$ are.

When $\alpha$ is the constant function $1$, $\cN_\alpha\Gamma$ is simply $\cN\Gamma$.
An interesting example of a twisted group von Neumann algebra arises already for $\Zb^2$ and the cocycle $\alpha_\theta: \Zb^2 \times \Zb^2 \rightarrow S^1$ given by $\alpha_\theta((n_1, n_2), (m_1, m_2)):= \exp(2\pi i \theta (n_1m_2-n_2m_1))$ for $\theta \in \Rb$. When $\theta$ is irrational, $\cN_{\alpha_\theta} \Zb^2$ is the hyperfinite ${\rm II}_1$-factor \cite[Corollary 1.16]{Boca}.

Lemmas~\ref{L-invertible} and \ref{L-approximate invertible} and Theorems~\ref{T-FK} and \ref{T-approximate}, and their proofs all work with $\cN\Gamma$ replaced by $\cN_\alpha\Gamma$.
\end{remark}

\subsection{Estimates on the spectrum near zero}

\begin{notation} \label{N-product of eigenvalues}
For any positive $g\in M_d(\cN\Gamma)$, $F\in \cF(\Gamma)$, and $\kappa>0$, we denote by $\sD_{g, F, \kappa}$ the product of the eigenvalues of $g_F$
in the interval $(0, \kappa]$ counted with multiplicity. If $g_F$ has no eigenvalue in $(0, \kappa]$, we set $\sD_{g, F, \kappa}=1$.
\end{notation}

Using Theorem~\ref{T-approximate} we shall prove the following result, describing the asymptotic behavior of $\sD_{g, F, \kappa}$ under the condition that $\ddet_{\cN\Gamma}g>0$.

\begin{proposition} \label{P-measure weak convergence}
Let $g\in M_d(\cN\Gamma)$  be positive with $\det_{\cN\Gamma}g>0$.
Let $\lambda>1$. Then there exists $0<\kappa<\min(1, \|g\|)$ such that
$$ \limsup_F(\sD_{g, F, \kappa})^{-\frac{1}{|F|}}\le \lambda.$$
\end{proposition}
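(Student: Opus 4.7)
The strategy compares Theorem~\ref{T-approximate} applied to $g$ and to $g + \epsilon I$. First observe that since $\det_{\cN\Gamma} g > 0$, Theorem~\ref{T-approximate} forces $\det(g_F) > 0$ for every $F \in \cF(\Gamma)$; hence all eigenvalues $\lambda_1, \dots, \lambda_{|F|d}$ of $g_F$ are strictly positive. Writing $N_\kappa(F) := |\{i : \lambda_i \le \kappa\}|$, the central identity for any $\epsilon > 0$ is
\begin{align*}
\log\det(g_F + \epsilon I) - \log\det(g_F) = \sum_{i=1}^{|F|d} \log(1 + \epsilon/\lambda_i).
\end{align*}

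From this I extract two complementary bounds. Restricting the sum to $\lambda_i \le \kappa$ and using $\log(1+\epsilon/\lambda_i) \ge \log(1+\epsilon/\kappa)$ gives
\begin{align*}
N_\kappa(F)\cdot \log(1+\epsilon/\kappa) \le \log\det(g_F+\epsilon I) - \log\det(g_F). \tag{$*$}
\end{align*}
On the other hand, the algebraic identity $-\log\lambda_i = \log(1+\epsilon/\lambda_i) - \log(\lambda_i+\epsilon)$ combined with $\log(\lambda_i+\epsilon) \ge \log\epsilon$ yields, for $\epsilon<1$, $-\log\lambda_i \le \log(1+\epsilon/\lambda_i) + |\log\epsilon|$; so for $\kappa<1$, summing over $\lambda_i \le \kappa$,
\begin{align*}
-\log\sD_{g,F,\kappa} \le \bigl[\log\det(g_F+\epsilon I) - \log\det(g_F)\bigr] + N_\kappa(F)\cdot |\log\epsilon|. \tag{$**$}
\end{align*}
Applying Theorem~\ref{T-approximate} to both positive operators $g$ and $g+\epsilon I$ yields
\begin{align*}
\lim_F \frac{1}{|F|}\bigl[\log\det(g_F+\epsilon I) - \log\det(g_F)\bigr] = A(\epsilon) := \log\det_{\cN\Gamma}(g+\epsilon I) - \log\det_{\cN\Gamma} g,
\end{align*}
and $A(\epsilon)\downarrow 0$ as $\epsilon\downarrow 0$ by Theorem~\ref{T-FK}(3).

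Dividing $(**)$ by $|F|$, taking $\limsup_F$, and inserting $(*)$ to bound $N_\kappa(F)/|F|$, one obtains
\begin{align*}
\limsup_F \frac{-\log\sD_{g,F,\kappa}}{|F|} \le A(\epsilon)\Bigl(1 + \frac{|\log\epsilon|}{\log(1+\epsilon/\kappa)}\Bigr).
\end{align*}
Given $\lambda>1$, first fix $\epsilon\in(0,1)$ with $A(\epsilon)<\tfrac{1}{2}\log\lambda$; then choose $\kappa\in(0,\min(1,\|g\|))$ so small that $|\log\epsilon| < \log(1+\epsilon/\kappa)$, which is possible since $\log(1+\epsilon/\kappa)\to\infty$ as $\kappa\to 0^+$. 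Exponentiating the resulting inequality gives $\limsup_F(\sD_{g,F,\kappa})^{-1/|F|}<\lambda$, as required. The only delicate point is the interplay between the two parameters: shrinking $\kappa$ is needed to make the $N_\kappa(F)/|F|$-term small, but the factor $|\log\epsilon|$ threatens to blow up; choosing $\epsilon$ first (so $|\log\epsilon|$ is fixed) and then taking $\kappa$ arbitrarily smaller than $\epsilon$ resolves this cleanly.
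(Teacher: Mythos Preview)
Your proof is correct and takes a genuinely different route from the paper's. The paper decomposes
\[
-\frac{\log \sD_{g,F,\kappa}}{|F|} = -\frac{\log\det(g_F)}{|F|} + \int_{\kappa+}^{\|g\|}\log t\, d\mu_{g,F}(t),
\]
controls the first term via Theorem~\ref{T-approximate}, and bounds the $\limsup$ of the second term by $\int_{\kappa+}^{\|g\|}\log t\, d\mu_g(t)$ using weak convergence of the spectral measures $\mu_{g,F}\to\mu_g$ (Lemma~\ref{L-measure weak convergence}, which in turn invokes the L\"uck--Dodziuk--Mathai--Schick trace-approximation result). The remaining quantity is then $-\int_0^\kappa\log t\, d\mu_g(t)$, made small by shrinking $\kappa$.

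Your argument bypasses the spectral-measure machinery entirely: you apply Theorem~\ref{T-approximate} \emph{twice}, to $g$ and to $g+\epsilon I$, and extract everything from the elementary eigenvalue identity $\log\det(g_F+\epsilon I)-\log\det(g_F)=\sum_i\log(1+\epsilon/\lambda_i)$. This is more self-contained --- no upper-semicontinuity lemma, no appeal to external weak-convergence results --- at the cost of juggling two parameters $\epsilon,\kappa$ rather than one. The paper's approach, on the other hand, yields the slightly sharper information that the $\limsup$ is bounded by $-\int_0^\kappa\log t\, d\mu_g(t)$ for \emph{every} small $\kappa$, which is a cleaner quantitative statement even though it is not needed for the proposition as stated.
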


To prove Proposition~\ref{P-measure weak convergence} we need some preparation.

Let $g\in M_d(\cN\Gamma)$ be positive and $F\in \cF(\Gamma)$.
Denote by $\tr_F$ the $\Cb$-valued trace on $B((\ell^2(F))^{d\times 1})$ normalized by $\tr_F(1)=d$.
 Then there exists a unique Borel measure $\mu_{g, F}$, called the {\it spectral measure of $g_F$}, on the interval $[0, \|g_F\|]\subseteq [0, \|g\|]$
satisfying
\begin{align} \label{E-spectral measure on F}
 \int_0^{\|g\|}p(t)\, d\mu_{g, F}(t)=\tr_F(p(g_F))
\end{align}
for every one-variable real-coefficients polynomial $p$. In particular, $\mu_{g, F}([0, \|g\|])=d$.
More explicitly, for any $t\in [0, \|g\|]$, one has
\begin{align} \label{E-spectral measure and multiplicity}
 \mu_{g, F}(\{t\})=\frac{\mbox{ multiplicity of } t \mbox{ as an eigenvalue of } g_F}{|F|}.
\end{align}

\begin{lemma} \label{L-measure weak convergence}
Let $g\in M_d(\cN\Gamma)$ be positive and non-zero.
Let $0<\kappa<\min(\|g\|, 1)$.
Then
$$ \limsup_F\int_{\kappa+}^{\|g\|}\log t \, d\mu_{g, F}(t)\le \int_{\kappa+}^{\|g\|}\log t \, d\mu_g(t).$$
\end{lemma}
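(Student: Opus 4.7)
The plan is to pivot through the continuous function $\ell_\kappa(t) := \log(\max(t, \kappa))$ on $[0, \|g\|]$, using the elementary identity
\[
\int_{(\kappa, \|g\|]} \log t \, d\mu(t) = \int_0^{\|g\|} \ell_\kappa(t) \, d\mu(t) + (-\log \kappa) \cdot \mu([0, \kappa])
\]
valid for any finite Borel measure $\mu$ on $[0, \|g\|]$. Since $\kappa < 1$ we have $-\log \kappa > 0$, so I will upper-bound each summand separately.

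For the first summand, I would consider the positive invertible operator $g \vee \kappa := \max(g, \kappa) \in M_d(\cN\Gamma)$ defined via the bounded Borel functional calculus. Since $g \vee \kappa \geq g$ and $g \vee \kappa \geq \kappa \cdot 1$, compressing to $F$ yields $(g \vee \kappa)_F \geq g_F$ and $(g \vee \kappa)_F \geq \kappa \cdot 1$; as $g_F \vee \kappa$ (via functional calculus of the finite-dimensional $g_F$) is the least operator dominating both $g_F$ and $\kappa \cdot 1$, we obtain $(g \vee \kappa)_F \geq g_F \vee \kappa$. For positive definite matrices, $A \geq B > 0$ implies $\det A \geq \det B$, hence $\det((g \vee \kappa)_F) \geq \det(g_F \vee \kappa) = \exp\bigl(|F|\int \ell_\kappa \, d\mu_{g, F}\bigr)$. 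Applying Lemma~\ref{L-approximate invertible} to the invertible operator $g \vee \kappa$ gives $\tfrac{1}{|F|} \log \det((g \vee \kappa)_F) \to \log \ddet_{\cN\Gamma}(g \vee \kappa) = \int \ell_\kappa \, d\mu_g$, yielding
\[
\limsup_F \int_0^{\|g\|} \ell_\kappa \, d\mu_{g, F} \leq \int_0^{\|g\|} \ell_\kappa \, d\mu_g.
\]

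For the second summand, I would establish the Portmanteau-type inequality $\limsup_F \mu_{g, F}([0, \kappa]) \leq \mu_g([0, \kappa])$ by proving weak convergence $\mu_{g, F} \to \mu_g$. By Stone--Weierstrass, this reduces to the moment convergence $\tr_F((g_F)^n) \to \tr_{\cN\Gamma}(g^n)$ for every $n \geq 1$. Setting $q := 1 - p_F$ and expanding $(g_F)^n = (p_F g p_F)^n = p_F g (1 - q) g (1 - q) \cdots (1 - q) g p_F$, the ``main'' term $p_F g^n p_F$ satisfies $\tfrac{1}{|F|} \Tr(p_F g^n p_F) = \tr_{\cN\Gamma}(g^n)$ exactly, by the right-$\Gamma$-equivariance of the matrix coefficients of $g^n$. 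Each of the $2^{n-1} - 1$ correction terms contains at least one factor $q$ sandwiched between $g$'s; splitting at the first such $q$ and applying Cauchy--Schwarz in Hilbert--Schmidt norm, one bounds its normalized trace by a product featuring
\[
\frac{\|q\, g^k p_F\|_{HS}^2}{|F|} = \sum_{s \in \Gamma} \|(g^k)_s\|_{HS}^2 \Bigl(1 - \frac{|F \cap sF|}{|F|}\Bigr),
\]
which tends to $0$ by the F\o lner condition combined with dominated convergence (using $\sum_s \|(g^k)_s\|_{HS}^2 = \tr_{\cN\Gamma}(g^{2k}) \leq d \|g\|^{2k}$). Portmanteau for the closed set $[0, \kappa]$ then delivers the required bound.

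Combining both steps via the decomposition identity applied to $\mu_{g, F}$ and $\mu_g$, together with subadditivity of $\limsup$ and $-\log \kappa > 0$:
\[
\limsup_F \int_{(\kappa, \|g\|]} \log t \, d\mu_{g, F} \leq \int \ell_\kappa \, d\mu_g + (-\log \kappa) \mu_g([0, \kappa]) = \int_{(\kappa, \|g\|]} \log t \, d\mu_g.
\]
The hardest step is the moment convergence for higher $n$: one must carefully enumerate the exponentially many correction terms in the expansion of $(p_F g p_F)^n$ and control each via Hilbert--Schmidt estimates leveraging the F\o lner property, whereas the first summand falls out almost immediately from the operator inequality and Lemma~\ref{L-approximate invertible}.
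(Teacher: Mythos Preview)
Your argument has a genuine gap in the first summand. The claim that ``$g_F \vee \kappa$ is the least operator dominating both $g_F$ and $\kappa \cdot 1$'' is false: the spectral maximum of two commuting self-adjoint operators is \emph{not} their supremum in the Loewner order. For a concrete counterexample, take $A=\begin{psmallmatrix}2&0\\0&0\end{psmallmatrix}$, $\kappa=1$, and $C=\begin{psmallmatrix}3&0.8\\0.8&1.5\end{psmallmatrix}$; then $C\ge A$ and $C\ge I$, but $C-\begin{psmallmatrix}2&0\\0&1\end{psmallmatrix}=\begin{psmallmatrix}1&0.8\\0.8&0.5\end{psmallmatrix}$ has negative determinant, so $C\not\ge A\vee\kappa$. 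Hence the operator inequality $(g\vee\kappa)_F\ge g_F\vee\kappa$ does not follow from your reasoning.

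The determinant inequality you actually need, however, \emph{is} true, via eigenvalue comparison rather than operator order: from $(g\vee\kappa)_F\ge g_F$ and $(g\vee\kappa)_F\ge\kappa I$ the Courant--Fischer min-max principle gives $\lambda_k((g\vee\kappa)_F)\ge\max(\lambda_k(g_F),\kappa)$ for each $k$, whence $\det((g\vee\kappa)_F)\ge\prod_k\max(\lambda_k(g_F),\kappa)=\det(g_F\vee\kappa)$. With this correction your route works. That said, once you have established weak convergence $\mu_{g,F}\to\mu_g$ for the second summand, the first summand is immediate anyway since $\ell_\kappa$ is continuous and bounded. The paper's proof is shorter still: it proves weak convergence (citing the same moment computation you sketch) and then applies the Portmanteau bound for upper semicontinuous functions directly to $h(t)=\log t\cdot 1_{(\kappa,\|g\|]}(t)$, avoiding your decomposition and the detour through $g\vee\kappa$ and Lemma~\ref{L-approximate invertible} altogether.
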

\begin{proof}

By a result of L\"{u}ck, Dodziuk-Mathai, and Schick \cite{Luck94} \cite[Lemma 2.3]{DM} \cite[Lemma 4.6]{Schick} \cite[Lemma 13.42]{Luck},  one has
$$ \tr_{\cN\Gamma}(p(g))=\lim_F\tr_F(p(g_F))$$
for every one-variable real-coefficients polynomial $p$.
In view of \eqref{E-spectral measure} and \eqref{E-spectral measure on F}, this means
\begin{align} \label{E-measure convergence}
 \int_0^{\|g\|}p(t) \, d\mu_g(t)=\lim_F\int_0^{\|g\|}p(t) \, d\mu_{g, F}(t).
\end{align}
By the Stone-Weierstra\ss\ approximation theorem, the space of one-variable real-coefficients polynomials is dense in
the space of real-valued continuous functions on the interval $[0, \|g\|]$, under the supremum norm. Thus \eqref{E-measure convergence} holds for
every real-valued continuous function $p$ on $[0, \|g\|]$. That is, the net $\{\mu_{g, F}\}_F$ converges to $\mu_g$ weakly, as $F\in \cF(\Gamma)$ becomes more
and more left invariant. Then one has
$$\int_0^{\|g\|}h(t) \, d\mu_g(t)\ge \limsup_F\int_0^{\|g\|}h(t) \, d\mu_{g, F}(t)$$
for every real-valued upper semicontinuous function $h$ on $[0, \|g\|]$ \cite[page 24, Exercise 2.6]{Billingsley}.

Set $h(t)=0$ for $t\in [0, \kappa]$ and $h(t)=\log t$ for $t\in (\kappa, \|g\|]$. Then $h$ is a real-valued upper semicontinuous  function on $[0, \|g\|]$.
Therefore
\begin{align*}
\int_{\kappa+}^{\|g\|}\log t \, d\mu_g(t)&=\int_0^{\|g\|}h(t) \, d\mu_g(t)\\
&\ge \limsup_F\int_0^{\|g\|}h(t) \, d\mu_{g, F}(t)\\
&=\limsup_F\int_{\kappa+}^{\|g\|}\log t \, d\mu_{g, F}(t).
\end{align*}
\end{proof}

\begin{remark} \label{R-lemma invertible}
Note that once the weak convergence $\lim_F \mu_{g, F}= \mu_g$ has been established, Lemma~\ref{L-approximate invertible} is an immediate consequence. Indeed, if $g \geq \varepsilon$ for some constant $\varepsilon > 0$, then the support of $\mu_{g,F}$ is contained in $[\varepsilon,\| g\|]$ for all $F$. This implies the lemma, since the function $t\mapsto \log t$ is continuous and bounded on this interval.
\end{remark}

We are ready to prove Proposition~\ref{P-measure weak convergence}.

\begin{proof}[Proof of Proposition~\ref{P-measure weak convergence}]
Because $\int_0^{\|g\|}\log t\, d\mu_g(t)=\log\det_{\cN\Gamma}g>-\infty$,
we  can find some $0<\kappa<\min(1, \|g\|)$ such that
$$\int_0^\kappa \log t \, d\mu_g(t)\ge -\log \lambda.$$

Since $\det_{\cN\Gamma}g>0$, by Theorem~\ref{T-FK} we have $\ker g=\{0\}$. Let $F\in \cF(\Gamma)$. By Lemma~\ref{L-invertible} we get that $g_F$ is injective.
From \eqref{E-spectral measure and multiplicity} we have
\begin{align*}
-\frac{\log \sD_{g, F, \kappa}}{|F|}
=-\frac{\log \det(g_F)}{|F|}+\int_{\kappa+}^{\|g\|}\log t \, d\mu_{g, F}(t).
\end{align*}

From Theorem~\ref{T-approximate} and  Lemma~\ref{L-measure weak convergence} we get
\begin{align*}
 \limsup_F \left(-\frac{\log \sD_{g, F, \kappa}}{|F|} \right)&=\limsup_F \left(-\frac{\log \det(g_F)}{|F|}+\int_{\kappa+}^{\|g\|}\log t \, d\mu_{g, F}(t) \right)\\
 &= \lim_F \left(-\frac{\log \det(g_F)}{|F|} \right)+\limsup_{n\to \infty}\int_{\kappa+}^{\|g\|}\log t \, d\mu_{g, F}(t)\\
 &\le -\log \ddet_{\cN\Gamma}g+\int_{\kappa+}^{\|g\|}\log t \, d\mu_g(t)\\
 &=-\int_0^{\|g\|}\log t \, d\mu_g(t)+\int_{\kappa+}^{\|g\|}\log t \, d\mu_g(t)\\
 &=-\int_0^{\kappa}\log t \, d\mu_g(t)\le \log\lambda.
\end{align*}
\end{proof}

\begin{remark} \label{R-twisted 2}
Proposition~\ref{P-measure weak convergence} and its proof also work in the twisted case.
\end{remark}

In order to apply Proposition~\ref{P-measure weak convergence}, we need to know which $g\in M_d(\cN\Gamma)$ has strictly positive determinant.
Schick showed that this is the case for $g\in M_d(\Zb\Gamma)$  with $\ker g=\{0\}$ \cite[Theorem 1.21]{Schick} \cite[Theorem 13.3]{Luck}:

\begin{lemma} \label{L-integral to positive determinant}
For any $g\in M_d(\Zb\Gamma)$  with $\ker g=\{0\}$, one has $\ddet_{\cN\Gamma}g\ge 1$.
\end{lemma}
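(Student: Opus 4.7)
The plan is to reduce to the positive case and then exploit the integrality of the compressions $g_F$ together with the approximation Theorem~\ref{T-approximate}.

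First, I would observe that we may assume $g$ is positive. Indeed, since the involution on $\Zb\Gamma$ (given by $(\sum_s f_s s)^* = \sum_s f_s s^{-1}$) preserves $\Zb\Gamma$, we have $g^*g \in M_d(\Zb\Gamma)$; clearly $g^*g \geq 0$, and $\ker(g^*g) = \ker g = \{0\}$ because $\langle g^*gx,x\rangle = \|gx\|^2$. By formula \eqref{E-determinant square root}, $\ddet_{\cN\Gamma} g = (\ddet_{\cN\Gamma}(g^*g))^{1/2}$, so it suffices to prove $\ddet_{\cN\Gamma}(g^*g) \geq 1$. Thus we may replace $g$ by $g^*g$ and assume $g \in M_d(\Zb\Gamma)$ is positive with $\ker g = \{0\}$.

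Next, I would show that for every $F \in \cF(\Gamma)$, the compression $g_F \in B((\ell^2(F))^{d\times 1})$ is a positive-definite matrix with integer entries. In the standard basis $\{\delta_t \otimes e_i : t \in F, 1 \leq i \leq d\}$ of $(\ell^2(F))^{d\times 1}$, the matrix of $g_F$ has entries $(g_{ij})_{ts^{-1}}$ for $t,s \in F$ and $1 \leq i,j \leq d$, each of which is an integer because $g \in M_d(\Zb\Gamma)$. Since $g \geq 0$, we have $g_F \geq 0$. Moreover, Lemma~\ref{L-Elek} applied to $g$ (viewed in $M_d(\Cb\Gamma)$) yields $\ker g \cap (\Cb\Gamma)^{d \times 1} = \{0\}$, so by Lemma~\ref{L-invertible} the operator $g_F$ is invertible, hence positive-definite.

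Any positive-definite matrix with integer entries has determinant a strictly positive integer, so $\det(g_F) \geq 1$ for every $F \in \cF(\Gamma)$. Applying Theorem~\ref{T-approximate}, I conclude
\[
\ddet_{\cN\Gamma} g = \inf_{F \in \cF(\Gamma)} (\det(g_F))^{1/|F|} \geq 1,
\]
which completes the proof. The argument is essentially immediate once Theorem~\ref{T-approximate} is in hand — the only non-trivial input is the approximation theorem itself, together with Elek's lemma used to rule out kernel vectors with finite support. No serious obstacle arises at this stage, since the key analytical content has been concentrated in the earlier results.
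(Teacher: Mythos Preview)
Your proof is correct. The paper itself does not prove this lemma but cites it as Schick's result; however, your argument is precisely the specialization to $\Zb$ of the paper's own proof of the more general Lemma~\ref{L-algebraic to positive determinant} (the case of algebraic-integer coefficients), where the Galois-theoretic estimate collapses to the trivial observation that a positive-definite integer matrix has determinant at least $1$.

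One small remark: invoking Lemma~\ref{L-Elek} to obtain $\ker g \cap (\Cb\Gamma)^{d\times 1}=\{0\}$ is unnecessary, since this is an immediate consequence of the hypothesis $\ker g=\{0\}$ in $(\ell^2(\Gamma))^{d\times 1}$. Elek's lemma is only needed for the converse direction.
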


Though Lemma~\ref{L-integral to positive determinant} is sufficient for our proof of Theorems~\ref{T-main} and \ref{T-torsion}, we note by passing a slightly more general result from \cite{DLMSY}. Denote by $\bar{\Qb}$ the algebraic closure of $\Qb$ in $\Cb$, i.e. the field of algebraic numbers.

\begin{lemma} \label{L-algebraic to positive determinant}
For any $g\in M_d(\bar{\Qb}\Gamma)$  with $\ker g=\{0\}$, one has $\ddet_{\cN\Gamma}g>0$.
\end{lemma}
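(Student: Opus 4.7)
The plan is to reduce to Schick's theorem (Lemma~\ref{L-integral to positive determinant}) via a Galois-theoretic diagonalization. Let $K\subseteq\bar\Qb$ be a number field containing all the algebraic numbers occurring as coefficients of $g$, and set $n=[K:\Qb]$. After rescaling $g$ by a suitable positive integer (which only multiplies $\ddet_{\cN\Gamma}(g)$ by a positive constant and leaves $\ker g$ unchanged), one may assume $g\in M_d(\cO_K\Gamma)$. Since $\cO_K$ is a free $\Zb$-module of rank $n$, fixing a $\Zb$-basis gives a ring embedding $\cO_K\hookrightarrow M_n(\Zb)$ via left multiplication; applying this entry-wise yields an embedding $M_d(\cO_K\Gamma)\hookrightarrow M_{nd}(\Zb\Gamma)$, which I denote by $g\mapsto \tilde g$.

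Because $K/\Qb$ is separable, $K\otimes_\Qb\Cb\cong\prod_\sigma\Cb$ as $\Cb$-algebras, where $\sigma$ runs over the $n$ field embeddings $K\hookrightarrow\Cb$; equivalently, there exists $P\in GL_n(\Cb)$ with $P^{-1}[\alpha]P=\diag(\sigma_1(\alpha),\dots,\sigma_n(\alpha))$ for every $\alpha\in K$, where $[\alpha]$ denotes the matrix of the regular representation of $\alpha$. Conjugating $\tilde g$ by $I_d\otimes P\in GL_{nd}(\Cb)\subseteq M_{nd}(\cN\Gamma)$ and permuting coordinates transforms $\tilde g$ into the block-diagonal operator $\bigoplus_\sigma g^{(\sigma)}$, where $g^{(\sigma)}\in M_d(\Cb\Gamma)$ is obtained from $g$ by applying $\sigma$ to every coefficient. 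Conjugation by an invertible element preserves the Fuglede-Kadison determinant (by multiplicativity, Theorem~\ref{T-FK}(1)), and the spectral measure of $\bigl|\bigoplus_\sigma h_\sigma\bigr|=\diag(|h_\sigma|)$ is the sum of those of the $|h_\sigma|$, so
\[
\ddet_{\cN\Gamma}(\tilde g)=\prod_{\sigma}\ddet_{\cN\Gamma}(g^{(\sigma)}).
\]

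It remains to verify $\ker \tilde g=\{0\}$, which by the block-diagonalisation above is equivalent to $\ker g^{(\sigma)}=\{0\}$ for every embedding $\sigma$. For each $F\in\cF(\Gamma)$ the operations of truncating to $F$ and applying $\sigma$ to the coefficients commute, so $(g^{(\sigma)})_F=(g_F)^{(\sigma)}$, and the rank of a finite matrix with entries in $K$ is unchanged under any field embedding $\sigma\colon K\to\Cb$. Lemma~\ref{L-Elek} therefore gives
\[
\dim_{\cN\Gamma}(\ker g^{(\sigma)})=\lim_F\frac{\dim_\Cb\ker (g_F)^{(\sigma)}}{|F|}=\lim_F\frac{\dim_\Cb\ker g_F}{|F|}=\dim_{\cN\Gamma}(\ker g),
\]
and this last quantity vanishes since $\ker g=\{0\}$ and $\tr_{\cN\Gamma}$ is faithful. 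Applying Schick's Lemma~\ref{L-integral to positive determinant} to $\tilde g\in M_{nd}(\Zb\Gamma)$ yields $\ddet_{\cN\Gamma}(\tilde g)\ge 1$, whence the product formula forces every factor $\ddet_{\cN\Gamma}(g^{(\sigma)})$ to be strictly positive; specialising to $\sigma=\mathrm{id}$ gives the claim. The main obstacle is the Galois-invariance of the $\cN\Gamma$-dimension of the kernel, which is precisely where amenability enters through Elek's approximation theorem; outside the amenable setting one would need a substitute such as the sofic-approximation results of Elek--Szab\'o or Thom.
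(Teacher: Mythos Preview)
Your proof is correct and takes a genuinely different route from the paper's. The paper first reduces to $g\ge 0$, then works directly with the finite truncations $g_F$: since $\prod_{\sigma\in G}\sigma(\det g_F)$ is a nonzero Galois-invariant algebraic integer it has absolute value $\ge 1$, and bounding all but one factor by $(C|K|)^{d|F|}$ yields $\det g_F\ge (C|K|)^{-d|F|(|G|-1)}$; Theorem~\ref{T-approximate} then transfers this uniform lower bound to $\ddet_{\cN\Gamma}g$. Your approach instead performs restriction of scalars $\cO_K\hookrightarrow M_n(\Zb)$ to land in $M_{nd}(\Zb\Gamma)$ and invokes Schick's integral result (Lemma~\ref{L-integral to positive determinant}) directly, the only work being to propagate the kernel hypothesis to each Galois conjugate via Elek's approximation of $\dim_{\cN\Gamma}\ker(\cdot)$. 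The paper's argument yields an explicit lower bound and exercises the Szeg\H{o}-type theorem; yours is more structural, bypasses Theorem~\ref{T-approximate} entirely, and isolates the single analytic ingredient beyond Lemma~\ref{L-integral to positive determinant} as the Galois-invariance of the von Neumann dimension of the kernel---which, as you observe, is precisely where amenability enters through Lemma~\ref{L-Elek}.
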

\begin{proof}
Since $\ddet_{\cN\Gamma}g=(\ddet_{\cN\Gamma}(g^*g))^{1/2}$, $\ker (g^*g)=\ker g=\{0\}$, and $g^*g\in M_d(\bar{\Qb}\Gamma)$, we may assume that $g\ge 0$.

Multiplying $g$ by a suitable positive integer, we may assume further that $g\in M_d(\cO\Gamma)$, where $\cO \subseteq \bar{\Qb}$ denotes the ring of algebraic integers in $\bar{\Qb}$. The coefficients of $g$ are contained in a finite Galois field extension of $\Qb$ and we denote by $G$ its Galois group. We write $g = \sum_{s \in K} g_s s$ with $g_s \in M_d(\cO)$, where $K \subset \Gamma$ is the support of $g$. Let $C>0$ be an upper bound for $\|\sigma(g_s)\|$ for all $s \in K$ and $\sigma \in G$.
Note that $\|\sigma(g_F)\| \leq C |K|$ for all $F \in \cF(\Gamma)$, and hence $|\det(\sigma(g_F))| \leq (C  |K|)^{d|F|}$.

Note that $\prod_{\sigma \in G} \sigma(\det(g_F))\in \Zb$ as it is a Galois invariant algebraic integer. Since $g_F$ is invertible by Lemma~\ref{L-invertible}, one has $\det(g_F) \neq 0$ and hence $\det(\sigma(g_F)) = \sigma(\det(g_F)) \neq 0$ for all $\sigma \in G$.
Then $\prod_{\sigma \in G} \sigma(\det(g_F)) \neq 0$, and we conclude $$\prod_{\sigma \in G} |\sigma(\det(g_F)) |\ge 1.$$
Now, $|\sigma(\det(g_F))| \leq (C |K|)^{d |F|}$
for all $\sigma \in G$ and we get
$$\det(g_F) \geq \prod_{\sigma\in G\setminus \{e_G\}}|\sigma(\det(g_F))|^{-1}\geq \prod_{\sigma\in G\setminus \{e_G\}} (C |K|)^{-d |F|}=(C |K|)^{-d |F| (|G|-1)},$$
where $e_G$ denotes the identity element of $G$.

From Theorem~\ref{T-approximate} we conclude that
$$\ddet_{\cN\Gamma}g = \inf_{F \in \cF(\Gamma)} \det(g_F)^{\frac1{|F|}}\geq (C|K|)^{-d (|G|-1)} >0.$$
This finishes the proof.
\end{proof}

We are unable to settle the following question:

\begin{question} \label{Q-positive determinant}
Do we have $\ddet_{\rm \cN\Gamma}g>0$
for all  $g\in M_d(\Cb\Gamma)$ with $\ker g=\{0\}$?
\end{question}

\section{Entropy and determinant} \label{S-entaction}

In this section we prove Theorem~\ref{T-main}. Throughout this section $\Gamma$ will be a countable discrete amenable group.

\subsection{A formula for entropy of finitely generated algebraic actions} \label{SS-approximate solution}

In this subsection we prove Theorem~\ref{T-approximate solution formula for entropy}, giving a formula for the entropy of finitely generated algebraic actions
in terms of approximate solutions to the equations defining the algebraic action.

Let $\theta$ be a continuous pseudometric
on a compact metrizable space $X$. For $\varepsilon>0$, we say that $\cW\subseteq X$ is {\it $(\theta, \varepsilon)$-separated} if $\theta(x, y)>\varepsilon$ for
all distinct $x, y\in \cW$. Denote by $N_\varepsilon(X, \theta)$ the maximal cardinality of $(\theta, \varepsilon)$-separated subsets of $X$.

Let $\Gamma$ act continuously on $X$. We say that $\theta$ is {\it dynamically generating} if for any distinct $x, y\in X$ one has $\theta(sx, sy)>0$ for some
$s\in \Gamma$. For each $F\in \cF(\Gamma)$, we define continuous pseudometrics $\theta_{F, 2}$ and $\theta_{F, \infty}$ on
$X$ by
\begin{align}
\label{E-2 distance} \theta_{F, 2}(x, y)&= \left(\frac{1}{|F|}\sum_{s\in F}(\theta(sx, sy))^2\right)^{1/2},\\
\label{E-infinity distance} \theta_{F, \infty}(x, y)&=\max_{s\in F}\theta(sx, sy).
\end{align}

The following result says that the topological entropy of the action can be calculated using $N_\varepsilon(X, \theta_{F, 2})$ or $N_\varepsilon(X, \theta_{F, \infty})$, for any dynamically generating continuous pseudometric $\theta$ on $X$. The formula in terms of $N_\varepsilon(X, \theta_{F, \infty})$
was proved by Deninger \cite[Proposition 2.3]{Deninger06}, and the formula in terms of  $N_\varepsilon(X, \theta_{F, 2})$ is in \cite[Theorem 4.2]{Li}.

\begin{lemma} \label{L-l2 definition of entropy}
Let $\Gamma$ act on a compact metrizable space $X$ continuously, and let $\theta$ be a dynamically generating continuous pseudometric
on $X$.  Then
$$ \rh_{\rm top}(X)=\sup_{\varepsilon>0} \limsup_F\frac{\log N_\varepsilon(X, \theta_{F, 2})}{|F|}=\sup_{\varepsilon>0} \limsup_F\frac{\log N_\varepsilon(X, \theta_{F, \infty})}{|F|}.$$
\end{lemma}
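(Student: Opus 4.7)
The plan is to establish both equalities by comparing the separated-set counts with the open-cover definition of topological entropy $\htopol(X)$. First I reduce to the case when $\theta$ is a genuine metric on $X$. Since $\theta$ is dynamically generating and $X$ is compact metrizable, the monotone family $\{\theta_{K,\infty}\}_{K\in \cF(\Gamma)}$ separates points, and compactness of $X\times X$ off the diagonal yields a single $K_{0}\in \cF(\Gamma)$ such that $\theta_{K_{0},\infty}$ is a metric generating the topology of $X$. The change of variables $F\mapsto FK_{0}^{-1}$ in the F\o lner net shows that replacing $\theta$ by $\theta_{K_{0},\infty}$ does not affect any of the relevant $\limsup_{F}|F|^{-1}\log N_{\varepsilon}$, so I may assume throughout that $\theta$ itself is such a metric.

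For the $L^{\infty}$-equality, which is due to Deninger, I would use the Bowen--Dinaburg comparison adapted to F\o lner nets. Given a finite open cover $\cU$ with $\theta$-Lebesgue number $3\delta$, every $\theta_{F,\infty}$-ball of radius $\delta$ lies in some member of $\cU^{F}$, giving $N(\cU^{F})\le N_{\delta}(X,\theta_{F,\infty})$. Conversely, if $\cU_{\varepsilon}$ is the cover by open $\theta$-balls of radius $\varepsilon/2$, a $(\theta_{F,\infty},\varepsilon)$-separated set meets each element of $\cU_{\varepsilon}^{F}$ at most once, so $N_{\varepsilon}(X,\theta_{F,\infty})\le N(\cU_{\varepsilon}^{F})$. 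Dividing by $|F|$, taking $\limsup_{F}$, and then suprema over covers or over $\varepsilon$ identifies both sides with $\htopol(X)$.

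For the $L^{2}$-version, the pointwise inequality $\theta_{F,2}\le \theta_{F,\infty}$ immediately yields $\sup_{\varepsilon}\limsup_{F}|F|^{-1}\log N_{\varepsilon}(X,\theta_{F,2})\le \htopol(X)$. For the reverse direction I would use a covering-with-exceptions argument. Let $\cV$ be a minimal $(\theta_{F,2},\delta)$-spanning set; for each $x\in X$ and the corresponding nearby $y\in \cV$, Markov's inequality forces the bad set $B_{x,y}=\{s\in F:\theta(sx,sy)>\eta\}$ to have size at most $\delta^{2}|F|/\eta^{2}$. For each $y\in \cV$ and each candidate $B\subseteq F$ of that size, the associated stratum has $\theta_{F\setminus B,\infty}$-diameter $\le 2\eta$ and can be further covered in the $B$-coordinates by $M(\eta)^{|B|}$ pieces, where $M(\eta)$ is a finite covering number of $X$ in $\theta$. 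Assembling,
\[
N_{4\eta}(X,\theta_{F,\infty})\le |\cV|\cdot |F|\cdot \binom{|F|}{\lfloor \delta^{2}|F|/\eta^{2}\rfloor}\cdot M(\eta)^{\delta^{2}|F|/\eta^{2}}.
\]
Taking logarithms, dividing by $|F|$, and using Stirling's estimate $|F|^{-1}\log \binom{|F|}{c|F|}\le H(c)$, the combinatorial overhead $H(\delta^{2}/\eta^{2})+(\delta^{2}/\eta^{2})\log M(\eta)$ vanishes as $\delta\to 0$ with $\eta$ fixed; letting $\eta\to 0$ afterwards produces the required reverse inequality.

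The main obstacle is this last comparison. The naive pointwise bound $\theta_{F,2}\ge \theta_{F,\infty}/\sqrt{|F|}$ is useless, and one must exploit the fact that a small $L^{2}$-deviation forces $L^{\infty}$-approximation outside a thin exceptional subset of $F$, with the combinatorial cost of choosing this subset controlled by Shannon entropy and absorbed in the iterated limit $\delta\to 0$ then $\eta\to 0$. This is the mechanism by which $L^{2}$- and $L^{\infty}$-separated sets produce the same exponential growth rate and thereby compute the same topological entropy.
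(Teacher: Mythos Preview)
The paper does not give its own proof of this lemma; it simply cites Deninger \cite[Proposition~2.3]{Deninger06} for the $\theta_{F,\infty}$ formula and \cite[Theorem~4.2]{Li} for the $\theta_{F,2}$ formula. So your outline has to stand on its own, and there is one genuine gap.

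Your reduction to the metric case is incorrect as stated. You assert that ``compactness of $X\times X$ off the diagonal'' yields a single finite $K_{0}\subseteq\Gamma$ with $\theta_{K_{0},\infty}$ a metric, but $X\times X$ minus the diagonal is not compact, and the conclusion is false in general: for the shift of $\Zb$ on $\{0,1\}^{\Zb}$ with $\theta(x,y)=|x_{0}-y_{0}|$, the pseudometric $\theta$ is dynamically generating, yet for every finite $K_{0}$ one can find distinct sequences agreeing on all coordinates in $K_{0}$, so $\theta_{K_{0},\infty}$ is never a metric. The repair is to let $K$ depend on the open cover. Given a finite open cover $\cU$, the set $(X\times X)\setminus\bigcup_{U\in\cU}(U\times U)$ is compact and misses the diagonal; on it the increasing continuous family $\{\theta_{K,\infty}\}_{K}$ has strictly positive pointwise supremum, so there exist $K_{\cU}\in\cF(\Gamma)$ and $\delta_{\cU}>0$ with $\theta_{K_{\cU},\infty}(x,y)<\delta_{\cU}\Rightarrow x,y\in U$ for some $U\in\cU$. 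This yields $N(\cU^{F})\le N_{\delta_{\cU}}(X,\theta_{K_{\cU}F,\infty})$, and since $|K_{\cU}F|/|F|\to 1$ and $K_{\cU}F$ is again asymptotically invariant, your change-of-variables idea now goes through, cover by cover rather than once and for all. The reverse inequality uses the open cover by $\theta$-balls of radius $\varepsilon/2$, which are open because $\theta$ is continuous, so no metric hypothesis is needed there either.

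Your Chebyshev/Stirling comparison between the $\theta_{F,2}$ and $\theta_{F,\infty}$ counts is correct and does not rely on $\theta$ being a metric: the only ingredient is $M(\eta)<\infty$, which holds because $\theta$-balls are open and $X$ is compact. So once the reduction step is corrected as above, the remainder of your argument is sound.
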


For any countable left $\Zb\Gamma$-module $\cM$, we denote by $\widehat{\cM}$ the Pontryagin dual of the discrete abelian group $\cM$.
The left $\Zb\Gamma$-module structure on $\cM$ corresponds to an action of $\Gamma$ on the discrete abelian group $\cM$ by automorphisms, and induces a natural action of $\Gamma$ on the compact metrizable abelian group $\widehat{\cM}$ by (continuous) automorphisms.

Let $d\in \Nb$. We may identify $((\Rb/\Zb)^{d\times 1})^\Gamma$ with  $\widehat{(\Zb\Gamma)^{1\times d}}$ naturally through the pairing
$ (\Zb\Gamma)^{1\times d}\times ((\Rb/\Zb)^{d\times 1})^\Gamma\rightarrow \Rb/\Zb$ given by
$$ \left<f, x\right>=(fx)_e,$$
where $fx\in (\Rb/\Zb)^\Gamma$ is defined similar to the product in $\Zb\Gamma$:
$$ (fx)_t=\sum_{s\in \Gamma, 1\le j\le d}f_{s, j}x_{s^{-1}t, j}.$$
It is easy to check that the induced natural $\Gamma$-action on $((\Rb/\Zb)^{d\times 1})^\Gamma$, denoted by $\sigma$, is given by
$$ (\sigma_s(x))_t=x_{ts}$$
for $x\in ((\Rb/\Zb)^{d\times 1})^\Gamma$ and $s, t\in \Gamma$.

Consider the following metric $\vartheta$ on $\Rb/\Zb$:
$$ \vartheta(x+\Zb, y+\Zb)=\min_{z\in \Zb}|x-y-z|.$$
Using $\vartheta$ we define two metrics $\vartheta_2$ and $\vartheta_\infty$ on $(\Rb/\Zb)^{d\times 1}$ by
\begin{align*}
\vartheta_2(x, y)&=\left(\frac{1}{d}\sum_{j=1}^d(\vartheta(x_j, y_j))^2\right)^{1/2},\\
\vartheta_\infty(x, y)&=\max_{1\le j\le d}\vartheta(x_j, y_j),
\end{align*}
for $x=(x_j)_{1\le j\le d}$ and $y=(y_j)_{1\le j\le d}$ in $(\Rb/\Zb)^{d\times 1}$.
Via the coordinate map at $e$, we shall also think of both $\vartheta_2$ and $\vartheta_\infty$ as continuous pseudometrics
on $((\Rb/\Zb)^{d\times 1})^\Gamma$, i.e.
$$ \vartheta_2(x, y)=\vartheta_2(x_e, y_e), \quad \vartheta_\infty(x, y)=\vartheta_\infty(x_e, y_e).$$
It is clear that both $\vartheta_2$ and $\vartheta_\infty$ are dynamically generating on $((\Rb/\Zb)^{d\times 1})^\Gamma$.
For any $F\in \cF(\Gamma)$, we shall write $\vartheta_{F, 2}$ and $\vartheta_{F, \infty}$ for $(\vartheta_2)_{F, 2}$ and $(\vartheta_\infty)_{F, \infty}$ respectively. Explicitly, for $x=(x_{s, j})_{s\in \Gamma, 1\le j\le d}$ and $y=(y_{s, j})_{s\in \Gamma, 1\le j\le d}$ in $((\Rb/\Zb)^{d\times 1})^\Gamma$,
one has
\begin{align}
\label{E-l2 metric} \vartheta_{F, 2}(x, y)&=\left(\frac{1}{d|F|}\sum_{s\in F, 1\le j\le d}(\vartheta(x_{s, j}, y_{s, j}))^2\right)^{1/2}, \\
\label{E-linfinity metric} \vartheta_{F, \infty}(x, y)&=\max_{s\in F, 1\le j\le d}\vartheta(x_{s, j}, y_{s, j}).
\end{align}
If $X$ is a closed $\Gamma$-invariant subgroup of $((\Rb/\Zb)^{d\times 1})^\Gamma$, then from Lemma~\ref{L-l2 definition of entropy} we get
\begin{align} \label{E-l2 definition of entropy}
\rh(X)=\sup_{\varepsilon>0} \limsup_F\frac{\log N_\varepsilon(X, \vartheta_{F, 2})}{|F|}=\sup_{\varepsilon>0} \limsup_F\frac{\log N_\varepsilon(X, \vartheta_{F, \infty})}{|F|}.
\end{align}

Let $J$ be a (possibly infinite) subset of $(\Zb\Gamma)^{1\times d}$. We denote by $\cM_J$ the $\Zb\Gamma$-submodule of $(\Zb\Gamma)^{1\times d}$ generated by $J$, and set
\begin{align} \label{E-X_J}
X_J=\widehat{(\Zb\Gamma)^{1\times d}/\cM_J}=\{x\in ((\Rb/\Zb)^{d\times 1})^\Gamma: fx=0 \mbox{ for all } f\in J\}.
\end{align}
For $F\in \cF(\Gamma)$, we set
\begin{align} \label{E-X_JF}
X_{J, F}=\{x\in ((\Rb/\Zb)^{d\times 1})^\Gamma: fx=0 \mbox{ on } F \mbox{ for all } f\in J\}.
\end{align}

The following theorem is the main result in this subsection.

\begin{theorem}  \label{T-approximate solution formula for entropy}
Let $J\subseteq (\Zb \Gamma)^{1\times d}$.  Then
$$ \rh(X_J)=\sup_{\varepsilon>0}\limsup_F \frac{\log N_\varepsilon(X_{J, F}, \vartheta_{F, \infty})}{|F|}.$$
\end{theorem}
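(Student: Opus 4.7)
The plan is to establish the two inequalities separately; the $\le$ direction is essentially tautological, while the $\ge$ direction carries the content. For the easy direction, since $X_J \subseteq X_{J,F}$ for every $F \in \cF(\Gamma)$, any $(\vartheta_{F,\infty}, \varepsilon)$-separated subset of $X_J$ is also one of $X_{J,F}$, so $N_\varepsilon(X_J, \vartheta_{F,\infty}) \le N_\varepsilon(X_{J,F}, \vartheta_{F,\infty})$. Combining this with Lemma~\ref{L-l2 definition of entropy} applied to the dynamically generating pseudometric $\vartheta_\infty$ on $((\Rb/\Zb)^{d \times 1})^\Gamma$ immediately yields $\rh(X_J) \le \sup_{\varepsilon > 0} \limsup_F \frac{\log N_\varepsilon(X_{J,F}, \vartheta_{F,\infty})}{|F|}$.

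For the reverse inequality I would first reduce to the case of finite $J$, using that both sides depend only on the $\Zb \Gamma$-submodule generated by $J$ and that finite subsets of a set of generators give monotone approximations. Assuming $J$ finite with joint support $K \in \cF(\Gamma)$ (containing $e$), I would introduce the interior $F^\circ = \{t \in F : K^{-1} t \subseteq F\}$; for $F$ sufficiently left invariant with respect to $K$, the complement $F \setminus F^\circ$ is a negligible fraction of $F$. The crucial step is the following \emph{approximation claim}: for every $x \in X_{J,F}$ there exists $y \in X_J$ with $y|_{F^\circ} = x|_{F^\circ}$, i.e.\ the restriction maps $\pi_{F^\circ}$ agree on $X_J$ and on $X_{J,F}$.

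I expect this approximation claim to be the main obstacle. Via Pontryagin duality it translates to the algebraic identity that every $g \in \cM_J$ supported on $(F^\circ)^{-1}$ lies in the $\Zb$-span of $F^{-1} \cdot J$. The nontrivial inclusion should follow by a combinatorial rewriting argument: start with an arbitrary representation $g = \sum_{f \in J} \alpha_f f$ with $\alpha_f \in \Zb\Gamma$, and truncate each $\alpha_f$ to be supported on $F^{-1}$, using the conditions $\supp(f) \subseteq K$ and $\supp(g) \subseteq (F^\circ)^{-1}$ to force the truncated remainders to cancel against each other.

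With the approximation claim in hand, the $\ge$-direction will follow by a pigeonhole argument. Given a maximal $(\vartheta_{F,\infty}, \varepsilon)$-separated subset $\cW \subseteq X_{J,F}$, cover the boundary coordinate space $((\Rb/\Zb)^{d \times 1})^{F \setminus F^\circ}$ by $N_0 = O((1/\varepsilon)^{d|F \setminus F^\circ|})$ balls of radius $\varepsilon/3$ in the sup metric. Pigeonhole yields $\cW_1 \subseteq \cW$ of size at least $|\cW|/N_0$ lying in one such ball, so any two distinct $x, y \in \cW_1$ satisfy $\vartheta_{F \setminus F^\circ, \infty}(x, y) < \varepsilon/3$ while $\vartheta_{F,\infty}(x, y) > \varepsilon$, forcing $\vartheta_{F^\circ, \infty}(x, y) > \varepsilon$. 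Lifting each $x \in \cW_1$ via the approximation claim to some $y_x \in X_J$ with $y_x|_{F^\circ} = x|_{F^\circ}$ produces a $(\vartheta_{F,\infty}, \varepsilon)$-separated subset of $X_J$ of size at least $|\cW|/N_0$. Since $\log N_0/|F| \to 0$ as $F$ becomes more and more left invariant with respect to $K$, Lemma~\ref{L-l2 definition of entropy} then delivers the matching lower bound.
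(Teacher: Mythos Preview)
Your easy direction is fine and matches the paper. The hard direction, however, rests on the ``approximation claim'' $\pi_{F^\circ}(X_{J,F})=\pi_{F^\circ}(X_J)$, and this claim is \emph{false}, even for $F$ that are as left invariant as you like. Take $\Gamma=\Zb$, $d=1$, $J=\{1-s\}$, so that $X_J$ is the group of constant sequences in $(\Rb/\Zb)^\Zb$. For $N\ge 2$ set
\[
F=\{0,\dots,N\}\cup\{2N,\dots,3N\}.
\]
Then $K=\{0,1\}$ and $F^\circ=\{1,\dots,N\}\cup\{2N+1,\dots,3N\}$. Any $x\in X_{J,F}$ is constant on each of the two blocks of $F^\circ$ separately, but the two constants are unrelated; thus $\pi_{F^\circ}(X_{J,F})\cong(\Rb/\Zb)^2$, whereas $\pi_{F^\circ}(X_J)\cong\Rb/\Zb$. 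Since $|F\setminus F^\circ|=2$ while $|F|=2N+2$, the family $\{F\}_N$ is as invariant as desired, so the failure is not a boundary effect you can absorb into $N_0$. Your proposed truncation argument for the dual statement breaks down for exactly this reason: writing $g=1-s^{n}\in\cM_J$ one has $g=(1+s+\cdots+s^{n-1})(1-s)$, and there is no way to shrink the support of the coefficient without leaving $\cM_J$. (A smaller point: your reduction to finite $J$ also needs care, since $X_{J,F}$ depends on the generating set $J$ itself, not only on $\cM_J$.)

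The paper avoids any extension of individual approximate solutions. Instead it fixes $\varepsilon$, takes an extremely invariant $B$ realising the $\limsup$, and applies the Ornstein--Weiss quasitiling lemma to cover most of $B$ by translates of finitely many tiles $F_1,\dots,F_m$. A pigeonhole over the tiles produces one tile $F_j$ and a translate $c$ with $N_{2\varepsilon}(X_{J,B},\vartheta_{F_jc,\infty})$ large; after right-translating by $c^{-1}$ this becomes a large separated set in $X_{J,K_nF_j}$ for growing $K_n\nearrow\Gamma$. Finally a diagonal/compactness argument in $((\Rb/\Zb)^{d\times1})^\Gamma$ passes to the limit $K_n\to\Gamma$ and turns these approximate solutions into genuine elements of $X_J$ that remain $\varepsilon$-separated on the fixed tile $F_j$. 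The point is that one never lifts a single $x\in X_{J,F}$ to $X_J$; one extracts a limit of a \emph{sequence} of approximate solutions on sets exhausting $\Gamma$.
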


Theorem~\ref{T-approximate solution formula for entropy} follows from \eqref{E-l2 definition of entropy} and the lemma below.

\begin{lemma}  \label{L-approximate solution formula for entropy}
Let $J\subseteq (\Zb\Gamma)^{1\times d}$ and $\varepsilon>0$.
Then
\begin{align} \label{E-approximate 0}
\limsup_F \frac{\log N_{4\varepsilon}(X_{J, F}, \vartheta_{F, \infty})}{|F|}\le  \limsup_F \frac{\log N_\varepsilon(X_J, \vartheta_{F, \infty})}{|F|}\le \limsup_F \frac{\log N_\varepsilon(X_{J, F}, \vartheta_{F, \infty})}{|F|}.
\end{align}
\end{lemma}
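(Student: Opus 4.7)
The right-hand inequality is immediate from the inclusion $X_J \subseteq X_{J,F}$: any $\varepsilon$-separated subset of $X_J$ with respect to the pseudometric $\vartheta_{F,\infty}$ (which is defined on the whole ambient space $((\Rb/\Zb)^{d\times 1})^\Gamma$) automatically lies in $X_{J,F}$ and is $\varepsilon$-separated there.

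For the left-hand inequality, I first observe that $N_\varepsilon(X, \vartheta_{F,\infty})$ depends only on the projection $X|_F\subseteq(\Rb/\Zb)^{d\times F}$, and that both $X_J|_F$ and $X_{J,F}|_F$ are closed subgroups of this compact torus with $X_J|_F \le X_{J,F}|_F$. Let $Q_F:=X_{J,F}|_F/X_J|_F$ denote the quotient compact abelian group, equipped with the quotient pseudometric $\bar\vartheta_\infty$. Given a $(4\varepsilon)$-separated subset $T\subseteq X_{J,F}|_F$, cover $Q_F$ by $\bar\vartheta_\infty$-balls of radius $\varepsilon/2$ (hence of diameter $\le\varepsilon$), at most $N_{\varepsilon/2}(Q_F,\bar\vartheta_\infty)$ of them, and group the elements of $T$ by which ball the image in $Q_F$ falls in. Within each group, fix a base point $x_0$; for every other $x$, the bound $\bar\vartheta_\infty(\bar x,\bar x_0)\le\varepsilon$ produces some $a_x\in X_J|_F$ with $\vartheta_\infty(x-x_0,a_x)\le\varepsilon$, and the triangle inequality combined with $\vartheta_\infty(x,y)>4\varepsilon$ for distinct $x,y$ forces $\vartheta_\infty(a_x,a_y)>2\varepsilon$. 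Hence each group contributes at most $N_{2\varepsilon}(X_J|_F,\vartheta_\infty)\le N_\varepsilon(X_J,\vartheta_{F,\infty})$, and summing yields
\[
N_{4\varepsilon}(X_{J,F},\vartheta_{F,\infty})\le N_{\varepsilon/2}(Q_F,\bar\vartheta_\infty)\cdot N_\varepsilon(X_J,\vartheta_{F,\infty}).
\]

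The main obstacle is then to verify that $\log N_{\varepsilon/2}(Q_F,\bar\vartheta_\infty)=o(|F|)$ as $F$ becomes left-invariant, so that the multiplicative factor disappears upon taking logarithm, dividing by $|F|$, and passing to the $\limsup$. By Pontryagin duality $Q_F$ is dual to the discrete abelian group
\[
K_F:=(\Zb F^{-1}\cap \cM_J)\big/(\Zb F^{-1}\cap \cM_{J,F}),
\]
where $\cM_{J,F}$ is the $\Zb$-span of $\{t^{-1}f:t\in F,\ f\in J\}$ inside $(\Zb\Gamma)^{1\times d}$. The claim reduces to bounding both the $\Zb$-rank and the logarithm of the torsion order of $K_F$ by $o(|F|)$; this is a boundary estimate, controlled by the F\o lner-small set $K\cdot F^{-1}\setminus F^{-1}$ for a finite support $K$ of a finite generating subset of $\cM_J$, together with standard $\varepsilon$-net estimates on compact abelian Lie groups. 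For infinite $J$, one exhausts $J=\bigcup_n J_n$ with $J_n$ finite, uses the monotonicity $X_{J_n,F}\supseteq X_{J,F}$, and concludes by a compactness/diagonal argument on the compact space $Y$.
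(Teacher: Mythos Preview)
Your factorization through the quotient $Q_F = X_{J,F}|_F / X_J|_F$ is correct and elegant, and the resulting inequality
\[
N_{4\varepsilon}(X_{J,F},\vartheta_{F,\infty}) \le N_{\varepsilon/2}(Q_F,\bar\vartheta_\infty)\cdot N_\varepsilon(X_J,\vartheta_{F,\infty})
\]
is valid. The problem is that everything now rests on the assertion $\log N_{\varepsilon/2}(Q_F,\bar\vartheta_\infty)=o(|F|)$, and this is not a ``boundary estimate'' in any straightforward sense. Unwinding the duality, the rank of $K_F$ measures how many elements of $\cM_J$ supported in $F^{-1}$ \emph{fail} to be expressible as $\Zb$-combinations of $\{sf : s\in F^{-1},\, f\in J\}$. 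This is a question about relations (syzygies) in the module $\cM_J$, not about supports: an element $g=\sum_s c_s\, sf$ with $\mathrm{supp}(g)\subseteq F^{-1}$ may well require coefficients $c_s$ supported far outside $F^{-1}$, with massive cancellation. Bounding the defect by $|KF^{-1}\setminus F^{-1}|$ would need a quantitative statement to the effect that such representations can always be ``localized'', and you have not supplied one. Controlling the torsion of $K_F$ is even less transparent. In short, your step (2) is at least as deep as the lemma itself; you have reformulated the difficulty rather than removed it.

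The paper proceeds by an entirely different route that avoids $Q_F$ altogether. It invokes the Ornstein--Weiss quasitiling lemma: a very left-invariant set $B$ is tiled (up to a small remainder) by right-translates of finitely many fixed tiles $F_1,\dots,F_m$, and a product bound shows that some tile $F_{j_n}$ carries at least its ``fair share'' of $(2\varepsilon)$-separated approximate solutions. Translating back and letting $B$ exhaust $\Gamma$ through an increasing sequence $K_n$, one extracts by compactness genuine elements of $X_J$ that remain $\varepsilon$-separated on the fixed tile. The quasitiling machinery is precisely what converts ``approximate solutions on large sets'' into ``exact solutions separated on a fixed set'', and there is no indication that this can be replaced by a naive boundary count. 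Your reduction from infinite $J$ to finite $J_n$ is also problematic: one obtains $\limsup_F |F|^{-1}\log N_{4\varepsilon}(X_{J,F})\le \limsup_F |F|^{-1}\log N_\varepsilon(X_{J_n})$, but since $X_{J_n}\supseteq X_J$ the right-hand side is on the wrong side of what you need, and passing to the limit in $n$ requires an additional continuity argument you have not given. The paper's proof handles arbitrary $J$ directly, with no finiteness assumption.
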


In order to prove the previous lemma,  we need the following quasitiling result of  Ornstein and Weiss \cite[page 24, Theorem 6]{OW} \cite[Theorem 8.3 and Remark 8.4]{Li}.

\begin{lemma} \label{L-quasitile}
Let $\varepsilon>0$ and  $K\in \cF(\Gamma)$.  Then there exist $\delta>0$ and
$K', F_1, \dots, F_m\in \cF(\Gamma)$  such
that
\begin{enumerate}
\item
$F_j\in \cB(K, \varepsilon)$ for each $1\le j\le m$;

\item for any $A\in \cB(K', \delta)$, there are
finite subsets $D_1, \dots,  D_m$ of $\Gamma$ such that $\bigcup_{1\le j\le m}F_jD_j\subseteq A$, the family
$\{F_jc_j: 1\le j\le m, c_j\in D_j\}$ of subsets of $\Gamma$ are pairwise disjoint, and $|\bigcup_{1\le j\le m}F_jD_j|\ge (1-\varepsilon)|A|$.
\end{enumerate}
\end{lemma}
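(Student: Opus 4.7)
The plan is to prove this by the classical Ornstein--Weiss quasitiling construction, whose single-scale ingredient is the following covering lemma: for $F, A \in \cF(\Gamma)$ with $A$ sufficiently $(F^{-1}F, \eta)$-invariant, every maximal family of pairwise disjoint translates $\{Fc : c \in D\}$ contained in $A$ covers at least a fraction $(1-\eta)|F|/|F^{-1}F|$ of $A$. Indeed, by maximality any $a \in A$ with $Fa \subseteq A$ must satisfy $Fa \cap Fc \neq \emptyset$ for some $c \in D$, forcing $a \in F^{-1}F \cdot D$; since the set of such $a$ has size at least $(1-\eta)|A|$, one gets $|D| \geq (1-\eta)|A|/|F^{-1}F|$, and multiplying by $|F|$ yields the packed density.

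Because this single-scale density can be tiny, I iterate over multiple scales. Fix a small $\eta > 0$ (to be specified below). Using amenability I inductively pick $F_1, \ldots, F_m \in \cB(K, \varepsilon)$ so that $F_{j+1}$ is drastically more invariant and larger than $F_j$: concretely $F_{j+1} \in \cB(F_j^{-1}F_j, \eta)$, and $|F_{j+1}|$ is taken large enough that the single-scale density associated with $F_{j+1}$ is bounded below by some absolute $\alpha_0 > 0$ depending only on $\eta$. The number $m$ is then chosen so that $(1-\alpha_0)^m < \varepsilon$. Finally, I take $K'$ and $\delta$ so that every $A \in \cB(K', \delta)$ is simultaneously sufficiently invariant at each scale $F_j^{-1}F_j$ for $1 \leq j \leq m$.

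Given such an $A$, I pack greedily from the largest scale down. At stage $j = m$, take a maximal disjoint family $\{F_m c : c \in D_m\} \subseteq A$; the covering lemma gives $|A_1| \leq (1-\alpha_0)|A|$ for $A_1 := A \setminus F_m D_m$. Crucially, $A_1$ remains sufficiently invariant with respect to $F_{m-1}^{-1}F_{m-1}$, because the extra boundary introduced by deleting $F_m D_m$ is contained in $\partial_{F_{m-1}^{-1}F_{m-1}}(F_m D_m)$, whose total measure is small by the strong $(F_{m-1}^{-1}F_{m-1}, \eta)$-invariance of $F_m$. I then pack $A_1$ with disjoint $F_{m-1}$-translates to obtain $D_{m-1}$ with $F_{m-1} D_{m-1} \subseteq A_1$, and iterate down to $j = 1$. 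By construction the translates $\{F_j c : c \in D_j, 1 \leq j \leq m\}$ are pairwise disjoint, $\bigcup_j F_j D_j \subseteq A$, and $|A \setminus \bigcup_j F_j D_j| \leq (1-\alpha_0)^m |A| < \varepsilon |A|$.

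The main obstacle is the bookkeeping of invariance parameters across the iteration: I must quantify precisely how each deletion step degrades the invariance of the residual set, and then fix the invariance gap between consecutive scales $F_j, F_{j+1}$ large enough to compensate. This in turn dictates the choice of $K'$ and $\delta$ at the final step, which must be strong enough to survive $m$ successive small perturbations. The estimates are routine once the iterative structure is in place and are carried out in detail in \cite{OW} and \cite[Theorem 8.3 and Remark 8.4]{Li}, to which the argument ultimately reduces.
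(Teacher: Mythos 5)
The paper offers no proof of this lemma: it is quoted as a known result of Ornstein and Weiss, with \cite[page 24, Theorem 6]{OW} and \cite[Theorem 8.3 and Remark 8.4]{Li} cited for the precise form used here. So your closing deferral to the same references is consistent with the authors' treatment; the issue is that the argument you sketch in place of that citation is not the Ornstein--Weiss argument and contains a genuine gap at the single-scale step.

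Your covering lemma for \emph{exactly} disjoint translates gives, for a maximal disjoint family $\{Fc: c\in D\}$ inside an almost invariant $A$, only the density $|FD|\ge (1-\eta)\,|F|\,|A|/|F^{-1}F|$. The factor $|F|/|F^{-1}F|$ is a property of the shape of $F$; it is not improved by taking $|F|$ large or $F$ more invariant, and there is no justification for the claim that $F_{j+1}$ can be chosen so that this density is at least an absolute $\alpha_0(\eta)>0$. That would require a Tempelman-type bound $|F^{-1}F|\le C|F|$ along the F\o lner sets, which fails for general amenable groups (e.g.\ for the standard F\o lner sets $F_n$ of the lamplighter group $(\bigoplus_{\Zb}\Zb/2\Zb)\rtimes\Zb$ one has $|F_n^{-1}F_n|/|F_n|\to\infty$). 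Without a uniform lower bound $\alpha_0$, the leftover fractions multiply to $\prod_j(1-\alpha_j)$, which need not tend to $0$, so no choice of $m$ makes the residue smaller than $\varepsilon|A|$ and the whole iteration collapses. This is exactly why Ornstein and Weiss pack with maximal \emph{$\varepsilon$-disjoint} families rather than disjoint ones: maximality then forces $|Fa\cap FC|\ge\varepsilon|F|$ for every $a$ with $Fa\subseteq A$, and summing this over such $a$ gives $|FC|\ge\varepsilon(1-\delta)|A|$, a density independent of the shape of $F$. The price is that the resulting translates are only $\varepsilon$-disjoint, and upgrading an $\varepsilon$-disjoint quasitiling to the pairwise disjoint full translates $F_jc_j$ demanded by the statement is a further nontrivial step --- this is precisely the content of \cite[Remark 8.4]{Li} and is not delivered by a greedy disjoint packing. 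By contrast, the bookkeeping of how deletions degrade the invariance of the residual sets, which you single out as the main obstacle, is indeed the routine part.
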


We are now ready to prove Lemma~\ref{L-approximate solution formula for entropy}.
Let us first sketch the idea of the proof of Lemma~\ref{L-approximate solution formula for entropy} briefly. While the second inequality is obvious, the first relies on a quasi-tiling argument using Lemma~\ref{L-quasitile}. The limit superior on the left side is almost realized by sets which become more and more left invariant and can all be tiled with a fixed precision by finitely many fixed tiles. There are plenty of approximate solutions on certain right translations of the more and more left invariant set, being sufficiently separated on one of the tiles. We arrange the right translations of the more and more left invariant sets to exhaust the group $\Gamma$. Then the approximate solutions converge to precise solutions.

\begin{proof}[Proof of Lemma~\ref{L-approximate solution formula for entropy}] Since $X_J\subseteq X_{J, F}$ for every $F\in \cF(\Gamma)$, clearly the second
inequality in \eqref{E-approximate 0} holds.

Note that
\begin{align} \label{E-approximate solution 4}
N_\varepsilon(((\Rb/\Zb)^{d\times 1})^\Gamma, \vartheta_{F, \infty})\le (1+\varepsilon^{-1})^{d|F|}
\end{align}
for all $\varepsilon>0$ and  $F\in \cF(\Gamma)$.

Set $C=\limsup_F \frac{\log N_{4\varepsilon}(X_{J, F}, \vartheta_{F, \infty})}{|F|}<+\infty$.
To show the first inequality in \eqref{E-approximate 0}  it suffices to show that
for any $\eta>0$, $K\in \cF(\Gamma)$ and $\delta>0$ there exists $F\in \cB(K, \delta)$ such that
\begin{align} \label{E-approximate solution 5}
 \frac{\log N_\varepsilon(X_J, \vartheta_{F, \infty})}{|F|}\ge C-2\eta.
\end{align}
Let $\eta>0$, $K\in \cF(\Gamma)$ and $\delta>0$. We may assume that $C-2\eta>0$. Take $\delta_1>0$ such that
$(1+(2\varepsilon)^{-1})^{2\delta_1 d}\le \exp(\eta)$.

By Lemma~\ref{L-quasitile} there exist $\delta'>0$ and
$K', F_1, \dots, F_m\in \cF(\Gamma)$  such
that
\begin{enumerate}
\item
$F_j\in \cB(K, \delta)$ for each $1\le j\le m$;

\item for any $A\in \cB(K', \delta')$, there are
finite subsets $D_1, \dots,  D_m$ of $\Gamma$ such that $\bigcup_{1\le j\le m}F_jD_j\subseteq A$, the family
$\{F_jc_j: 1\le j\le m, c_j\in D_j\}$ of subsets of $\Gamma$ are pairwise disjoint, and $|\bigcup_{1\le j\le m}F_jD_j|\ge (1-\delta_1)|A|$.
\end{enumerate}
Enlarging $K'$ if necessary, we may assume that $e\in K'$.

Take an increasing sequence $\{e\}\in K_1\subseteq K_2\subseteq \dots$ of finite subsets of $\Gamma$ such that their union is $\Gamma$.

Fix $n\in \Nb$ and take $B\in \cB(K_nK', \min(\delta_1, \delta'))$ such that
\begin{align} \label{E-approximate solution 2}
 \frac{\log N_{4\varepsilon}(X_{J, B}, \vartheta_{B, \infty})}{|B|}\ge C-\eta.
\end{align}
Set $A=\{s\in B: K_ns\subseteq B\}$. Since $K_nK'\supseteq K_n$, one has $|A|\ge (1-\delta_1)|B|$.
Furthermore,
$$ |\{s\in A:K's\subseteq A\}|=|\{s\in B: K_nK's\subseteq B\}|\ge (1-\delta')|B|\ge (1-\delta')|A|.$$
Thus $A\in \cB(K', \delta')$.
Then we have finite subsets $D_1, \dots,  D_m$ of $\Gamma$ as above.

Set $W=B\setminus \bigcup_{1\le j\le m} F_jD_j$. Then
$$ |B\setminus A|=|B|-|A|\le \delta_1|A|,$$
and
$$\left|A\setminus \bigcup_{1\le j\le m} F_jD_j \right|=|A|-\left|\bigcup_{1\le j\le m} F_jD_j \right|\le \delta_1|A|.$$
Thus
$$ |W|=|B\setminus A|+\left|A\setminus \bigcup_{1\le j\le m} F_jD_j \right|\le \delta_1|B|+\delta_1|A|\le 2\delta_1|B|.$$
From \eqref{E-approximate solution 4} we have
\begin{align} \label{E-approximate solution 1}
N_{2\varepsilon}(X_{J, B}, \vartheta_{W, \infty})
 \le (1+(2\varepsilon)^{-1})^{d|W|}
 \le (1+(2\varepsilon)^{-1})^{2\delta_1 d|B|}
 \le \exp(|B|\eta).
\end{align}

Let $\cW_{j, c_j}$ (resp. $\cW_W$) be a maximal $2\varepsilon$-separated subset of $X_{J, B}$ under $\vartheta_{F_jc_j, \infty}$ for every $1\le j\le m$ and $c_j\in D_j$ (resp. under $\vartheta_{W, \infty}$). Also let $\cW_B$ be a $4\varepsilon$-separated subset of $X_{J, B}$ under $\vartheta_{B, \infty}$.
For each $x\in \cW_B$, we can find a point $\varphi(x)$ in $\cW_W\times\prod_{1\le j\le m, c_j\in D_j}\cW_{j, c_j}$ such
that
$$ \vartheta_{W, \infty}(x, \varphi(x)_W)\le 2\varepsilon,   \mbox{ and } \vartheta_{F_jc_j, \infty}(x, \varphi(x)_{j, c_j})\le 2\varepsilon, \forall 1\le j\le m, c_j\in D_j,$$
where $\varphi(x)_W$ denotes the coordinate of $\varphi(x)$ in $\cW_W$ and $\varphi(x)_{j, c_j}$ denotes the coordinate of $\varphi(x)$ in $\cW_{j, c_j}$.
Since $B$ is the union of $F_jc_j$ for $1\le j\le m, c_j\in D_j$ and $W$, we have
$$ \vartheta_{B, \infty}(x, y)=\max\left\{\vartheta_{W, \infty}(x, y), \max_{1\le j\le m}\max_{c_j\in D_j}\vartheta_{F_jc_j, \infty}(x, y) \right\}$$
for all $x, y\in ((\Rb/\Zb)^{d\times 1})^\Gamma$. It follows that the map $\varphi:\cW_B\rightarrow \cW_W\times\prod_{1\le j\le m, c_j\in D_j}\cW_{j, c_j}$ is injective.
Therefore
\begin{align} \label{E-approximate solution 3}
N_{4\varepsilon}(X_{J, B}, \vartheta_{B, \infty})&\le N_{2\varepsilon}(X_{J, B}, \vartheta_{W, \infty})\prod_{1\le j\le m}\prod_{c_j\in D_j}N_{2\varepsilon}(X_{J, B}, \vartheta_{F_jc_j, \infty})\\
\nonumber &\overset{\eqref{E-approximate solution 1}}\le \exp(|B|\eta)\prod_{1\le j\le m}\prod_{c_j\in D_j}N_{2\varepsilon}(X_{J, B}, \vartheta_{F_jc_j, \infty}).
\end{align}

From \eqref{E-approximate solution 3} and \eqref{E-approximate solution 2} we get
\begin{align*}
\prod_{1\le j\le m}\prod_{c_j\in D_j}N_{2\varepsilon}(X_{J, B}, \vartheta_{F_jc_j, \infty})&\ge \exp(|B|(C-2\eta))\\
&\ge \exp\left((C-2\eta)\sum_{1\le j\le m}\sum_{c_j\in D_j}|F_jc_j|\right),
\end{align*}
and hence we can find some $1\le j_n\le m$ and $c_{(n)}\in D_{j_n}$ such that
$$ N_{2\varepsilon}(X_{J, B}, \vartheta_{F_{j_n}c_{(n)}, \infty})\ge \exp((C-2\eta)|F_{j_n}c_{(n)}|)=\exp((C-2\eta)|F_{j_n}|).$$
Note that $\sigma_s(X_{J, Fs})=X_{J, F}$ and $\vartheta_{F, \infty}(\sigma_s(x), \sigma_s(y))=\vartheta_{Fs, \infty}(x, y)$
for all $s\in \Gamma$, $F\in \cF(\Gamma)$ and $x, y\in ((\Rb/\Zb)^{d\times 1})^\Gamma$. It follows that
$$  N_{2\varepsilon}(X_{J, B c_{(n)}^{-1}}, \vartheta_{F_{j_n}, \infty})=N_{2\varepsilon}(X_{J, B}, \vartheta_{F_{j_n}c_{(n)}, \infty})\ge \exp((C-2\eta)|F_{j_n}|).$$

Note that if $B_1\subseteq B_2$ are in $\cF(\Gamma)$, then $X_{J, B_2}\subseteq X_{J, B_1}$. Since $Bc_{(n)}^{-1}\supseteq K_nF_{j_n}$,
we have $X_{J, Bc_{(n)}^{-1}}\subseteq X_{J, K_nF_{j_n}}$. Therefore
$$ N_{2\varepsilon}(X_{J, K_nF_{j_n}}, \vartheta_{F_{j_n}, \infty})\ge N_{2\varepsilon}(X_{J, Bc_{(n)}^{-1}}, \vartheta_{F_{j_n}, \infty})\ge \exp((C-2\eta)|F_{j_n}|).$$

Passing to a subsequence of $\{K_n\}_{n\in \Nb}$ if necessary, we may assume that $j_n$ does not depend on $n$. Set $F=F_{j_n}$.
Denote by $M$ the smallest integer no less than $\exp((C-2\eta)|F|)$.
Then  $F\in \cB(K, \delta)$ and
$$N_{2\varepsilon}(X_{J, K_nF}, \vartheta_{F, \infty})\ge M$$
for all $n\in \Nb$.

For each $n\in \Nb$, take $x_{n, 1}, \dots, x_{n, M}$ in $X_{J, K_nF}$ such that
$$\vartheta_{F, \infty}(x_{n, i}, x_{n, j})\ge 2\varepsilon$$
for all $1\le i<j\le M$. Then $fx_{n, i}=0$ on $K_nF$ for all $n\in \Nb$, $1\le i\le M$, and $f\in J$.
Since $((\Rb/\Zb)^{d\times 1})^\Gamma$ is a compact metrizable space, passing to a subsequence of $\{K_n\}_{n\in \Nb}$ if necessary,
we may assume that, for each $1\le i\le M$, the sequence $x_{n, i}$ converges to some $x_i$ in $((\Rb/\Zb)^{d\times 1})^\Gamma$ as $n\to \infty$. Since $\vartheta_{F,
\infty}$ is a continuous pseudometric on $((\Rb/\Zb)^{d\times 1})^\Gamma$, we have
$$   \vartheta_{F, \infty}(x_i, x_j)=\lim_{n\to \infty}\vartheta_{F, \infty}(x_{n, i}, x_{n, j})\ge 2\varepsilon$$
for all $1\le i<j\le M$. Note that the map $((\Rb/\Zb)^{d\times 1})^\Gamma\rightarrow (\Rb/\Zb)^\Gamma$ sending $x$ to $fx$ is continuous for
every $f\in (\Zb\Gamma)^{1\times d}$. Thus for each $1\le i\le M$ one has
$$ fx_i=\lim_{n\to \infty}fx_{n, i}=0$$
for all $f\in J$,
and hence $x_i\in X_J$. Therefore
$$N_\varepsilon(X_J, \vartheta_{F, \infty})\ge M\ge \exp((C-2\eta)|F|),$$
establishing \eqref{E-approximate solution 5}.
\end{proof}

\subsection{The positive case} \label{SS-positive}

In this subsection we prove Theorem~\ref{T-main} in the case $f\in M_d(\Zb\Gamma)$ is positive in $M_d(\cN\Gamma)$ with $\ker f=\{0\}$.

\begin{lemma} \label{L-positive}
Let $f\in M_d(\Zb \Gamma)$ be positive in $M_d(\cN\Gamma)$ with $\ker f=\{0\}$.  Then
$$\rh(X_f)=\log \ddet_{\cN\Gamma}f.$$
\end{lemma}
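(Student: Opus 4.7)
The plan is to combine Theorem~\ref{T-approximate solution formula for entropy}, which expresses
$\rh(X_f) = \sup_{\varepsilon > 0} \limsup_F \frac{\log N_\varepsilon(X_{f, F}, \vartheta_{F, \infty})}{|F|}$,
with the Szeg\H{o}-type Theorem~\ref{T-approximate}, which gives $(\det f_F)^{1/|F|} \to \ddet_{\cN\Gamma} f$. Since $f \in M_d(\Zb\Gamma)$ is positive with $\ker f = \{0\}$, Lemma~\ref{L-invertible} yields that each finite compression $f_F \in M_{d|F|}(\Zb)$ is positive and invertible, while Lemma~\ref{L-integral to positive determinant} gives $\ddet_{\cN\Gamma} f \ge 1 > 0$, so Proposition~\ref{P-measure weak convergence} is applicable. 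Identifying $((\Rb/\Zb)^{d\times 1})^F$ with $(\Rb/\Zb)^{d|F|}$, the integer matrix $f_F$ descends to $\bar f_F \colon (\Rb/\Zb)^{d|F|} \to (\Rb/\Zb)^{d|F|}$, and its kernel $Y_F$ has exactly $\det(f_F)$ elements.

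For the upper bound, given $x \in X_{f, F}$, I would choose a lift $\tilde x \colon \Gamma \to [-1/2, 1/2)^d$ of $x$. Writing $K := \supp f$, the condition $(fx)|_F = 0$ translates to $f_F(\tilde x|_F) \equiv u \pmod{\Zb^{d|F|}}$, where the ``boundary defect'' $u \in \Rb^{d|F|}$ is determined by $\tilde x$ restricted to the boundary set $K^{-1}F \setminus F$ and satisfies $\|u\|_\infty \le \|f\|_1/2$. The set of $\varepsilon$-distinguishable such $u$ has cardinality at most $(C/\varepsilon)^{d|K^{-1}F \setminus F|}$, and for each fixed $u$ the coset $\bar f_F^{-1}(\bar u)$ on the torus has $\det(f_F)$ elements. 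Hence $N_\varepsilon(X_{f, F}, \vartheta_{F, \infty}) \le (C/\varepsilon)^{d|K^{-1}F \setminus F|} \det(f_F)$. Since $|K^{-1}F \setminus F|/|F| \to 0$ by amenability and $\lim_F (\det f_F)^{1/|F|} = \ddet_{\cN\Gamma} f$ by Theorem~\ref{T-approximate}, taking $\limsup_F$ and $\sup_\varepsilon$ in Theorem~\ref{T-approximate solution formula for entropy} yields $\rh(X_f) \le \log \ddet_{\cN\Gamma} f$.

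For the lower bound, for each $y \in Y_F$ the extension $\tilde y \in ((\Rb/\Zb)^{d\times 1})^\Gamma$ by zero outside $F$ satisfies $(f\tilde y)_t = (f_F y)_t = 0$ for every $t \in F$ (because $\tilde y|_{F^c} = 0$), hence $\tilde y \in X_{f, F}$; distinct $y$'s give distinct $\tilde y$'s with pairwise $\vartheta_{F,\infty}$-distance equal to the $\ell^\infty$-distance of the corresponding $y$'s on $(\Rb/\Zb)^{d|F|}$. It remains to extract many $\varepsilon$-separated points from $Y_F$. These lie on the lattice $f_F^{-1}\Zb^{d|F|} / \Zb^{d|F|}$, whose short vectors occur only in the eigen-directions of small eigenvalues of $f_F$. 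Given $\lambda > 1$, Proposition~\ref{P-measure weak convergence} produces $\kappa \in (0, 1)$ with $\limsup_F \sD_{f, F, \kappa}^{-1/|F|} \le \lambda$. Splitting $\det(f_F) = \sD_{f, F, \kappa} \cdot \prod_{\lambda_i > \kappa} \lambda_i$ and applying a Minkowski-type argument on the spectral subspace $V_\kappa$ where $f_F \ge \kappa$, I would produce at least $\det(f_F)/\sD_{f, F, \kappa}$ elements of $Y_F$ that are pairwise $c\kappa$-separated in $\vartheta_{F,\infty}$ for some constant $c > 0$ depending only on $\|f\|$. Taking $\log$, dividing by $|F|$, and using Theorems~\ref{T-approximate} and \ref{T-approximate solution formula for entropy} gives $\rh(X_f) \ge \log \ddet_{\cN\Gamma} f - \log \lambda$; sending $\lambda \downarrow 1$ finishes the argument.

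The hard part is the lower bound: because $f$ is only assumed to satisfy $\ker f = \{0\}$ (not invertibility in $\cN\Gamma$), the matrices $f_F$ may develop arbitrarily small positive eigenvalues, causing the $\det(f_F)$ torus points of $Y_F$ to cluster tightly in those directions. Proposition~\ref{P-measure weak convergence}, which itself rests on Theorem~\ref{T-approximate} together with the weak convergence of the spectral measures $\mu_{f, F} \to \mu_f$, is precisely the tool that caps this clustering by a sub-exponential factor in $|F|$, so that the lower bound matches the upper bound asymptotically.
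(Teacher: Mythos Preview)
Your overall strategy—combine Theorem~\ref{T-approximate solution formula for entropy} with Theorem~\ref{T-approximate}—is correct, but you have the roles of the two inequalities reversed: in the paper the spectral estimate drives the \emph{upper} bound, while the lower bound is direct. Each half of your sketch has a concrete problem.

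\textbf{Lower bound.} Your Minkowski-type step is both unnecessary and mis-quantified: since $\sD_{f,F,\kappa}\le 1$, the number $\det(f_F)/\sD_{f,F,\kappa}$ already exceeds $|Y_F|=\det(f_F)$, so you cannot extract that many points from $Y_F$. In fact no spectral control is needed here. The $\det(f_F)$ torus points of $Y_F=\ker\bar f_F$ are automatically $1/(2\|f\|_1)$-separated in $\vartheta_{F,\infty}$: if $y,y'\in Y_F$ have $\vartheta_{F,\infty}(P(y),P(y'))\le 1/(2\|f\|_1)$, choose $z\in(\Zb[F])^{d\times 1}$ with $\|y-y'-z\|_\infty\le 1/(2\|f\|_1)$; then $f_F(y-y'-z)$ is an integer vector with $\|f_F(y-y'-z)\|_\infty\le\|f\|_1\cdot\tfrac{1}{2\|f\|_1}=\tfrac12$, hence $f_F(y-y'-z)=0$, hence $y=y'$ by injectivity of $f_F$. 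This is exactly Lemma~\ref{L-lower bound for positive case}. The clustering you worry about can occur in $\ell^2$, but never in $\ell^\infty$, which is why $\vartheta_{F,\infty}$ is the right pseudometric for this half.

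\textbf{Upper bound.} The step ``$(\text{number of }\varepsilon\text{-distinguishable }u)\times\det(f_F)$'' does not yield the stated bound. The boundary defect $u$ varies continuously, and the preimage under $\bar f_F$ of an $\varepsilon$-ball of $u$-values is not $\det(f_F)$ points but $\det(f_F)$ translates of a region whose diameter is governed by $\|f_F^{-1}\|$; this blows up precisely when $f_F$ has small eigenvalues, so the product bound does not follow as written. The paper handles this by working on $X_f$ (not $X_{f,F}$) with the $\ell^2$ pseudometric $\vartheta_{F,2}$: for $x\in X_f$ one has $f\tilde x\in(\Zb^{d\times 1})^\Gamma$ exactly, so $(f\tilde x)|_F$ lands in the \emph{finite} group $(\Zb[F])^{d\times 1}/f_F(\Zb[F])^{d\times 1}$ of order $\det(f_F)$; the $\ell^2$-spread within each coset is then bounded by $1/\sD_{f,F,\kappa}$ via Lemma~\ref{L-bound preimage}, and Lemma~\ref{L-measure weak convergence} (weak convergence of spectral measures) turns this into $\log\ddet_{\cN\Gamma}f$ as $\kappa\to 0$. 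This is Lemma~\ref{L-upper}. So the small-eigenvalue analysis is the engine of the upper bound, not the lower.
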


To show $\rh(X_f)\le \log \ddet_{\cN\Gamma}f$, we apply the method used first in \cite{BL} for the case $\Gamma$ is a finitely generated residually finite group and
$f$ is a generalized Laplace operator. For any set $\cX$, we denote by $\ell^2_\Rb(\cX)$ the Hilbert space of all real-valued square-summable functions on $\cX$.

\begin{lemma} \label{L-bound preimage}
Let $\cX$ be a nonempty finite set, and $T\in B(\ell^2_\Rb(\cX))$ be injective and positive.
For each  $\eta>0$ denote by $B_\eta$ the closed ball in $\ell^2_\Rb(\cX)$ with center $0$ and radius $\eta |\cX|^{1/2}$.
Let $0<\kappa\le 1/2$. Denote by $\sD_\kappa$ the product of the eigenvalues of $T$ in $(0, \kappa]$ counted with multiplicity.
If $T$ has no eigenvalue in $(0, \kappa]$, we set $\sD_\kappa=1$.
Then
$N_1(T^{-1}(B_{\kappa/4}), \|\cdot\|_2/|\cX|^{1/2})\le 1/\sD_\kappa$.
\end{lemma}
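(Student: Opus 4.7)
The plan is to diagonalize $T$ and decompose $\ell^2_\Rb(\cX)$ as $V_-\oplus V_+$, where $V_-$ is the span (of some dimension $k$) of the eigenvectors with eigenvalues in $(0,\kappa]$ and $V_+$ is the span of those with eigenvalues in $(\kappa,\|T\|]$; let $P_\pm$ be the corresponding orthogonal projections, set $n:=|\cX|$, and write $\omega_k$ for the volume of the Euclidean unit ball in $\Rb^k$. The determinant $\sD_\kappa=\prod_{i=1}^k\lambda_i$ is exactly the factor by which $T|_{V_-}$ contracts $k$-dimensional volume, and the strategy is to extract this factor by a volume-packing comparison after projecting the separated subset onto $V_-$.

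First I would control the $V_+$-component on the preimage ellipsoid $T^{-1}(B_{\kappa/4})$. For such $x$ one has $\|Tx\|\le\kappa\sqrt n/4$, and since $V_+$ is $T$-invariant with $T|_{V_+}\ge\kappa$, this yields $\|P_+x\|\le \|P_+Tx\|/\kappa\le\sqrt n/4$. Hence for any two distinct points $x,y$ of a $(\|\cdot\|_2/\sqrt n,1)$-separated set $\cW\subseteq T^{-1}(B_{\kappa/4})$, the Pythagorean identity gives
\[
\|P_-x-P_-y\|^2=\|x-y\|^2-\|P_+x-P_+y\|^2>n-n/4=3n/4,
\]
so $\{P_-x:x\in\cW\}$ is a $\sqrt{3n}/2$-separated subset (in particular $P_-$ is injective on $\cW$) of the ellipsoid $E_-:=\{y\in V_-:\|Ty\|\le\kappa\sqrt n/4\}$.

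Next I would run the volume-packing estimate in $V_-$. In the eigenbasis of $T|_{V_-}$ the ellipsoid $E_-$ has semi-axes $\kappa\sqrt n/(4\lambda_i)$ for $i=1,\dots,k$, so $\mathrm{vol}_k(E_-)=\omega_k(\kappa\sqrt n/4)^k/\sD_\kappa$. Because each $\lambda_i\le\kappa$, the smallest semi-axis is at least $\sqrt n/4$, so $B_-(0,\sqrt n/4)\subseteq E_-$ and, using the central symmetry of $E_-$, $B_-(0,r)\subseteq (4r/\sqrt n)E_-$ for all $r>0$ while $E_-+\alpha E_-=(1+\alpha)E_-$ for all $\alpha\ge 0$. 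Taking $r=\sqrt{3n}/4$ gives $E_-+B_-(0,\sqrt{3n}/4)\subseteq(1+\sqrt 3)E_-$. The open $\sqrt{3n}/4$-balls around the projections $P_-x$ are pairwise disjoint in $V_-$ and contained in this enlargement, so
\[
|\cW|\cdot\omega_k(\sqrt{3n}/4)^k\le(1+\sqrt 3)^k\omega_k(\kappa\sqrt n/4)^k/\sD_\kappa,
\]
which simplifies to $|\cW|\le\bigl((1+\sqrt 3)\kappa/\sqrt 3\bigr)^k/\sD_\kappa$.

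Finally, the hypothesis $\kappa\le 1/2$ gives $(1+\sqrt 3)\kappa/\sqrt 3\le(1+\sqrt 3)/(2\sqrt 3)<1$, so the exponential factor is $\le 1$ and the bound $|\cW|\le 1/\sD_\kappa$ follows. The degenerate case $k=0$ is handled separately: there $V_-=\{0\}$, $\cW\subseteq V_+$ has $\ell^2$-diameter at most $\sqrt n/2<\sqrt n$, so $|\cW|\le 1=1/\sD_\kappa$. The delicate point will be calibrating the Minkowski-sum inclusion $E_-+B_-(0,r)\subseteq(1+r/a_{\min})E_-$ against the hypothesis $\kappa\le 1/2$ so that the geometric constants combine to at most $1$; this is exactly why the lemma is formulated with $\kappa\le 1/2$ rather than for general $\kappa$.
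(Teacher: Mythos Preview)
Your proof is correct and follows essentially the same route as the paper: decompose into the small-eigenvalue subspace $V_-$ and its complement, bound the $V_+$-component on $T^{-1}(B_{\kappa/4})$ by $\sqrt n/4$, deduce that the projections to $V_-$ are well separated, and finish with a volume-packing argument in $V_-$. The only difference is in the execution of that last step: the paper pushes the disjoint balls of radius $\sqrt n/4$ forward by $T|_{V_-}$ into a single ball of radius $\kappa\sqrt n/2$ and reads off the factor $(2\kappa)^k\le 1$ directly, whereas you stay in the domain and control the Minkowski sum $E_-+B_-(0,\sqrt{3n}/4)\subseteq(1+\sqrt3)E_-$; both constants collapse below $1$ precisely because of the hypothesis $\kappa\le 1/2$.
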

\begin{proof}
Denote by $V$ the linear span of the eigenvectors of $T$ in $\ell^2_\Rb(\cX)$ with eigenvalue no bigger than $\kappa$, and
 by $P$ the orthogonal projection of $\ell^2_\Rb(\cX)$ onto $V$.

Note that for each $x\in \ell^2_\Rb(\cX)$ one has
$$\|Tx\|_2^2=\|T(Px)\|_2^2+\|T(x-Px)\|_2^2\ge \|T(x-Px)\|_2^2\ge \kappa^2\|x-Px\|_2^2.$$
Thus $\|x-Px\|_2/|\cX|^{1/2}\le 1/4$ for every $x\in T^{-1}(B_{\kappa/4})$.

Let $\cW$ be a $1$-separated subset of $T^{-1}(B_{\kappa/4})$ under $\|\cdot \|_2/|\cX|^{1/2}$ with
$$N_1(T^{-1}(B_{\kappa/4}), \|\cdot\|_2/|\cX|^{1/2})=|\cW|.$$
For any
distinct $x, y$ in $\cW$, one has $\|(x-y)-P(x-y)\|_2/|\cX|^{1/2}\le 1/2$, and hence $\|Px-Py\|_2/|\cX|^{1/2}>1/2$.

For each $z\in P(\cW)$,
denote by $B_z$ the closed ball in $V$ centered at $z$ with radius $1/4$ under $\|\cdot \|_2/|\cX|^{1/2}$.
For each $z\in P(\cW)$, say $z=Px$ for some $x\in \cW$, one has
$$\|Tz\|_2/|\cX|^{1/2}=\|TPx\|_2/|\cX|^{1/2}=\|PTx\|_2/|\cX|^{1/2}\le \|Tx\|_2/|\cX|^{1/2}\le \kappa/4.$$
Note that $\|Tx\|_2\le \kappa\|x\|_2$ for all $x\in V$. Thus every element in $T(\bigcup_{z\in P(\cW)}B_z)$ has $\|\cdot \|_2/|\cX|^{1/2}$-norm at most $\kappa/2$.

Denote by $E$ the multiset of all eigenvalues of $T$ in $(0, \kappa]$ listed with multiplicity.
Then we can find a basis of $V$  under which the matrix of $T|_V$ is diagonal with the diagonal entries being exactly the elements of $E$. Thus the volume of $T(\bigcup_{z\in P(\cW)}B_z)$ is $\det(T|_V)=\prod_{t \in E}t$ times the volume of $\bigcup_{z\in P(\cW)}B_z$.
Since the balls $B_z$ for $z\in P(\cW)$ are pairwise disjoint, we have
$$ |\cW| \prod_{t\in E} t\le \left(\frac{\kappa /2}{1/4}\right)^{\dim_\Rb (V)}=(2\kappa)^{\dim_\Rb (V)}\le 1.$$
Therefore
\begin{align*}
N_1(T^{-1}(B_{\kappa/4}), \|\cdot \|_2/|\cX|^{1/2})=|\cW| \le \prod_{t\in E}t^{-1}=1/\sD_\kappa.
\end{align*}
\end{proof}

We need the following well-known fact (see \cite[Lemma 4]{Solomyak} or \cite[Lemma 3.1]{Li}).

\begin{lemma} \label{L-determinant quotient group size}
Let $n\in \Nb$ and let $T:\Cb^n\rightarrow \Cb^n$ be an invertible linear map, preserving $\Zb^n$.
Then $|\det T|=|\Zb^n/T\Zb^n|$.
\end{lemma}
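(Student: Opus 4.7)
The plan is to reduce to the diagonal case via Smith normal form. Since $T$ preserves $\Zb^n$, its matrix in the standard basis of $\Zb^n$ has integer entries; this matrix also represents $T$ as a $\Cb$-linear map, so $\det T \in \Zb$ in particular. I then invoke the Smith normal form theorem for $\Zb$: there exist $U, V \in \GL_n(\Zb)$ and a diagonal matrix $D = \diag(d_1, \dots, d_n)$ with $d_i \in \Nb$ such that $UTV = D$. Invertibility of $T$ forces $d_i > 0$, since otherwise the right-hand side would be singular.

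Next I compute both sides in terms of the invariant factors $d_i$. On one hand,
\[
|\det T| = |\det U|^{-1} \cdot |\det D| \cdot |\det V|^{-1} = d_1 d_2 \cdots d_n,
\]
using that $\det U, \det V \in \{\pm 1\}$. On the other hand, since $V \Zb^n = \Zb^n$ and the map $U: \Zb^n \to \Zb^n$ is a bijection, we obtain an isomorphism of finite abelian groups
\[
\Zb^n / T\Zb^n \;\cong\; U\Zb^n / UT\Zb^n \;=\; \Zb^n / UTV\Zb^n \;=\; \Zb^n / D\Zb^n \;\cong\; \bigoplus_{i=1}^n \Zb/d_i\Zb,
\]
whose cardinality is $d_1 d_2 \cdots d_n$. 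Combining the two computations gives $|\det T| = |\Zb^n/T\Zb^n|$, as desired.

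There is no substantial obstacle here: the only input beyond basic linear algebra is the existence of Smith normal form over the PID $\Zb$, which is classical. One could alternatively give a measure-theoretic proof by observing that $T$ scales Lebesgue measure on $\Rb^n$ by $|\det T|$ and that the index $[\Zb^n : T\Zb^n]$ equals the ratio of covolumes $\mathrm{vol}(\Rb^n/T\Zb^n)/\mathrm{vol}(\Rb^n/\Zb^n) = |\det T|$, but the algebraic proof via Smith normal form is cleaner and fully self-contained.
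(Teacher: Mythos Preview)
Your proof is correct. The paper does not actually prove this lemma; it simply cites it as a well-known fact with references to \cite[Lemma 4]{Solomyak} and \cite[Lemma 3.1]{Li}, so there is no in-paper argument to compare against. Your Smith normal form argument is the standard one and is entirely adequate here.
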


For $f=(f_{s,j,k})_{s\in \Gamma, 1\le j\le d', 1\le k\le d}\in M_{d'\times d}(\Zb\Gamma)$, we set
$$\|f\|_1=\sum_{s\in \Gamma}\sum_{1\le j\le d'}\sum_{1\le k\le d}|f_{s, j, k}|.$$
For $g=(g_{s, j})_{s\in \Gamma, 1\le j\le d}\in (\Rb^{d\times 1})^\Gamma$, we set
$$ \|g\|_\infty=\sup_{s\in \Gamma, 1\le j\le d}|g_{s, j}|,$$
and
$$ \|g\|_2=(\sum_{s\in \Gamma} \sum_{1\le j\le d}|g_{s, j}|^2)^{1/2}.$$

For a finite set $\cX$, we denote by $\Zb[\cX]$ the set of all $\Zb$-valued functions on $\cX$.
For any subset $K$ of $\Gamma$, we identify $(\Rb^{d\times 1})^K$ with the set of elements in $(\Rb^{d\times 1})^\Gamma$ with support contained in $K$.

For any $f\in M_{d'\times d}(\Zb\Gamma)$, let $J$ be the set of rows of $f$, then from \eqref{E-X_J} we have
\begin{equation} \label{E-Xf}
 X_f=X_J=\{x\in ((\Rb/\Zb)^{d\times 1})^\Gamma: fx=0\}.
\end{equation}

\begin{lemma} \label{L-upper}
Let $f\in M_d(\Zb \Gamma)$ be positive  in $M_d(\cN\Gamma)$ with $\ker f=\{0\}$. Then
$$\rh(X_f)\le \log \ddet_{\cN\Gamma}f.$$
\end{lemma}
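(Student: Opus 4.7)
The plan is to show that for every $\lambda > 1$ and every $\varepsilon > 0$,
\[ \limsup_F \frac{\log N_\varepsilon(X_f, \vartheta_{F, 2})}{|F|} \le \log \ddet_{\cN\Gamma} f + \log \lambda, \]
from which Lemma~\ref{L-l2 definition of entropy} and $\lambda \downarrow 1$ will give the result. Since $f \in M_d(\Zb \Gamma)$ has trivial kernel, Lemma~\ref{L-integral to positive determinant} gives $\ddet_{\cN \Gamma} f > 0$, so Proposition~\ref{P-measure weak convergence} supplies $\kappa \in (0, \min(1/2, \|f\|))$ with $\limsup_F (\sD_{f, F, \kappa})^{-1/|F|} \le \lambda$. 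Let $K$ be a finite symmetric subset of $\Gamma$ containing $e$ and the supports of the entries of $f$, and set $F' := \{t \in F : K^{-1}t \subseteq F\}$; then $|F \setminus F'|/|F|$ can be made arbitrarily small by taking $F$ more left invariant.

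For $x \in X_f$, let $\bar x \in [-1/2, 1/2)^{\Gamma \times d}$ be the symmetric lift; since $fx = 0$, the function $f \bar x$ is $\Zb^d$-valued on $\Gamma$ with supremum norm at most $\|f\|_1 / 2$. Set $z_x := f_F \bar x_F \in \Rb^{F \times d}$; for $t \in F'$ the quantity $(f \bar x)(t)$ only involves coordinates of $\bar x$ in $F$, giving $z_x(t) = (f\bar x)(t) \in \Zb^d$. Put $\Lambda := f_F \Zb^{F \times d}$, so by Lemmas~\ref{L-invertible} and \ref{L-determinant quotient group size}, $|\Zb^{F \times d}/\Lambda| = \det(f_F)$. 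Let $n_x \in \Zb^{F \times d}$ be obtained from $z_x$ by leaving its coordinates in $F'$ unchanged and rounding each coordinate of $z_x|_{F \setminus F'}$ to the nearest integer, and set $r_x := z_x|_{F \setminus F'} - n_x|_{F \setminus F'} \in [-1/2, 1/2)^{(F \setminus F') \times d}$. Fix $\delta := \varepsilon\kappa/4$, partition $[-1/2, 1/2)^{(F \setminus F') \times d}$ into boxes of side $\delta$, and assign to each $x$ the \emph{type} given by the pair $([n_x] \in \Zb^{F \times d}/\Lambda, \text{box containing } r_x)$. The total number of types is at most $\det(f_F) \cdot (4/(\varepsilon\kappa) + 1)^{d|F \setminus F'|}$.

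To bound the number of $\vartheta_{F, 2}$-$\varepsilon$-separated points in a single type, pick $a \in X_f$ of the type and, for each $x$ of the type, write $n_x - n_a = f_F m_x$ with $m_x \in \Zb^{F \times d}$. The vector $w_x := \bar x_F - \bar a_F - m_x$ satisfies $f_F w_x = (0_{F'}, r_x - r_a)$, so $\|f_F w_x\|_2 \le \delta \sqrt{d|F \setminus F'|} \le (\varepsilon \kappa/4) \sqrt{d|F|}$; that is, $w_x \in f_F^{-1}(B_{\varepsilon \kappa/4})$ in the notation of Lemma~\ref{L-bound preimage} with $\cX := F \times \{1, \ldots, d\}$. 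For distinct $x, y$ of the type,
\[ \|w_x - w_y\|_2 \ge \min_{n \in \Zb^{F \times d}} \|\bar x_F - \bar y_F - n\|_2 = \vartheta_{F, 2}(x, y) \sqrt{d|F|} > \varepsilon \sqrt{d|F|}, \]
so the $w_x$'s form an $\varepsilon$-separated subset of $f_F^{-1}(B_{\varepsilon \kappa/4})$ under $\|\cdot\|_2 / \sqrt{d|F|}$. An $\varepsilon$-scaled rerun of the proof of Lemma~\ref{L-bound preimage} (replacing the radius $1/4$ by $\varepsilon/4$ throughout, still using $\kappa \le 1/2$) bounds each type by $(\sD_{f, F, \kappa})^{-1}$; hence
\[ N_\varepsilon(X_f, \vartheta_{F, 2}) \le \det(f_F) \cdot (4/(\varepsilon\kappa) + 1)^{d|F \setminus F'|} \cdot (\sD_{f, F, \kappa})^{-1}. \]
Dividing the logarithm by $|F|$, Theorem~\ref{T-approximate} takes care of the first factor, left-invariance of $F$ makes the middle vanish, and the choice of $\kappa$ handles the third. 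The main obstacle is arranging the pigeonhole correctly: recording integer values $z_x|_{F'}$ directly would contribute a disastrous $(2\|f\|_1 + 1)^{d|F'|}$, so one must pass to the residue class in $\Zb^{F \times d}/\Lambda$, which gives exactly the $\det(f_F)$ factor we need. Once this is done, the coupling $\delta = \varepsilon\kappa/4$ and the smallness of $|F \setminus F'|/|F|$ keep the $w_x$'s inside $B_{\varepsilon \kappa/4}$ where Lemma~\ref{L-bound preimage} can be applied.
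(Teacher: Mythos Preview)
Your proof is correct and follows essentially the same strategy as the paper's: lift $x\in X_f$ to $\Rb$, pass to the coset in $\Zb^{F\times d}/f_F\Zb^{F\times d}$ (contributing the $\det(f_F)$ factor), bound the fiber using Lemma~\ref{L-bound preimage}, and finish by spectral approximation. The paper's organization is slightly leaner in two places. First, instead of your $z_x=f_F\bar x_F$ (integer only on $F'$), the paper records $(f\tilde x)|_F$, which is already in $\Zb^{F\times d}$ because $f\tilde x$ is globally integer-valued; this eliminates the rounding step and the residue-box part of your type, and the boundary is handled instead by a single correction $x'$ with $f_Fx'=(f(\tilde x|_{\Gamma\setminus F}))|_F$. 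Second, at the end the paper writes $\det(f_F)/\sD_{f,F,\kappa}=\exp\big(|F|\int_{\kappa+}^{\|f\|}\log t\,d\mu_{f,F}\big)$ and applies Lemma~\ref{L-measure weak convergence} followed by $\kappa\to 0$, rather than invoking Theorem~\ref{T-approximate} and Proposition~\ref{P-measure weak convergence} separately---but since Proposition~\ref{P-measure weak convergence} is itself proved from those two ingredients, this is only a repackaging. Your ``$\varepsilon$-scaled rerun'' of Lemma~\ref{L-bound preimage} is simply the linear rescaling $N_\varepsilon(T^{-1}(B_{\varepsilon\kappa/4}))=N_1(T^{-1}(B_{\kappa/4}))$, which the paper uses verbatim.
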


The idea of the proof is to lift elements of some $\varepsilon$-separated subset of $X_f$ to elements in $(\Rb^{d\times 1})^{\Gamma}$ and restrict them to elements in $(\ell^2 (F))^{d \times 1}$. These are mapped by $f_F$ to $(\Zb[F])^{d\times 1}/f_F(\Zb[F])^{d\times 1}$. We use Lemma~\ref{L-bound preimage} to control the cardinality of the fiber of this map by spectral information about $f_F$. Finally, this implies the inequality using Lemma~\ref{L-measure weak convergence}.

\begin{proof}
Denote by $K$ the support of $f$ as a $M_d(\Zb)$-valued function on $\Gamma$.
For each $x\in ((\Rb/\Zb)^{d\times 1})^\Gamma$, take $\tilde{x}\in ([-1/2, 1/2)^{d\times 1})^\Gamma$ such that $x_s=\tilde{x}_s+\Zb^{d\times 1}$ for all $s\in \Gamma$. Then for each $x\in X_f$ one has $f\tilde{x}\in (\Zb^{d\times 1})^\Gamma$.

Let $0<\varepsilon<1$, and $0<\kappa\le 1/2$.
Set $\eta=\varepsilon \kappa/4$.
Let $F\in \cF(\Gamma)$ such that $\|f\|\cdot |K^{-1}F\setminus F|^{1/2}/2\le \eta |F|^{1/2}$.
Recall the linear map $f_F:(\ell^2(F))^{d\times 1}\rightarrow (\ell^2(F))^{d\times 1}$ defined by \eqref{E-g_F}.
Define a map $\Phi_F: X_f\rightarrow (\Zb[F])^{d\times 1}/f_F(\Zb[F])^{d\times 1}$ sending
$x$ to $(f\tilde{x})|_F+f_F(\Zb[F])^{d\times 1}$. By Lemma~\ref{L-invertible} we know that $f_F$ is invertible.
From Lemma~\ref{L-determinant quotient group size} we get
\begin{align} \label{E-upper bound 2}
|(\Zb[F])^{d\times 1}/f_F(\Zb[F])^{d\times 1}|=\det(f_F).
\end{align}

Let $\cW$ be a $(\vartheta_{F, 2}, \varepsilon)$-separated subset of $X_f$ with $|\cW|=N_\varepsilon(X_f, \vartheta_{F, 2})$.
Then we can find a subset $\cW_1$ of $\cW$ such that
$$|\cW|\le |\cW_1|\cdot |(\Zb[F])^{d\times 1}/f_F(\Zb[F])^{d\times 1}|$$
and  $\Phi_F$ takes the same value on $\cW_1$.

Let $x\in \cW_1$. Then $f\tilde{x}\in (\Zb^{d\times 1})^\Gamma$ and $\|f\tilde{x}\|_\infty\le \|f\|_1/2$.
Since $f_F$ is bijective and has real coefficients under the natural basis of $(\ell^2(F))^{d\times 1}$, it restricts to an invertible linear map
$T:(\ell^2_\Rb(F))^{d\times 1}\rightarrow (\ell^2_\Rb(F))^{d\times 1}$.
Thus we can find a unique $x'\in (\ell^2_\Rb(F))^{d\times 1}$ such that $fx'=f(\tilde{x}|_{\Gamma \setminus F})$ on $F$.
Then $fx'+f(\tilde{x}|_F)=f\tilde{x}$ on $F$.
Note that $f(\tilde{x}|_{\Gamma \setminus F})=f(\tilde{x}|_{K^{-1}F\setminus F})$ on $F$.
Thus
\begin{align*}
\|f_Fx'\|_2&=\|(fx')|_F\|_2\\
&=\|(f(\tilde{x}|_{\Gamma\setminus F}))|_F\|_2\\
&=\|(f(\tilde{x}|_{K^{-1}F\setminus F}))|_F\|_2\\
&\le \|f(\tilde{x}|_{K^{-1}F\setminus F})\|_2\\
&\le \|f\|\cdot \|\tilde{x}|_{K^{-1}F\setminus F}\|_2\\
&\le \|f\|\cdot (d|K^{-1}F\setminus F|)^{1/2}/2\le \eta (d|F|)^{1/2}.
\end{align*}
For $r>0$ denote by $B_{F, r}$ the closed  ball in $(\ell^2_\Rb(F))^{d\times 1}$ with center $0$ and radius $r(d|F|)^{1/2}$. Then $x'\in T^{-1}(B_{F, \eta})$.

Recall $\sD_{f, F, \kappa}$  in Notation~\ref{N-product of eigenvalues}. Taking $\cX=F\times \{1, \dots, d\}$  in  Lemma~\ref{L-bound preimage}
we have
\begin{align*}
N_\varepsilon(T^{-1}(B_{F, \eta}), \|\cdot\|_2/(d|F|)^{1/2})
&=N_1(T^{-1}(B_{F, \kappa/4}), \|\cdot\|_2/(d|F|)^{1/2})\\
&\le 1/\sD_{f, F, \kappa}.
\end{align*}
Thus we can find some $\cW_2\subseteq \cW_1$ and $y\in \cW_2$ such that
$$|\cW_1|\le  |\cW_2|/\sD_{f, F, \kappa}$$
and for every $x\in \cW_2$ one has $\|x'-y'\|_2\le \varepsilon (d|F|)^{1/2}$. We fix such an element $y$ of $\cW_2$.

Let $x\in \cW_2$. Then $\Phi_F(x)=\Phi_F(y)$. Thus there exists $w_x\in (\Zb[F])^{d\times 1}$ such
that $fw_x=f\tilde{x}-f\tilde{y}$ on $F$. Then $fw_x=f(x'-y')+f(\tilde{x}|_F-\tilde{y}|_F)$ on $F$. Since
$f_F$ is injective, we get $w_x=x'-y'+\tilde{x}|_F-\tilde{y}|_F$. From \eqref{E-l2 metric} we get
$$\vartheta_{F, 2}(x, y)\le \|\tilde{x}|_F-\tilde{y}|_F-w_x\|_2/(d|F|)^{1/2}=\|x'-y'\|_2/(d|F|)^{1/2}\le \varepsilon.$$
As $\cW$ is $(\vartheta_{F, 2}, \varepsilon)$-separated, we get $x=y$. Thus $|\cW_2|=1$, and hence
\begin{align} \label{E-upper bound 4}
N_\varepsilon(X_f, \vartheta_{F, 2})&=|\cW|\\
\nonumber &\le |\cW_1|\cdot |(\Zb[F])^{d\times 1}/f_F(\Zb[F])^{d\times 1}| \\
\nonumber &\le |\cW_2|\cdot |(\Zb[F])^{d\times 1}/f_F(\Zb[F])^{d\times 1}|/\sD_{f, F, \kappa}\\
\nonumber &=  |(\Zb[F])^{d\times 1}/f_F(\Zb[F])^{d\times 1}|/\sD_{f, F, \kappa}\\
\nonumber &\overset{\eqref{E-upper bound 2}}= \det(f_F)/\sD_{f, F, \kappa}\\
\nonumber &\overset{\eqref{E-spectral measure and multiplicity}}=\exp\left(|F|\int_{\kappa+}^{\|f\|}\log t \, d\mu_{f, F}(t)\right).
\end{align}

Taking $g=f$ in  Lemma~\ref{L-measure weak convergence} we get
$$ \limsup_F\frac{\log N_\varepsilon(X_f, \vartheta_{F, 2})}{|F|}\overset{\eqref{E-upper bound 4}}\le \limsup_F\int_{\kappa+}^{\|f\|}\log t \, d\mu_{f, F}(t)\le \int_{\kappa+}^{\|f\|}\log t \, d\mu_f(t).$$
Letting $\kappa\to 0+$, we obtain
$$ \limsup_F\frac{\log N_\varepsilon(X_f, \vartheta_{F, 2})}{|F|}\le \int_{0+}^{\|f\|}\log t \, d\mu_f(t).$$
Since $\ker f=\{0\}$, from Section~\ref{SS-determinant} we have $\mu_f(\{0\})=0$. Thus
$$ \limsup_F\frac{\log N_\varepsilon(X_f, \vartheta_{F, 2})}{|F|}\le \int_{0+}^{\|f\|}\log t \, d\mu_f(t)=\int_0^{\|f\|}\log t \, d\mu_f(t)\overset{\eqref{E-determinant}}=\log \ddet_{\rm \cN\Gamma}f.$$
From \eqref{E-l2 definition of entropy} we get
$$ \rh(X_f)=\sup_{\varepsilon>0} \limsup_F\frac{\log N_\varepsilon(X_f, \vartheta_{F, 2})}{|F|}\le \log \ddet_{\rm \cN\Gamma}f.$$
\end{proof}

To show $\rh(X_f)\ge \log \ddet_{\rm \cN\Gamma}f$, we use Theorem~\ref{T-approximate solution formula for entropy} to prove the following
lemma and then apply Theorem~\ref{T-approximate}.

\begin{lemma}  \label{L-lower bound for positive case}
Let $f\in M_d(\Zb \Gamma)$ be positive in $M_d(\cN\Gamma)$ with $\ker f=\{0\}$.  Then
$$ \rh(X_f)\ge \limsup_{F} \frac{\log \det(f_F)}{|F|}.$$
\end{lemma}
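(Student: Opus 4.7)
The plan is to invoke Theorem~\ref{T-approximate solution formula for entropy} with $J$ equal to the set of rows of $f$, which identifies $X_J$ with $X_f$ as in \eqref{E-Xf}. This reduces the problem to producing, for each sufficiently small $\varepsilon>0$ and each $F\in\cF(\Gamma)$, a subset of $X_{J,F}$ of cardinality at least $\det(f_F)$ that is $(\vartheta_{F,\infty},\varepsilon)$-separated.

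The construction is via the restricted operator $f_F\in B((\ell^2(F))^{d\times 1})$. Since $f\ge 0$ and $\ker f=\{0\}$, Lemma~\ref{L-invertible} gives that $f_F$ is invertible, and since $f\in M_d(\Zb\Gamma)$, the matrix $f_F$ has integer entries, so the lattice $\Lambda_F:=f_F^{-1}(\Zb^{d|F|})$ contains $\Zb^{d|F|}$. By Lemma~\ref{L-determinant quotient group size}, the finite quotient $G_F:=\Lambda_F/\Zb^{d|F|}\subseteq (\Rb/\Zb)^{d|F|}$ has cardinality $\det(f_F)$. Each element $g\in G_F$ extends by zero outside $F$ to an element $\tilde g\in((\Rb/\Zb)^{d\times 1})^\Gamma$; since the support is contained in $F$, the condition $f\tilde g\equiv 0\pmod{\Zb}$ on $F$ reduces to $f_F(g)\in\Zb^{d|F|}$, which is exactly the defining condition for $G_F$. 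Hence $\tilde G_F\subseteq X_{J,F}$.

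The separation step is the crux, but it is surprisingly clean. Suppose $g_1,g_2\in G_F$ have $\vartheta_{F,\infty}(\tilde g_1,\tilde g_2)<\varepsilon$; then there is a lift $h\in\Rb^{d|F|}$ of $g_1-g_2$ with $\|h\|_\infty<\varepsilon$. Since the matrix entries of $f_F$ are a restriction of coefficients of $f$, the $\ell^\infty$-to-$\ell^\infty$ operator norm of $f_F$ is bounded by $\|f\|_1$, giving $\|f_F h\|_\infty\le\|f\|_1\,\varepsilon$. However $f_F h\in\Zb^{d|F|}$ by construction of $G_F$, so if we choose $\varepsilon<\|f\|_1^{-1}$ then $f_F h=0$, and invertibility of $f_F$ forces $h=0$, i.e.\ $g_1=g_2$. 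Thus $\tilde G_F$ is $(\vartheta_{F,\infty},\varepsilon)$-separated for every such $\varepsilon$, yielding
\[
N_\varepsilon(X_{J,F},\vartheta_{F,\infty})\ \ge\ |G_F|\ =\ \det(f_F).
\]

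Taking logarithms, dividing by $|F|$, and then taking $\limsup_F$ followed by the supremum over $\varepsilon\in(0,\|f\|_1^{-1})$, Theorem~\ref{T-approximate solution formula for entropy} yields
\[
\rh(X_f)\ =\ \sup_{\varepsilon>0}\limsup_F\frac{\log N_\varepsilon(X_{J,F},\vartheta_{F,\infty})}{|F|}\ \ge\ \limsup_F\frac{\log\det(f_F)}{|F|},
\]
as desired. The only delicate point is the separation estimate, but it is a direct consequence of the integrality of $f_F$ combined with the trivial norm bound $\|f_F\|_{\ell^\infty\to\ell^\infty}\le\|f\|_1$; no spectral analysis or quasi-tiling is required here, since Theorem~\ref{T-approximate solution formula for entropy} has already absorbed that technology.
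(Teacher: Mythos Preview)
Your proof is correct and follows essentially the same approach as the paper: both identify $X_f$ with $X_J$ for $J$ the rows of $f$, construct for each $F$ a set of $\det(f_F)$ many points in $X_{J,F}$ coming from $f_F^{-1}(\Zb^{d|F|})/\Zb^{d|F|}$ (the paper phrases this via representatives $Y_F\subseteq([0,1)^F)^{d\times 1}$), prove $(\vartheta_{F,\infty},\varepsilon)$-separation using integrality of $f_F$ together with the bound $\|f_F\|_{\ell^\infty\to\ell^\infty}\le\|f\|_1$ and invertibility of $f_F$ from Lemma~\ref{L-invertible}, and then invoke Theorem~\ref{T-approximate solution formula for entropy}. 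The only cosmetic difference is that the paper fixes $\varepsilon=1/(2\|f\|_1)$ from the outset, while you allow any $\varepsilon<\|f\|_1^{-1}$.
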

\begin{proof} Set $\varepsilon=1/(2\|f\|_1)$.
Denote by $P$ the natural quotient map $(\ell^\infty_{\Rb}(\Gamma))^{d\times 1}\rightarrow ((\Rb/\Zb)^\Gamma)^{d\times 1}=((\Rb/\Zb)^{d\times 1})^\Gamma$. Denote by $J$ the set of rows of $f$. Then $J\subseteq (\Zb\Gamma)^{1\times d}$ and
$X_f=X_J$.

Let $F\in \cF(\Gamma)$. Denote by $Y_F$ the set of $x\in ([0, 1)^F)^{d\times 1}$ satisfying $f_Fx\in (\Zb[F])^{d\times 1}$. Then $P(Y_F)\subseteq X_{J, F}$.
Let $x, y\in Y_F$. Suppose that $\vartheta_{F, \infty}(P(x), P(y))\le \varepsilon$. From \eqref{E-linfinity metric} we can find $z\in (\Zb[F])^{d\times 1}$ such
that $\|x-y-z\|_\infty\le \varepsilon$. Note that
$f(x-y-z)=fx-fy-fz$ takes values in $\Zb^{d\times 1}$ on $F$, and
$\|f(x-y-z)\|_\infty\le \|f\|_1\|x-y-z\|_\infty\le 1/2$.
Thus $f(x-y-z)=0$ on $F$, i.e., $f_F(x-y-z)=0$. Since $f_F$ is injective by Lemma~\ref{L-invertible},
we get $x-y-z=0$. Because $x-y$ takes values in $(-1, 1)^{d\times 1}$ on $F$ and $z$ takes values in $\Zb^{d\times 1}$ on $F$, we get $x=y$.
Therefore
$$N_\varepsilon(X_{J, F}, \vartheta_{F, \infty})\ge |Y_F|.$$

From Lemma~\ref{L-determinant quotient group size} one has
\begin{align*}
|Y_F|=\det(f_F).
\end{align*}

By Theorem~\ref{T-approximate solution formula for entropy} we get
$$ \rh(X_f)=\rh(X_J)\ge \limsup_F\frac{\log N_\varepsilon(X_{J, F}, \vartheta_{F, \infty})}{|F|}\ge \limsup_F\frac{\log \det(f_F)}{|F|}.$$
\end{proof}

Now Lemma~\ref{L-positive} follows from Lemmas~\ref{L-upper} and \ref{L-lower bound for positive case}, and Theorem~\ref{T-approximate}.

\subsection{Proof of Theorem~\ref{T-main}} \label{SS-proof}

In this subsection we prove Theorem~\ref{T-main}.
Let $\Gamma$ act on a compact metrizable abelian group $X$ by automorphisms.
For any nonempty finite subset $\sE$ of the Pontryagin dual $\widehat{X}$ of $X$, the function $F\mapsto \log |\sum_{s\in F}s^{-1}\sE|$ defined on $\cF(\Gamma)$
satisfies
the conditions of the Ornstein-Weiss lemma \cite[Theorem 6.1]{LW}, thus the limit
$$\lim_F\frac{\log |\sum_{s\in F}s^{-1}\sE|}{|F|}$$ exists and is a nonnegative real number.
 We need the following  result of Peters \cite[Theorem 6]{Peters}:

\begin{theorem} \label{T-Peters}
Let $\Gamma$ act on a compact metrizable abelian group $X$ by automorphisms. Then
$$ \rh(X)=\sup_\sE\lim_F\frac{\log |\sum_{s\in F}s^{-1}\sE|}{|F|},$$
where $\sE$ ranges over all nonempty finite subsets of $\widehat{X}$.
\end{theorem}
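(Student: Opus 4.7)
The plan is to establish both inequalities separately. The existence of the limit $\lim_F \log|\sum_{s\in F}s^{-1}\sE|/|F|$ (with $\sum$ denoting Minkowski sum in the abelian group $\widehat{X}$) follows immediately from the Ornstein-Weiss lemma applied to $\varphi_\sE(F):=\log|\sum_{s\in F}s^{-1}\sE|$, which is right-translation invariant and satisfies $\varphi_\sE(F_1\sqcup F_2)\le\varphi_\sE(F_1)+\varphi_\sE(F_2)$ (since $|A+B|\le|A|\cdot|B|$ for Minkowski sums of finite sets in an abelian group), as already noted in the paragraph preceding the theorem.

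For the direction $\rh(X)\ge\lim_F\varphi_\sE(F)/|F|$ at a fixed finite $\sE\subseteq\widehat{X}$, I would first pass to the closed quotient $X/Y_\sE$, where $Y_\sE$ denotes the annihilator of the $\Gamma$-invariant subgroup of $\widehat{X}$ generated by $\sE$, using $\rh(X)\ge\rh(X/Y_\sE)$ by monotonicity of entropy under $\Gamma$-equivariant surjections. On $X/Y_\sE$ the characters in $\sE$ together with their $\Gamma$-translates separate points, so the continuous pseudometric $\theta_\sE(x,y):=\max_{\chi\in\sE}\vartheta(\chi(x),\chi(y))$ (lifted from the standard metric on $\Tb$) is dynamically generating, and Lemma~\ref{L-l2 definition of entropy} expresses $\rh(X/Y_\sE)$ as the $\varepsilon$-supremum of $\limsup_F \log N_\varepsilon(X/Y_\sE,(\theta_\sE)_{F,\infty})/|F|$. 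The counting input is the $\Gamma$-equivariant homomorphism $\Psi_F\colon X/Y_\sE\to(\Tb^\sE)^F$ given by $\Psi_F(x)(s)=(\chi(s^{-1}x))_{\chi\in\sE}$, whose image is a closed subgroup of $(\Tb^\sE)^F$ with Pontryagin dual equal to the subgroup of $\widehat{X}$ generated by $\bigcup_{s\in F}s^{-1}\sE$; since distinct elements of the Minkowski sum $\sum_{s\in F}s^{-1}\sE$ yield distinct characters of this compact group, a Haar-volume argument on the image produces at least $|\sum_{s\in F}s^{-1}\sE|$ many $(\theta_\sE)_{F,\infty}$-separated points at sufficiently small $\varepsilon>0$.

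For the reverse inequality $\rh(X)\le\sup_\sE\lim_F\varphi_\sE(F)/|F|$, I would invoke Deninger's theorem (cited at the end of Section~\ref{SS-entropy}) to replace topological entropy by the Haar-measure entropy $h_\nu(X)$, and then use the Kolmogorov-Sinai variational formula $h_\nu(X)=\sup_\xi h_\nu(X,\xi)$ over finite measurable partitions. Since characters generate a uniformly dense subalgebra of $C(X)$ by Stone-Weierstrass, every finite measurable partition can be approximated in Rokhlin metric by a partition $\xi_\sE$ obtained by lifting a sufficiently fine partition of $\Tb$ through the characters in some finite $\sE\subseteq\widehat{X}$. It remains to bound $h_\nu(X,\xi_\sE)\le\lim_F\varphi_\sE(F)/|F|$, which is obtained by estimating the number of atoms of $\xi_{\sE,F}=\bigvee_{s\in F}s^{-1}\xi_\sE$ via Fourier-analytic accounting: the effective count of joint character values on $\bigcup_{s\in F}s^{-1}\sE$ is controlled by the cardinality of the Minkowski sum $\sum_{s\in F}s^{-1}\sE$. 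This last estimate, bridging the discrete combinatorial count in $\widehat{X}$ with the partition entropy on $X$, is the main technical obstacle and the heart of Peters' original argument.
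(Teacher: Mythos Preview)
The paper does not itself prove this theorem; it cites Peters \cite{Peters} (proved there for $\Gamma=\Zb$) and remarks that the same argument works for amenable $\Gamma$. Your overall architecture---two inequalities, reduction to $X/Y_\sE$, a dynamically generating pseudometric from $\sE$, and Kolmogorov--Sinai for the upper bound---matches Peters' outline, and you are honest that the upper bound is the technical heart and defer it.

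There is, however, a concrete gap in your lower-bound sketch. The step ``distinct elements of $\sum_{s\in F}s^{-1}\sE$ are distinct characters of the image $H=\Psi_F(X/Y_\sE)$, so a Haar-volume argument gives at least $|\sum_{s\in F}s^{-1}\sE|$ many $(\theta_\sE)_{F,\infty}$-separated points'' is false as stated: having $N$ distinct characters on a compact abelian group does not force $N_\varepsilon(H)\ge N$ for fixed $\varepsilon$. Take $\Gamma$ acting trivially on $X=\Tb$ with $\sE=\{1,2\}\subset\Zb$; for $|F|=n$ the Minkowski sum is $\{n,\dots,2n\}$ of size $n+1$, while $H$ is a one-dimensional torus diagonally embedded in $(\Tb^2)^F$, so $N_\varepsilon(H)\sim 1/\varepsilon$ independently of $n$. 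The theorem still holds here (both sides vanish), but your mechanism breaks: Lemma~\ref{L-l2 definition of entropy} fixes $\varepsilon$ before letting $F$ become invariant, so ``sufficiently small $\varepsilon$'' cannot depend on $F$. The correct route to the lower bound also goes through the measure entropy of a partition built from $\sE$, with the Minkowski-sum count entering via a Fourier-analytic estimate on the same level as the step you flag in the other direction; a separated-point count alone does not carry it.
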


In \cite{Peters}, Theorem~\ref{T-Peters} was stated and proved only for the case $\Gamma=\Zb$, but the proof there works for general countable discrete amenable groups.
It was used in \cite{CL} first to study entropy properties of algebraic actions.

We specialize Theorem~\ref{T-Peters} to the case of finitely presented algebraic actions.
The following lemma shows that we may restrict attention to a specific family of finite subsets of the Pontrjagin dual if $X=X_g$.
We need to introduce some notations.
For $n\in \Nb$ and $F\in \cF(\Gamma)$, we denote by $\Zb[F, n]$ the set of $x\in \Zb[F]$ satisfying $\|x\|_\infty \le n$. For $f\in M_{d'\times d}(\Zb\Gamma)$ and $W\subseteq (\Zb\Gamma)^{d\times 1}$ (resp.~$W\subseteq (\Zb\Gamma)^{1\times d}$), we denote
by $W+f^*(\Zb\Gamma)^{d'\times 1}$ (resp.~$W+(\Zb\Gamma)^{1\times d'} f$) the subset of $(\Zb\Gamma)^{d\times 1}/f^*(\Zb\Gamma)^{d'\times 1}$ (resp.~$(\Zb\Gamma)^{1\times d}/(\Zb\Gamma)^{1\times d'} f$) consisting of elements of the form $x+f^*(\Zb\Gamma)^{d'\times 1}$
(resp.~$x+(\Zb\Gamma)^{1\times d'} f$) with $x\in W$.

\begin{lemma} \label{L-Peters}
Let $g\in M_{d'\times d}(\Zb\Gamma)$. For each $n\in \Nb$, the limit
$$\lim_F\frac{\log |(\Zb[F, n])^{d\times 1}+g^*(\Zb\Gamma)^{d'\times 1}|}{|F|}$$ exists, and is a nonnegative real number.
Furthermore,
$$ \rh(X_g)=\sup_{n\in \Nb}\lim_F\frac{\log |(\Zb[F, n])^{d\times 1}+g^*(\Zb\Gamma)^{d'\times 1}|}{|F|}.$$
\end{lemma}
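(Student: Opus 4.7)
The approach is to specialize Peters' Theorem~\ref{T-Peters} to $X = X_g$. I would first identify $\widehat{X_g}$ with $(\Zb\Gamma)^{d\times 1}/g^*(\Zb\Gamma)^{d'\times 1}$ via the $*$-anti-involution on $\Zb\Gamma$, noting that this translates the natural left $\Gamma$-action $s\cdot[y]=[sy]$ on row vectors into the action $s\cdot[x]=[xs^{-1}]$ on column vectors. To verify that the limit exists for each fixed $n$, I would apply the Ornstein-Weiss lemma to $\varphi_n(F) := \log|(\Zb[F, n])^{d\times 1} + g^*(\Zb\Gamma)^{d'\times 1}|$. Monotonicity and right translation invariance are straightforward; the latter uses that right multiplication by $s\in\Gamma$ is a $\Zb$-linear bijection on $(\Zb\Gamma)^{d\times 1}$ preserving $g^*(\Zb\Gamma)^{d'\times 1}$, since it commutes with left multiplication by $g^*$. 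Subadditivity $\varphi_n(A\cup B) \le \varphi_n(A) + \varphi_n(B)$ follows from the inclusion $(\Zb[A \cup B, n])^{d\times 1} \subseteq (\Zb[A, n])^{d\times 1} + (\Zb[B, n])^{d\times 1}$ together with the trivial bound $|X+Y| \le |X|\cdot |Y|$ in the abelian quotient.

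For the lower bound on $\rh(X_g)$, I would apply Peters' Theorem to the specific finite set $\sE_n := \{[x] : x \in ([-n, n] \cap \Zb)^d\}\subseteq\widehat{X_g}$, where each $x \in \Zb^d$ is viewed as a column vector in $(\Zb\Gamma)^{d\times 1}$ supported at $e$. Unwinding the Minkowski sumset under the action $s\cdot[x]=[xs^{-1}]$ yields the identity $\sum_{s \in F} s^{-1}\sE_n = (\Zb[F, n])^{d\times 1} + g^*(\Zb\Gamma)^{d'\times 1}$ as subsets of the quotient, so Peters' Theorem gives $\rh(X_g) \ge \lim_F \varphi_n(F)/|F|$ for every $n$, and hence $\rh(X_g) \ge \sup_n \lim_F \varphi_n(F)/|F|$.

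For the reverse inequality, any nonempty finite $\sE\subseteq\widehat{X_g}$ admits finitely many representatives in $(\Zb\Gamma)^{d\times 1}$ which are all supported in a common $K\in\cF(\Gamma)$ with $\ell^\infty$-entries bounded by some $n\in\Nb$. Then every element of $\sum_{s\in F} s^{-1}\sE$ is represented by a column vector supported in $KF$, and at each point $t \in KF$ at most $|K|$ summands contribute, each of absolute value at most $n$; this yields $|\sum_{s\in F}s^{-1}\sE| \le |(\Zb[KF, n|K|])^{d\times 1} + g^*(\Zb\Gamma)^{d'\times 1}|$. Since $|KF|/|F|\to 1$ and $KF$ itself becomes more and more left-invariant as $F$ does, the convergence established above gives $\lim_F |F|^{-1}\log|\sum_{s\in F}s^{-1}\sE| \le \lim_F \varphi_{n|K|}(F)/|F|$, and taking $\sup_\sE$ in Peters' Theorem closes the inequality. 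The main obstacle I anticipate is the careful bookkeeping of the $*$-anti-involution and the resulting switch between left translation on row vectors and right translation on column vectors, together with confirming that the combinatorial inclusions in the upper-bound step pass correctly to the quotient by $g^*(\Zb\Gamma)^{d'\times 1}$; the remaining estimates are essentially sumset counting and standard amenable-group facts.
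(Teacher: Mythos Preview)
Your proposal is correct and follows essentially the same route as the paper's proof: identify $\widehat{X_g}$ with the column-vector quotient via the $*$-map, apply Peters' Theorem to the finite sets $\sE_n$ of classes of vectors supported at $e$ with entries in $[-n,n]$, and for the upper bound absorb an arbitrary $\sE$ into $(\Zb[KF,n|K|])^{d\times 1}$ using the support/coefficient counting argument together with $|KF|/|F|\to 1$. The only cosmetic difference is that the paper does the computation in the row-vector picture and then applies the isomorphism $\Phi$, and it obtains existence of the limit by invoking the general Ornstein--Weiss statement already recorded before Theorem~\ref{T-Peters} rather than re-verifying the subadditivity conditions for $\varphi_n$ directly.
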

\begin{proof}
The adjoint map $(\Zb\Gamma)^{1\times d}\rightarrow (\Zb\Gamma)^{d\times 1}$ sending $h$ to $h^*$ induces an abelian group isomorphism $\Phi$ from
$\widehat{X_g}=(\Zb\Gamma)^{1\times d}/(\Zb\Gamma)^{1\times d'} g$ onto $(\Zb\Gamma)^{d\times 1}/g^*(\Zb\Gamma)^{d'\times 1}$.

Let $n\in \Nb$. Via the embedding $\Zb\hookrightarrow \Zb\Gamma$ sending $a$ to $ae$ we identify $\Zb$ with a subring of $\Zb\Gamma$. Then we identify $\Zb^{1\times d}$ with a subgroup of $(\Zb\Gamma)^{1\times d}$. Denote by $\sE$ the finite subset $\{k+(\Zb\Gamma)^{1\times d'}g: k\in \Zb^{1\times d}, \|k\|_\infty\le n\}$ of $(\Zb\Gamma)^{1\times d}/(\Zb\Gamma)^{1\times d'}g$. For each $F\in \cF(\Gamma)$, we have
$\sum_{s\in F}s^{-1}\sE=(\Zb[F^{-1}, n])^{1\times d}+(\Zb\Gamma)^{1\times d'} g$, and hence
$\Phi(\sum_{s\in F}s^{-1}\sE)=(\Zb[F, n])^{d\times 1} +g^*(\Zb\Gamma)^{d'\times 1}$. Thus
$$\lim_F\frac{\log |(\Zb[F, n])^{d\times 1}+g^*(\Zb\Gamma)^{d'\times 1}|}{|F|}=\lim_F\frac{\log |\sum_{s\in F}s^{-1}\sE|}{|F|}\in [0, +\infty).$$
 From Theorem~\ref{T-Peters} we get
$$ \rh(X_g)\ge \sup_{n\in \Nb}\lim_F\frac{\log |(\Zb[F, n])^{d\times 1}+g^*(\Zb\Gamma)^{d'\times 1}|}{|F|}.$$

Let $\sE$ be a nonempty finite subset of $\widehat{X_g}=(\Zb\Gamma)^{1\times d}/(\Zb\Gamma)^{1\times d'} g$. Then there exist $n\in \Nb$ and $K\in \cF(\Gamma)$ such that
$\sE\subseteq (\Zb[K, n])^{1\times d}+(\Zb\Gamma)^{1\times d'} g$. Let $F\in \cF(\Gamma)$.
Note that each element of $\Gamma$ can be written as $s^{-1}t$ with $s\in F$ and $t\in K$ for at most $|K|$ ways. Thus
$$\sum_{s\in F}s^{-1}\sE\subseteq (\sum_{s\in F}s^{-1}\Zb[K, n])^{1\times d}+(\Zb\Gamma)^{1\times d'} g\subseteq (\Zb[F^{-1}K, |K|n])^{1\times d}+(\Zb\Gamma)^{1\times d'} g.$$
When $F$ becomes more and more left invariant, $K^{-1}F$ also becomes more and more left invariant, and $|K^{-1}F|/|F|\to 1$. Therefore
\begin{align*}
\lim_F\frac{\log|\sum_{s\in F}s^{-1}\sE|}{|F|}&\le \limsup_F\frac{\log|(\Zb[F^{-1}K, |K|n])^{1\times d}+(\Zb\Gamma)^{1\times d'} g|}{|F|} \\
&=\limsup_F\frac{\log|(\Zb[F^{-1}K, |K|n])^{1\times d}+(\Zb\Gamma)^{1\times d'} g|}{|K^{-1}F|} \\
&=\lim_F\frac{\log|(\Zb[F^{-1}, |K|n])^{1\times d}+(\Zb\Gamma)^{1\times d'} g|}{|F|} \\
&=\lim_F\frac{\log|(\Zb[F, |K|n])^{d\times 1}+g^*(\Zb\Gamma)^{d'\times 1} |}{|F|}.
\end{align*}
Since $\sE$ is an arbitrary nonempty finite subset of $\widehat{X_g}$, from Theorem~\ref{T-Peters} we get
$$ \rh(X_g)\le \sup_{n\in \Nb}\lim_F\frac{\log |(\Zb[F, n])^{d\times 1}+g^*(\Zb\Gamma)^{d'\times 1}|}{|F|}.$$
\end{proof}

We need the following result:

\begin{lemma}[Lemma 5.1 in \cite{Li}] \label{L-ball 1}
There exists some universal constant $C>0$ such that for any
$\lambda>1$,
there is some $0<\delta<1$ so that for any nonempty finite
set $\cX$, any positive integer $m$ with $|\cX|\le \delta m$, and any $M\ge 1$ one has
$$ |\{x\in \Zb[\cX]: \|x\|_2\le M\cdot m^{1/2}\}|\le
C\lambda^m M^{|\cX|}.$$
\end{lemma}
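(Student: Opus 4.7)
The plan is to reduce the statement to a classical lattice-point-in-a-Euclidean-ball count and then apply Stirling's formula, with the exponent optimized through a suitable choice of $\delta$. Write $n := |\cX|$ and, via any ordering of $\cX$, identify $\Zb[\cX]$ with $\Zb^n$ and the set in question with $\{x \in \Zb^n : \|x\|_2 \le R\}$, where $R := Mm^{1/2}$.

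The first step is the standard volume-comparison bound: each $x \in \Zb^n$ is the center of a closed unit cube $C_x \subseteq \Rb^n$, these cubes are pairwise disjoint, and each has diameter $\sqrt{n}$; hence any $C_x$ with $\|x\|_2 \le R$ lies in the Euclidean ball of radius $R + \sqrt{n}/2$ about the origin. Using the hypotheses $n \le \delta m$ with $\delta < 1$ and $M \ge 1$, one has $R + \sqrt{n}/2 \le 2Mm^{1/2}$, hence
$$ N := \bigl|\{x \in \Zb^n : \|x\|_2 \le R\}\bigr| \le \frac{\pi^{n/2}}{\Gamma(n/2+1)}\,(2Mm^{1/2})^n. $$

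Next, Stirling's formula supplies a universal constant $c_0 > 0$ with $\Gamma(n/2+1) \ge c_0 \sqrt{n}\,(n/(2e))^{n/2}$ for every $n \ge 1$. Substituting this, simplifying, and writing $t := n/m \in (0,\delta]$ yields
$$ N \le \frac{M^n}{c_0\sqrt{n}}\left(\frac{8\pi e\,m}{n}\right)^{n/2} = \frac{M^n}{c_0\sqrt{n}}\,\exp\!\left(\tfrac{m}{2}\,\varphi(t)\right), \qquad \varphi(t) := t\log(8\pi e/t). $$

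The final step is to tune $\delta$. The function $\varphi$ is continuous on $(0,\infty)$, satisfies $\varphi(t) \to 0$ as $t \to 0^+$, and is strictly increasing on $(0,8\pi)$, so given $\lambda > 1$ one may choose $\delta \in (0,1)$ small enough that $\varphi(\delta) \le 2\log\lambda$; then $\exp((m/2)\varphi(t)) \le \lambda^m$ throughout $(0,\delta]$, and the previous display becomes $N \le (1/c_0)\,\lambda^m M^n$. Setting $C := 1/c_0$ yields the claim. The one delicate point (rather than a serious obstacle) is the quantifier order: $C$ must be independent of $\lambda$ while $\delta$ may depend on it. This is automatic in the above argument because $c_0$ comes from Stirling alone, and the entire $\lambda$-dependence has been funneled into the choice of $\delta$ via the single continuous function $\varphi$.
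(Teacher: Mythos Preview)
Your argument is correct. The paper itself does not prove this lemma; it simply quotes it as Lemma~5.1 of \cite{Li} and uses it as a black box. Your proof is the standard one: bound the number of lattice points in a Euclidean ball by the volume of a slightly enlarged ball, then control the volume via Stirling's formula, absorbing the dependence on $n/m$ into a function $\varphi(t)=t\log(8\pi e/t)$ that tends to $0$ as $t\to 0^+$. The choice of $\delta$ so that $\varphi(\delta)\le 2\log\lambda$ handles the $\lambda$-dependence, while the Stirling constant $c_0$ gives the universal $C$. All the quantifiers are in the right order, and the small inequalities (e.g.\ $R+\sqrt{n}/2\le 2Mm^{1/2}$ from $n\le m$ and $M\ge 1$, and $1/\sqrt{n}\le 1$) are fine since $\cX$ is assumed nonempty.
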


Now, we establish a relationship between the cardinality of $(\Zb[F, n])^{d\times 1}+f^*(\Zb\Gamma)^{d'\times 1}$ and $N_{\varepsilon}(X_{J, F}, \vartheta_{F, \infty})$. This will be the key to prove Lemma~\ref{L-f < J}. Once we have established Lemma~\ref{L-f < J}, Theorem~\ref{T-main} will basically follow from an application of Yuzvinski\u{\i}'s addition formula.

\begin{lemma} \label{L-f < f*}
Let $\lambda>1$, $0<\kappa\le 1/2$, and $n\in \Nb$.
Let $C$ and $\delta$ be as in Lemma~\ref{L-ball 1}.
Let $f\in M_{d'\times d}(\Zb\Gamma)$ such that $\ker f=\{0\}$.
Denote by $J$ the subset of $(\Zb\Gamma)^{1\times d'}$ consisting of all rows of $f^*$ and $g\in (\Zb\Gamma)^{1\times d'}$ satisfying
$gf=0$.
Set $M=(8n\|f\|^2/\kappa) +2\|f\|(d'/d)^{1/2}$.
Denote by $K$ the support of $f$ as a $M_{d'\times d}(\Zb)$-valued function on $\Gamma$ union with $\{e\}$.
Then for any $F\in \cF(\Gamma)$ satisfying
$|K^{-1}KF\setminus F|\le \delta |F|$,
one has
\begin{align} \label{E-compare f and J}
 |(\Zb[F, n])^{d\times 1}+f^*(\Zb\Gamma)^{d'\times 1}|&\le C\lambda^{d|F|}M^{d|K^{-1}KF\setminus F|}
 (1+4\|f\|_1)^{d'|KF\setminus F|}\\
\nonumber &\hspace*{1cm} N_{1/(4\|f\|_1)}(X_{J, F}, \vartheta_{F, \infty})(\sD_{f^*f, F, \kappa})^{-1},
\end{align}
where $X_{J, F}$ and $\sD_{f^*f, F, \kappa}$ are defined by \eqref{E-X_JF} and  Notation~\ref{N-product of eigenvalues} respectively.
If furthermore $\Gamma$ is finite, then
\begin{align} \label{E-compare f and J finite}
 |(\Zb[\Gamma, n])^{d\times 1}+f^*(\Zb\Gamma)^{d'\times 1}|\le  N_{1/(2\|f\|_1)}(X_{J, \Gamma}, \vartheta_{\Gamma, \infty})(\sD_{f^*f, \Gamma, \kappa})^{-1}.
\end{align}
\end{lemma}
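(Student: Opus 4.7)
My plan is to construct, for each $x\in(\Zb[F,n])^{d\times 1}$, a canonical approximate solution $\bar y_x\in X_{J,F}$ in $((\Rb/\Zb)^{d'\times 1})^\Gamma$, cover $X_{J,F}$ by $(1/(4\|f\|_1))$-balls for $\vartheta_{F,\infty}$, and bound the number of cosets inside each ball by a boundary-plus-spectral argument. Since $\ker f=\{0\}$ implies $\ker(f^*f)=\{0\}$, Lemma~\ref{L-invertible} ensures $(f^*f)_F\in B((\ell^2(F))^{d\times 1})$ is invertible. Set
\[
\xi_x:=((f^*f)_F)^{-1}x\in(\ell^2_\Rb(F))^{d\times 1},\qquad y_x:=f\iota_F\xi_x\in(\ell^2_\Rb(\Gamma))^{d'\times 1},
\]
so that $y_x$ is supported in $KF$, and let $\bar y_x:=y_x+(\Zb^{d'\times 1})^\Gamma$. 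Direct computation gives $\bar y_x\in X_{J,F}$: on $F$, $f^*y_x=(f^*f)_F\xi_x=x\in\Zb^{d\times 1}$, so the rows of $f^*$ annihilate $\bar y_x$ on $F$; and for any $g\in(\Zb\Gamma)^{1\times d'}$ with $gf=0$, $gy_x=gf\iota_F\xi_x=0$.

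Fix a maximal $(1/(4\|f\|_1))$-separated subset $\cW\subseteq X_{J,F}$ for $\vartheta_{F,\infty}$ of cardinality $N_{1/(4\|f\|_1)}(X_{J,F},\vartheta_{F,\infty})$, and for each $x$ pick $\bar z_x\in\cW$ within distance $1/(4\|f\|_1)$ of $\bar y_x$. It suffices to bound, for each fixed $\bar z\in\cW$, the number of distinct cosets $[x]$ having a representative $x\in(\Zb[F,n])^{d\times 1}$ with $\bar z_x=\bar z$. Fix a base point $x_0$ in the bucket; for any other $x$, $\vartheta_{F,\infty}(\bar y_x,\bar y_{x_0})\le 1/(2\|f\|_1)$. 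Choose $w\in(\Zb[KF])^{d'\times 1}$ by rounding $y_x-y_{x_0}$ to the nearest integer on $F$ (so $\|(y_x-y_{x_0}-w)|_F\|_\infty\le 1/(2\|f\|_1)$) and on $KF\setminus F$ (so $\|(y_x-y_{x_0}-w)|_{KF\setminus F}\|_\infty\le 1/2$). For $t$ in the interior $F^\circ:=\{t\in F:Kt\subseteq F\}$, only $F$-values of $y_x-y_{x_0}-w$ enter the sum defining $(f^*(y_x-y_{x_0}-w))_t$, so this integer has absolute value at most $\|f\|_1\cdot(1/(2\|f\|_1))=1/2$ and must vanish. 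Hence $[x]-[x_0]$ is represented by an integer vector supported in $K^{-1}KF\setminus F^\circ$, determined by $w|_{KF\setminus F}$ together with the boundary integer data $\delta:=(x-x_0-f^*w)|_{K^{-1}KF\setminus F}$.

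To count these data, decompose $\xi_x-\xi_{x_0}$ spectrally with respect to $(f^*f)_F$: the component in eigenspaces with eigenvalue $>\kappa$ has $\ell^2$-norm at most $\kappa^{-1}\|x-x_0\|_2\le 2n\kappa^{-1}(d|F|)^{1/2}$, and combined with the elementary bound $\|x-x_0\|_2\le 2n(d|F|)^{1/2}$ yields an $\ell^2$-bound $M(d|F|)^{1/2}$ with $M=8n\|f\|^2/\kappa+2\|f\|(d'/d)^{1/2}$ on the relevant integer vector. Lemma~\ref{L-ball 1} applied with $m=d|F|$ and $|\cX|=d|K^{-1}KF\setminus F|\le\delta m$ then enumerates the integer boundary data $\delta$, producing the factor $C\lambda^{d|F|}M^{d|K^{-1}KF\setminus F|}$. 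The low-eigenvalue part of the spectral decomposition is handled by the volume-packing argument of Lemma~\ref{L-bound preimage}, contributing $(\sD_{f^*f,F,\kappa})^{-1}$. Finally, each integer entry of $w$ on $KF\setminus F$ sits in an interval of length at most $1+4\|f\|_1$ (after the spectral truncation), yielding $(1+4\|f\|_1)^{d'|KF\setminus F|}$. Multiplying these three factors and summing over $\cW$ gives \eqref{E-compare f and J}. When $\Gamma$ is finite and $F=\Gamma$, both boundary sets are empty and $F^\circ=F$, so only the spectral factor $(\sD_{f^*f,\Gamma,\kappa})^{-1}$ remains and the improved threshold $1/(2\|f\|_1)$ is available, producing \eqref{E-compare f and J finite}.

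\emph{Main obstacle.} The crux is the careful combination of three independent counting sources — the spectral fiber bound via Lemma~\ref{L-bound preimage}, the integer-valued boundary data via Lemma~\ref{L-ball 1}, and the residual freedom in $w$ on $KF\setminus F$ — each calibrated so that the integer-valued error on the interior $F^\circ$ is strictly bounded by $1/2$ and thus forced to vanish. The specific constants $M=8n\|f\|^2/\kappa+2\|f\|(d'/d)^{1/2}$ and $1/(4\|f\|_1)$ are chosen precisely to render these three sources compatible, and tracking the supports of $w$, $f^*w$, and $x-x_0$ through the spectral decomposition without losing the integrality foothold is where most of the work lies.
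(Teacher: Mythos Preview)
Your plan uses exactly the same three ingredients as the paper --- the spectral packing Lemma~\ref{L-bound preimage}, the lattice count Lemma~\ref{L-ball 1}, and the $X_{J,F}$-bucketing --- but the order in which you run the pigeonhole steps is reversed, and in the order you describe the argument does not close.

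The concrete gap is this. You fix $x_0$ in a $\vartheta_{F,\infty}$-bucket and then ``decompose $\xi_x-\xi_{x_0}$ spectrally''. The high-eigenvalue component of $\xi_x-\xi_{x_0}$ is indeed bounded by $2n\kappa^{-1}(d|F|)^{1/2}$, but the low-eigenvalue component has \emph{no} a priori bound, and it feeds directly into $y_x-y_{x_0}$, hence into $w$, hence into your boundary data $\delta=-(f^*w)|_{K^{-1}KF\setminus F}$. So the $\ell^2$-bound $M(d|F|)^{1/2}$ on $\delta$ that you need for Lemma~\ref{L-ball 1} does not follow. Saying the low part is ``handled by the volume-packing argument of Lemma~\ref{L-bound preimage}'' is not a separate count you can multiply in afterwards: that lemma bounds the number of $(4n/\kappa)$-separated points in $T^{-1}(B_{F,n})$, and until you have pigeonholed into one such cluster and re-chosen a base point \emph{inside it}, you have no control on $\|\xi_x-\xi_{x_0}\|_2$. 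For the same reason, your claimed $\ell^\infty$-bound ``each integer entry of $w$ on $KF\setminus F$ sits in an interval of length at most $1+4\|f\|_1$'' is unjustified: even after the spectral pigeonhole you only get an $\ell^2$-bound on $y_x-y_{x_0}$, never an $\ell^\infty$-bound on $KF\setminus F$.

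In the paper the order is: first the spectral pigeonhole (producing $\cW_1$ with $\|x'-y'\|_2\le(4n/\kappa)(d|F|)^{1/2}$), \emph{then} the $X_{J,F}$-pigeonhole under $\vartheta_{KF,\infty}$ (not $\vartheta_{F,\infty}$), and only then the rounding $x^\dag$ and Lemma~\ref{L-ball 1}. The factor $(1+4\|f\|_1)^{d'|KF\setminus F|}$ arises from comparing $\vartheta_{KF,\infty}$-separation to $\vartheta_{F,\infty}$-separation, not from enumerating $w|_{KF\setminus F}$. Using $\vartheta_{KF,\infty}$ (rather than your $\vartheta_{F,\infty}$) is exactly what forces $\|\tilde x\|_\infty\le 1/(2\|f\|_1)$ on all of $KF$, so that $f^*\tilde x=0$ on \emph{all} of $F$ (not just on your interior $F^\circ$); this is what makes the final identity $x-w=f^*(x^\dag-w^\dag)$ hold globally and gives $|\cW_3|=1$. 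Your route leaves a residual on $F\setminus F^\circ$ that is neither counted nor eliminated.
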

\begin{proof} Take $\cW\subseteq (\Zb[F, n])^{d\times 1}$ such that
\begin{align} \label{E-f < f* 1}
|\cW|=|(\Zb[F, n])^{d\times 1}+f^*(\Zb\Gamma)^{d'\times 1}|
\end{align}
 and $x-y\not \in f^*(\Zb\Gamma)^{d'\times 1}$ for all distinct $x, y\in \cW$.

Note that $\ker (f^*f)=\ker f=\{0\}$. Thus by Lemma~\ref{L-invertible} the linear map $(f^*f)_F$ defined by \eqref{E-g_F} is invertible.
Since $(f^*f)_F$ is bijective and has real coefficients under the natural basis of $(\ell^2(F))^{d\times 1}$, it restricts to an invertible linear map
$T: (\ell^2_\Rb(F))^{d\times 1}\rightarrow (\ell^2_\Rb(F))^{d\times 1}$.

For each $x\in \cW$, take $x'\in (\ell^2_\Rb(F))^{d\times 1}$ such that $f^*fx'=x$ on $F$. Set $\cW'=\{x': x\in \cW\}$.
For $r>0$ denote by $B_{F, r}$ the closed ball in $(\ell^2_\Rb(F))^{d\times 1}$ with center $0$ and radius $r(d|F|)^{1/2}$.
For each $x\in \cW$, one has
$$\|(f^*fx')|_F\|_2/(d|F|)^{1/2}=\|x\|_2/(d|F|)^{1/2}\le \|x\|_\infty \le n.$$
Thus $\cW'\subseteq T^{-1}(B_{F, n})$.
Taking $\cX=F\times \{1, \dots, d\}$ in Lemma~\ref{L-bound preimage} we get
\begin{align*}
N_{4n/\kappa}(\cW', \|\cdot\|_2/(d|F|)^{1/2})
&\le N_{4n/\kappa}(T^{-1}(B_{F, n}), \|\cdot\|_2/(d|F|)^{1/2})\\
&= N_1(T^{-1}(B_{F, \kappa/4}), \|\cdot\|_2/(d|F|)^{1/2})\\
&\le 1/\sD_{f^*f, F, \kappa}.
\end{align*}
Thus we can find some $\cW_1\subseteq \cW$ and $y\in \cW_1$ such that
\begin{align} \label{E-f < f* 2}
|\cW|\le |\cW_1|/\sD_{f^*f, F, \kappa}
\end{align}
and for every $x\in \cW_1$ one has
\begin{align} \label{E-f < f* 3}
\|x'-y'\|_2\le (4n/\kappa) (d|F|)^{1/2}.
\end{align}
We fix such an element $y$ of $\cW_1$.

Denote by $P$ the natural quotient map $(\ell^\infty_\Rb(\Gamma))^{d'\times 1}\rightarrow ((\Rb/\Zb)^\Gamma)^{d'\times 1}=((\Rb/\Zb)^{d'\times 1})^\Gamma$. For each $x\in \cW_1$, the function
$f^*f(x'-y')=x-y$ takes values  in $\Zb^{d\times 1}$ on $F$, and it follows that $P(f(x'-y'))\in X_{J, F}$.
Then we can find some $\cW_2\subseteq \cW_1$ and $z\in \cW_2$ such that
\begin{align} \label{E-f < f* 4}
 |\cW_1|\le |\cW_2|N_{1/(2\|f\|_1)}(X_{J, F}, \vartheta_{KF, \infty})
\end{align}
and for every $x\in \cW_2$ one has
\begin{align} \label{E-f<f* 10}
\vartheta_{KF, \infty}(P(f(x'-z')), 0)=\vartheta_{KF, \infty}(P(f(x'-y')), P(f(z'-y')))\le 1/(2\|f\|_1).
\end{align}
We fix such an element $z$ of $\cW_2$.

Note that
$$ \vartheta_{KF, \infty}(u, v)=\max(\vartheta_{F, \infty}(u, v), \vartheta_{KF\setminus F}(u, v))$$
for all $u, v\in ((\Rb/\Zb)^{d'\times 1})^\Gamma$. Thus
\begin{align} \label{E-f < f* 5}
N_{1/(2\|f\|_1)}(X_{J, F}, \vartheta_{KF, \infty})&\le N_{1/(4\|f\|_1)}(X_{J, F}, \vartheta_{F, \infty}) \cdot N_{1/(4\|f\|_1)}(X_{J, F}, \vartheta_{KF\setminus F, \infty}) \\
\nonumber &\le N_{1/(4\|f\|_1)}(X_{J, F}, \vartheta_{F, \infty}) \cdot N_{1/(4\|f\|_1)}(((\Rb/\Zb)^{d'\times 1})^\Gamma, \vartheta_{KF\setminus F, \infty})\\
\nonumber &\overset{\eqref{E-approximate solution 4}}\le N_{1/(4\|f\|_1)}(X_{J, F}, \vartheta_{F, \infty}) \cdot (1+4\|f\|_1)^{d'|KF\setminus F|}.
\end{align}

Let $x\in \cW_2$. Note that the support of $f(x'-z')$ as a $\Rb^{d'\times 1}$-valued function on $\Gamma$ is contained in $KF$. Take $\tilde{x}\in ([-1/2, 1/2)^{d'\times 1})^{KF}$
such that $f(x'-z')-\tilde{x}\in (\Zb[KF])^{d'\times 1}$. From \eqref{E-linfinity metric} we have
$$\|\tilde{x}\|_\infty=\vartheta_{KF, \infty}(P(f(x'-z')), 0)\overset{\eqref{E-f<f* 10}}\le 1/(2\|f\|_1).$$
Set $x^\dag=f(x'-z')-\tilde{x}\in (\Zb[KF])^{d'\times 1}$.
Note that both $x-z$ and $f^*x^\dag$ are  in $(\Zb\Gamma)^{d\times 1}$, and $\|f^*\tilde{x}\|_\infty\le \|f\|_1\|\tilde{x}\|_\infty\le 1/2$.
Since
$$ x-z=f^*f(x'-z')=f^*\tilde{x}+f^*x^\dag$$
on $F$, we get $f^*\tilde{x}=0$ on $F$ and $x-z=f^*x^\dag$ on $F$. Also note that
\begin{align} \label{E-f < f* 6}
 \|f^*x^\dag\|_2 &\le \|f^*\|\cdot \|x^\dag\|_2\\
\nonumber &\le \|f\|\cdot\|f(x'-z')\|_2+\|f\|\cdot \|\tilde{x}\|_2\\
\nonumber &\le \|f\|^2\|x'-z'\|_2+\|f\|\cdot \|\tilde{x}\|_\infty (d'|KF|)^{1/2}\\
\nonumber &\le \|f\|^2(\|x'-y'\|_2+\|y'-z'\|_2)+\|f\|(d'|K^{-1}KF|)^{1/2} \\
\nonumber &\overset{\eqref{E-f < f* 3}}\le (8n\|f\|^2/\kappa)(d|F|)^{1/2} +2\|f\|(d'|F|)^{1/2}\\
\nonumber &= M (d|F|)^{1/2}.
\end{align}

From \eqref{E-f < f* 6} and Lemma~\ref{L-ball 1} we get that the cardinality of the set $\{(f^*x^\dag)|_{K^{-1}KF\setminus F}: x\in \cW_2\}$ is at most
$C\lambda^{d|F|}M^{d|K^{-1}KF\setminus F|}$. Thus we can find some $\cW_3\subseteq \cW_2$  such that
\begin{align} \label{E-f < f* 7}
|\cW_2|\le |\cW_3| C\lambda^{d|F|}M^{d|K^{-1}KF\setminus F|}
\end{align}
and $(f^*x^\dag)|_{K^{-1}KF\setminus F}$ is the same for all $x\in \cW_3$.

Let $x, w\in \cW_3$.
Then
$$x-w=(x-z)-(w-z)=f^*(x^\dag-w^\dag)$$
on $F$. Also $f^*(x^\dag-w^\dag)=0$ on $K^{-1}KF\setminus F$. Since the supports of $f^*(x^\dag-w^\dag)$ and $x-w$ as $\Zb^{d\times 1}$-valued functions on $\Gamma$ are contained in $K^{-1}KF$ and $F$ respectively,
we conclude that
\begin{align*}
 x-w=f^*(x^\dag-w^\dag)\in f^*(\Zb\Gamma)^{d'\times 1}.
\end{align*}
By the choice of $\cW$ we have $x=w$. Therefore $|\cW_3|=1$.

 Now \eqref{E-compare f and J} follows from the inequalities \eqref{E-f < f* 1}, \eqref{E-f < f* 2}, \eqref{E-f < f* 4}, \eqref{E-f < f* 5}, and \eqref{E-f < f* 7}.

 Suppose that $\Gamma$ is finite and $F=\Gamma$. For any $x\in \cW_2$, we have $x-z=f^*x^\dag\in f^*(\Zb\Gamma)^{d'\times 1}$, and hence $x=z$ by the choice of $\cW$. Thus $|\cW_2|=1$. Then \eqref{E-compare f and J finite} follows from the inequalities \eqref{E-f < f* 1}, \eqref{E-f < f* 2}, and \eqref{E-f < f* 4}.
\end{proof}

\begin{lemma} \label{L-f < J}
Let $f\in M_{d'\times d}(\Zb\Gamma)$ with $\ker f=\{0\}$.
Denote by $J$ the subset of $(\Zb\Gamma)^{1\times d'}$ consisting of all rows of $f^*$ and $g\in (\Zb\Gamma)^{1\times d'}$ satisfying
$gf=0$. Then
$$ \rh(X_f)\le \rh(X_J)\le \rh(X_{f^*}),$$
where $X_J$ is defined by \eqref{E-X_J}.
\end{lemma}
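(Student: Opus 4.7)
The plan is to establish the two inequalities separately, with the right-hand inequality essentially a soft consequence of Pontryagin duality and the left-hand one being the main technical step that assembles the preceding lemmas.

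For the easy inequality $\rh(X_J)\le \rh(X_{f^*})$, I will observe that $\cM_J$ contains the $\Zb\Gamma$-submodule generated by the rows of $f^*$, which is exactly $(\Zb\Gamma)^{1\times d} f^*$. Hence we have a surjection of discrete abelian $\Gamma$-modules
\[
(\Zb\Gamma)^{1\times d'}/(\Zb\Gamma)^{1\times d} f^* \twoheadrightarrow (\Zb\Gamma)^{1\times d'}/\cM_J,
\]
which under Pontryagin duality yields a continuous $\Gamma$-equivariant embedding $X_J \hookrightarrow X_{f^*}$ as a closed invariant subgroup. Topological entropy is monotone under passage to closed invariant subsystems, so $\rh(X_J)\le \rh(X_{f^*})$.

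For the main inequality $\rh(X_f)\le \rh(X_J)$, I will fix $n\in\Nb$ and combine the quantitative bound from Lemma~\ref{L-f < f*} with the two asymptotic tools developed earlier. Taking logarithms and dividing by $|F|$ in \eqref{E-compare f and J}, the factors $M^{d|K^{-1}KF\setminus F|}$ and $(1+4\|f\|_1)^{d'|KF\setminus F|}$ contribute terms that vanish as $F$ becomes more and more left invariant, since $K$ is fixed. Thus, for any $\lambda>1$ and any admissible $\kappa\in(0,1/2]$,
\begin{align*}
\lim_F \frac{\log |(\Zb[F,n])^{d\times 1}+f^*(\Zb\Gamma)^{d'\times 1}|}{|F|}
&\le d\log\lambda + \limsup_F \frac{\log N_{1/(4\|f\|_1)}(X_{J,F},\vartheta_{F,\infty})}{|F|} \\
&\quad + \limsup_F \Bigl(-\tfrac{\log \sD_{f^*f,F,\kappa}}{|F|}\Bigr).
\end{align*}
The middle term is bounded by $\rh(X_J)$ by Theorem~\ref{T-approximate solution formula for entropy} (applied to the set $J$ in $(\Zb\Gamma)^{1\times d'}$). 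For the third term, note $\ker(f^*f)=\ker f=\{0\}$ and $f^*f\in M_d(\Zb\Gamma)$ is positive, so Lemma~\ref{L-integral to positive determinant} gives $\ddet_{\cN\Gamma}(f^*f)\ge 1>0$. Thus Proposition~\ref{P-measure weak convergence} applies to $f^*f$: for any $\lambda'>1$ we can choose $\kappa>0$ so that the third term is bounded by $\log\lambda'$.

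Combining these bounds, for every $n\in\Nb$ and every $\lambda,\lambda'>1$,
\[
\lim_F \frac{\log |(\Zb[F,n])^{d\times 1}+f^*(\Zb\Gamma)^{d'\times 1}|}{|F|} \le d\log\lambda + \rh(X_J) + \log\lambda'.
\]
Letting $\lambda,\lambda'\downarrow 1$ and then taking the supremum over $n$, Lemma~\ref{L-Peters} yields $\rh(X_f)\le \rh(X_J)$. The main obstacle is purely bookkeeping: arranging the order of limits in $\kappa$, $\lambda$, $\lambda'$, and $F$, and verifying that the error terms coming from boundary effects like $|KF\setminus F|/|F|$ genuinely vanish along the net of more and more left invariant $F$; all substantive content has already been packaged into Lemmas~\ref{L-f < f*} and \ref{L-integral to positive determinant}, Proposition~\ref{P-measure weak convergence}, and Theorem~\ref{T-approximate solution formula for entropy}.
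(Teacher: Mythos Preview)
Your proposal is correct and follows essentially the same route as the paper: the inequality $\rh(X_J)\le \rh(X_{f^*})$ comes from the inclusion $X_J\subseteq X_{f^*}$, and the main inequality $\rh(X_f)\le \rh(X_J)$ is obtained by combining Lemma~\ref{L-Peters}, Lemma~\ref{L-f < f*}, Theorem~\ref{T-approximate solution formula for entropy}, and Proposition~\ref{P-measure weak convergence} (via Lemma~\ref{L-integral to positive determinant}) exactly as you describe. The only cosmetic difference is that the paper uses a single parameter $\lambda$ where you use two ($\lambda$ and $\lambda'$), arriving at $(d+1)\log\lambda$ instead of $d\log\lambda+\log\lambda'$; both tend to $0$ in the same way.
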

\begin{proof}According to \eqref{E-Xf}, we have $X_J\subseteq X_{f^*}$. Thus $\rh(X_J)\le \rh(X_{f^*})$.

In order to show $\rh(X_f)\le \rh(X_J)$, by Lemma~\ref{L-Peters} applied to $g=f$, it suffices to show
$$\lim_F\frac{\log |(\Zb[F, n])^{d\times 1}+f^*(\Zb\Gamma)^{d'\times 1}|}{|F|}\le \rh(X_J)+(d+1)\log \lambda $$
for all $n\in \Nb$ and $\lambda>1$. Let $n\in \Nb$ and $\lambda>1$.

We have $\ker (f^*f)=\ker f=\{0\}$. By Lemma~\ref{L-integral to positive determinant} and Proposition~\ref{P-measure weak convergence} applied to $g=f^*f$  there exists $0<\kappa<1$ such that
$$ \limsup_F\frac{\log (\sD_{f^*f, F, \kappa})^{-1}}{|F|}\le \log \lambda.$$
We may assume that $\kappa\le 1/2$.

By Theorem~\ref{T-approximate solution formula for entropy} we have
$$ \limsup_F\frac{\log N_{1/(4\|f\|_1)}(X_{J, F}, \vartheta_{F, \infty})}{|F|}\le \rh(X_J).$$

 From Lemma~\ref{L-f < f*} we get
\begin{align*}
\lim_F\frac{\log |(\Zb[F, n])^{d\times 1}+f^*(\Zb\Gamma)^{d'\times 1}|}{|F|}&\le d\log  \lambda +\limsup_F\frac{\log N_{1/(4\|f\|_1)}(X_{J, F}, \vartheta_{F, \infty})}{|F|}\\
&\hspace*{1cm}+\limsup_F\frac{\log (\sD_{f^*f, F, \kappa})^{-1}}{|F|}\\
&\le (d+1)\log \lambda+\rh(X_J)
\end{align*}
as desired.
\end{proof}

\begin{lemma} \label{L-exact}
Let $f\in M_{d'\times d}(\Zb\Gamma)$.
Denote by $J$ the subset of $(\Zb\Gamma)^{1\times d'}$ consisting of all rows of $f^*$ and $g\in (\Zb\Gamma)^{1\times d'}$ satisfying
$gf=0$.
Then one has a $\Gamma$-equivariant short exact sequence of compact metrizable groups
$$ 1\rightarrow X_f\rightarrow X_{f^*f}\rightarrow
X_J\rightarrow 1,$$
where the homomorphism $ X_{f^*f}\rightarrow X_J$ is given by left multiplication by $f$.
\end{lemma}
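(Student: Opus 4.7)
The natural approach is to dualize and establish the corresponding short exact sequence of discrete $\Zb\Gamma$-modules, then invoke the exactness of Pontryagin duality. Using the pairing $\langle h,x\rangle = (hx)_e$, we identify $\widehat{X_f} = (\Zb\Gamma)^{1\times d}/(\Zb\Gamma)^{1\times d'}f$, $\widehat{X_{f^*f}} = (\Zb\Gamma)^{1\times d}/(\Zb\Gamma)^{1\times d}f^*f$, and $\widehat{X_J} = (\Zb\Gamma)^{1\times d'}/\cM_J$. The relation $\langle hf,x\rangle = \langle h,fx\rangle$ shows that the proposed compact-group homomorphism $x\mapsto fx$ is Pontryagin-dual to the map $h\mapsto hf$ on the relevant quotients, while the inclusion $X_f\hookrightarrow X_{f^*f}$ is dual to the natural projection.

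My plan is therefore to verify that the algebraic sequence
\[
0 \to (\Zb\Gamma)^{1\times d'}/\cM_J \xrightarrow{h\mapsto hf} (\Zb\Gamma)^{1\times d}/(\Zb\Gamma)^{1\times d}f^*f \to (\Zb\Gamma)^{1\times d}/(\Zb\Gamma)^{1\times d'}f \to 0
\]
is exact. First I would check that the maps are well defined: right multiplication by $f$ sends each row of $f^*$ into $(\Zb\Gamma)^{1\times d}f^*f$ and annihilates every $g$ with $gf=0$, so it kills $\cM_J$; the second map is well defined because $(\Zb\Gamma)^{1\times d}f^*f \subseteq (\Zb\Gamma)^{1\times d'}f$. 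Surjectivity of the projection is immediate, and exactness in the middle reduces to the identification of its kernel as $(\Zb\Gamma)^{1\times d'}f/(\Zb\Gamma)^{1\times d}f^*f$, which is by definition the image of $h\mapsto hf$.

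The essential step is the injectivity of $h\mapsto hf$ on $(\Zb\Gamma)^{1\times d'}/\cM_J$: if $hf = h'f^*f$ for some $h'\in (\Zb\Gamma)^{1\times d}$, then $(h-h'f^*)f = 0$, so $h-h'f^* \in J$ and hence $h \in \cM_J$. This is the one place where the precise definition of $\cM_J$ (rows of $f^*$ \emph{together with} the right annihilator of $f$ in $(\Zb\Gamma)^{1\times d'}$) is used in a non-formal way; without including the right annihilator one cannot in general conclude that $h$ lies in $\cM_J$.

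Finally, applying Pontryagin duality to the established exact sequence of discrete abelian groups yields the desired short exact sequence of compact abelian groups. The $\Gamma$-equivariance comes for free from the $\Zb\Gamma$-linearity of every algebraic map involved (right multiplication by $f$ and the quotient maps are all $\Zb\Gamma$-linear, so their duals intertwine the $\Gamma$-actions). The main obstacle is the injectivity step above; everything else is bookkeeping.
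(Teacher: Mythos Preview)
Your proof is correct and follows essentially the same approach as the paper: dualize to the discrete module sequence, verify exactness there (with the injectivity step being the only substantive check, via $(h-h'f^*)f=0$), and then apply Pontryagin duality. Your explicit remark on why $\cM_J$ must contain both the rows of $f^*$ and the right annihilator of $f$ is a nice clarification that the paper leaves implicit.
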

\begin{proof}
The dual sequence of the above one is the following
\begin{align} \label{E-exact}
0\leftarrow (\Zb\Gamma)^{1\times d}/(\Zb\Gamma)^{1\times d'}f \leftarrow (\Zb\Gamma)^{1\times d}/(\Zb\Gamma)^{1\times d}
f^*f\leftarrow (\Zb\Gamma)^{1\times d'}/\cM_J\leftarrow
0,
\end{align}
where the homomorphism $(\Zb\Gamma)^{1\times d}/(\Zb\Gamma)^{1\times d} f^*f \leftarrow (\Zb\Gamma)^{1\times d'}/\cM_J$ is given by right multiplication by $f$.
By Pontryagin duality it suffices to show that
\eqref{E-exact} is exact. Clearly it is exact at
$(\Zb\Gamma)^{1\times d}/(\Zb\Gamma)^{1\times d'}f$ and $(\Zb\Gamma)^{1\times d}/(\Zb\Gamma)^{1\times d}
f^*f$.
Suppose that
$x\in (\Zb\Gamma)^{1\times d'}/\cM_J$ and $xf=0$ in
$(\Zb\Gamma)^{1\times d}/(\Zb\Gamma)^{1\times d}
f^*f$. Say, $x$ is represented by $\tilde{x}$
in $(\Zb\Gamma)^{1\times d'}$. Then $\tilde{x}f=\tilde{z}f^*f$
for some $\tilde{z}\in (\Zb\Gamma)^{1\times d}$. It follows easily that $\tilde{x}$ lies in $\cM_J$.
Consequently, $x=0$ and hence \eqref{E-exact} is also exact at
$(\Zb\Gamma)^{1\times d'}/\cM_J$.
\end{proof}

The following result is well known. For the convenience of the reader, we give a proof.

\begin{proposition} \label{P-zero divisor}
Let $f\in M_d(\Zb\Gamma)$. Then the following are equivalent:
\begin{enumerate}
\item $\ker f\neq \{0\}$;


\item $\ker f^*\neq \{0\}$;

\item $fg=0$ for some nonzero $g\in M_d(\Zb\Gamma)$.
\end{enumerate}
\end{proposition}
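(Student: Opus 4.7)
The plan is to prove $(3)\Rightarrow(1)$ trivially, $(1)\Leftrightarrow(2)$ by polar decomposition in the finite von Neumann algebra $M_d(\cN\Gamma)$, and $(1)\Rightarrow(3)$ by combining Lemma~\ref{L-Elek} with a descent argument from $\Cb$ to $\Zb$.

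For $(3)\Rightarrow(1)$: if $g\in M_d(\Zb\Gamma)$ is nonzero with $fg=0$, then some column $y$ of $g$ is a nonzero element of $(\Zb\Gamma)^{d\times 1}\subseteq(\ell^2(\Gamma))^{d\times 1}$ with $fy=0$, so $\ker f\neq\{0\}$.

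For $(1)\Leftrightarrow(2)$: I will use the polar decomposition $f=u|f|$ in $M_d(\cN\Gamma)$, where $u$ is a partial isometry. One checks readily that $\ker f=\ker|f|$ coincides with the range of $1-u^{*}u$ and $\ker f^{*}=\ker u^{*}$ coincides with the range of $1-uu^{*}$; hence $q_f=1-u^{*}u$ and $q_{f^{*}}=1-uu^{*}$. By the tracial property one has $\tr_{\cN\Gamma}(u^{*}u)=\tr_{\cN\Gamma}(uu^{*})$, so $\tr_{\cN\Gamma}(q_f)=\tr_{\cN\Gamma}(q_{f^{*}})$. Faithfulness of $\tr_{\cN\Gamma}$ on $M_d(\cN\Gamma)$ then gives $q_f=0$ iff $q_{f^{*}}=0$, which is exactly $(1)\Leftrightarrow(2)$.

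For $(1)\Rightarrow(3)$: since $\Gamma$ is amenable throughout this section, Lemma~\ref{L-Elek} applies and produces a nonzero $x\in(\Cb\Gamma)^{d\times 1}$ with $fx=0$. Pick a $\Qb$-basis $\alpha_1,\dots,\alpha_m$ of the $\Qb$-linear span of the (finitely many) coefficients of $x$, and write $x=\sum_{i=1}^{m}\alpha_i y^{(i)}$ with $y^{(i)}\in(\Qb\Gamma)^{d\times 1}$. Applying $f$ yields $\sum_i\alpha_i(fy^{(i)})=0$, with each $fy^{(i)}\in(\Qb\Gamma)^{d\times 1}$; the $\Qb$-linear independence of the $\alpha_i$ in $\Cb$ forces $fy^{(i)}=0$ for every $i$. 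Since $x\neq 0$, some $y^{(i_0)}\neq 0$; multiplying by a common denominator of its coefficients gives a nonzero $y\in(\Zb\Gamma)^{d\times 1}$ with $fy=0$. Finally, let $g\in M_d(\Zb\Gamma)$ be the matrix whose first column is $y$ and whose other columns vanish; then $g\neq 0$ and $fg=0$, as desired.

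The only substantive input is Lemma~\ref{L-Elek} (Elek's theorem in the amenable case); the remaining steps are formal, so no real obstacle is anticipated beyond correctly invoking the amenability hypothesis through Lemma~\ref{L-Elek}.
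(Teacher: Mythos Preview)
Your proof is correct and follows the same overall structure as the paper's: $(3)\Rightarrow(1)$ trivially, $(1)\Leftrightarrow(2)$ via polar decomposition in $M_d(\cN\Gamma)$ together with the trace identity $\tr_{\cN\Gamma}(u^*u)=\tr_{\cN\Gamma}(uu^*)$ and faithfulness, and $(1)\Rightarrow(3)$ via Lemma~\ref{L-Elek} followed by a descent from $\Cb\Gamma$ to $\Zb\Gamma$.

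The one point of difference worth recording is the descent step in $(1)\Rightarrow(3)$. The paper first passes to $(\Rb\Gamma)^{d\times 1}$ by taking the real or imaginary part of $x$, and then invokes an external result (\cite[Theorem~4.11]{CL}) to reach $(\Zb\Gamma)^{d\times 1}$. Your argument instead chooses a $\Qb$-basis $\alpha_1,\dots,\alpha_m$ for the $\Qb$-span of the coefficients of $x$, writes $x=\sum_i\alpha_i y^{(i)}$ with $y^{(i)}\in(\Qb\Gamma)^{d\times 1}$, and uses $\Qb$-linear independence of the $\alpha_i$ over $\Cb$ (together with $f\in M_d(\Zb\Gamma)$) to deduce $fy^{(i)}=0$ for each $i$; clearing denominators then yields the integral kernel element. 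This is entirely self-contained and avoids the external citation, which is a modest gain.
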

\begin{proof} (1)$\Rightarrow$(2): for any $T\in B(\ell^2(\Gamma)^{d\times 1})$ one has the {\it polar decomposition} as follows: there exist unique $U, S\in B((\ell^2(\Gamma))^{d\times 1})$ satisfying that $S\ge 0$, $\ker U=\ker S=\ker T$,  $U$ is an isometry from the orthogonal complement of $\ker T$ onto the closure of $\im T$,
and $T=US$ \cite[Theorem 6.1.2]{KR2}. Take $T=f\in M_d(\cN\Gamma)$. Since $M_d(\cN\Gamma)$ is the subalgebra of $B((\ell^2(\Gamma))^{d\times 1})$ consisting of elements commuting with the right representation of $\Gamma$, from the uniqueness of the polar decomposition we have $U, S\in M_d(\cN\Gamma)$. Denote by $P$ and $Q$ the orthogonal projections from $(\ell^2(\Gamma))^{d\times 1}$ onto $\ker f$ and $\ker f^*$ respectively. Then both $P=1-U^*U$ and $Q=1-UU^*$ are in $M_d(\cN\Gamma)$. Since $\tr_{\cN\Gamma}$ is faithful and $P\neq 0$,
we have $\tr_{\cN\Gamma}P=\tr_{\cN\Gamma}(P^*P)>0$. Then
$$\tr_{\cN\Gamma}Q=\tr_{\cN\Gamma}(1-UU^*)=\tr_{\cN\Gamma}(1-U^*U)=\tr_{\cN\Gamma}P>0.$$
Thus $Q\neq 0$, which means that $\ker f^*\neq \{0\}$.

(2)$\Rightarrow$(1) follows from (1)$\Rightarrow$(2) by symmetry.

(1)$\Rightarrow$(3): by Lemma~\ref{L-Elek} we have $fx=0$ for some nonzero $x\in (\Cb\Gamma)^{d\times 1}$. Taking the real or imaginary part of
$x$, we may assume that $x\in (\Rb\Gamma)^{d\times 1}$. By \cite[Theorem 4.11]{CL} we may furthermore assume that $x\in (\Zb \Gamma)^{d\times 1}$. Take $g\in M_d(\Zb\Gamma)$ to be the square matrix with every column being $x$. Then $fg=0$.

(3)$\Rightarrow$(1) is trivial.
\end{proof}

We need the Yuzvinski\u{\i} addition formula:

\begin{lemma} [Corollary 6.3 in \cite{Li}]  \label{L-addition}
For any
$\Gamma$-equivariant exact sequence of compact metrizable groups
$$1\rightarrow Y_1\rightarrow Y_2\rightarrow
Y_3\rightarrow 1,$$
one has
$$\rh(Y_2)=\rh(Y_1)+\rh(Y_3).$$
\end{lemma}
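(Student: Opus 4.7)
The plan is to reduce the statement to a measure-theoretic identity and then invoke an addition formula for amenable group actions. Since each $Y_i$ is a compact metrizable group on which $\Gamma$ acts by (continuous) automorphisms, the theorem of Deninger recalled at the end of Section~\ref{SS-entropy} identifies $\rh(Y_i)$ with the measure-theoretic entropy $\rh_{\nu_i}(Y_i)$, where $\nu_i$ denotes the normalized Haar measure on $Y_i$. It therefore suffices to establish
$$\rh_{\nu_2}(Y_2)=\rh_{\nu_1}(Y_1)+\rh_{\nu_3}(Y_3).$$

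First I would observe that the given surjective continuous homomorphism $\pi\colon Y_2\to Y_3$ pushes $\nu_2$ forward to $\nu_3$ by uniqueness of Haar measure, so $\pi$ realizes $(Y_3,\nu_3)$ as a measure-theoretic $\Gamma$-factor of $(Y_2,\nu_2)$. The Abramov--Rokhlin addition formula, in its form for measure-preserving actions of countable discrete amenable groups, then gives
$$\rh_{\nu_2}(Y_2)=\rh_{\nu_3}(Y_3)+\rh_{\nu_2}(Y_2\mid Y_3),$$
where the last summand is the Rokhlin conditional entropy of the extension $\pi$. The remaining task is to identify this conditional entropy with $\rh_{\nu_1}(Y_1)$.

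For this identification I would disintegrate $\nu_2$ over $\nu_3$: because $\pi$ is a surjective homomorphism of compact groups with kernel $Y_1$, its fibers are the $Y_1$-cosets and the conditional measure on each fiber is the translate of $\nu_1$. Choosing a Borel section $\tau\colon Y_3\to Y_2$ of $\pi$ one obtains a measure-space isomorphism $Y_2\cong Y_1\times Y_3$ sending $\nu_2$ to $\nu_1\times\nu_3$. Since $\Gamma$ acts by automorphisms on all three groups and $\pi$ is $\Gamma$-equivariant, the action on $Y_2$ in these coordinates becomes a skew-product of the action on $Y_3$ with a family of translations of the action on $Y_1$. A short calculation then shows that the conditional entropy $\rh_{\nu_2}(Y_2\mid Y_3)$ coincides with $\rh_{\nu_1}(Y_1)$, completing the proof.

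The main obstacle I anticipate is twofold: justifying the Abramov--Rokhlin formula with the correct Ornstein--Weiss-type convergence of F{\o}lner averages of conditional information functions, and controlling the non-equivariance of the Borel section $\tau$ when reducing the skew-product conditional entropy to $\rh_{\nu_1}(Y_1)$. A purely topological alternative -- which avoids measure-theoretic disintegration -- is to work directly from the definition of $\htopol$ via finite open covers: any open cover of $Y_2$ can be refined by one of the form $\pi^{-1}(\cU_3)\vee \sigma^{-1}(\cU_1)$ for small covers $\cU_3$ of $Y_3$ and $\cU_1$ of $Y_1$ together with a Borel lift $\sigma$, and one then brackets $N(\cU_2^F)$ between products of $N(\cU_1^F)$ and $N(\cU_3^F)$ using the F{\o}lner property. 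This trades the functional-analytic subtleties of conditional entropy for combinatorial ones involving quasi-tilings analogous to Lemma~\ref{L-quasitile}.
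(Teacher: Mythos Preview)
The paper does not prove this lemma; it simply cites it as Corollary~6.3 of \cite{Li} (the Yuzvinski\u{\i} addition formula for algebraic actions of amenable groups). So there is no in-paper proof to compare against.

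Your outline is the standard route and is essentially correct, but two steps are doing more work than you acknowledge. First, the Abramov--Rokhlin formula for measure-preserving actions of a general countable amenable group is itself a nontrivial result; you should cite a source (e.g.\ Ward--Zhang for $\Zb^d$, or Danilenko/Glasner--Thouvenot--Weiss in the orbital/amenable setting) rather than treat it as folklore. Second, the ``short calculation'' identifying $\rh_{\nu_2}(Y_2\mid Y_3)$ with $\rh_{\nu_1}(Y_1)$ is the heart of the matter: in your skew-product coordinates the fibre maps are affine, $y_1\mapsto s\cdot y_1\,c(s,y_3)$, and the point is that translation invariance of $\nu_1$ makes the conditional Shannon entropy $H_{\nu_2}(\cP^F\mid\pi^{-1}\cB(Y_3))$ equal to $H_{\nu_1}(\cQ^F)$ for a suitable partition $\cQ$ of $Y_1$, independently of the cocycle $c$. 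This is true, but it deserves at least one displayed line rather than a wave of the hand; the non-measurability issues you worry about with $\tau$ do not actually intervene once you compute conditional entropies against the full factor $\sigma$-algebra.

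Your alternative topological sketch via covers and quasi-tilings is less promising: bounding $N(\cU_2^F)$ from below by a product $N(\cU_1^F)\cdot N(\cU_3^F)$ is not straightforward without a measure, which is precisely why the measure-theoretic route (together with Deninger's $\htopol=\rh_{\nu}$) is the efficient one.
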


We are ready to prove Theorem~\ref{T-main}.

\begin{proof}[Proof of Theorem~\ref{T-main}]
Note that $\ker (f^*f)=\ker f=\{0\}$, and $f^*f$ is positive in $M_d(\cN\Gamma)$.
From Lemma~\ref{L-positive} we have
$$ \frac{1}{2}\rh(X_{f^*f})=\frac{1}{2}\log \ddet_{\cN\Gamma}(f^*f)\overset{\eqref{E-determinant square root}}=\log \ddet_{\cN\Gamma}f.$$

Denote by $J$ the subset of $(\Zb\Gamma)^{1\times d'}$ consisting of all rows of $f^*$ and $g\in (\Zb\Gamma)^{1\times d'}$ satisfying
$gf=0$. From Lemma~\ref{L-addition} and  the short exact sequence in Lemma~\ref{L-exact} we get
\begin{align} \label{E-addition}
 \rh(X_{f^*f})=\rh(X_J)+\rh(X_f),
\end{align}
where $X_J$ is defined by \eqref{E-X_J}.
From Lemma~\ref{L-f < J} we have
$$ \rh(X_f)\le \rh(X_J).$$
Therefore
$$\rh(X_f)\le \frac{1}{2}\rh(X_{f^*f})=\log \ddet_{\cN\Gamma}f.$$

Now assume that $d'=d$. From Lemma~\ref{L-f < J} we have $ \rh(X_f)\le \rh(X_{f^*})$.
By Proposition~\ref{P-zero divisor} we have $\ker f^*=\{0\}$. Thus we also have $\rh(X_{f^*})\le \rh(X_f)$. Therefore $\rh(X_f)=\rh(X_{f^*})$.
Since $\ker f^*=\{0\}$, $J$ consists of all rows of $f^*$ and the zero element of $(\Zb\Gamma)^{1\times d'}$.
Thus from \eqref{E-Xf} we have $X_J=X_{f^*}$.  By \eqref{E-addition} we get
$$ \rh(X_f)=\frac{1}{2}\rh(X_{f^*f})=\log \ddet_{\cN\Gamma}f.$$
\end{proof}

\section{Entropy and $L^2$-torsion} \label{S-entropy-torsion}

Throughout this section $\Gamma$ will be a countable discrete amenable group.

\subsection{Proof of Theorem~\ref{T-torsion}} \label{SS-entropy-torsion}

In this subsection we prove Theorem~\ref{T-torsion}. Throughout this subsection we let $\cM$ be a left $\Zb\Gamma$-module of type FL${}_k$ for some $k\in \Nb$ with  a partial resolution $\cC_*\rightarrow \cM$  by finitely generated free left $\Zb\Gamma$-modules as in \eqref{E-resolution 0}. We choose an ordered basis for each  $\cC_j$, identify $\cC_j$ with $(\Zb\Gamma)^{1\times d_j}$, and take $f_j\in M_{d_j\times d_{j-1}}(\Zb\Gamma)$ so that  $\partial_j(y)=yf_j$ for all $y\in (\Zb\Gamma)^{1\times d_j}$.

We show first that the conditions $\dim_{\cN\Gamma}(\cN\Gamma\otimes_{\Zb\Gamma}\cM)=0$ and $\chi(\cM)=0$ in Theorem~\ref{T-torsion} are equivalent
to $\ker f_1=\{0\}$. Indeed the latter condition is the one we really use in the proof of Theorem~\ref{T-torsion}. We choose to use $\dim_{\cN\Gamma}(\cN\Gamma\otimes_{\Zb\Gamma}\cM)$ and $\chi(\cM)$ in the statement of Theorem~\ref{T-torsion} because they are well-known intrinsic invariants of $\cM$.

\begin{lemma} \label{L-Euler characteristic}
The following are equivalent:
\begin{enumerate}
\item $\dim_{\cN\Gamma}(\cN\Gamma\otimes_{\Zb\Gamma}\cM)=0$,

\item $\ker f_1=\{0\}$.
\end{enumerate}
If furthermore $\cM$ is  of type FL and $\cC_*\rightarrow \cM$ is a resolution  as in
\eqref{E-resolution 2}, then $\dim_{\cN\Gamma}(\cN\Gamma\otimes_{\Zb\Gamma}\cM)=\chi(\cM)$, and in particular the conditions (1) and (2) are also equivalent to
\begin{enumerate}
\item[(3)] $\chi(\cM)=0$.
\end{enumerate}
\end{lemma}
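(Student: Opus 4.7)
The plan is to compute $\dim_{\cN\Gamma}(\cN\Gamma\otimes_{\Zb\Gamma}\cM)$ directly from the length-one tail of the resolution, identify it with $\dim_{\cN\Gamma}\ker f_1$ via duality in Hilbert $\cN\Gamma$-modules, and then invoke dimension-flatness of $\cN\Gamma$ over $\Zb\Gamma$ (valid for amenable $\Gamma$) to relate this to $\chi(\cM)$ in the FL case.

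First I would use that FL${}_k$ with $k\ge 1$ gives the exact tail $\cC_1\xrightarrow{\cdot f_1}\cC_0\to\cM\to 0$. Applying the right-exact functor $\cN\Gamma\otimes_{\Zb\Gamma}-$ produces
\[
(\cN\Gamma)^{1\times d_1}\xrightarrow{\cdot f_1}(\cN\Gamma)^{1\times d_0}\to \cN\Gamma\otimes_{\Zb\Gamma}\cM\to 0,
\]
and additivity of $\dim_{\cN\Gamma}$ on exact sequences of $\cN\Gamma$-modules (\cite[Section~6.1]{Luck}) yields
\[
\dim_{\cN\Gamma}(\cN\Gamma\otimes_{\Zb\Gamma}\cM)=d_0-d_1+\dim_{\cN\Gamma}\ker(\cdot f_1),
\]
where $\ker(\cdot f_1)\subseteq (\cN\Gamma)^{1\times d_1}$ is the kernel of right multiplication by $f_1$.

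Next, the $*$-operation identifies $\ker(\cdot f_1)$ as a left $\cN\Gamma$-module with the right $\cN\Gamma$-module $\ker f_1^*\cap (\cN\Gamma)^{d_1\times 1}$, and by L\"uck's comparison of algebraic and $\ell^2$-dimensions \cite[Section~6.1]{Luck} the latter has $\cN\Gamma$-dimension equal to that of the Hilbert $\cN\Gamma$-module $\ker f_1^*\subseteq(\ell^2(\Gamma))^{d_1\times 1}$. The polar decomposition of $f_1\colon(\ell^2(\Gamma))^{d_0\times 1}\to(\ell^2(\Gamma))^{d_1\times 1}$ (as used in the proof of Proposition~\ref{P-zero divisor}) provides a partial isometry whose initial and final projections lie in $M_{d_0}(\cN\Gamma)$ and $M_{d_1}(\cN\Gamma)$ respectively, giving the balance
\[
\dim_{\cN\Gamma}\ker f_1-\dim_{\cN\Gamma}\ker f_1^*=d_0-d_1.
\]
Substituting yields $\dim_{\cN\Gamma}(\cN\Gamma\otimes_{\Zb\Gamma}\cM)=\dim_{\cN\Gamma}\ker f_1$. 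Since $\ker f_1$ is the range of a projection $p\in M_{d_0}(\cN\Gamma)$ with $\dim_{\cN\Gamma}\ker f_1=\tr_{\cN\Gamma}(p)$ and $\tr_{\cN\Gamma}$ is faithful, the vanishing of this dimension is equivalent to $\ker f_1=\{0\}$, establishing (1)$\Leftrightarrow$(2).

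For the FL part, suppose $\cC_*\to\cM$ is a full resolution as in \eqref{E-resolution 2}. Since $\Gamma$ is amenable, L\"uck's dimension-flatness theorem gives $\dim_{\cN\Gamma}\mathrm{Tor}_j^{\Zb\Gamma}(\cN\Gamma,\cM)=0$ for all $j\ge 1$ \cite[Chapter~6]{Luck}. Combining with additivity of $\dim_{\cN\Gamma}$ along the finite-length chain complex $\cN\Gamma\otimes_{\Zb\Gamma}\cC_*$, whose homology computes $\mathrm{Tor}_*^{\Zb\Gamma}(\cN\Gamma,\cM)$,
\[
\chi(\cM)=\sum_{j=0}^k(-1)^jd_j=\sum_j(-1)^j\dim_{\cN\Gamma}\mathrm{Tor}_j^{\Zb\Gamma}(\cN\Gamma,\cM)=\dim_{\cN\Gamma}(\cN\Gamma\otimes_{\Zb\Gamma}\cM),
\]
so (1) and (3) coincide. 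The main technical point is the careful bookkeeping between the $*$-induced identification of $\ker(\cdot f_1)$ with $\ker f_1^*$ and the translation between algebraic and $\ell^2$-module dimensions; once this is in place the lemma is a direct consequence of already-established dimension theory.
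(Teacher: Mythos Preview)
Your proof is correct, and for the FL part it is essentially the paper's argument rephrased in terms of $\mathrm{Tor}$: the paper also applies L\"uck's dimension-flatness (cited there as \cite[Theorem~6.37]{Luck}) to the complex $\cN\Gamma\otimes_{\Zb\Gamma}\cC_*$ and uses additivity of $\dim_{\cN\Gamma}$.

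For the equivalence (1)$\Leftrightarrow$(2), however, your route genuinely differs from the paper's. The paper computes $\dim_{\cN\Gamma}(\cN\Gamma\otimes_{\Zb\Gamma}\cM)=\tr_{\cN\Gamma}(I_{d_0}-p_{f_1})$ directly via L\"uck's ${\bf T}/{\bf P}$ decomposition (\cite[Lemma~6.52, Theorem~6.7]{Luck}), and then argues that this trace vanishes iff $(\ell^2(\Gamma))^{1\times d_1}f_1$ is dense, iff $\ker f_1=\{0\}$. You instead derive the sharper quantitative identity $\dim_{\cN\Gamma}(\cN\Gamma\otimes_{\Zb\Gamma}\cM)=\dim_{\cN\Gamma}\ker f_1$ by combining additivity along the right-exact tail with the index relation $\dim_{\cN\Gamma}\ker f_1-\dim_{\cN\Gamma}\ker f_1^*=d_0-d_1$ coming from the polar decomposition. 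Your approach buys a cleaner numerical statement and avoids citing the specific structure results on ${\bf P}$-modules; the paper's approach is slightly more self-contained in that it stays with a single projection $p_{f_1}$ rather than passing between $\ker f_1$, $\ker f_1^*$, and the algebraic kernel $\ker(\cdot f_1)$. Both are short and rest on the same underlying dimension theory.
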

\begin{proof}
Let $g\in M_{d_1\times d_0}(\cN\Gamma)$.  An argument similar to that in Section~\ref{SS-determinant} shows that the orthogonal projection $p_g$ from $(\ell^2(\Gamma))^{d_0\times 1}$ onto the closure of $(\ell^2(\Gamma))^{1\times d_1}g$ lies in $M_{d_0}(\cN\Gamma)$. For each left $\cN\Gamma$-module
$\tilde{\cM}$, denote by ${\bf T}\tilde{\cM}$ the submodule of $\tilde{\cM}$ consisting of elements having image $0$ under every $\cN\Gamma$-module homomorphism $\tilde{\cM}\rightarrow \cN\Gamma$, and denote by ${\bf P}\tilde{\cM}$ the quotient module $\tilde{\cM}/{\bf T}\tilde{\cM}$.
When $\tilde{\cM}$ is finitely generated, one has $\dim_{\cN\Gamma}\tilde{\cM}=\dim_{\cN\Gamma}({\bf P}\tilde{\cM})$ \cite[Theorem 6.7]{Luck}.
Also, from \cite[Lemma 6.52]{Luck} one has
$${\bf P}((\cN\Gamma)^{1\times d_0}/(\cN\Gamma)^{1\times d_1}g)=(\cN\Gamma)^{1\times d_0}(I_{d_0}-p_g),$$
where $I_{d_0}$ denotes the $d_0\times d_0$ identity matrix.
Thus
$$ \dim_{\cN\Gamma}((\cN\Gamma)^{1\times d_0}/(\cN\Gamma)^{1\times d_1}g)=\dim_{\cN\Gamma}((\cN\Gamma)^{1\times d_0}(I_{d_0}-p_g))=\tr_{\cN\Gamma}(I_{d_0}-p_g).$$

For any unital ring $R$, any right $R$-module $\cM^\natural$, and any short exact sequence
$$0\rightarrow \cM_1\rightarrow \cM_2\rightarrow \cM_3\rightarrow 0$$
of left $R$-modules, the sequence
$$\cM^\natural\otimes_R\cM_1\rightarrow \cM^\natural\otimes_R\cM_2 \rightarrow \cM^\natural\otimes_R\cM_3\rightarrow 0$$
is exact \cite[Proposition 19.13]{AF}.
From the exact sequence \eqref{E-resolution 0}, taking $R=\Zb\Gamma$, $\cM^\natural=\cN\Gamma$, $\cM_1=\partial_1(\cC_1)$, $\cM_2=\cC_0$ and $\cM_3=\cM$,  we find that
$$ \cN\Gamma\otimes_{\Zb\Gamma}\cC_1\overset{1\otimes \partial_1}\rightarrow \cN\Gamma\otimes_{\Zb\Gamma}\cC_0\rightarrow \cN\Gamma\otimes_{\Zb\Gamma}\cM\rightarrow 0$$
is exact.
Taking $g=f_1$, we get
$$\dim_{\cN\Gamma}(\cN\Gamma\otimes_{\Zb\Gamma}\cM)=\dim_{\cN\Gamma}((\cN\Gamma)^{1\times d_0}/(\cN\Gamma)^{1\times d_1}f_1)=\tr_{\cN\Gamma}(I_{d_0}-p_{f_1}).$$
Thus $\dim_{\cN\Gamma}(\cN\Gamma\otimes_{\Zb\Gamma}\cM)=0$ if and only if $\tr_{\cN\Gamma}(I_{d_0}-p_{f_1})=0$, equivalently $I_{d_0}=p_{f_1}$, i.e.
$(\ell^2(\Gamma))^{1\times d_1}f_1$ is dense in $(\ell^2(\Gamma))^{1\times d_0}$. The latter condition is equivalent to that the map $(\ell^2(\Gamma))^{1\times d_0}\rightarrow (\ell^2(\Gamma))^{1\times d_1}$ sending $y$ to  $yf_1^*$ is  injective. Taking adjoints we find that the last condition is equivalent to $\ker f_1=\{0\}$. This proves (1)$\Leftrightarrow$(2).

Now we assume further that $\cM$ is  of type FL and $\cC_*\rightarrow \cM$ is a resolution  as in
\eqref{E-resolution 2}. Note that
$ \Cb\otimes_\Zb\Zb\Gamma=\Cb\Gamma$
and hence for any left $\Zb\Gamma$-module $\tilde{\cM}$ one has
$$\Cb\Gamma\otimes_{\Zb\Gamma}\tilde{\cM}=(\Cb\otimes_\Zb\Zb\Gamma)\otimes_{\Zb\Gamma}\tilde{\cM}=\Cb\otimes_\Zb(\Zb\Gamma\otimes_{\Zb\Gamma}\tilde{\cM})=\Cb\otimes_\Zb\tilde{\cM}.$$
Since $\Cb$ is a torsion-free $\Zb$-module, the functor $\Cb\otimes_\Zb?$ from the category of $\Zb$-modules to the category of $\Cb$-modules is exact \cite[Proposition XVI.3.2]{Lang}. Thus the functor $\Cb\Gamma\otimes_{\Zb\Gamma}?$ from the category of left $\Zb\Gamma$-modules to the category of left $\Cb\Gamma$-modules is exact.
Set $\cC'_j=\Cb\Gamma\otimes_{\Zb\Gamma}\cC_j$ and $\cM'=\Cb\Gamma\otimes_{\Zb\Gamma}\cM$. Then from \eqref{E-resolution 2} we have the exact sequence
\begin{align} \label{E-ker dim}
0 \to \cC'_k \overset{\partial'_k}\rightarrow \cdots \overset{\partial'_2}\rightarrow \cC'_1 \overset{\partial'_1}\rightarrow \cC'_0 \rightarrow \cM'\rightarrow 0.
\end{align}
The sequence
$$ 0\rightarrow \cN\Gamma\otimes_{\Cb\Gamma}\cC'_k\overset{1\otimes \partial'_k}\rightarrow \cdots \overset{1\otimes \partial'_2}\rightarrow \cN\Gamma\otimes_{\Cb\Gamma}\cC'_1\overset{1\otimes \partial'_1}\rightarrow \cN\Gamma\otimes_{\Cb\Gamma}\cC'_0\rightarrow \cN\Gamma\otimes_{\Cb\Gamma}\cM'\rightarrow 0$$
is exact at $\cN\Gamma\otimes_{\Cb\Gamma}\cC'_0$ and $\cN\Gamma\otimes_{\Cb\Gamma}\cM'$, but may fail to be exact at other places.
Note that \eqref{E-ker dim} is a resolution of $\cM'$ by free left $\Cb\Gamma$-modules.
L\"{u}ck showed \cite[Theorem 6.37]{Luck} that
$$ \dim_{\cN\Gamma}(\ker(1\otimes \partial'_j)/\im(1\otimes \partial'_{j+1}))=0$$
for all $1\le j\le k$, where we set $1\otimes \partial'_{k+1}=0$. Since $\dim_{\cN\Gamma}$ is additive in the sense that for any short exact sequence
$$0\rightarrow \cM_1\rightarrow \cM_2\rightarrow \cM_3\rightarrow 0$$
of left $\cN\Gamma$-modules one has $\dim_{\cN\Gamma}\cM_2=\dim_{\cN\Gamma}\cM_1+\dim_{\cN\Gamma}\cM_3$ \cite[Theorem 6.7]{Luck}, we get
\begin{align*}
 \dim_{\cN\Gamma}(\cN\Gamma\otimes_{\Zb\Gamma}\cM)&=\dim_{\cN\Gamma}(\cN\Gamma\otimes_{\Cb\Gamma}\cM')=
\sum_{j=0}^k(-1)^j\dim_{\cN\Gamma}(\cN\Gamma\otimes_{\Cb\Gamma}\cC'_j)\\
&=\sum_{j=0}^k(-1)^jd_j=\chi(\cM).
\end{align*}
\end{proof}

\begin{remark} \label{R-vanishdim}
It follows from Lemma~\ref{L-Euler characteristic} and \cite[Theorem 4.11]{CL} that for a finitely presented left $\Zb \Gamma$-module $\cM$, $\rh(\widehat \cM)$ is finite if and only if $\dim_{\cN \Gamma} (\cN \Gamma \otimes_{\Zb \Gamma} \cM) = 0$. In particular, if $\cM$ is of type FL, then $\rh(\widehat \cM)$ is finite if and only if $\chi(\cM)=0$.
\end{remark}

Next we show that (in the case of amenable groups) $\ker f_1=\{0\}$ is the only condition needed to define $\rho^{(2)}(\cC_*)$.
Set $f_0=0$.

\begin{lemma} \label{L-resolution to acyclic}
Suppose that $\ker f_1=\{0\}$. Then the chain complex $\ell^2(\Gamma)\otimes_{\Zb\Gamma}\cC_*$ in \eqref{E-resolution 3} is weakly acyclic, and $\ker (f_{j+1}^*f_{j+1}+f_jf_j^*)=\{0\}$ for all $0\le j<k$.
\end{lemma}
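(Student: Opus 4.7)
The plan is to first handle weak acyclicity, since it implies the Laplacian kernel statement by a short orthogonality argument.

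For position $0$, I would compute the orthogonal complement of $\overline{(\ell^2(\Gamma))^{1\times d_1}f_1}$ inside $(\ell^2(\Gamma))^{1\times d_0}$. A direct calculation using the right $\cN\Gamma$-module structure on $\ell^2(\Gamma)$ shows that the Hilbert adjoint of right multiplication by $f_1$ on row vectors is right multiplication by $f_1^{\ast}$; the orthogonal complement is therefore $\{z:zf_1^{\ast}=0\}$, which under the antilinear isometric involution $z\mapsto z^{\ast}$ (applied componentwise) corresponds to $\ker f_1\subseteq(\ell^2(\Gamma))^{d_0\times 1}$. The hypothesis $\ker f_1=\{0\}$ gives weak acyclicity at position $0$. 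For interior positions $1\le j\le k-1$, the algebraic complex $\cC_{\ast}$ is exact at $\cC_j$ because $\cC_{\ast}\to\cM$ is a partial resolution of type FL${}_k$. Applying \cite[Theorem~6.37]{Luck} (the tool already used in the proof of Lemma~\ref{L-Euler characteristic}) yields $\dim_{\cN\Gamma}\bigl(\ker(1\otimes\partial_j)/\im(1\otimes\partial_{j+1})\bigr)=0$ for the tensored complex $\cN\Gamma\otimes_{\Zb\Gamma}\cC_{\ast}$. By the standard identification of $\cN\Gamma$-dimensions of algebraic $\cN\Gamma$-homology with those of reduced $\ell^2$-homology (\cite[Chapter~6]{Luck}), the reduced $\ell^2$-homology $\ker(\cdot f_j)/\overline{\im(\cdot f_{j+1})}$ also has $\cN\Gamma$-dimension zero; being a Hilbert $\cN\Gamma$-module, it must be trivial, which is weak acyclicity at position $j$.

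Once weak acyclicity is in hand, the Laplacian kernel statement is immediate. Since $f_{j+1}^{\ast}f_{j+1}$ and $f_jf_j^{\ast}$ are positive in $M_{d_j}(\cN\Gamma)$, the kernel of their sum on $(\ell^2(\Gamma))^{d_j\times 1}$ equals $\ker f_{j+1}\cap\ker f_j^{\ast}$. Translating the row-space identity $\ker(\cdot f_j)=\overline{\im(\cdot f_{j+1})}$ to the column space via $y\mapsto y^{\ast}$ yields $\ker f_j^{\ast}=\overline{\im f_{j+1}^{\ast}}$; taking orthogonal complements in $(\ell^2(\Gamma))^{d_j\times 1}$ this is equivalent to $\overline{\im f_j}=\ker f_{j+1}$. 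Therefore $\ker f_{j+1}\cap\ker f_j^{\ast}=\overline{\im f_j}\cap(\overline{\im f_j})^{\perp}=\{0\}$. The boundary case $j=0$ with $f_0=0$ already reduces to the hypothesis $\ker f_1=\{0\}$ directly.

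The main technical input is L\"uck's dimension machinery, namely Theorem~6.37 together with the dimension comparison between algebraic $\cN\Gamma$-homology and reduced $\ell^2$-homology; once those are available, the remaining work is orthogonality and careful bookkeeping of the row-versus-column convention through the star involution.
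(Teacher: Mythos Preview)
Your argument is correct, but it proceeds in the opposite order from the paper and relies on heavier external machinery. The paper establishes the Laplacian kernel statement \emph{first}, by an elementary computation at the $\Zb\Gamma$ level: if $y(f_{j+1}^*f_{j+1}+f_jf_j^*)=0$ for $y\in(\Zb\Gamma)^{1\times d_j}$, then pairing with $y$ forces $yf_j=0$ and $yf_{j+1}^*=0$; exactness of the resolution gives $y=zf_{j+1}$, and then $\|zf_{j+1}\|^2=\langle zf_{j+1}f_{j+1}^*,z\rangle=0$ yields $y=0$. Proposition~\ref{P-zero divisor} (which the paper proves using Elek's lemma) then upgrades this $\Zb\Gamma$-injectivity to $\ell^2$-injectivity. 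Weak acyclicity is \emph{deduced} from the Laplacian kernel vanishing by the same Hodge argument you use in the reverse direction.

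What each approach buys: the paper's route is more self-contained, using only tools developed within the paper and a short inner-product computation, and it isolates exactly where amenability enters (in Proposition~\ref{P-zero divisor}). Your route is also valid but imports L\"uck's dimension-flatness theorem and the comparison between algebraic and reduced $\ell^2$-homology as black boxes; this is legitimate since the paper already cites \cite[Theorem~6.37]{Luck} elsewhere, but it makes the proof depend on substantially more infrastructure than is necessary. One minor point: your invocation of Theorem~6.37 is for a \emph{partial} resolution, so strictly speaking you should note that the homology at positions $1\le j\le k-1$ computes $\mathrm{Tor}_j^{\Zb\Gamma}(\cN\Gamma,\cM)$ regardless of how (or whether) the resolution continues past degree $k$; this is standard but worth a sentence.
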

\begin{proof} We show first that for any $1\le j<k$ the map $(\Zb\Gamma)^{1\times d_j}\rightarrow (\Zb\Gamma)^{1\times d_j}$ sending $y$ to  $y(f_{j+1}^*f_{j+1}+f_jf_j^*)$ is injective. Suppose that $y(f_{j+1}^*f_{j+1}+f_jf_j^*)=0$. Computing $\left<y(f_{j+1}^*f_{j+1}+f_jf_j^*), y\right>$, we find
that $yf_{j+1}^*=0$ and $yf_j=0$. Since \eqref{E-resolution 0} is exact at $\cC_j=(\Zb\Gamma)^{1\times d_j}$, we have $y=zf_{j+1}$ for some $z\in (\Zb\Gamma)^{1\times d_{j+1}}$. From $\left<zf_{j+1}, zf_{j+1}\right>=\left<zf_{j+1}f_{j+1}^*, z\right>=0$ we get $y=zf_{j+1}=0$. This proves our claim.

Since $f_{j+1}^*f_{j+1}+f_jf_j^*$ is self-adjoint, taking adjoints we find that for each $1\le j<k$ the map $(\Zb\Gamma)^{d_j\times 1}\rightarrow (\Zb\Gamma)^{d_j\times 1}$ sending $y$ to  $(f_{j+1}^*f_{j+1}+f_jf_j^*)y$ is also injective. From Proposition~\ref{P-zero divisor} we
conclude that $\ker(f_{j+1}^*f_{j+1}+f_jf_j^*)=\{0\}$ for $1\le j<k$. The assertion $\ker(f_1^*f_1+f_0f_0^*)=\{0\}$ follows directly from $\ker f_1=\{0\}$.

Let $0\le j<k$. Taking adjoints again, we find that the map $(\ell^2(\Gamma))^{d_j\times 1}\rightarrow (\ell^2(\Gamma))^{d_j\times 1}$ sending $y$ to  $y(f_{j+1}^*f_{j+1}+f_jf_j^*)$ is injective. Any $y$ in the orthogonal complement of the closure of $\im(1\otimes \partial_{j+1})$ inside of $\ker(1\otimes \partial_j)$ satisfies $y(f_{j+1}^*f_{j+1}+f_jf_j^*)=0$ and hence is equal to $0$. Therefore
$\ker(1\otimes \partial_j)$ is equal to the closure of $\im(1\otimes \partial_{j+1})$. That is, $\ell^2(\Gamma)\otimes_{\Zb\Gamma}\cC_*$ is weakly acyclic.
\end{proof}

In order to prove Theorem~\ref{T-torsion} we need some preparation.
For each $0<j\le k$, we define a left $\Zb\Gamma$-module homomorphism
$\partial_j^*: \cC_{j-1}\rightarrow \cC_j$ by $\partial_j^*(y)=yf_j^*$ for all $y\in (\Zb\Gamma)^{1\times d_{j-1}}=\cC_{j-1}$.
Then $\partial_{j+1}^*\partial_j^*=0$ for all $0<j<k$.
For each $0\le j\le k$, set $\cD_j=\bigoplus_{0\le i\le j, j-i\in 2\Zb}\cC_i$ and $d_j'=\sum_{0\le i\le j, j-i\in 2\Zb}d_i$. For each $0<j\le k$,  consider the $\Zb\Gamma$-module homomorphism $T_j:\cD_j\rightarrow \cD_{j-1}$ defined as $\partial_i+\partial_{i+1}^*$ on $\cC_i$ for all $0<i<j$ with $j-i\in 2\Zb$,  as $\partial_j$ on $\cC_j$,
and also as $\partial_1^*$ on $\cC_0$ if $j$ is even.
Then the chosen ordered basis of $\cC_i$'s give rise to an ordered basis for $\cD_j$, under which $T_j$ is represented by the matrix
\begin{align*}
g_j=\left(
\begin{matrix}
f_j& 0 & 0 &\cdots\\
f_{j-1}^* & f_{j-2} & 0 &\cdots \\
0 & f_{j-3}^* & f_{j-4}& \cdots \\
\cdots &\cdots &\cdots &\cdots
\end{matrix}
\right)\in M_{d'_j\times d'_{j-1}}(\Zb\Gamma).
\end{align*}

\begin{lemma} \label{L-short exact}
Let $0<j<k$. Then one has a $\Gamma$-equivariant short exact sequence of compact metrizable groups
\begin{align} \label{E-short exact}
 1\rightarrow X_{g_j}\rightarrow X_{g_j^*g_j}\rightarrow
X_{g_{j+1}}\rightarrow 1,
\end{align}
where the homomorphism $ X_{g_j^*g_j}\rightarrow X_{g_{j+1}}$ is given by left multiplication by $g_j$.
\end{lemma}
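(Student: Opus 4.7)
The plan is to pass to Pontryagin duals and verify exactness of the resulting sequence of $\Zb\Gamma$-modules
\[
0 \to (\Zb\Gamma)^{1\times d'_j}/(\Zb\Gamma)^{1\times d'_{j+1}}g_{j+1} \xrightarrow{\alpha} (\Zb\Gamma)^{1\times d'_{j-1}}/(\Zb\Gamma)^{1\times d'_{j-1}}g_j^*g_j \xrightarrow{\beta} (\Zb\Gamma)^{1\times d'_{j-1}}/(\Zb\Gamma)^{1\times d'_j}g_j \to 0,
\]
where $\alpha$ is induced by right multiplication by $g_j$ and $\beta$ is the natural surjection. The starting point is the block identity
\[
g_{j+1}g_j = \begin{pmatrix} 0 \\ g_j^*g_j \end{pmatrix}
\]
under the decomposition $\cD_{j+1} = \cC_{j+1}\oplus \cD_{j-1}$, which follows by a routine block computation using $f_{l+1}f_l = 0$. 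This identity simultaneously verifies that left multiplication by $g_j$ carries $X_{g_j^*g_j}$ into $X_{g_{j+1}}$ (so the primal sequence is a complex), that $\alpha$ is well-defined, and that the kernel of $X_{g_j^*g_j}\to X_{g_{j+1}}$ equals $X_{g_j}$. Surjectivity of $\beta$ and exactness at the middle term of the dual sequence are then immediate, leaving only the injectivity of $\alpha$ to verify.

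The main obstacle is thus to show that if $y g_j = z g_j^*g_j$ in $(\Zb\Gamma)^{1\times d'_{j-1}}$, then $y \in (\Zb\Gamma)^{1\times d'_{j+1}}g_{j+1}$. A short direct computation shows that the element $(0,z)\in \cC_{j+1}\oplus \cD_{j-1}$ satisfies $(0,z)g_{j+1} = z g_j^*$, so it suffices to prove the sharper claim $\ker_r g_j \subseteq (\Zb\Gamma)^{1\times d'_{j+1}}g_{j+1}$, applied to $\tilde y := y - zg_j^*$.

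To prove the claim, write $h = (h_i)_{i\in I_j}$ for the block decomposition of an element $h\in \ker_r g_j$, where $I_j = \{i : 0 \le i \le j,\ j-i \text{ even}\}$. The equation $h g_j = 0$ unfolds into the equations $h_{n+1}f_{n+1} + h_{n-1}f_n^* = 0$ for each $n \in I_{j-1}$. Viewed inside $\ell^2(\Gamma)$, the two summands lie in $\overline{\im\partial_{n+1}}$ and $\overline{\im\partial_n^*}$ respectively, and these subspaces are mutually orthogonal by the general Hilbert-space identity $(\overline{\im T})^\perp = \ker T^*$ combined with the weak acyclicity of $\ell^2(\Gamma)\otimes_{\Zb\Gamma}\cC_*$ supplied by Lemma~\ref{L-resolution to acyclic} (which gives $\overline{\im\partial_{n+1}} = \ker\partial_n$). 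Both summands therefore vanish separately, producing $h_i f_i = 0$ for $i\ge 1$ and $h_i f_{i+1}^* = 0$ for $i\le j-2$. For $1\le i\le j-2$ this forces $h_i\in \overline{\im\partial_{i+1}}\cap(\overline{\im\partial_{i+1}})^\perp = \{0\}$; the boundary case $i=0$ (when $j$ is even) is handled identically, using that $\overline{\im\partial_1} = (\ell^2(\Gamma))^{1\times d_0}$ because $\partial_0 = 0$. The surviving component $h_j$ satisfies $h_j f_j = 0$ in $(\Zb\Gamma)^{1\times d_j}$, so exactness of the $\Zb\Gamma$-resolution $\cC_*$ at $\cC_j$ (available since $j<k$) provides $u_{j+1}\in (\Zb\Gamma)^{1\times d_{j+1}}$ with $h_j = u_{j+1}f_{j+1}$. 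Setting $u := (u_{j+1}, 0, \ldots, 0) \in (\Zb\Gamma)^{1\times d'_{j+1}}$ then yields $u g_{j+1} = h$ by a final block computation, completing the proof.
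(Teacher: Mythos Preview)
Your proof is correct and follows essentially the same route as the paper. Both arguments reduce to the key claim that any $h\in(\Zb\Gamma)^{1\times d'_j}$ with $hg_j=0$ has all lower components $h_i=0$ ($i<j$) and $h_jf_j=0$, after which exactness of $\cC_*$ at $\cC_j$ finishes. The only difference is in how this vanishing is obtained: the paper multiplies by $g_j^*$ and uses that $g_jg_j^*$ is block-diagonal with blocks $f_jf_j^*$ and $\Delta_i=f_{i+1}^*f_{i+1}+f_if_i^*$, then invokes the (purely algebraic) injectivity of $y\mapsto y\Delta_i$ from the proof of Lemma~\ref{L-resolution to acyclic}; you instead read off the equations $h_{n+1}f_{n+1}+h_{n-1}f_n^*=0$ directly and split them via $\ell^2$-orthogonality, using the weak-acyclicity part of the same lemma. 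The paper also factors the duality step through the general Lemma~\ref{L-exact} (identifying your target with $X_{J_j}$ and then showing $\cM_{J_j}=(\Zb\Gamma)^{1\times d'_{j+1}}g_{j+1}$), whereas you verify exactness of the dual sequence directly; this is a cosmetic difference.
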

\begin{proof}
Denote by $J_j$ the subset of $(\Zb\Gamma)^{1\times d'_j}$ consisting of all rows of $g_j^*$ and $h\in (\Zb\Gamma)^{1\times d'_j}$ satisfying
$hg_j=0$. Recall that $\cM_{J_j}$ denotes the submodule of $(\Zb \Gamma)^{1 \times d'_j}$ generated by  $J_j$. We want to show that
$\cM_{J_j} = (\Zb\Gamma)^{1\times d'_{j+1}}g_{j+1}$.
Set
\begin{align*}
 w=\left(
\begin{matrix}
f_{j+1} & 0
\end{matrix}
\right)\in M_{d_{j+1}\times d'_j}(\Zb\Gamma).
\end{align*}
Then
\begin{align*}
 g_{j+1}=\left(
\begin{matrix}
w\\
g_j^*
\end{matrix}
\right),
\end{align*}
and $wg_j=0$. It follows that $(\Zb\Gamma)^{1\times d'_{j+1}}g_{j+1}\subseteq \cM_{J_j}$. Moreover, any row of $g_j^*$ is obviously in $(\Zb \Gamma)^{1 \times d'_{j+1}}g_{j+1}$. It remains to show that every $h\in (\Zb\Gamma)^{1\times d'_j}$ satisfying $hg_j=0$ lies in $(\Zb \Gamma)^{1 \times d'_{j+1}}g_{j+1}$.

Note that $g_jg_j^*$ is a block-diagonal matrix with the diagonal blocks being $f_jf_j^*$ and $f_{i+1}^*f_{i+1}+f_if_i^*$ for $0\le i\le j-2$ with $j-i\in 2\Zb$.
Since \eqref{E-resolution 0} is exact and $\ker f_1=0$, for any $0\le i<j$, any $y\in (\Zb\Gamma)^{1\times d'_j}$ satisfying $y(f_{i+1}^*f_{i+1}+f_if_i^*)$ must be $0$.
Thus any $h\in (\Zb\Gamma)^{1\times d'_j}$ satisfying $hg_j=0$ must be of the form $(x, 0)$ for some
$x\in (\Zb\Gamma)^{1\times d_j}$ satisfying $xf_jf_j^*=0$, equivalently $xf_j=0$.
It follows from the exactness of \eqref{E-resolution 0} that $(x,0) = (yf_{j+1},0)$ for some
$y\in (\Zb\Gamma)^{1\times d_{j+1}}$. Thus, $\cM_{J_j}\subseteq (\Zb\Gamma)^{1\times d'_{j+1}}g_{j+1}$,
and hence $\cM_{J_j}=(\Zb\Gamma)^{1\times d'_{j+1}}g_{j+1}$, which leads to the short exact sequence \eqref{E-short exact} by
Lemma~\ref{L-exact}.
\end{proof}

\begin{lemma} \label{L-addition for torsion}
For any $0<j\le k$ one has $\ker g_j=0$. For $0<j<k$ one has
\begin{align*}
\rh(X_{g_j})+\rh(X_{g_{j+1}})=\sum_{i=1}^j\log \ddet_{\cN\Gamma} (f_i^*f_i + q_{f_i})\in \Rb_{\ge 0},
\end{align*}
where $q_{f_i}$ denotes the orthogonal projection from $(\ell^2(\Gamma))^{d_{j-1}\times 1}$ onto $\ker f_i$.
We also have
\begin{align*}
\rh(X_{g_k^*g_k})=2\log\ddet_{\cN\Gamma}(g_k)=\sum_{i=1}^k\log \ddet_{\cN\Gamma} (f_i^*f_i + q_{f_i})\in \Rb_{\ge 0}.
\end{align*}
\end{lemma}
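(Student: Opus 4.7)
The strategy is to exploit the block-diagonal structure of $g_j^* g_j$ and reduce everything to the positive case (Lemma~\ref{L-positive}) combined with the Yuzvinski\u{\i} addition formula (Lemma~\ref{L-addition}). A direct block multiplication shows that $g_j^* g_j$ is block-diagonal: the off-diagonal blocks involve products like $f_{i+1} f_i$ which vanish by the chain complex relation $\partial_i \partial_{i+1} = 0$, while the diagonal blocks are exactly the operators $f_{i+1}^* f_{i+1} + f_i f_i^*$ for $i$ ranging over a subset of $\{0, 1, \ldots, j-1\}$. Lemma~\ref{L-resolution to acyclic} tells us each such block has trivial kernel (since $j \le k$), so $\ker(g_j^* g_j) = 0$ and hence $\ker g_j = 0$, settling the first claim.

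For $0 < j < k$, I would apply Lemma~\ref{L-short exact} to the exact sequence $1 \to X_{g_j} \to X_{g_j^* g_j} \to X_{g_{j+1}} \to 1$ and then Lemma~\ref{L-addition} to obtain $\rh(X_{g_j}) + \rh(X_{g_{j+1}}) = \rh(X_{g_j^* g_j})$. The block-diagonal structure produces a natural $\Gamma$-equivariant identification $X_{g_j^* g_j} \cong \prod_i X_{f_{i+1}^* f_{i+1} + f_i f_i^*}$, under which entropies are additive. Each factor is a positive matrix over $\Zb\Gamma$ with trivial kernel, so Lemma~\ref{L-positive} applies and yields $\rh(X_{f_{i+1}^* f_{i+1} + f_i f_i^*}) = \log \ddet_{\cN\Gamma}(f_{i+1}^* f_{i+1} + f_i f_i^*)$.

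The heart of the argument will then be the factorization $\ddet_{\cN\Gamma}(f_{i+1}^* f_{i+1} + f_i f_i^*) = \ddet_{\cN\Gamma}(f_{i+1}^* f_{i+1} + q_{f_{i+1}}) \cdot \ddet_{\cN\Gamma}(f_i^* f_i + q_{f_i})$. Writing $A = f_{i+1}^* f_{i+1}$ and $B = f_i f_i^*$, the relation $f_{i+1} f_i = 0$ together with the weak acyclicity established in Lemma~\ref{L-resolution to acyclic} forces $\overline{\im f_i} = \ker f_{i+1}$, so the supports of $A$ and $B$ are mutually orthogonal and complementary inside $(\ell^2(\Gamma))^{d_i \times 1}$. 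Consequently $A + B$ decomposes as $A|_{(\ker f_{i+1})^\perp} \oplus B|_{\ker f_{i+1}}$, and a direct spectral-measure computation gives $\ddet(A + B) = \ddet(A + q_A) \cdot \ddet(B + q_B)$; the symmetry formula \eqref{E-symmetry} then rewrites $\ddet(B + q_B) = \ddet(f_i f_i^* + q_{f_i^*}) = \ddet(f_i^* f_i + q_{f_i})$. Setting $D_i := \ddet_{\cN\Gamma}(f_i^* f_i + q_{f_i})$, the pairs $(D_{i+1}, D_i)$ arising from the different blocks partition $\{D_1, \ldots, D_j\}$ (with the endpoint $D_0 = 1$ absorbed when $j$ is odd, since $f_0 = 0$ forces $q_{f_0} = I$), yielding $\rh(X_{g_j^* g_j}) = \sum_{m=1}^j \log D_m$. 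Nonnegativity is automatic because the left-hand side is a sum of entropies.

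Finally, for $j = k$ the same block-diagonal computation applies to $g_k^* g_k$ and gives $\sum_{m=1}^k \log D_m$, while Lemma~\ref{L-positive} applied directly to the positive matrix $g_k^* g_k \in M_{d'_{k-1}}(\Zb\Gamma)$ (whose kernel is trivial by the first step), combined with \eqref{E-determinant square root}, gives $\rh(X_{g_k^* g_k}) = \log \ddet_{\cN\Gamma}(g_k^* g_k) = 2 \log \ddet_{\cN\Gamma}(g_k)$. The main technical hurdle is the orthogonal-supports factorization in the previous paragraph; everything else is a formal consequence of results already in place.
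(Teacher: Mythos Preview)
Your proposal is correct and follows essentially the same approach as the paper's proof. The one cosmetic difference is in how you handle the factorization $\ddet_{\cN\Gamma}(f_{i+1}^*f_{i+1}+f_if_i^*)=\ddet_{\cN\Gamma}(f_{i+1}^*f_{i+1}+q_{f_{i+1}})\cdot\ddet_{\cN\Gamma}(f_if_i^*+q_{f_i^*})$: where you invoke a spectral-measure computation on the orthogonal decomposition, the paper simply observes the \emph{operator identity} $A+B=(A+q_A)(B+q_B)$ (valid whenever $A,B\ge 0$, $AB=BA=0$, and $\ker(A+B)=0$, since then $q_A+q_B=I$) and applies the multiplicativity of the Fuglede--Kadison determinant from Theorem~\ref{T-FK}. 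This is the same content packaged more crisply; likewise, the paper applies Lemma~\ref{L-positive} once to the whole block-diagonal matrix $g_j^*g_j$ rather than to each block separately, but the two routes are interchangeable.
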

\begin{proof}
Let $0<j\le k$. Note that $g_j^*g_j$ is a block-diagonal matrix with the diagonal  blocks being $f_{i+1}^*f_{i+1}+f_if_i^*$ for $0\le i<j$ with $j-i\not\in 2\Zb$.
From Lemma~\ref{L-resolution to acyclic} we get $\ker g_j=\ker (g_j^*g_j)=\{0\}$.
For any $0\le i<j$, since $f_{i+1}^*f_{i+1}$ and $f_if_i^*$ are self-adjoint, $(\ell^2(\Gamma))^{d_i\times 1}$ is both the orthogonal direct sum of $\ker(f_{i+1}^*f_{i+1})$ and $\overline{\im(f_{i+1}^*f_{i+1})}$, and  the orthogonal direct sum of $\ker(f_if_i^*)$ and $\overline{\im(f_if_i^*)}$.
Because
$f_{i+1}^*f_{i+1}\cdot f_if_i^*=f_if_i^*\cdot f_{i+1}^*f_{i+1}=0$, and
$\ker (f_{i+1}^*f_{i+1}+f_if_i^*)=\{0\}$, we have $\ker(f_{i+1}^*f_{i+1})=\overline{\im(f_if_i^*)}$ and $\ker(f_if_i^*)=\overline{\im(f_{i+1}^*f_{i+1})}$.
It follows that
$$f_{i+1}^*f_{i+1}+f_if_i^*=(f_{i+1}^*f_{i+1}+q_{f_{i+1}^*f_{i+1}})(f_if_i^*+q_{f_if_i^*})=(f_{i+1}^*f_{i+1}+q_{f_{i+1}})(f_if_i^*+q_{f_i^*}).$$
From Lemma~\ref{L-positive} we have
\begin{eqnarray*}
 \rh(X_{g_j^*g_j})&=&\log \ddet_{\cN\Gamma}(g_j^*g_j)\\
&=&\sum_{0\le i<j, j-i\not\in 2\Zb}\log \ddet_{\cN\Gamma} (f_{i+1}^*f_{i+1}+f_if_i^*)\\
&=&\sum_{0\le i<j, j-i\not\in 2\Zb}\big(\log \ddet_{\cN\Gamma} (f_{i+1}^*f_{i+1}+q_{f_{i+1}})+\log \ddet_{\cN\Gamma} (f_if_i^*+q_{f_i^*})\big)\\
&\overset{\eqref{E-symmetry}}=&\sum_{i=1}^j\log \ddet_{\cN\Gamma} (f_i^*f_i + q_{f_i}).
\end{eqnarray*}

Now the lemma follows from \eqref{E-determinant square root} and the observation that for $0<j<k$ from Lemmas~\ref{L-short exact} and \ref{L-addition} we have

$$\rh(X_{g_j})+\rh(X_{g_{j+1}})=\rh(X_{g_j^*g_j}).$$
\end{proof}

We are ready to prove Theorem~\ref{T-torsion}.

\begin{proof}[Proof of Theorem~\ref{T-torsion}] From Lemma~\ref{L-addition for torsion} we have
\begin{align*}
(-1)^k\rh(X_{g_1})+\rh(X_{g_k})
&=\sum_{j=1}^{k-1}(-1)^{k+1+j}(\rh(X_{g_j})+\rh(X_{g_{j+1}}))\\
&=\sum_{j=1}^{k-1}(-1)^{k+1+j}\sum_{i=1}^j\log \ddet_{\cN\Gamma} (f_i^*f_i + q_{f_i}).
\end{align*}
From Lemmas~\ref{L-addition for torsion} and Theorem~\ref{T-main} we have
\begin{align*}
\rh(X_{g_k})\le \frac12 \sum_{i=1}^k\log \ddet_{\cN\Gamma} (f_i^* f_i + q_{f_i}).
\end{align*}
Note that $g_1=f_1$. Therefore
\begin{eqnarray*}
(-1)^k\rh(\widehat{\cM})&=&(-1)^k\rh(X_{f_1})\\
&=&(-1)^k\rh(X_{g_1})\\
&\ge& \sum_{j=1}^{k-1}(-1)^{k+1+j}\sum_{i=1}^j\log \ddet_{\cN\Gamma} (f_i^*f_i +q_{f_i})-\frac12 \sum_{i=1}^k\log \ddet_{\cN\Gamma} (f_i^*f_i + q_{f_i}) \\
&=&  \frac{(-1)^k}2 \sum_{i=1}^k(-1)^{i+1}\log \ddet_{\cN\Gamma} (f_i^* f_i + q_{f_i})\\
& =&(-1)^k \rho^{(2)}(\cC_*).
\end{eqnarray*}

When $\cC_*\rightarrow \cM$ is a resolution as in \eqref{E-resolution 2}, we may also think of it as a partial resolution with length $k+1$ by setting $\cC_{k+1}=0$. Then we also have $(-1)^{k+1}\rh(\widehat{\cM})\ge (-1)^{k+1} \rho^{(2)}(\cC_*)$, and hence $\rh(\widehat{\cM})=\rho^{(2)}(\cC_*)$.
\end{proof}

\subsection{Applications to $L^2$-torsion}  \label{SS-application to torsion}

The first application of Theorem~\ref{T-torsion} to $L^2$-torsion is a proof of Theorem~\ref{T-Luck conjecture}.

If the trivial left $\Zb\Gamma$-module $\Zb$ is of type FL, then $\Gamma$ is torsion-free \cite[Corollary VIII.2.5]{Brown}, and in particular $\Gamma$ can not be a non-trivial finite group. The latter fact can also be proved using the following quick argument we learned from a comment of Ian Agol:
Suppose that $\Gamma$ is finite and there exists a resolution
$$0 \to (\Zb \Gamma)^{1 \times d_k} \to \cdots \to (\Zb \Gamma)^{1 \times d_0} \to \Zb \to 0.$$
Counting ranks of $\Zb$-modules we get $1=|\Gamma| \cdot \sum_{j=0}^k (-1)^j d_j$, and hence $\Gamma$ is trivial.

From the definition of topological entropy one observes easily that the trivial action of an infinite amenable group on a compact metrizable space has topological entropy $0$.
Now Theorem~\ref{T-Luck conjecture} is an immediate consequence of Theorem~\ref{T-torsion} and Remark~\ref{R-vanishdim}.

As a second application, since the entropy of an action is non-negative, we note from Theorem~\ref{T-torsion} that if a left $\Zb\Gamma$-module $\cM$ is of type FL and $\chi(\cM)=0$, then $\rho^{(2)}(\cM)\ge 0$. This is non-trivial since the $L^2$-torsion is defined as an alternating sum of non-negative numbers.

Let us mention one more application.

\begin{theorem} \label{T-infinite index}
Let $\Gamma$ be a countable discrete amenable group which contains $\Zb$ as a subgroup of infinite index. Let $\cM$ be a left $\Zb\Gamma$-module. If $\cM$ is finitely generated as an abelian group and $\cM$ is of type FL as a left $\Zb\Gamma$-module, then $\chi(\cM)=0$ and $\rho^{(2)}(\cM)=0.$
\end{theorem}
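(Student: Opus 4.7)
The proof will split into two parts: establishing $\chi(\cM) = 0$ and then deducing $\rho^{(2)}(\cM) = 0$.

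For $\chi(\cM) = 0$, I plan to invoke Lemma~\ref{L-Euler characteristic} and reduce to the statement $\dim_{\cN\Gamma}(\cN\Gamma \otimes_{\Zb\Gamma}\cM) = 0$. Setting $H = \Zb \leq \Gamma$, the natural surjection $\cN\Gamma \otimes_{\Zb H}\cM \twoheadrightarrow \cN\Gamma \otimes_{\Zb\Gamma}\cM$ combined with L\"uck's change-of-rings identity $\dim_{\cN\Gamma}(\cN\Gamma \otimes_{\cN H} W) = \dim_{\cN H}(W)$ will further reduce the claim to $\dim_{\cN H}(\cN H \otimes_{\Zb H}\cM) = 0$. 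Since $\cM \otimes_{\Zb}\Qb$ is finite-dimensional (because $\cM$ is finitely generated as an abelian group), Cayley--Hamilton applied to the action of a generator $h$ of $H$ produces a nonzero polynomial $p(x) \in \Zb[x]$ with $p(h) \cdot \cM \subseteq \cM_{\tor}$; multiplying by the exponent of $\cM_{\tor}$ yields a nonzero $q \in \Zb H$ annihilating $\cM$. Under the Fourier identification $\cN H \cong L^\infty(S^1)$, $q$ becomes a nonzero Laurent polynomial with only finitely many zeros on $S^1$, so $\dim_{\cN H}(\cN H/q\cN H) = 0$; since $\cN H \otimes_{\Zb H}\cM$ is a finitely generated $\cN H/(q)\cN H$-module, it then has $\cN H$-dimension zero.

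For $\rho^{(2)}(\cM) = 0$, I will apply Theorem~\ref{T-torsion} (using $\chi(\cM) = 0$) to obtain $\rho^{(2)}(\cM) = \rh(\widehat\cM)$, so it remains to verify that the entropy vanishes. The Pontryagin dual $\widehat\cM$ is a compact abelian Lie group of topological dimension $r = \rank_{\Zb}\cM$. The short exact sequence $0 \to \widehat{\cM/\cM_{\tor}} \to \widehat\cM \to \widehat{\cM_{\tor}} \to 0$, together with Lemma~\ref{L-addition} and $\rh(\widehat{\cM_{\tor}}) = 0$ (the dual is finite), reduces the problem to the torsion-free case $\cM = \Zb^r$, with $\widehat\cM = (\Rb/\Zb)^r$ acted on via some $\rho\colon \Gamma \to \GL_r(\Zb)$. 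Peters' formula (Theorem~\ref{T-Peters}) then expresses $\rh(\widehat\cM)$ as the supremum over finite $\sE \subseteq \cM$ of $\lim_F \log|\sum_{s \in F} s^{-1}\sE|/|F|$. Since $\sum_{s\in F} s^{-1}\sE$ lies in the rank-$r$ lattice $\cM$, a lattice-point count in a bounding box gives $|\sum_{s\in F} s^{-1}\sE| \leq (2R_F+1)^r$ with $R_F \leq |F| \cdot \max_{s\in F}\|\rho(s)^{-1}\| \cdot \max_{e\in\sE}\|e\|_\infty$. Bounding $\|\rho(s)^{-1}\| \leq C^{|s|_S}$ exponentially in the word-length with respect to a finite generating set $S$ of a finitely generated subgroup of $\Gamma$ controlling the action yields
\[
\frac{\log|\sum_{s\in F} s^{-1}\sE|}{|F|} \leq \frac{r\log(2|F|\,\max_e\|e\|_\infty)}{|F|} + \frac{r(\log C)\cdot \max_{s\in F}|s|_S}{|F|}.
\]
Since the hypothesis $[\Gamma:\Zb] = \infty$ prevents $\Gamma$ from being virtually cyclic, its growth function is super-linear; consequently there exist F\o lner sequences with $\max_{s\in F}|s|_S/|F| \to 0$ (for example, F\o lner balls in a suitable finitely generated subgroup, whose growth is at least quadratic). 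Along such a sequence the right-hand side vanishes, giving $\rh(\widehat\cM) = 0$.

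The main obstacle will be the last step: rigorously establishing the existence of the required ``thin'' F\o lner sequences, compatible with the $\Gamma$-action, in the general countable (possibly non-finitely-generated) amenable setting. I expect this to demand either a careful reduction to a finitely generated subgroup of $\Gamma$ that still contains $\Zb$ with infinite index and whose image in the countable group $\Aut(\widehat\cM)$ captures the full action, or a direct coset-decomposition argument within Peters' formula exploiting the fact that $H$-F\o lner fibers within F\o lner sets of $\Gamma$ become arbitrarily long, as a consequence of $[\Gamma:H] = \infty$.
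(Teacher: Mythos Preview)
Your argument for $\chi(\cM)=0$ via L\"uck's dimension theory is correct and in fact proves more than stated: it only uses $\Zb\le\Gamma$, not $[\Gamma:\Zb]=\infty$. The paper instead deduces $\chi(\cM)=0$ a posteriori from $\rh(\widehat\cM)<\infty$ via Remark~\ref{R-vanishdim}, so your algebraic route is an independent and somewhat sharper contribution.

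For the entropy vanishing, however, your word-length approach has a real gap---essentially the one you flag yourself. First, the bound $\|\rho(s)^{-1}\|\le C^{|s|_S}$ only makes sense for $s$ in the subgroup generated by $S$; when $\Gamma$ is not finitely generated, F\o lner sets of $\Gamma$ cannot be confined to a fixed finitely generated subgroup, and passing to such a subgroup computes the entropy of a different action (there is no simple monotonicity of entropy under restriction that would finish the job). Second, even if $\Gamma$ is finitely generated, you need F\o lner sets with $\max_{s\in F}|s|_S=o(|F|)$. Your justification via ``F\o lner balls'' and superlinear growth only works for groups of subexponential growth; for amenable groups of exponential growth (e.g.\ solvable groups that are not virtually nilpotent) balls are typically not F\o lner, and the existence of F\o lner sets with diameter $o(|F|)$ is not an obvious fact.

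The paper bypasses both issues through Lemma~\ref{L-smooth to zero}, which shows that any differentiable $\Gamma$-action on a compact manifold has zero topological entropy under the hypothesis $[\Gamma:\Zb]=\infty$. The mechanism is your second suggested fix made precise: choose a generator $s_0$ of $H\cong\Zb$ and $N$ representatives $t_1,\dots,t_N$ of distinct left $H$-cosets, set $P=\{t_js_0^k:1\le j\le N,\ 0\le k<M\}$ of cardinality $MN$, and for any $(P,1/|P|)$-invariant $F$ extract a maximal $P$-packing $\Omega\subseteq F$ with $|\Omega|\le 2|F|/(MN)$. Since $F\subseteq P^{-1}P\Omega$, separation on $F$ is controlled by separation on $\Omega$ up to the Lipschitz constant of elements of $P^{-1}P$, yielding $\limsup_F\frac{\log N_\varepsilon(X,\theta_{F,\infty})}{|F|}\le\frac{2}{MN}(2Mq\log K_1+\text{const})$, where $K_1$ is the Lipschitz constant of $s_0^{\pm1}$ and $q=\dim X$. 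Letting $M\to\infty$ and then $N\to\infty$ gives zero. No finite generation, growth estimates, or special F\o lner sets are needed: the infinite-index hypothesis enters exactly as the availability of arbitrarily many coset representatives $t_j$.
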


To prove Theorem~\ref{T-infinite index} we need the following well-known dynamical fact. For the convenience of the reader, we give a proof.
We allow smooth manifolds to have different dimensions for different connected components, including $0$ dimension. In particular, compact smooth manifolds could be finite sets.
We say that an action of $\Gamma$ on a compact smooth manifold $X$ is {\it differentiable} if the homeomorphism of $X$ given by each $s\in \Gamma$ is $C^{(1)}$.
\begin{lemma} \label{L-smooth to zero}
Let $\Gamma$ be a countable discrete amenable group containing $\Zb$ as a subgroup of infinite index.
Then every differentiable action of $\Gamma$ on a compact smooth manifold $X$ has  topological entropy $0$.
\end{lemma}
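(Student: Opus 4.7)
The strategy is to combine finiteness of entropy for a single $C^1$-diffeomorphism with a dilution principle for amenable groups admitting an infinite-index subgroup.

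First, I would fix any embedding $\Zb\hookrightarrow\Gamma$ and let $T\colon X\to X$ denote the $C^1$-diffeomorphism induced by the chosen generator. The topological entropy of the restricted $\Zb$-action (that is, of $T$) is finite by the classical Kushnirenko volume estimate: with $L:=\sup_{x\in X}\|D_xT\|$, any $(n,\varepsilon)$-dynamical ball for $T$ contains an ordinary Riemannian ball of radius $\varepsilon/L^n$, and hence $X$ admits an $(n,\varepsilon)$-spanning set of cardinality $O((L^n/\varepsilon)^{\dim X})$, giving $\htopol(T)\le(\dim X)\cdot\log\max(1,L)<\infty$.

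Next, I would invoke the following fact from amenable entropy theory: if a countable discrete amenable group $\Gamma$ acts continuously on a compact metric space $X$ and a subgroup $H\le\Gamma$ satisfies $[\Gamma:H]=\infty$ and $\htopol(H\curvearrowright X)<\infty$, then $\htopol(\Gamma\curvearrowright X)=0$. Applied with $H=\Zb$, this immediately gives $\htopol(X)=0$. The main content lies in justifying this dilution fact. Given any finite open cover $\cU$ of $X$, any $n\in\Nb$, and any $t_1,\ldots,t_n\in\Gamma$ representing distinct right $H$-cosets, the disjoint union $F:=\bigsqcup_{i=1}^n Et_i$ (for any finite $E\subseteq H$) satisfies
$$\cU^F=\bigwedge_{i=1}^n t_i^{-1}\cU^E,\qquad N(\cU^F)\le N(\cU^E)^n.$$
Combining this with the Ornstein-Weiss quasi-tiling theorem (Lemma~\ref{L-quasitile}), one tiles an arbitrary Følner set of $\Gamma$, up to negligible remainder, by translates of such configurations with $E$ Følner in $H$; letting $|E|$ and $n$ grow, one gets $\htopol(\Gamma\curvearrowright X,\cU)\le\htopol(H\curvearrowright X,\cU)/n$ for each $n$, hence $\htopol(\Gamma\curvearrowright X,\cU)=0$. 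Taking the supremum over $\cU$ yields the claim.

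The main obstacle is making the quasi-tiling step fully rigorous: one must verify that "staircase" configurations $\bigsqcup_i Et_i$ can be arranged to be approximately $\Gamma$-invariant (so that the quasi-tiling theorem can be applied with them as tiles), which in turn requires choosing the coset representatives $t_i$ compatibly with the target level of left invariance, and ensuring that the boundary errors arising from both the quasi-tiling remainder and the non-$\Gamma$-Følnerness of $E\subseteq H$ remain negligible as $n$ grows. Everything else is routine bookkeeping.
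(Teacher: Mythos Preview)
Your high-level strategy is valid: the Kushnirenko bound gives $\htopol(T)<\infty$, and the dilution principle you invoke ($[\Gamma:H]=\infty$ with $\htopol(H\curvearrowright X)<\infty$ forces $\htopol(\Gamma\curvearrowright X)=0$) is indeed a theorem. But your sketch of its proof has a real gap, not just the bookkeeping obstacle you flag. From $N(\cU^F)\le N(\cU^E)^n$ and $|F|=n|E|$ one obtains only
\[
\frac{\log N(\cU^F)}{|F|}\le\frac{\log N(\cU^E)}{|E|};
\]
the factor $n$ cancels. Tiling a $\Gamma$-F{\o}lner set by translates of such staircases and summing still yields at best $\htopol(\Gamma,\cU)\le\htopol(H,\cU)$, with no division by $n$ anywhere. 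The correct dilution argument runs in the opposite direction: one \emph{enlarges} the partition on the $H$-side to $\cQ=\bigvee_{i=1}^n t_i^{-1}\cP$ (with the $t_i$ in distinct left $H$-cosets), observes that $H_\mu(\cQ^E)=H_\mu(\cP^G)$ with $|G|=n|E|$, and uses the infimum characterization $h_\mu(\Gamma,\cP)=\inf_F H_\mu(\cP^F)/|F|$ (valid by strong subadditivity of Shannon entropy, as in Lemma~\ref{L-strong subadditive to limit}) to conclude $h_\mu(H,\cQ)\ge n\cdot h_\mu(\Gamma,\cP)$; the topological statement then follows from the variational principle.

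The paper takes a more direct route, avoiding any abstract dilution lemma. With $P=\{t_js_0^k:1\le j\le N,\ 0\le k<M\}$ and $F$ a $\Gamma$-F{\o}lner set, a greedy packing (no Ornstein--Weiss quasi-tiling) produces $\Omega\subseteq F$ with $F\subseteq P^{-1}P\Omega$ and $|\Omega|\le 2|F|/(NM)$. The Lipschitz constants of the diffeomorphisms indexed by $P^{-1}P$ (bounded by $K_1^{2M}K_2$, where $K_1$ depends on $s_0$ and $K_2$ on the finitely many $t_j^{-1}t_{j'}$) turn $\varepsilon'$-density at the points of $\Omega$ into $\varepsilon$-density on all of $F$, giving
\[
\limsup_F\frac{\log N_\varepsilon(X,\theta_{F,\infty})}{|F|}\le\frac{2}{NM}\bigl(2Mq\log K_1+q\log K_2+\text{const}\bigr).
\]
Letting $M\to\infty$ first and then $N\to\infty$ yields zero. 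This is essentially your Kushnirenko-plus-dilution carried out by hand, with the explicit Lipschitz control replacing the abstract hypothesis $\htopol(H)<\infty$.
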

\begin{proof} Endow $X$ with a Riemannian metric. Since $X$ is a compact manifold, it has finitely many connected components. Thus we may take a compatible
metric $\theta$ on $X$ which restricts to the geodesic distance on each connected component. Take $0<\eta<1$ such that if $x, y\in X$ have distance less that
$\eta$, then $x$ and $y$ are in the same connected component of $X$. Denote by $L$ the diameter of $X$. Recall that a subset $Z$ of $X$ is called $\delta$-dense
for $\delta>0$ if for any $x\in X$ one has $\theta(x, z)\le \delta$ for some $z\in Z$.

For each $p\in \Nb\cup \{0\}$, consider the supremum norm $\|\cdot \|_\infty$ on $\Rb^p$ given by $\|(u_1, \dots, u_p)\|_\infty=\max_{1\le j\le p}|u_j|$.
For $r>0$, denote by $B(0, r)$ the open ball in $\Rb^p$ with center $0$ and radius $r$ in this norm. For a connected component $Y$ of $X$ with dimension $p$,
choose smooth charts $f_j: B(0, 2)\rightarrow Y$ for $1\le j\le k_Y$ such that $\bigcup_{1\le j\le k_Y}f_j(B(0, 1))=Y$. Take $K_Y>0$ such that
$\theta(f_j(x), f_j(y))\le K_Y\|x-y\|_\infty$ for all $1\le j\le k_Y$ and $x, y\in B(0, 1)$.
For any $0<\delta'<1$,  $B(0, 1)$ has a $\delta'$-dense subset with
cardinality at most $(4/\delta')^p$, and hence $Y$ has a $(K_Y\delta')$-dense subset with cardinality at most $k_Y(4/\delta')^p$.
Denoting by $q$ the highest
dimension of the connected components of $X$, we see that there exist constants $C_1, C_2>1$ such that for any $0<\delta'<1$,
$X$ has a $(C_1\delta')$-dense
subset with cardinality at most $C_2(4/\delta')^q$.

For each $s\in \Gamma$ and $x\in X$, denote by $\xi_{s, x}$ the linear map induced by $s$ from the tangent space of $X$ at $x$ to the tangent space of $X$ at $sx$. Denote by $\|\xi_{s, x}\|$ the operator norm of $\xi_{s, x}$. Since $X$ is compact, one has $K_s:=\sup_{x\in X}\|\xi_{s, x}\|<+\infty$.
Then for any $x$ and $y$ in the same connected component of $X$, one has $\theta(sx, sy)\le K_s\theta(x, y)$.

Let $0<\varepsilon<\eta/2$.
By assumption, we can find $s_0\in \Gamma$ with infinite order such that the subgroup $H$ of $\Gamma$ generated by $s_0$ has infinite index.
Let $M, N\in \Nb$.
Take $t_1=e, \dots, t_N$ in $\Gamma$ such that the left cosets $t_1H, \dots, t_NH$ are pairwise distinct. Set  $P=\{t_js_0^k: 1\le j\le N, 0\le k\le M-1\}$.

Let $F\in \cB(P, 1/|P|)$, i.e. $F$ is a nonempty finite subset of $\Gamma$ and
$$|\{t\in F: Pt\subseteq F\}|\ge \left(1-\frac{1}{|P|}\right)|F|.$$
Then
$$ |PF|\le |F|+|P|\cdot |\{t\in F: Pt\nsubseteq F\}|\le 2|F|.$$
Take a maximal subset $\Omega$ of $F$ subject to the condition that for any distinct $s, t\in \Omega$,
the sets $Ps$ and $Pt$ are disjoint. For any $s\in F$, one has $Ps\cap Pt\neq \emptyset$ for some $t\in \Omega$ and hence $s\in (P^{-1}P)t$.
That is, $F\subseteq P^{-1}P\Omega$.  Note that
\begin{align} \label{E-smooth 1}
N\cdot M\cdot |\Omega|=|P|\cdot |\Omega|=|P\Omega|\le |PF|\le 2|F|.
\end{align}

Set $K_1=\max(K_{s_0}, K_{s_0^{-1}})\ge 1$ and $K_2=\max_{1\le j, k\le N}K_{t_j^{-1}t_k}\ge 1$. Let $x, y\in X$. If  $\theta(tx, ty)\le \eta/2$ for some $t\in \Omega$, then $tx$ and $ty$ lie in the same connected component of $X$, and hence
\begin{align*}
\theta(stx, sty)\le K_1^{2M}K_2\theta(tx, ty)
\end{align*}
for all $s\in P^{-1}P$. Thus, if $\theta(tx, ty)\le \min(\eta/2, \varepsilon/(K_1^{2M}K_2))=(K_1^{2M}K_2)^{-1}\varepsilon$ for all $t\in \Omega$, then
$\theta_{F, \infty}(x, y)\le \varepsilon$, where $\theta_{F, \infty}$ is defined by \eqref{E-infinity distance}.
Taking $\delta'=C_1^{-1}(2K_1^{2M}K_2)^{-1}\varepsilon$ in the second paragraph of the proof, we see that $X$ has  a $(2K_1^{2M}K_2)^{-1}\varepsilon$-dense subset $Z$ with cardinality at most $C_2(8K_1^{2M}K_2C_1/\varepsilon)^q$.
For each $x\in X$, there is some $\varphi(x)\in Z^\Omega$ such that $\theta(tx, \varphi(x)_t)\le (2K_1^{2M}K_2)^{-1}\varepsilon$ for all $t\in \Omega$.
If $\cW$ is a $(\theta_{F, \infty}, \varepsilon)$-separated subset of $X$ and $\varphi(x)=\varphi(y)$ for some $x, y\in \cW$, then
$$\theta(tx, ty)\le \theta(tx, \varphi(x)_t)+\theta(\varphi(y)_t, ty)\le (K_1^{2M}K_2)^{-1}\varepsilon$$
for all $t\in \Omega$, and hence $\theta_{F, \infty}(x, y)\le \varepsilon$, which implies that $x=y$.
It follows that
$$ N_\varepsilon(X, \theta_{F, \infty})\le |Z^\Omega|\le (C_2(8K_1^{2M}K_2C_1/\varepsilon)^q)^{|\Omega|}\overset{\eqref{E-smooth 1}}\le (C_2(8K_1^{2M}K_2C_1/\varepsilon)^q)^{2|F|/MN}.$$
Therefore
$$
\limsup_F\frac{\log N_\varepsilon(X, \theta_{F, \infty})}{|F|}\le \frac{2}{MN}(\log (C_2(8C_1/\varepsilon)^q)+2Mq\log K_1+q\log K_2).$$
Fix $N$ and $t_1, \dots, t_N$ first. Then $K_2$ is fixed. Letting $M\to \infty$, we get
$$\limsup_F\frac{\log N_\varepsilon(X, \theta_{F, \infty})}{|F|}\le \frac{4q}{N}\log K_1.$$
Next letting $N\to \infty$, we get
$$\limsup_F\frac{\log N_\varepsilon(X, \theta_{F, \infty})}{|F|}=0.$$
Since $\varepsilon$ is arbitrary, by Lemma~\ref{L-l2 definition of entropy} we conclude that $\htopol(X)=0$ as desired.
\end{proof}

Now Theorem~\ref{T-infinite index} follows from Lemma~\ref{L-smooth to zero}, Theorem~\ref{T-torsion} and Remark~\ref{R-vanishdim}.

\subsection{Application to entropy} \label{SS-application to entropy}

For $\Gamma=\Zb^d$, the fact that
$\Zb[\Zb^d]$ is a factorial Noetherian integral domain allows one to apply the tools from commutative algebra. Using such tools Lind, Schmidt and Ward \cite[Section 4]{LSW} showed
that the calculation of the entropy for algebraic actions of $\Zb^d$ can be reduced to calculating the entropy of principal algebraic actions, i.e.
$$\rh \left(\widehat{\Zb[\Zb^d]/\Zb[\Zb^d]f} \right) =\log \ddet_{\cN(\Zb^d)}f$$ for all non-zero $f\in \Zb[\Zb^d]$.
For general countable discrete amenable groups, it is not clear how to carry out such a reduction.
 However, we shall see that the calculation of the entropy for algebraic actions of poly-$\Zb$ groups can be reduced to calculating the $L^2$-torsion.

A group $\Gamma$ is called {\it poly-$\Zb$}  if there is a sequence of subgroups $\Gamma=\Gamma_1\rhd \Gamma_2\rhd\cdots \rhd \Gamma_n=\{e\}$ such
that $\Gamma_j/\Gamma_{j+1}=\Zb$  for every $1\le j\le n-1$.
Let $\Gamma$ be a poly-$\Zb$ group. Then $\Zb\Gamma$ satisfies the following conditions:
\begin{enumerate}
\item $\Zb\Gamma$ is {\it left Noetherian}, i.e. every left ideal of $\Zb\Gamma$ is finitely generated.

\item $\Zb\Gamma$ has {\it finite global dimension}, i.e. there exists $k\in \Nb$ such that for any left $\Zb\Gamma$-module $\cM$ and any exact sequence of left $\Zb\Gamma$-modules as in \eqref{E-resolution 2} with $\cC_j$ being a projective left $\Zb\Gamma$-module for each $0\le j<k$, $\cC_k$ is also a projective left $\Zb\Gamma$-module.

\item Every finitely generated projective left $\Zb\Gamma$-module $\cM$ is {\it stably free}, i.e. $\cM\oplus (\Zb\Gamma)^m=(\Zb\Gamma)^n$ for some $m,n \in \Nb$.

\end{enumerate}
This was proved in the proof of Theorem 13.4.9 in \cite{Passman} for $K\Gamma$ with $K$ being a field, but the argument there also works for $\Zb\Gamma$, using
that $\Zb$ has finite global dimension \cite[page 433]{Passman}. It follows easily that every finitely generated left $\Zb\Gamma$-module $\cM$ is of type FL.
If $\chi(\cM)>0$, then by Lemma~\ref{L-Euler characteristic} and \cite[Theorem 4.11]{CL} one has $\rh(\widehat{\cM})=\infty$.
If $\chi(\cM)=0$, then by Theorem~\ref{T-torsion} we have $\rh(\widehat{\cM})=\rho^{(2)}(\cM)$. For any countable left $\Zb\Gamma$-module $\cM$,
take an increasing sequence $\{\cM_j\}_{j\in \Nb}$ of finitely generated submodules of $\cM$ with union $\cM$, then from Theorem~\ref{T-Peters} or using the fact that
$\widehat{\cM}$ is the projective limit of $\{\widehat{\cM_j}\}_{j\in \Nb}$ one has $$\rh(\widehat{\cM})=\lim_{j\to \infty}\rh(\widehat{\cM_j})=\sup_{j\in \Nb}\rh(\widehat{\cM_j}).$$

\subsection{Torsion for arbitrary modules and the Milnor-Turaev formula} \label{SS-milnorturaev}

Note that the definition of $L^2$-torsion makes sense only for left $\Zb\Gamma$-modules $\cM$ of type FL. Furthermore, if $\chi(\cM)\neq 0$, then by Lemma~\ref{L-Euler characteristic} any resolution $\cC_*\rightarrow \cM$ of $\cM$ by finitely generated free left $\Zb\Gamma$-modules as in \eqref{E-resolution 2} fails to be weakly acyclic, and as a consequence $\rho^{(2)}(\cC_*)$ defined in \eqref{E-torsion} may depend on the choice of the resolution. Thus the $L^2$-torsion is well-defined only for left $\Zb\Gamma$-modules $\cM$ of type FL with $\chi(\cM)=0$.
Motivated by Theorem~\ref{T-torsion}, we define the \emph{$L^2$-torsion} or just \emph{torsion} of a countable left $\Zb\Gamma$-module $\cM$ to be $\rh(\widehat{\cM})$.
 Theorem~\ref{T-torsion} shows that this extends the definition of $L^2$-torsion for left $\Zb\Gamma$-modules of type FL with $\chi(\cM)=0$. 
 By Remark~\ref{R-vanishdim}, the torsion of $\cM$ is infinite if $\cM$ is of type FL and $\chi(\cM)\neq 0$. Lemma~\ref{L-addition} shows that torsion is additive for extensions of countable left $\Zb\Gamma$-modules and hence serves as a well-behaved invariant. From now on, we use the notation $\rho(\cM)$ to denote $\rh(\widehat{\cM})$.

\begin{proof}[Proof of Theorem~\ref{T-milnorturaev}:]
First of all, the chain complex $\cC_*$ is $\Delta$-acyclic if and only if the chain complex $\ell^2(\Gamma)\otimes_{\Zb\Gamma}\cC_*$ is weakly acyclic, see Proposition~\ref{P-definitions of torsion}.
The proof follows closely the proof of Theorem~\ref{T-torsion}.
We use the notation in Section~\ref{SS-entropy-torsion} and the proof of Lemma~\ref{L-short exact}, and also set $f_{k+1}=0$.

Since the chain complex $\ell^2(\Gamma)\otimes_{\Zb\Gamma}\cC_*$ is weakly acyclic, the map $(\ell^2(\Gamma))^{1\times d_j}\rightarrow (\ell^2(\Gamma))^{1\times d_j}$ sending
$y$ to $y(f_{j+1}^*f_{j+1}+f_jf_j^*)$ is injective for every $0\le j\le k$. Let $0<j<k$. Note that if $zg_j^*g_j=0$ for some $z\in (\Zb\Gamma)^{1\times d'_{j-1}}$, then $zg_j^*=0$. Thus the intersection of $(\Zb\Gamma)^{1\times d'_{j-1}}g_j^*$ and $\{y\in (\Zb\Gamma)^{1\times d_j}: yg_j=0\}$ is $\{0\}$.
In the proof of Lemma~\ref{L-short exact} from the assumption $H_j(\cC_*)=0$ one obtains $\cM_{J_j}=(\Zb\Gamma)^{1\times d'_{j+1}}g_{j+1}$. Now
$H_j(\cC_*)$ could be nonzero, but
the argument in the proof of Lemma~\ref{L-short exact} still shows that   one has a short
exact sequence of left $\Zb\Gamma$-modules
$$ 0\rightarrow (\Zb\Gamma)^{1\times d'_{j+1}}g_{j+1}\rightarrow \cM_{J_j}\rightarrow H_j(\cC_*)\rightarrow 0.$$
Then one has  a short exact sequence of left $\Zb\Gamma$-modules
$$0\rightarrow (\Zb\Gamma)^{1\times d'_j}/\cM_{J_j}\rightarrow (\Zb\Gamma)^{1\times d'_j}/(\Zb\Gamma)^{1\times d'_{j+1}}g_{j+1}\rightarrow H_j(\cC_*)\rightarrow 0,$$
and its dual $\Gamma$-equivariant exact sequence of compact metrizable groups
$$ 1  \rightarrow \widehat{H_j(\cC_*)}  \rightarrow X_{g_{j+1}}   \rightarrow X_{J_j}\rightarrow 1.$$
From Lemma~\ref{L-exact} we still have
the  $\Gamma$-equivariant exact sequence of compact metrizable groups
$$1 \rightarrow X_{g_j} \rightarrow X_{g_j^*g_j} \to X_{J_j} \rightarrow 1.$$
In the proof of Lemma~\ref{L-short exact} from the assumption $H_j(\cC_*)=0$ one obtains
The argument in the proof of Lemma~\ref{L-short exact} also shows that any $y\in (\ell^2(\Gamma))^{1\times d'_k}$ satisfying $yg_k=0$ must be $0$.
Taking adjoints we get $\ker g_k^*=\{0\}$. The argument in the proof of Lemma~\ref{L-addition for torsion} still shows $\ker g_k=\{0\}$. By Lemma~\ref{L-f < J}
we have $\rh(X_{g_k})=\rh(X_{g_k^*})$. From Lemmas~\ref{L-exact} and \ref{L-addition} we get $\rh(X_{g_k})=\frac12 \rh(X_{g_k^*g_k})$.
The rest of the proof is analogous to the proof of Theorem~\ref{T-torsion}.
\end{proof}

\begin{remark}
Note that in the trivial case, where $\Gamma=\{e\}$, one has
$$\rho( H_j(\cC_*) ) = \log | H_j(\cC_*) |.$$
In the classical cases $\Gamma=\Zb^d$, Theorem~\ref{T-milnorturaev} is a consequence of the results of Milnor \cite[page 131]{Milnor68} and Turaev \cite[Lemma 2.1.1]{Turaev}. The classical result of Milnor in the case $\Gamma=\Zb$ shows an identity of elements in the reduced $K_1$-group of the ring of rational functions $\Qb(z)$. In this case, the Mahler measure of the determinant computes the $L^2$-torsion, which gives the relationship with Theorem~\ref{T-milnorturaev}.
\end{remark}

\section{$L^2$-torsion of modules} \label{S-torsion}

We want to provide a fresh view on $L^2$-torsion and show that it is -- if set up correctly -- a completely classical torsion theory, much in the spirit of classical Reidemeister torsion. All desired properties follow from the work of Milnor \cite{Milnor66}. Throughout this section $\Gamma$ will be a countable discrete (not necessarily amenable) group.

\subsection{Whitehead torsion} \label{SS-Whitehead}

Let $R$ be a unital ring. For each $n\in \Nb$, we have the multiplicative group $\GL_n(R)$ consisting of all invertible $n\times n$ matrices over $R$. One may think of $\GL_n(R)$ as a subgroup of $\GL_{n+1}(R)$ via identifying $A\in \GL_n(R)$ with
\begin{align*}
\left(
\begin{matrix}
A &  0\\
0& 1
\end{matrix}
\right)\in \GL_{n+1}(R).
\end{align*}
Denote by $\GL_\infty(R)$ the union of $\GL_n(R)$ for all $n\in \Nb$. The {\it $K_1$-group} of $R$, denoted by $K_1(R)$, is defined as the abelian quotient group of $\GL_\infty(R)$ by its commutator subgroup $[\GL_\infty(R), \GL_\infty(R)]$ \cite[Definition 2.1.5]{Rosenberg}. The {\it reduced $K_1$-group} of $R$, denoted by $\bar{K}_1(R)$, is the quotient group of $K_1(R)$  by the image of $\{1, -1\}\subseteq \GL_1(R)$. We shall write the abelian group $\bar{K}_1(R)$ as an additive group.

In the rest of this subsection we assume that $R$ satisfies the condition \eqref{E-rank}.

For an acyclic (i.e.\ exact) chain complex $\cC_*$ of finitely generated free left $R$-modules of the form
\begin{align}  \label{E-acyclic}
0\rightarrow \cC_k \overset{\partial_k}\rightarrow \cdots \overset{\partial_2}\rightarrow \cC_1\overset{\partial_1}\rightarrow \cC_0\rightarrow 0
\end{align}
with a chosen unordered basis for each $\cC_j$, Milnor defined the {\it Whitehead torsion} $\tau(\cC_*)$ of $\cC_*$, as an element of $\bar{K}_1(R)$ \cite[Sections 3 and 4]{Milnor66}. Instead of recalling Milnor's definition, we recall the equivalent definition in \cite[Section 15]{Cohen}.

Since $\cC_*$ is a finite acyclic chain complex of free left $R$-modules, it has a {\it contraction} $\delta$, i.e. a left $R$-module homomorphism $\delta_j: \cC_j\rightarrow\cC_{j+1}$
for each $j\in \Zb$, such that $\partial_{j+1}\delta_j+\delta_{j-1}\partial_j=\id$ for every $j\in \Zb$ \cite[page 47]{Cohen}.
Set
\begin{align*}
\cC_\odd&=\sum_{j\not \in 2\Zb}\cC_j, \\
\cC_\even&=\sum_{j\in 2\Zb}\cC_j, \\
(\partial+\delta)_\odd&=(\partial+\delta)|_{\cC_\odd}:\cC_\odd\rightarrow \cC_\even.
\end{align*}
It turns out that $(\partial+\delta)_{\odd}$ is an isomorphism from $\cC_\odd$ onto $\cC_\even$ \cite[page 53]{Cohen}. The unions of the chosen unordered basis of each $\cC_j$ give rise to unordered basis of $\cC_\odd$ and $\cC_\even$ respectively. Under these bases the matrix of $(\partial+\delta)_\odd$ (up to switching rows and columns) is an element of $\GL_\infty(R)$, whose image in $\bar{K}_1(R)$ is the Whitehead torsion $\tau(\cC_*)$. (The fact that the matrix of $(\partial+\delta)_\odd$ is a square matrix uses the condition \eqref{E-rank}.)

Let $(\cC_*, \partial)$ be an acyclic  chain complex of finitely generated free left $R$-modules of finite length as in \eqref{E-acyclic}, with a chosen unordered basis for each $\cC_j$. Its {\it suspension} is the chain complex $(\Sigma\cC_*, \Sigma\partial)$  defined
by $(\Sigma\cC)_j=\cC_{j-1}$ and $(\Sigma\partial)_j=-\partial_{j-1}$ for all $j\in \Zb$ \cite[page 5]{Brown}.
Note that $\Sigma\cC_*$ is also acyclic. The chosen unordered basis of $\cC_j$ gives rise to an unordered basis of $(\Sigma\cC)_{j+1}$ naturally.  One has
\cite[page 53]{Cohen}
\begin{align} \label{E-suspension}
\tau(\Sigma\cC_*)=-\tau(\cC_*) \in \bar{K}_1(R).
\end{align}

Let
$$0 \rightarrow \cC'_* \rightarrow \cC_* \rightarrow \cC''_* \rightarrow 0$$
be a short exact sequence of chain complexes of finitely generated free left $R$-modules of finite length as in
\eqref{E-acyclic}. For each $j\in \Zb$, denote by $H_j(\cC_*)$ the $j$-th homology $\ker(\partial_j)/\im(\partial_{j+1})$ of $\cC_*$, which is a left $R$-module. Similarly, define $H_j(\cC'_*)$ and $H_j(\cC''_*)$. Then one has the long exact sequence
\begin{align} \label{E-long exact}
\cdots \rightarrow H_{j+1}(\cC'_*)\rightarrow H_{j+1}(\cC_*)\rightarrow H_{j+1}(\cC''_*)\rightarrow H_j(\cC'_*)\rightarrow H_j(\cC_*)\rightarrow H_j(\cC''_*)\rightarrow \cdots
\end{align}
of left $R$-modules \cite[Theorem 3.3]{Osborne}. It follows that if two of $\cC'_*, \cC_*$ and $\cC''_*$ are acyclic, then so is the other.
Moreover, if this is the case and for chosen unordered basis of $\cC'_j$ and $\cC''_j$ we take a left $R$-module lifting $\cC''_j\rightarrow \cC_j$ for the quotient map $\cC_j\rightarrow \cC''_j$ and endow $\cC_j$ with the unordered basis
as the union of the images of the chosen bases of $\cC'_j$ and $\cC''_j$ under $\cC'_j\rightarrow \cC_j$ and $\cC''_j\rightarrow \cC_j$ for each $j\in \Zb$, then \cite[Theorem 3.1]{Milnor66} shows that
\begin{align} \label{E-torsion addition}
\tau(\cC_*) = \tau(\cC'_*) + \tau(\cC''_*) \in \bar{K}_1(R).
\end{align}

\subsection{The Haagerup-Schultz algebra} \label{SS-HS algebra}

Let $R$ be a unital ring. A subset $S$ of $R$ is called {\it multiplicative} if $1_R\in S$, $0\not\in S$ and $ab\in S$ for all $a, b\in S$.
For a multiplicative set $S$ consisting of non-zero-divisors of $R$, the pair $(R, S)$ is said to satisfy the {\it right Ore condition} if for any $s\in S$ and $a\in R$ there exist $t\in S$ and $b\in R$ with $sb=at$. In such case,
one can form the {\it Ore localization} of $R$ with respect to $S$, denoted by $RS^{-1}$, which is a unital ring containing $R$ as a subring
such that every $s\in S$ is invertible in $RS^{-1}$ and every element of $RS^{-1}$ is of the form $as^{-1}$ for some $a\in R$ and $s\in S$  \cite[Section 10A]{Lam}. Similarly, when $(R, S)$ satisfies the left Ore condition, one can define the Ore localization $S^{-1}R$. If $(R, S)$ satisfies both the left and right Ore conditions, then $S^{-1}R=RS^{-1}$ \cite[Corollary 10.14]{Lam}.

Let $\Gamma$ be a countable discrete group.
Denote by $S$ the set of elements $g$ in $\cN\Gamma$ satisfying $\ddet_{\cN\Gamma}g>0$.
If $fg=0$ for some $f, g\in \cN\Gamma$ and $g\neq 0$, then from the injective map $\cN\Gamma\hookrightarrow \ell^2(\Gamma)$ sending $h$ to $he$ one has
$\ker f\neq \{0\}$, which implies $\ddet_{\cN\Gamma}f=0$ by Theorem~\ref{T-FK}.(4). If $gf=0$ for some $f, g\in \cN\Gamma$ and $g\neq 0$, then from $f^*g^*=0$
and Theorem~\ref{T-FK}.(2) we get $\ddet_{\cN\Gamma}f=\ddet_{\cN\Gamma}(f^*)=0$.
Thus $S$ is a multiplicative set
consisting of non-zero-divisors of $\cN\Gamma$. Furthermore,  the pair $(\cN\Gamma, S)$ satisfies both the left and right Ore conditions \cite[Lemma 2.4]{HS}. The {\it Haagerup-Schultz algebra} of $\Gamma$, denoted by $\cN\Gamma^\Delta$,  is defined as the Ore localization $S^{-1}\cN\Gamma=\cN\Gamma S^{-1}$. From Theorem~\ref{T-FK} we have $S^*=S$. Thus $\cN\Gamma^\Delta$ has a unique involution $b\mapsto b^*$ extending that of $\cN\Gamma$.

For each $d\in \Nb$, the Fuglede-Kadison determinant $\det_{\cN\Gamma}: M_d(\cN\Gamma)\rightarrow \Rb_{\ge 0}$ has a unique multiplicative extension $ \ddet_{\cN\Gamma}: M_d(\cN\Gamma^\Delta) \rightarrow \Rb_{\ge 0}$ satisfying $\ddet_{\cN\Gamma}(b^*)=\ddet_{\cN\Gamma}b$ for all $b\in M_d(\cN\Gamma^\Delta)$ \cite[Proposition 2.5]{HS}, since every element of $M_d(\cN\Gamma^\Delta)$ can be written as $a(tI_d)^{-1}$ for some $a\in M_d(\cN\Gamma)$ and $t\in S$, where $I_d$ denotes the $d\times d$ identity matrix \cite[page 301]{Lam}. From the latter fact it also follows that for any
$d\in \Nb$ and $A\in \GL_d(\cN\Gamma^\Delta)$, one has
\begin{align*}
\ddet_{\cN\Gamma}A=\ddet_{\cN\Gamma}\left(
\begin{matrix}
A &  0\\
0& 1
\end{matrix}
\right).
\end{align*}
Also note that $\ddet_{\cN\Gamma}(-1)=(\ddet_{\cN\Gamma}((-1)^2))^{1/2}=1$. Thus $\ddet_{\cN\Gamma}$ induces a group homomorphism $\bar{K}_1(\cN\Gamma^\Delta)\rightarrow \Rb_{>0}$, sending the image of $A\in \GL_d(\cN\Gamma^\Delta)$ in $\bar{K}_1(\cN\Gamma^\Delta)$ to
$\ddet_{\cN\Gamma}A$, which we still denote by $\ddet_{\cN\Gamma}$.

\begin{remark} \label{R-determinant}
L\"uck and R\o rdam showed in \cite{luckrordam} that $\ddet_{\cN \Gamma} \colon \bar{K}_1(\cN \Gamma) \to \Rb_{>0}$ is an isomorphism if $\cN \Gamma$ is a factor (this happens if and only if $\Gamma$ has only infinite non-trivial conjugacy classes).
\end{remark}

\begin{lemma} \label{L-distinct free module}
The ring $\cN\Gamma^\Delta$ satisfies the condition \eqref{E-rank}.
\end{lemma}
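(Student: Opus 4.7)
The plan is to equip every left $\cN\Gamma^\Delta$-module with the L\"uck dimension of its underlying left $\cN\Gamma$-module and to show that $\dim_{\cN\Gamma}(\cN\Gamma^\Delta)=1$. Once this is established, additivity of $\dim_{\cN\Gamma}$ over direct sums gives $\dim_{\cN\Gamma}((\cN\Gamma^\Delta)^k)=k$ for every $k\in \Nb$; since any isomorphism of left $\cN\Gamma^\Delta$-modules between $(\cN\Gamma^\Delta)^k$ and $(\cN\Gamma^\Delta)^l$ is in particular an isomorphism of left $\cN\Gamma$-modules, we would conclude $k=l$.

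To compute the dimension, I would first note that for each $s\in S$ the cyclic left $\cN\Gamma$-submodule $\cN\Gamma\cdot s^{-1}\subseteq \cN\Gamma^\Delta$ is isomorphic to $\cN\Gamma$ as a left $\cN\Gamma$-module via $xs^{-1}\mapsto x$ (well-defined and bijective because $s$ is invertible in $\cN\Gamma^\Delta$), so it has $\cN\Gamma$-dimension equal to $1$. Next, I would check that the family $\{\cN\Gamma\cdot s^{-1}\}_{s\in S}$ is directed under inclusion: given $s_1, s_2\in S$, the right Ore condition for $(\cN\Gamma, S)$ applied to $s_1\in S$ and $s_2\in \cN\Gamma$ produces $u\in S$ and $v\in \cN\Gamma$ with $s_1 v=s_2 u$. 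Setting $t:=s_2 u\in S$, one has both $\cN\Gamma\cdot s_1^{-1}$ and $\cN\Gamma\cdot s_2^{-1}$ contained in $\cN\Gamma\cdot t^{-1}$. Since $\cN\Gamma^\Delta=\bigcup_{s\in S}\cN\Gamma\cdot s^{-1}$ by the construction of the Ore localization, the cofinality property of L\"uck's dimension function \cite[Theorem 6.7]{Luck} then yields
$$\dim_{\cN\Gamma}(\cN\Gamma^\Delta)=\sup_{s\in S}\dim_{\cN\Gamma}(\cN\Gamma\cdot s^{-1})=1.$$

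The only step that requires genuine care is the directedness of the filtration, which rests on a careful application of the right Ore condition; no serious analytic difficulty is anticipated, and the remaining arguments are formal consequences of the definition of the Haagerup--Schultz algebra and the standard additivity and cofinality properties of the L\"uck dimension from \cite[Section 6.1]{Luck}.
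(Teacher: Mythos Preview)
Your argument is correct but follows a genuinely different route from the paper's. The paper argues directly that no left $\cN\Gamma^\Delta$-module homomorphism $(\cN\Gamma^\Delta)^{1\times k}\to(\cN\Gamma^\Delta)^{1\times l}$ with $k>l$ can be injective: representing such a map by a matrix $A\in M_{k\times l}(\cN\Gamma^\Delta)$, it clears denominators by writing $A=B(tI_l)^{-1}$ with $B\in M_{k\times l}(\cN\Gamma)$ and $t\in S$, and then uses only the elementary fact $\dim_{\cN\Gamma}((\cN\Gamma)^k)=k>l$ to find a nonzero $y\in(\cN\Gamma)^{1\times k}$ with $yB=0$, whence $yA=0$. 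By contrast, you compute $\dim_{\cN\Gamma}(\cN\Gamma^\Delta)=1$ via the directed union $\cN\Gamma^\Delta=\bigcup_{s\in S}\cN\Gamma\cdot s^{-1}$ and the cofinality property of L\"uck's extended dimension, and then invoke additivity on $(\cN\Gamma^\Delta)^k$. Your approach is more conceptual and yields the slightly stronger intermediate statement $\dim_{\cN\Gamma}(\cN\Gamma^\Delta)=1$; the paper's approach is more elementary in that it only needs the dimension of finitely generated free $\cN\Gamma$-modules and avoids appealing to cofinality or to the extended dimension of non-finitely-generated modules.
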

\begin{proof} It suffices to show that, for any $k>l$ in $\Nb$, every homomorphism $\varphi:(\cN\Gamma^\Delta)^k\rightarrow (\cN\Gamma^\Delta)^l$ of left $\cN\Gamma^\Delta$-modules fails to be injective. Choosing an ordered basis, we may identify $(\cN\Gamma^\Delta)^k$ and $(\cN\Gamma^\Delta)^l$ with
$(\cN\Gamma^\Delta)^{1\times k}$ and $(\cN\Gamma^\Delta)^{1\times l}$ respectively. Then $\varphi$ is represented by a matrix $A\in M_{k, l}(\cN\Gamma^\Delta)$. We can write $A$ as $B(tI_l)^{-1}$ for some $B\in M_{k,l}(\cN\Gamma)$ and $t\in \cN\Gamma$ satisfying $\ddet_{\cN\Gamma}t>0$, where $I_l$ denotes the $l\times l$ identity matrix \cite[page 301]{Lam}. Since $\dim_{\cN\Gamma}((\cN\Gamma)^k)=k>l=\dim_{\cN\Gamma}((\cN\Gamma)^l)$, the $\cN\Gamma$-module homomorphism
$(\cN\Gamma)^{1\times k}\rightarrow (\cN\Gamma)^{1\times l}$ represented by $B$ can not be injective. That is,
there exists a nonzero $y\in (\cN\Gamma)^{1\times k}$ satisfying $yB=0$.
Then $y$ is a nonzero element of $(\cN\Gamma^\Delta)^{1\times k}$ and $yA=0$. Thus $\varphi$ is not injective.
\end{proof}

Thus for every  acyclic chain complex $\cC_*$ of finitely generated free left $\cN\Gamma^\Delta$-modules of finite length as in \eqref{E-acyclic} with a chosen unordered basis
for each $\cC_j$, the Whitehead torsion $\tau(\cC_*)\in \bar{K}_1(\cN\Gamma^\Delta)$ is defined.

Though we do not need this fact, let us mention that the algebra $\cN\Gamma^\Delta$ can be identified with the algebra of closed and densely defined (possibly unbounded) operators $T$ on $\ell^2(\Gamma)$ affiliated with $\cN\Gamma$ satisfying $\int^\infty_1\log t\, d\mu_{|T|}(t)<\infty$, where $\mu_{|T|}$  denotes the spectral measure of $|T|$ \cite[Lemma 2.4]{HS}. The phenomenon that the usage of algebras of unbounded operators affiliated with the group von Neumann algebra simplifies algebraic matters in the theory of $L^2$-invariants has  already been used successfully in \cite{PT}.

\subsection{A new view on $L^2$-torsion}
\label{SS-newview}

Let $\cC_*$ be a (not necessarily acyclic) chain complex of finitely generated free left $\Zb\Gamma$-modules  of finite length as in \eqref{E-acyclic}.
As in \cite[Section 18]{Cohen}, we may consider the chain complex $\cN\Gamma^\Delta\otimes_{\Zb\Gamma}\cC_*$ of left $\cN\Gamma^\Delta$-modules:
$$ 0\rightarrow \cN\Gamma^\Delta\otimes_{\Zb\Gamma}\cC_k\overset{1\otimes \partial_k}\rightarrow \cdots \overset{1\otimes \partial_2}\rightarrow \cN\Gamma^\Delta\otimes_{\Zb\Gamma}\cC_1 \overset{1\otimes \partial_1}\rightarrow \cN\Gamma^\Delta\otimes_{\Zb\Gamma}\cC_0\rightarrow 0.$$
We call the chain complex $\cC_*$ {\it $\Delta$-acyclic} if $\cN\Gamma^\Delta\otimes_{\Zb\Gamma}\cC_*$ is acyclic.

Assume that $\cC_*$ is $\Delta$-acyclic and choose an unordered basis for each $\cC_j$. The latter gives rise to an unordered basis of $\cN\Gamma^\Delta\otimes_{\Zb\Gamma}\cC_j$ naturally.
Thus we have the Whitehead torsion $\tau(\cN\Gamma^\Delta\otimes_{\Zb\Gamma}\cC_*)\in \bar{K}_1(\cN\Gamma^\Delta)$ defined, and hence can define
the {\it $L^2$-torsion}
\begin{align*}
\tilde{\rho}^{(2)}(\cC_*) := \log \ddet_{\cN \Gamma} (\tau(\cN\Gamma^\Delta\otimes_{\Zb\Gamma}\cC_*)) \in \Rb.
\end{align*}
In Proposition~\ref{P-definitions of torsion} below we shall show that $\tilde{\rho}^{(2)}(\cC_*)$ coincides with $\rho^{(2)}(\cC_*)$ defined in Section~\ref{SS-torsion}. We see the main advantage of our approach in the fact that it is more algebraic; all the analysis has been put into the properties of the ring $\cN \Gamma^{\Delta}$.

\begin{definition} \label{D-determinant condition}
We say that $\Gamma$ satisfies the \emph{determinant condition} if for any $d\in \Nb$ and any $g\in M_d(\Zb\Gamma)$ with $\ker g=\{0\}$
one has $\ddet_{\cN\Gamma}g\ge 1$.
\end{definition}

\begin{remark} \label{R-sofic has determinant}
L\"{u}ck's determinant conjecture \cite[Conjecture 13.2]{Luck} says that for every group $\Gamma$, every $d\in \Nb$, and every self-adjoint $f\in M_d(\Zb\Gamma)$, one has $\ddet_{\cN\Gamma}(f+q_f)\ge 1$, where $q_f$ denotes the orthogonal projection from $(\ell^2(\Gamma))^{d\times 1}$ onto $\ker f$.
If a group $\Gamma$ satisfies the determinant conjecture, then clearly it  satisfies the determinant condition. We refer to \cite[Theorem 13.3]{Luck} for a class of groups satisfying the determinant conjecture, see also Lemma~\ref{L-integral to positive determinant} for amenable groups.
It was shown by Elek and Szab\'o \cite[page 439]{ES} that all sofic groups satisfy the determinant conjecture.
So far there are no examples of groups known to fail the determinant condition.
\end{remark}

In the rest of this section, we assume that $\Gamma$ satisfies the determinant condition. Then for any $A\in \GL_\infty(\Zb\Gamma)$, we have
$\ddet_{\cN\Gamma}A, \ddet_{\cN\Gamma}(A^{-1})\ge 1$, and
$$1=\ddet_{\cN\Gamma}(A\cdot A^{-1})=\ddet_{\cN\Gamma}(A)\cdot \ddet_{\cN\Gamma}(A^{-1}),$$
and hence $\ddet_{\cN\Gamma}A=1$. It follows that for any $\Delta$-acyclic $\cC_*$, the $L^2$-torsion $\tilde{\rho}^{(2)}(\cC_*)$ does not depend on the choice of the unordered basis for each $\cC_j$.

\begin{lemma} \label{L-acyclic}
Let $\cC_*$ be an acyclic chain complex of finitely generated free left $\Zb\Gamma$-modules  of finite length as in \eqref{E-acyclic}.
Then $\cC_*$ is $\Delta$-acyclic and
\begin{align} \label{E-zero torsion}
\tilde{\rho}^{(2)}(\cC_*)=0.
\end{align}
\end{lemma}
\begin{proof}
The chain complex $\cC_*$ has a contraction \cite[page 47]{Cohen}. It follows that
$\cN\Gamma^\Delta\otimes_{\Zb\Gamma}\cC_*$ has an induced contraction, and hence $\cC_*$ is $\Delta$-acyclic \cite[Proposition 0.3]{Brown}.
Choose an unordered basis for $\cC_j$ and endow $\cN\Gamma^\Delta\otimes_{\Zb\Gamma}\cC_j$ with the corresponding unordered basis for each $j\in \Zb$.
In Section~\ref{SS-euler} we have observed that $\Zb\Gamma$ satisfies the condition \eqref{E-rank}.
Thus the Whitehead torsions $\tau(\cC_*)\in \bar{K}_1(\Zb\Gamma)$ and $\tau(\cN\Gamma^\Delta\otimes_{\Zb\Gamma}\cC_*)\in \bar{K}_1(\cN\Gamma^\Delta)$ are defined. Moreover, clearly $\tau(\cN\Gamma^\Delta\otimes_{\Zb\Gamma}\cC_*)$ is the image of $\tau(\cC_*)$ under the natural group homomorphism $\bar{K}_1(\Zb\Gamma)\rightarrow \bar{K}_1(\cN\Gamma^\Delta)$ induced by the embedding $\Zb\Gamma\rightarrow \cN\Gamma^\Delta$. Therefore
$$ \tilde{\rho}^{(2)}(\cC_*)=\log\ddet_{\cN\Gamma}(\tau(\cN\Gamma^\Delta\otimes_{\Zb\Gamma}\cC_*))=\log \ddet_{\cN\Gamma}(\tau(\cC_*))=0.$$
\end{proof}

Let
$$0 \rightarrow \cC'_* \to \cC_* \rightarrow \cC''_* \rightarrow 0$$
be a short exact sequence of chain complexes of finitely generated free left $\Zb\Gamma$-modules of finite length as in \eqref{E-acyclic}. Since each $\cC''_j$ is a free left $\Zb\Gamma$-module, the short sequence
$$ 0 \rightarrow \cN\Gamma^\Delta\otimes_{\Zb\Gamma}\cC'_* \rightarrow \cN\Gamma^\Delta\otimes_{\Zb\Gamma}\cC_* \rightarrow \cN\Gamma^\Delta\otimes_{\Zb\Gamma}\cC''_* \rightarrow 0$$
is also exact.
Thus, if two of $\cC'_*, \cC_*$ and $\cC''_*$ are $\Delta$-acyclic, then by the discussion in Section~\ref{SS-Whitehead} so is the other one.
Moreover, if this is the case, then
 from \eqref{E-torsion addition}  we have
\begin{align} \label{E-additive}
\tilde{\rho}^{(2)}(\cC_*) = \tilde{\rho}^{(2)}(\cC'_*) + \tilde{\rho}^{(2)}(\cC''_*) \in \Rb.
\end{align}

\begin{lemma} \label{L-torsion}
Let $\cM$ be a left $\Zb\Gamma$-module of type FL. Suppose that there is a finite resolution $(\cC_*, \partial) \rightarrow \cM$
of $\cM$ by
finitely generated free left $\Zb\Gamma$-modules as in \eqref{E-resolution 2} such that the chain complex $\cC_*$ is $\Delta$-acyclic.
Then for every finite resolution $(\cC'_*, \partial') \rightarrow \cM$ of $\cM$ by finitely generated free left $\Zb \Gamma$-modules, the chain complex $\cC'_*$ is $\Delta$-acyclic and $\tilde{\rho}^{(2)}(\cC'_*)=\tilde{\rho}^{(2)}(\cC_*)$.
\end{lemma}
\begin{proof}
There exists a homotopy equivalence $\varphi: \cC_* \rightarrow \cC'_*$ of chain complexes (index by $\Zb$) of left $\Zb\Gamma$-modules \cite[Theorem I.7.5]{Brown}.
Then $1\otimes \varphi: \cN\Gamma^\Delta\otimes_{\Zb\Gamma}\cC_*\rightarrow \cN\Gamma^\Delta\otimes_{\Zb\Gamma}\cC'_*$ is a homotopy equivalence of chain complexes of $\cN\Gamma^\Delta$-modules.  Thus $1\otimes \varphi$ induces an isomorphism from the homology groups of $\cN\Gamma^\Delta\otimes_{\Zb\Gamma}\cC_*$ to those of $\cN\Gamma^\Delta\otimes_{\Zb\Gamma}\cC'_*$. Since $\cN\Gamma^\Delta\otimes_{\Zb\Gamma}\cC_*$ is acyclic, so is $\cN\Gamma^\Delta\otimes_{\Zb\Gamma}\cC'_*$.
That is, $\cC'_*$ is also $\Delta$-acyclic. Consider
the {\it mapping cone}  of $\varphi$, which is the chain complex $({\rm cone}(\varphi)_*, \partial'')$ defined by
${\rm cone}(\varphi)_j := \cC'_j\oplus (\Sigma\cC)_j$ and
$\partial''_j(x,y) = (\partial'_j(x)+\varphi_{j-1}(y), -\partial_{j-1}(y))$.
Since $\varphi$ is a  homotopy equivalence, ${\rm cone}(\varphi)_*$  is
 acyclic \cite[Proposition I.0.6]{Brown}. Thus ${\rm cone}(\varphi)_*$ is $\Delta$-acyclic and from
 \eqref{E-zero torsion} we have $\tilde{\rho}^{(2)}({\rm cone}(\varphi)_*)=0$. Note that $\cN\Gamma^\Delta\otimes_{\Zb\Gamma}\Sigma\cC_*$ is exactly
 the suspension of $\cN\Gamma^\Delta\otimes_{\Zb\Gamma}\cC_*$.  Thus from \eqref{E-suspension} we have
 $\tilde{\rho}^{(2)}(\Sigma\cC_*)=-\tilde{\rho}^{(2)}(\cC_*)$.
Also note that we have a short exact sequence
\begin{align} \label{E-cone}
0 \rightarrow  \cC'_* \rightarrow {\rm cone}(\varphi)_* \rightarrow \Sigma\cC_*  \rightarrow 0
\end{align}
of chain complexes of left $\Zb\Gamma$-modules.
Therefore
\begin{align*}
0=\tilde{\rho}^{(2)}({\rm cone}(\varphi)_*)\overset{\eqref{E-additive}}=\tilde{\rho}^{(2)}(\cC'_*)+\tilde{\rho}^{(2)}(\Sigma\cC_*)=\tilde{\rho}^{(2)}(\cC'_*)-\tilde{\rho}^{(2)}(\cC_*)
\end{align*}
as desired.
\end{proof}

We say that a left $\Zb\Gamma$-module $\cM$ of type FL is {\it $\Delta$-acyclic} if it satisfies the conditions in Lemma~\ref{L-torsion}.
For such $\cM$ we can define the {\it $L^2$-torsion} of $\cM$, denoted by $\tilde{\rho}^{(2)}(\cM)$, as $\tilde{\rho}^{(2)}(\cC_*)$.

\begin{lemma} \label{additive}
Let $0 \to \cM' \to \cM \to \cM'' \to 0$ be a short exact sequence of left $\Zb \Gamma$-modules. If two $\Zb \Gamma$-modules out of the set $\{\cM',\cM, \cM''\}$ are of type FL and $\Delta$-acyclic, then so is the third and
$$\tilde{\rho}^{(2)}(\cM) = \tilde{\rho}^{(2)} (\cM') + \tilde{\rho}^{(2)}(\cM'').$$
\end{lemma}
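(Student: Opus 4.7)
The plan is to reduce the lemma to the additivity formula \eqref{E-additive} for $\Delta$-acyclic chain complexes, by constructing a short exact sequence of finite free resolutions compatible with the given short exact sequence $0 \to \cM' \to \cM \to \cM'' \to 0$.

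First, I would handle the claim that the third module is of type FL whenever two of them are. The symmetric case is $\cM',\cM''$ both of type FL: here the standard horseshoe lemma produces finite free resolutions $\cC'_*\to\cM'$ and $\cC''_*\to\cM''$ of the same length (padding by zero if needed) together with a finite free resolution $\cC_*\to\cM$ with $\cC_j=\cC'_j\oplus\cC''_j$, fitting into a degree-wise split short exact sequence of chain complexes
$$0\to \cC'_* \to \cC_* \to \cC''_* \to 0$$
that covers the given short exact sequence of modules. The other two cases --- where $\cM,\cM'$ or $\cM,\cM''$ are of type FL --- can be reduced to the symmetric case by a standard dimension-shifting argument (take a finitely generated free cover of the one unresolved module and induct on the length of the given resolutions), compare \cite[Chapter VIII, \S 6]{Brown}; the same construction again yields compatible finite free resolutions fitting into a short exact sequence as above.

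Next, I would transfer $\Delta$-acyclicity. Since the short exact sequence of chain complexes above is split in each degree (each $\cC''_j$ being free), tensoring with $\cN\Gamma^\Delta$ over $\Zb\Gamma$ yields a short exact sequence
$$0 \to \cN\Gamma^\Delta \otimes_{\Zb\Gamma} \cC'_* \to \cN\Gamma^\Delta \otimes_{\Zb\Gamma} \cC_* \to \cN\Gamma^\Delta \otimes_{\Zb\Gamma} \cC''_* \to 0$$
of chain complexes of finitely generated free left $\cN\Gamma^\Delta$-modules. The associated long exact sequence in homology shows that whenever two of these three complexes are acyclic, so is the third; this gives the $\Delta$-acyclicity of the third module.

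Finally, I would conclude by applying the additivity formula \eqref{E-additive} directly to this short exact sequence of $\cN\Gamma^\Delta$-module chain complexes, equipping each $\cC_j$ with the unordered basis obtained as the union of the chosen bases on $\cC'_j$ and $\cC''_j$. Since $\tilde\rho^{(2)}$ of an FL $\Delta$-acyclic module was shown in Section~\ref{SS-newview} to be independent of the chosen finite free resolution, this immediately yields the desired equality $\tilde\rho^{(2)}(\cM) = \tilde\rho^{(2)}(\cM') + \tilde\rho^{(2)}(\cM'')$. The main obstacle is the bookkeeping needed to produce compatible resolutions in the two asymmetric cases of the two-out-of-three claim, but this is purely standard homological algebra and requires no input beyond the theory of Whitehead torsion over $\cN\Gamma^\Delta$ already set up in Sections~\ref{SS-Whitehead}--\ref{SS-newview}.
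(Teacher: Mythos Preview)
Your proposal is correct and follows essentially the same approach as the paper: reduce to a short exact sequence of finite free resolutions and apply \eqref{E-additive}. The only notable difference is in how the asymmetric ``two out of three FL'' cases are handled --- you invoke a standard dimension-shifting argument with a reference to \cite[Chapter~VIII, \S6]{Brown}, whereas the paper gives explicit constructions via mapping cones (lifting $\cM'\to\cM$ or $\cM\to\cM''$ to a chain map and taking its cone to obtain a resolution of the remaining module). Both routes are standard homological algebra, and once all three modules are known to be FL, both you and the paper return to the Horseshoe setup to finish via \eqref{E-additive}.
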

\begin{proof}
Let us first assume that $\cM'$ and $\cM''$ are of type FL.
By the Horseshoe lemma \cite[Proposition 6.5]{Osborne}, any two resolutions $\cC'_* \rightarrow \cM'$ and $\cC''_* \rightarrow \cM''$ consisting of free left $\Zb\Gamma$-modules, can be combined to a free resolution $\cC_* \to \cM$, such that $\cC_*$ fits into an exact sequence
\begin{align} \label{E-extension}
0 \rightarrow \cC'_* \rightarrow \cC_* \rightarrow \cC''_* \rightarrow 0
\end{align}
of chain complexes of left $\Zb\Gamma$-modules. This shows that $\cM$ is of type FL if $\cM'$ and $\cM''$ are. If $\cM'$ and $\cM$ are of type FL, then consider resolutions $\cC'_* \rightarrow \cM'$ and $\cC_* \rightarrow \cM$ consisting of free left $\Zb\Gamma$-modules. The inclusion $\cM' \rightarrow \cM$ lifts to a map of chain complexes $\varphi \colon\cC'_* \rightarrow \cC_*$ \cite[Lemma I.7.4]{Brown}. Now, the mapping cone ${\rm cone}(\varphi)_*$ of $\varphi$ fits in a short exact sequence
$$0 \rightarrow \cC_* \rightarrow {\rm cone}(\varphi)_* \rightarrow \Sigma \cC'_* \rightarrow 0$$
of chain complexes.
From the associated long exact sequence \eqref{E-long exact}, we see that $H_j({\rm cone}(\varphi)_*) = \cM''$ if $j=0$ and $H_j({\rm cone}(\varphi)_*) = 0$ otherwise. That is,
$\cC''_*:={\rm cone}(\varphi)_*$ is a resolution of $\cM''$ consisting of  free left $\Zb\Gamma$-modules. This shows that $\cM''$ is of type FL if $\cM$ and $\cM'$ are. Finally, let us assume that $\cM$ and $\cM''$ are of type FL. As in the second case, we obtain in a similar way an extension
$$0 \rightarrow \cC''_* \rightarrow {\rm cone}(\varphi')_* \rightarrow \Sigma \cC_* \rightarrow 0$$
where $\cC_* \rightarrow \cM$ and $\cC''_* \rightarrow \cM''$ are resolutions consisting of free left $\Zb\Gamma$-modules and $\varphi': \cC_*\rightarrow \cC''_*$ is a chain map
lifting $\cM\rightarrow \cM''$. The long exact sequence \eqref{E-long exact} now yields
$H_j({\rm cone}(\varphi')_*) = \cM'$ if $j=1$ and $H_j({\rm cone}(\varphi')_*) = 0$ otherwise.
In particular, the differential ${\rm cone}(\varphi')_1\rightarrow {\rm cone}(\varphi')_0$ is surjective. Since ${\rm cone}(\varphi')_0$ is a free left $\Zb\Gamma$-module, we may choose a split of the differential ${\rm cone}(\varphi')_1 \rightarrow {\rm cone}(\varphi')_0$ and define a new chain complex $\cC'_j := {\rm cone}(\varphi')_{j+1}$ for $j \ge 2$ or $j=0$, and $\cC'_1:= {\rm cone}(\varphi')_2 \oplus {\rm cone}(\varphi')_0$. The differentials are defined in the obvious way using the split. It is easy to see that $\cC'_*$ is a resolution of $\cM'$ consisting of free left $\Zb\Gamma$-modules. This shows that $\cM'$ of type FL if $\cM$ and $\cM''$ are.

Now we may assume that $\cM'$ and $\cM''$ are of type FL.
The proof is finished in view of the short exact sequence \eqref{E-extension}  using \eqref{E-additive}.
\end{proof}

\begin{proposition} \label{P-definitions of torsion}
Let $\cC_*$ be a chain complex of finitely generated free left $\Zb\Gamma$-modules of finite length as in \eqref{E-acyclic}.
Then $\cC_*$ is $\Delta$-acyclic if and only if the chain complex $\ell^2(\Gamma) \otimes_{\Zb \Gamma} \cC_*$  is weakly acyclic.
Moreover, in such case one has
\begin{align} \label{E-defitions of torsion}
\tilde{\rho}^{(2)}(\cC_*) = \rho^{(2)}(\cC_*).
\end{align}
\end{proposition}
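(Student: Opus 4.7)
The argument splits along the two claims of the proposition.

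For the equivalence of $\Delta$-acyclicity and weak acyclicity, I would mediate through L\"uck's dimension theory. Since $\cN\Gamma^\Delta$ is an Ore localization of $\cN\Gamma$, it is flat as an $\cN\Gamma$-module, so
\[ H_j(\cN\Gamma^\Delta\otimes_{\Zb\Gamma}\cC_*) \;\cong\; \cN\Gamma^\Delta\otimes_{\cN\Gamma} H_j(\cN\Gamma\otimes_{\Zb\Gamma}\cC_*). \]
The homology modules on the right are finitely presented $\cN\Gamma$-modules, and for any such $\cM$ a theorem of L\"uck gives $\cN\Gamma^\Delta\otimes_{\cN\Gamma}\cM = 0$ if and only if $\dim_{\cN\Gamma}\cM = 0$. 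On the other hand, weak acyclicity of $\ell^2(\Gamma)\otimes_{\Zb\Gamma}\cC_*$ is equivalent to the vanishing of all $L^2$-Betti numbers $\dim_{\cN\Gamma} H_j(\cN\Gamma\otimes_{\Zb\Gamma}\cC_*)$, by L\"uck's identification of combinatorial and reduced $L^2$-homology. Combining these three statements yields the claimed equivalence.

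For the equality $\tilde\rho^{(2)}(\cC_*) = \rho^{(2)}(\cC_*)$, assume $\cC_*$ is weakly acyclic. The chain complex condition $f_{j+1}f_j = 0$ yields the commutation relations $L_jf_j = f_jL_{j-1}$ and $L_{j-1}f_j^* = f_j^*L_j$ for the Laplacian matrix $L_j := f_{j+1}^*f_{j+1} + f_jf_j^* \in M_{d_j}(\Zb\Gamma)$. An inner-product argument as in the proof of Lemma~\ref{L-resolution to acyclic} shows weak acyclicity forces $\ker L_j = \{0\}$ on $(\ell^2(\Gamma))^{1\times d_j}$ for each $j$, so by the determinant condition $\ddet_{\cN\Gamma}L_j \geq 1$ and $L_j$ is invertible in $M_{d_j}(\cN\Gamma^\Delta)$. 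This permits the explicit contraction $\delta_j(y) := yL_j^{-1}f_{j+1}^*$ of $\cN\Gamma^\Delta\otimes_{\Zb\Gamma}\cC_*$; the identity $(1\otimes\partial_{j+1})\delta_j + \delta_{j-1}(1\otimes\partial_j) = \id$ reduces to $L_j^{-1}L_j = I$ after using $f_jL_{j-1}^{-1} = L_j^{-1}f_j$ and the definition of $L_j$. The Whitehead torsion $\tau(\cN\Gamma^\Delta\otimes_{\Zb\Gamma}\cC_*)$ is therefore represented by the block upper-bidiagonal matrix
\[ A \;=\; \begin{pmatrix} f_1 & f_2^*L_2^{-1} & 0 & \cdots \\ 0 & f_3 & f_4^*L_4^{-1} & \cdots \\ 0 & 0 & f_5 & \cdots \\ \vdots & & & \ddots \end{pmatrix} \]
representing $(\partial+\delta)_\odd:\cC_\odd\to\cC_\even$, with rows indexed by $\cC_1,\cC_3,\ldots$ and columns by $\cC_0,\cC_2,\ldots$.

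The key algebraic step is to evaluate $AA^*$: the commutation relations together with $f_{j+1}f_j = 0$ (hence $f_j^*f_{j+1}^* = 0$) force all off-diagonal blocks to vanish, and the $i$-th diagonal block simplifies to $a_i + b_i^+$, where $a_i := f_{2i+1}f_{2i+1}^*$, $b_i := f_{2i+2}^*f_{2i+2}$, and $b_i^+$ is the Moore--Penrose pseudo-inverse of $b_i$ computed inside $M_{d_{2i+1}}(\cN\Gamma^\Delta)$. Since $a_ib_i = b_ia_i = 0$ and $\ker(a_i+b_i) = \ker L_{2i+1} = \{0\}$, the range projections of $a_i$ and $b_i$ are orthogonal and sum to the identity, whence a short spectral argument combined with \eqref{E-symmetry} gives
\[ \ddet_{\cN\Gamma}(a_i + b_i^+) \;=\; \frac{\ddet_{\cN\Gamma}(f_{2i+1}^*f_{2i+1} + q_{f_{2i+1}})}{\ddet_{\cN\Gamma}(f_{2i+2}^*f_{2i+2} + q_{f_{2i+2}})}. \]
Multiplying over $i$, extracting the square root $\ddet_{\cN\Gamma}A = \sqrt{\ddet_{\cN\Gamma}(AA^*)}$, and using that $\tilde\rho^{(2)}(\cC_*) = \log\ddet_{\cN\Gamma}A$ produces a telescoping identity
\[ \tilde\rho^{(2)}(\cC_*) \;=\; \frac{1}{2}\sum_{j=1}^{k}(-1)^{j+1}\log\ddet_{\cN\Gamma}(f_j^*f_j + q_{f_j}) \;=\; \rho^{(2)}(\cC_*). \]
The main obstacle will be verifying the block-diagonal structure of $AA^*$ and identifying the diagonal blocks; once this careful bookkeeping with the commutation relations is in place, the Fuglede--Kadison determinant calculation on each block via \eqref{E-symmetry} is routine.
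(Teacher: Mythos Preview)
Your computation of the torsion equality is essentially the paper's: both construct the contraction $\delta_j = L_j^{-1}f_{j+1}^*$, observe $AA^*$ is block-diagonal, and reduce to \eqref{E-symmetry}. Your Moore--Penrose phrasing is a clean repackaging of the paper's direct manipulation $\Delta_j^{-1}f_{j+1}^*f_{j+1}\Delta_j^{-1}+f_jf_j^* = (f_{j+1}^*f_{j+1}+q_{f_{j+1}})^{-1}(f_jf_j^*+q_{f_j^*})$, but the content is the same.

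There is, however, a real gap in your treatment of the equivalence. The biconditional ``$\cN\Gamma^\Delta\otimes_{\cN\Gamma}\cM=0$ iff $\dim_{\cN\Gamma}\cM=0$'' is a theorem of L\"uck for the algebra $\cU(\Gamma)$ of \emph{all} affiliated operators, not for $\cN\Gamma^\Delta$. One direction survives: if $\cN\Gamma^\Delta\otimes\cM=0$ then tensoring up gives $\cU(\Gamma)\otimes\cM=0$, whence $\dim\cM=0$. But the converse fails for $\cN\Gamma^\Delta$: take any non-zero-divisor $s\in\cN\Gamma$ with $\ddet_{\cN\Gamma}s=0$ and set $\cM=\cN\Gamma/\cN\Gamma s$; then $\dim\cM=0$, yet $s$ is not invertible in $\cN\Gamma^\Delta$ (multiplicativity of $\ddet_{\cN\Gamma}$ on $\cN\Gamma^\Delta$ forbids $ts=1$), so $\cN\Gamma^\Delta\otimes\cM\neq0$. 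Thus your dimension-theory route only handles the ``only if'' half. Fortunately your second part already contains the fix: the explicit contraction you build proves $\Delta$-acyclicity from weak acyclicity directly. You should restructure so that the ``if'' direction is established by that construction, and reserve the L\"uck/flatness argument (via $\cU(\Gamma)$) for ``only if''. The paper, by contrast, proves ``only if'' by a bare-hands contradiction argument not passing through dimension theory at all.

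One smaller point: you assert that $\ddet_{\cN\Gamma}L_j\ge1$ implies $L_j$ is invertible in $M_{d_j}(\cN\Gamma^\Delta)$. This is true but not immediate, since $\cN\Gamma^\Delta$ is defined by inverting \emph{scalars} with positive determinant, not matrices. The paper invokes Kadison's diagonalization of self-adjoint matrices over a finite von Neumann algebra to reduce to the scalar case; you should indicate some such step.
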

\begin{proof} Choose an ordered basis for each $\cC_j$, and denote the rank
of $\cC_j$ by $d_j$. Then the differential $\partial_j:\cC_j\rightarrow \cC_{j-1}$ is represented by a matrix $f_j\in M_{d_j\times d_{j-1}}(\Zb\Gamma)$.
The chosen ordered basis of
$\cC_j$ gives rise to an ordered basis of $\cN\Gamma^\Delta\otimes_{\Zb\Gamma}\cC_j$ naturally. Thus we may identify $\cN\Gamma^\Delta\otimes_{\Zb\Gamma}\cC_j$ with
$(\cN\Gamma^\Delta)^{1\times d_j}$, and the $\cN\Gamma^\Delta$-module homomorphism $1\otimes \partial_j$ is also represented by $f_j$.
Note that $\Delta_j:=f_{j+1}^*f_{j+1}+f_jf_j^*$ is a self-adjoint element of $M_{d_j}(\Zb\Gamma)$.

We prove the ``only if'' part first. Assume that $\cC_*$ is $\Delta$-acyclic. Then $\cN\Gamma^\Delta\otimes_{\Zb\Gamma}\cC_*$ is an acyclic chain complex
of free left $\cN\Gamma^\Delta$-modules of finite length. Thus it has a contraction $\delta$ \cite[page 47]{Cohen}.
Say, $\delta_j$ is represented by the matrix $g_j\in M_{d_j\times d_{j+1}}(\cN\Gamma^\Delta)$. Then
\begin{align} \label{E-weakly acyclic}
g_jf_{j+1}+f_jg_{j-1}=I_{d_j}
\end{align}
whenever $d_j>0$, where $I_{d_j}$ denotes the $d_j\times d_j$ identity matrix. Suppose that $\ell^2(\Gamma) \otimes_{\Zb \Gamma} \cC_*$ fails to be weakly acyclic. Then there exists
some $j\in \Zb$ with $d_j>0$ such that the closed linear subspace $V:=\{y\in (\ell^2(\Gamma))^{1\times d_j}: yf_j=yf_{j+1}^*=0\}$ of $(\ell^2(\Gamma))^{1\times d_j}$ is nonzero.
We may write $g_j$ and $g_{j-1}$ as $a^{-1}h_j$ and $h_{j-1}b^{-1}$ respectively for some $h_j\in M_{d_j\times d_{j+1}}(\cN\Gamma)$, $h_{j-1}\in M_{d_{j-1}\times d_j}(\cN\Gamma)$, and $a, b\in \cN\Gamma$ satisfying $\ddet_{\cN\Gamma}a, \ddet_{\cN\Gamma}b>0$ \cite[page 301]{Lam}.

We claim that there exists some nonzero $y\in (\ell^2(\Gamma))^{1\times d_j}$ with $ya\in V$.
An argument similar to that in Section~\ref{SS-determinant} shows that the orthogonal projection from $(\ell^2(\Gamma))^{1\times d_j}$ onto $V$ is given by some projection $q\in M_d(\cN\Gamma)$, i.e. $P(x)=xq$ for all $x\in (\ell^2(\Gamma))^{1\times d_j}$.
Consider the polar decomposition of the operator $T\in B((\ell^2(\Gamma))^{1\times d_j})$ sending $x$ to $xa(I_{d_j}-q)$: there
exist unique $U, S\in B((\ell^2(\Gamma))^{1\times d_j})$ satisfying that $S\ge 0$, $\ker U=\ker S=\ker T$, $U$ is an isometry from the orthogonal complement
of $\ker T$ onto the closure of $\im T$, and $T=US$ \cite[Theorem 6.1.2.]{KR2}. Another argument similar to that in Section~\ref{SS-determinant} shows that there is some $u\in M_{d_j}(\cN\Gamma)$  such that $U(x)=xu$  for all $x\in (\ell^2(\Gamma))^{1\times d_j}$.
Suppose that $xa\not \in V$ for every nonzero $x\in (\ell^2(\Gamma))^{1\times d_j}$. Then $T$ is injective, and hence $uu^*=I_{d_j}$.
Note that both $I_{d_j}-q$ and $u^*u$ are projections and $u^*u\le I_{d_j}-q$. Thus
$$d_j=\tr_{\cN\Gamma}(uu^*)=\tr_{\cN\Gamma}(u^*u)\le \tr_{\cN\Gamma}(I_{d_j}-q)=d_j-\tr_{\cN\Gamma}q,$$
and therefore $\tr_{\cN\Gamma}q=0$. Since $\tr_{\cN\Gamma}$ is faithful, we get $q=0$, which contradicts that $V$ is nonzero.
Therefore there is some nonzero $y\in (\ell^2(\Gamma))^{1\times d_j}$ with $ya\in V$.

From \eqref{E-weakly acyclic} we have
$$ h_jf_{j+1}b+af_jh_{j-1}=abI_{d_j}.$$
Thus
$$ yh_jf_{j+1}b=y(h_jf_{j+1}b+af_jh_{j-1})=yab.$$
Since $\ddet_{\cN\Gamma}(b^*)=\ddet_{\cN\Gamma}b>0$ (resp.~$\ddet_{\cN\Gamma}(a^*)=\ddet_{\cN\Gamma}a>0$), by Theorem~\ref{T-FK} the linear map $\ell^2(\Gamma)\rightarrow \ell^2(\Gamma)$ sending $z$ to $b^*z$ (resp.~$a^*z$) is injective. Taking adjoints we find that  the linear map $\ell^2(\Gamma)\rightarrow \ell^2(\Gamma)$ sending $z$ to $zb$ (resp.~$za$) is also injective.
Thus $yh_jf_{j+1}=ya\in V$. Then
$$ \|yh_jf_{j+1}\|_2^2=\left<yh_jf_{j+1}, yh_jf_{j+1}\right>=\left<yh_jf_{j+1}f_{j+1}^*, yh_j\right>=0,$$
and hence $ya=yh_jf_{j+1}=0$. Therefore $y=0$, which contradicts our choice of $y$. Thus $\ell^2(\Gamma)\otimes_{\Zb\Gamma}\cC_*$ is weakly acyclic.

Next we prove the ``if'' part. Assume that $\ell^2(\Gamma) \otimes_{\Zb \Gamma} \cC_*$ is weakly acyclic.
If $y\in (\ell^2(\Gamma))^{1\times d_j}$ and $y\Delta_j=0$, then $yf_{j+1}^*=0$ and $yf_j=0$, i.e. $y\in \ker(1\otimes \partial_{j+1})^*\cap \ker(1\otimes \partial_j)$. Since $\ell^2(\Gamma) \otimes_{\Zb \Gamma} \cC_*$ is weakly acyclic, $\ker(1\otimes \partial_j)=\overline{\im(1\otimes \partial_{j+1})}$ and hence
$y\in \ker(1\otimes \partial_{j+1})^*\cap \overline{\im(1\otimes \partial_{j+1})}=\{0\}$.
Thus the linear map $(\ell^2(\Gamma))^{1\times d_j}\rightarrow (\ell^2(\Gamma))^{1\times d_j}$ sending $y$ to $y\Delta_j$ is injective.
Taking adjoints, we find that the linear map $(\ell^2(\Gamma))^{d_j\times 1}\rightarrow (\ell^2(\Gamma))^{d_j\times 1}$ sending $z$ to $\Delta_jz$ is also injective. Since $\Gamma$ satisfies the determinant condition, we have $\ddet_{\cN\Gamma}\Delta_j\ge 1$, whenever $d_j>0$.
Since $\Delta_j$ is a self-adjoint element of $M_{d_j}(\cN\Gamma)$, Kadison showed that there exists
$U_j\in M_{d_j}(\cN\Gamma)$ satisfying $U^*_jU_j=U_jU_j^*=I_{d_j}$ such that
$U_j\Delta_jU_j^{-1}$ is diagonal \cite[Theorem 3.19]{Kadison}.
Note that
$\ddet_{\cN\Gamma}U_j=(\ddet_{\cN\Gamma}(U_j^*U_j))^{1/2}=1$. Thus $\ddet_{\cN\Gamma}(U_j\Delta_jU_j^{-1})=\ddet_{\cN\Gamma}\Delta_j\ge 1$.
Since $U_j\Delta_jU_j^{-1}$ is diagonal, its Fuglede-Kadsion determinant is the product of the Fuglede-Kadison determinants of the diagonal entries.
It follows that $U_j\Delta_jU_j^{-1}$ is invertible in $M_{d_j}(\cN\Gamma^\Delta)$. Then so is $\Delta_j$.
Define a left $\cN\Gamma^\Delta$-module homomorphism $\delta_j:(\cN\Gamma^\Delta)^{1\times d_j}\rightarrow (\cN\Gamma^\Delta)^{1\times d_{j+1}}$ to be the one represented by the matrix $\Delta_j^{-1}f_{j+1}^*$ when $d_j>0$, and be $0$ when $d_j=0$.

We claim that the module map $(1\otimes \partial_{j+1})\delta_j+\delta_{j-1}(1\otimes \partial_j)$ is the identity map on $(\cN\Gamma^\Delta)^{1\times d_j}$.
Consider first the case  $d_j, d_{j-1}>0$.
Then $(1\otimes \partial_{j+1})\delta_j+\delta_{j-1}(1\otimes \partial_j)$ is represented by the matrix
\begin{align*}
\Delta_j^{-1}f_{j+1}^*f_{j+1}+f_j\Delta_{j-1}^{-1}f_j^*.
\end{align*}
Note that
$$ \Delta_jf_j=f_jf_j^*f_j=f_j\Delta_{j-1},$$
and hence $\Delta_j^{-1}f_j=f_j\Delta_{j-1}^{-1}$. Therefore
\begin{align*}
\Delta_j^{-1}f_{j+1}^*f_{j+1}+f_j\Delta_{j-1}^{-1}f_j^*=\Delta_j^{-1}f_{j+1}^*f_{j+1}+\Delta_j^{-1}f_jf_j^*=I_{d_j}.
\end{align*}
This proves our claim in the case $d_j, d_{j-1}>0$. The other cases can be dealt with easily. Thus $\delta$ is a contraction of the chain complex $\cN\Gamma^\Delta\otimes_{\Zb\Gamma}\cC_*$. Therefore $\cN\Gamma^\Delta\otimes_{\Zb\Gamma}\cC_*$ is acyclic, i.e. $\cC_*$ is $\Delta$-acyclic.

Finally we use the contraction $\delta$ constructed above to prove \eqref{E-defitions of torsion}. The chosen ordered basis of $\cC_j$ gives rise to
ordered basis of $(\cN\Gamma^\Delta\otimes_{\Zb\Gamma}\cC)_\odd$ and $(\cN\Gamma^\Delta\otimes_{\Zb\Gamma}\cC)_\even$. Denote by $A$ the matrix in $M_d(\cN\Gamma^\Delta)$ representing
$(\partial+\delta)_\odd$, where $d=\sum_{j\not \in 2\Zb}d_j$. Note that $\Delta_j^{-1}f_{j+1}^*f_j^*=0$ and $f_{j+2}(\Delta_j^{-1}f_{j+1}^*)^*=f_{j+2}f_{j+1}\Delta_j^{-1}=0$ whenever $d_j>0$. It follows that the matrix $AA^*$ is block-diagonal with the diagonal blocks being $\Delta_j^{-1}f_{j+1}^*f_{j+1}\Delta_j^{-1}+f_jf_j^*$ for odd $j$ with $d_j>0$.
Therefore
\begin{align*}
\tilde{\rho}^{(2)}(\cC_*)&=\log\ddet_{\cN\Gamma}A \\
&=\frac12 \log\ddet_{\cN\Gamma}(AA^*) \\
&=\frac12 \sum_{j\not\in 2\Zb, d_j>0}\log \ddet_{\cN\Gamma}(\Delta_j^{-1}f_{j+1}^*f_{j+1}\Delta_j^{-1}+f_jf_j^*)\\
&=\frac12 \sum_{j\not\in 2\Zb, d_j>0}(\log \ddet_{\cN\Gamma}(f_{j+1}^*f_{j+1}+\Delta_jf_jf_j^*\Delta_j)-2\log \ddet_{\cN\Gamma}\Delta_j)\\
&=\frac12 \sum_{j\not\in 2\Zb, d_j>0}(\log \ddet_{\cN\Gamma}(f_{j+1}^*f_{j+1}+(f_jf_j^*)^3)-2\log \ddet_{\cN\Gamma}(f_{j+1}^*f_{j+1}+f_jf_j^*)).
\end{align*}
When $d_j>0$, since $f_{j+1}^*f_{j+1}$ and $f_jf_j^*$ are self-adjoint, $f_{j+1}^*f_{j+1}\cdot f_jf_j^*=f_jf_j^*\cdot f_{j+1}^*f_{j+1}=0$, and
$\ker(f_{j+1}^*f_{j+1}+f_jf_j^*)=\{0\}$, we have $ f_{j+1}^*f_{j+1}+(f_jf_j^*)^m=(f_{j+1}^*f_{j+1}+q_{f_{j+1}})(f_jf_j^*+q_{f_j^*})^m$ for all $m\in \Nb$, and hence
\begin{eqnarray*}
& &\log \ddet_{\cN\Gamma}(f_{j+1}^*f_{j+1}+(f_jf_j^*)^3)-2\log \ddet_{\cN\Gamma}(f_{j+1}^*f_{j+1}+f_jf_j^*)\\
&=&-\log \ddet_{\cN\Gamma}(f_{j+1}^*f_{j+1}+q_{f_{j+1}})+\log \ddet_{\cN\Gamma}(f_jf_j^*+q_{f_j^*}).
\end{eqnarray*}
Therefore
\begin{align*}
\tilde{\rho}^{(2)}(\cC_*)&=\frac12 \sum_{j\not\in 2\Zb, d_j>0}(-\log \ddet_{\cN\Gamma}(f_{j+1}^*f_{j+1}+q_{f_{j+1}})+\log \ddet_{\cN\Gamma}(f_jf_j^*+q_{f_j^*}))\\
&\overset{\eqref{E-symmetry}}=\frac12 \sum_{j=1}^\infty(-1)^{j+1}\log \ddet_{\cN\Gamma}(f_j^*f_j+q_{f_j})\\
&= \rho^{(2)}(\cC_*).
\end{align*}
\end{proof}

From now on we shall write $\tilde{\rho}^{(2)}(\cC_*)$ as $\rho^{(2)}(\cC_*)$.
We may consider the trivial left $\Zb\Gamma$-module $\Zb$ corresponding to the trivial action of $\Gamma$ on $\Zb$.
If the trivial left $\Zb \Gamma$-module $\Zb$ is of type FL and $\Delta$-acyclic, we may consider the $L^2$-torsion of $\Zb$ and call it the {\it $L^2$-torsion of $\Gamma$}, which we shall denote by $\rho^{(2)}(\Gamma)$.

When the trivial left $\Zb\Gamma$-module $\Zb$ is of type FL, we define the {\it Euler characteristic} of $\Gamma$, denoted by $\chi(\Gamma)$, to be the Euler characteristic of $\Zb$ as a left $\Zb \Gamma$-module.

For a subgroup $\Lambda$ of $\Gamma$, note that $\cN\Lambda$ is naturally a subalgebra of $\cN\Gamma$, and $\ddet_{\cN\Gamma}g=\ddet_{\cN\Lambda}g$ for all $d\in \Nb$ and $g\in M_d(\cN\Lambda)$.
It follows that if $\Gamma$ satisfies the determinant condition, then so does $\Lambda$. Furthermore, $\cN\Lambda^\Delta$ is a subalgebra of $\cN\Gamma^\Delta$, and $\ddet_{\cN\Gamma}g=\ddet_{\cN\Lambda}g$ for all $d\in \Nb$ and $g\in M_d(\cN\Lambda^\Delta)$.

\begin{lemma} \label{L-euler}
Let
$$1 \to \Lambda \to \Gamma \to \Gamma/\Lambda \to 1$$
be a short exact sequence of groups. Assume that $\Gamma$  satisfies the determinant condition, that
the trivial left  $\Zb\Lambda$-module $\Zb$ is of type FL and $\Delta$-acyclic, and that
the trivial left $\Zb[\Gamma/\Lambda]$-module $\Zb$ is of type FL. Then the trivial left $\Zb\Gamma$-module $\Zb$ is of type FL and $\Delta$-acyclic, and
$$\rho^{(2)}(\Gamma) = \chi(\Gamma/\Lambda) \cdot \rho^{(2)}(\Lambda).$$
\end{lemma}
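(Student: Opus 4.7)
The plan is to splice a finite free $\Zb\Lambda$-resolution $\cD_*\to\Zb$ with a finite free $\Zb[\Gamma/\Lambda]$-resolution $\cE_*\to\Zb$ into a double complex of finitely generated free $\Zb\Gamma$-modules, and then read off $\rho^{(2)}(\Gamma)$ from its column filtration using the additivity \eqref{E-additive}, the suspension formula \eqref{E-suspension}, and the compatibility of the Fuglede--Kadison determinant with the inclusion $\cN\Lambda^\Delta\hookrightarrow\cN\Gamma^\Delta$ noted just before the lemma.

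Write $\cD_q=(\Zb\Lambda)^{b_q}$ and $\cE_p=(\Zb[\Gamma/\Lambda])^{a_p}$. Since $\Zb\Gamma$ is free as a right $\Zb\Lambda$-module on any set of left coset representatives, $\Zb\Gamma\otimes_{\Zb\Lambda}\cD_*$ is a finite free $\Zb\Gamma$-resolution of $\Zb\Gamma\otimes_{\Zb\Lambda}\Zb=\Zb[\Gamma/\Lambda]$. Regarding each $\cE_p$ as a $\Zb\Gamma$-module via $\Zb\Gamma\twoheadrightarrow\Zb[\Gamma/\Lambda]$ and lifting the horizontal differentials of $\cE_*$ in the standard Cartan--Eilenberg fashion gives a bounded double complex $\cF_{q,p}:=(\Zb\Gamma\otimes_{\Zb\Lambda}\cD_q)^{a_p}$ of finitely generated free $\Zb\Gamma$-modules. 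Let $\cC_*$ be its total complex. Computing the spectral sequence of the double complex by first taking vertical homology (which yields $\cE_p$ concentrated in degree zero, as $\Zb\Gamma$ is $\Zb\Lambda$-flat) and then horizontal homology (which yields $\Zb$ in degree zero) shows that $\cC_*\to\Zb$ is a finite free resolution, so the trivial $\Zb\Gamma$-module $\Zb$ is of type FL.

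The main obstacle is verifying the $\Delta$-acyclicity of $\cC_*$. After applying $\cN\Gamma^\Delta\otimes_{\Zb\Gamma}-$, the $p$-th column of the double complex becomes $\cN\Gamma^\Delta\otimes_{\Zb\Lambda}\cD_*^{a_p}$, whose vertical homology is $\mathrm{Tor}^{\Zb\Lambda}_*(\cN\Gamma^\Delta,\Zb)^{a_p}$. The hypothesis that the trivial $\Zb\Lambda$-module $\Zb$ is $\Delta$-acyclic means exactly $\mathrm{Tor}^{\Zb\Lambda}_*(\cN\Lambda^\Delta,\Zb)=0$ (including in degree zero, since this is the acyclicity of the unaugmented complex $\cN\Lambda^\Delta\otimes_{\Zb\Lambda}\cD_*$). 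Via the change-of-rings spectral sequence
\[
E^2_{p,q}=\mathrm{Tor}^{\cN\Lambda^\Delta}_p(\cN\Gamma^\Delta,\mathrm{Tor}^{\Zb\Lambda}_q(\cN\Lambda^\Delta,\Zb))\;\Longrightarrow\;\mathrm{Tor}^{\Zb\Lambda}_{p+q}(\cN\Gamma^\Delta,\Zb)
\]
associated with the factorisation $\Zb\Lambda\hookrightarrow\cN\Lambda^\Delta\hookrightarrow\cN\Gamma^\Delta$ (where $\cN\Gamma^\Delta$ is a right $\cN\Lambda^\Delta$-module via the noted inclusion), this vanishing transfers to $\mathrm{Tor}^{\Zb\Lambda}_*(\cN\Gamma^\Delta,\Zb)=0$. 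Hence every column of the bounded double complex $\cN\Gamma^\Delta\otimes_{\Zb\Gamma}\cF_{\ast\ast}$ is acyclic, so is its total complex $\cN\Gamma^\Delta\otimes_{\Zb\Gamma}\cC_*$, so $\cC_*$ is $\Delta$-acyclic.

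Finally, consider the column filtration $F_k:=\bigoplus_{p\leq k}\cF_{\ast,p}$ of $\cC_*$. Each quotient $F_k/F_{k-1}$ equals $\Sigma^k\cF_{\ast,k}$, a direct sum of $a_k$ copies of $\Sigma^k(\Zb\Gamma\otimes_{\Zb\Lambda}\cD_*)$, which is $\Delta$-acyclic by the same Tor vanishing. Repeated application of \eqref{E-additive}, combined with $\rho^{(2)}(\Sigma\,\cdot)=-\rho^{(2)}(\cdot)$ (coming from \eqref{E-suspension}) and the obvious additivity of $\rho^{(2)}$ on direct sums, yields
\[
\rho^{(2)}(\cC_*)=\sum_k(-1)^k a_k\,\rho^{(2)}(\Zb\Gamma\otimes_{\Zb\Lambda}\cD_*).
\]
Choosing bases identifies the boundary matrices of $\Zb\Gamma\otimes_{\Zb\Lambda}\cD_*$ with those of $\cD_*$, regarded in $M_{*\times*}(\Zb\Gamma)$ via $M_{*\times*}(\Zb\Lambda)\hookrightarrow M_{*\times*}(\Zb\Gamma)$; since $\ddet_{\cN\Gamma}=\ddet_{\cN\Lambda}$ on matrices over $\cN\Lambda$ (and likewise for $h^*h+q_h$, because the spectral measure $\mu_{h^*h}$ depends only on the trace, which agrees under the inclusion), we obtain $\rho^{(2)}(\Zb\Gamma\otimes_{\Zb\Lambda}\cD_*)=\rho^{(2)}(\cD_*)=\rho^{(2)}(\Lambda)$. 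Summing yields $\rho^{(2)}(\Gamma)=\chi(\Gamma/\Lambda)\cdot\rho^{(2)}(\Lambda)$.
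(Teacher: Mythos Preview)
Your argument is correct, but it is considerably more elaborate than the paper's. Both proofs rest on the same key computation: the induced complex $\Zb\Gamma\otimes_{\Zb\Lambda}\cD_*$ is a finite free $\Zb\Gamma$-resolution of $\Zb[\Gamma/\Lambda]$, it is $\Delta$-acyclic, and its $L^2$-torsion equals $\rho^{(2)}(\Lambda)$ because the boundary matrices and the Fuglede--Kadison determinant are unchanged under $\cN\Lambda^\Delta\hookrightarrow\cN\Gamma^\Delta$. From there the two proofs diverge in organisation.

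The paper avoids your double complex entirely. Having established $\rho^{(2)}(\Zb[\Gamma/\Lambda])=\rho^{(2)}(\Lambda)$ at the \emph{module} level, it simply views the finite free $\Zb[\Gamma/\Lambda]$-resolution $0\to(\Zb[\Gamma/\Lambda])^{d_k}\to\cdots\to(\Zb[\Gamma/\Lambda])^{d_0}\to\Zb\to0$ as an exact sequence of $\Zb\Gamma$-modules and feeds it inductively into Lemma~\ref{additive}. This yields both that the trivial $\Zb\Gamma$-module $\Zb$ is of type FL and $\Delta$-acyclic, and the formula $\rho^{(2)}(\Gamma)=\bigl(\sum_j(-1)^jd_j\bigr)\rho^{(2)}(\Zb[\Gamma/\Lambda])$, all in one stroke. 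No lifting of horizontal differentials, no total complex, no column filtration. Moreover, for the $\Delta$-acyclicity of $\Zb\Gamma\otimes_{\Zb\Lambda}\cD_*$ the paper does not invoke a change-of-rings spectral sequence: since $\cN\Lambda^\Delta\otimes_{\Zb\Lambda}\cD_*$ is a bounded acyclic complex of \emph{free} $\cN\Lambda^\Delta$-modules, it admits a contraction, and applying $\cN\Gamma^\Delta\otimes_{\cN\Lambda^\Delta}-$ carries this contraction along (exactly the mechanism behind \eqref{E-zero torsion}). Your Tor spectral sequence is correct but unnecessary here. What your approach buys is an explicit single resolution $\cC_*\to\Zb$ over $\Zb\Gamma$; the paper never writes one down, relying instead on the closure properties of ``type FL and $\Delta$-acyclic'' encoded in Lemma~\ref{additive}.
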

\begin{proof}
Note that $\Zb\Gamma$ is a free right $\Zb\Lambda$-module. Thus the functor $\Zb \Gamma \otimes_{\Zb \Lambda} ?$ from the category of left $\Zb\Lambda$-modules to the category of left $\Zb\Gamma$-modules is exact.
Take a resolution $\cC_*\rightarrow \Zb$ of $\Zb$ by finitely generated free left $\Zb\Lambda$-modules of finite length as in \eqref{E-resolution 2}.
Then $\Zb\Gamma\otimes_{\Zb\Lambda}\cC_*\rightarrow \Zb[\Gamma/\Lambda]$ is a resolution of $\Zb\Gamma\otimes_{\Zb\Lambda}\Zb=\Zb[\Gamma/\Lambda]$ by finitely generated free left $\Zb\Gamma$-modules of finite length. Moreover, we have
$$\cN\Gamma^{\Delta}\otimes_{\Zb\Gamma}(\Zb\Gamma\otimes_{\Zb\Lambda}\cC_*)=\cN\Gamma^\Delta\otimes_{\Zb\Lambda}\cC_*=\cN\Gamma^\Delta\otimes_{\cN\Lambda^\Delta}(\cN\Lambda^\Delta\otimes_{\Zb\Lambda}\cC_*).$$
Thus, an argument similar to that in the proof of \eqref{E-zero torsion} shows that $\Zb\Gamma\otimes_{\Zb\Lambda}\cC_*$ is $\Delta$-acyclic and
$$\rho^{(2)}(\Zb\Gamma\otimes_{\Zb\Lambda}\cC_*)=\rho^{(2)}(\cC_*).$$
Therefore $\rho^{(2)}(\Zb[\Gamma/\Lambda])=\rho^{(2)}(\Lambda)$.

Take a resolution of the trivial left  $\Zb[\Gamma/\Lambda]$-module $\Zb$ by finitely generated free left $\Zb[\Gamma/\Lambda]$-modules
\begin{align*}
0 \rightarrow (\Zb [\Gamma/\Lambda])^{d_k}\rightarrow \cdots \rightarrow (\Zb[\Gamma/\Lambda])^{d_1}\rightarrow (\Zb[\Gamma/\Lambda])^{d_0}\rightarrow \Zb \rightarrow 0.
\end{align*}
Treat the above exact sequence as a sequence of left $\Zb\Gamma$-modules.
By Lemma~\ref{additive} the left $\Zb\Gamma$-module $(\Zb[\Gamma/\Lambda])^d$ is of type FL and $\Delta$-acyclic, and
$\rho^{(2)}((\Zb[\Gamma/\Lambda])^d)=d \cdot \rho^{(2)}(\Zb[\Gamma/\Lambda])$ for every natural number $d$.
For each $1\le j<k$ denote by $\cM_{j-1}$ the image of the homomorphism $(\Zb [\Gamma/\Lambda])^{d_j}\rightarrow (\Zb [\Gamma/\Lambda])^{d_{j-1}}$.
From the short exact sequence
$$0\rightarrow (\Zb [\Gamma/\Lambda])^{d_k}\rightarrow (\Zb [\Gamma/\Lambda])^{d_{k-1}}\rightarrow \cM_{k-2}\rightarrow 0,$$
by Lemma~\ref{additive} we know that $\cM_{k-2}$ is of type FL and $\Delta$-acyclic, and
$$\rho^{(2)}((\Zb [\Gamma/\Lambda])^{d_{k-1}})=\rho^{(2)}((\Zb [\Gamma/\Lambda])^{d_k})+\rho^{(2)}(\cM_{k-2}).$$
Similarly, by induction we conclude that
$\cM_{k-3}, \cM_{k-4}, \dots, \cM_0$ and the trivial left $\Zb\Gamma$-module $\Zb$ are all of type FL and $\Delta$-acyclic, and
$$ \rho^{(2)}((\Zb [\Gamma/\Lambda])^{d_j})=\rho^{(2)}(\cM_j)+\rho^{(2)}(\cM_{j-1})$$
for all $j=k-2, k-1, \dots, 1$, and
$$ \rho^{(2)}((\Zb [\Gamma/\Lambda])^{d_0})=\rho^{(2)}(\cM_0)+\rho^{(2)}(\Zb).$$
Therefore
\begin{align*}
\rho^{(2)}(\Gamma) &=\rho^{(2)}(\Zb)\\
&=\sum_{j=0}^{k} (-1)^j \rho^{(2)}((\Zb[\Gamma/\Lambda])^{d_j})\\
&=\left(\sum_{j=0}^{k} (-1)^j d_j \right)\cdot \rho^{(2)}(\Zb[\Gamma/\Lambda]) = \chi(\Gamma/\Lambda) \cdot \rho^{(2)}(\Lambda).
\end{align*}
\end{proof}

\begin{theorem} \label{T-vanishing torsion}
Let $\Gamma$ be a countable discrete group which satisfies the determinant condition and admits a sequence of subgroups
$$\Gamma_0 \subseteq \Gamma_1 \subseteq \cdots \subseteq \Gamma_{n+1}=\Gamma,$$
such that $\Gamma_i$ is normal in $\Gamma_{i+1}$ for all $0 \leq i \leq n$, the trivial left $\Zb\Gamma_0$-module $\Zb$ and the trivial left $\Zb[\Gamma_{i+1}/\Gamma_i]$-module $\Zb$ are of type FL for all $0 \leq i \leq n$, and $\Gamma_0$ is non-trivial and amenable. Then, the trivial left $\Zb\Gamma$-module $\Zb$ is of type FL and $\Delta$-acyclic, and  $\rho^{(2)}(\Gamma)=0$.
\end{theorem}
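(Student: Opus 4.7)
The plan is to prove the statement by induction on the length of the subnormal series, using Lemma~\ref{L-euler} as the inductive step and Theorem~\ref{T-Luck conjecture} as the base case. More precisely, I will show by induction on $i \in \{0, 1, \dots, n+1\}$ that the trivial left $\Zb\Gamma_i$-module $\Zb$ is of type FL and $\Delta$-acyclic, and $\rho^{(2)}(\Gamma_i)=0$. Applying this to $i = n+1$ gives the desired conclusion.

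First I would observe that every subgroup of $\Gamma$ inherits the determinant condition, since for a subgroup $\Lambda \leq \Gamma$, $\cN\Lambda$ embeds naturally in $\cN\Gamma$ with $\ddet_{\cN\Gamma}g = \ddet_{\cN\Lambda}g$ for $g \in M_d(\cN\Lambda)$ (this is noted right before Lemma~\ref{L-euler}). In particular, each $\Gamma_i$ satisfies the determinant condition. For the base case $i = 0$, Theorem~\ref{T-Luck conjecture} gives $\rho^{(2)}(\Gamma_0) = 0$; implicit in this is that $\Zb$ is $\Delta$-acyclic as a $\Zb\Gamma_0$-module. To see this directly, observe that the trivial action on $\widehat{\Zb} = \Rb/\Zb$ has entropy $0$, so by Remark~\ref{R-vanishdim} we have $\chi(\Gamma_0) = 0$. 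Then Lemma~\ref{L-Euler characteristic} gives $\dim_{\cN\Gamma_0}(\cN\Gamma_0 \otimes_{\Zb\Gamma_0} \Zb) = 0$, which forces $\ker f_1 = \{0\}$ for the initial map of any free resolution; combined with Lemma~\ref{L-resolution to acyclic} and Proposition~\ref{P-definitions of torsion}, this yields $\Delta$-acyclicity.

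For the inductive step, suppose the claim holds for $i$; I would apply Lemma~\ref{L-euler} to the short exact sequence
$$1 \to \Gamma_i \to \Gamma_{i+1} \to \Gamma_{i+1}/\Gamma_i \to 1,$$
checking that all hypotheses are met: $\Gamma_{i+1}$ satisfies the determinant condition (inherited from $\Gamma$); the trivial $\Zb\Gamma_i$-module $\Zb$ is of type FL and $\Delta$-acyclic (inductive hypothesis); and the trivial $\Zb[\Gamma_{i+1}/\Gamma_i]$-module $\Zb$ is of type FL (by assumption). The conclusion is that the trivial $\Zb\Gamma_{i+1}$-module $\Zb$ is of type FL and $\Delta$-acyclic, with
$$\rho^{(2)}(\Gamma_{i+1}) = \chi(\Gamma_{i+1}/\Gamma_i) \cdot \rho^{(2)}(\Gamma_i) = \chi(\Gamma_{i+1}/\Gamma_i) \cdot 0 = 0,$$
completing the induction. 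Setting $i = n+1$ gives the result.

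I do not anticipate a substantive obstacle beyond careful bookkeeping: the deep content is already packaged in Theorem~\ref{T-Luck conjecture} and Lemma~\ref{L-euler}, and the only non-formal point is verifying $\Delta$-acyclicity in the base case, which follows from the amenable-entropy argument recalled above. The power of $\rho^{(2)}(\Gamma_i) = 0$ at each stage is essential: even if $\chi(\Gamma_{i+1}/\Gamma_i) \neq 0$, the multiplicative formula of Lemma~\ref{L-euler} ensures the torsion stays zero through every successive extension.
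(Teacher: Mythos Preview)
Your proposal is correct and follows essentially the same approach as the paper, which simply states that the result ``follows from a straightforward induction argument using Lemma~\ref{L-euler} and Theorem~\ref{T-Luck conjecture}.'' Your write-up usefully spells out the base-case verification of $\Delta$-acyclicity for $\Gamma_0$ via Remark~\ref{R-vanishdim}, Lemma~\ref{L-Euler characteristic}, Lemma~\ref{L-resolution to acyclic}, and Proposition~\ref{P-definitions of torsion}, which the paper leaves implicit in its invocation of Theorem~\ref{T-Luck conjecture}.
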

\begin{proof}
This follows from a straightforward induction argument using Lemma~\ref{L-euler} and Theorem~\ref{T-Luck conjecture}.
\end{proof}

\section*{Acknowledgements} This work started while we visited the Institut Henri Poincar\'{e} in June 2011. We thank the institute for nice environment and financial support.

H.L. was partially supported by NSF Grants DMS-0701414 and DMS-1001625. Part of this work was carried out while H.L. visited  the math department of University of Science and Technology of China in Summer  2011 and the Erwin Schr\"{o}dinger International Institute for Mathematical Physics in Fall 2011. He thanks the Institute for financial support, and Wen Huang, Klaus Schmidt, Song Shao, and Xiangdong Ye for warm hospitality.

A.T. was supported by the ERC Starting Grant 277728. He thanks Roman Sauer for inspiring discussions.

We both thank Christopher Deninger, Russell Lyons, Varghese Mathai, Thomas Ward and the referee for helpful comments.


\begin{thebibliography}{99}
\Small

\bibitem{AF}
F. W. Anderson and K. R. Fuller. {\it Rings and Categories of Modules}. Second edition. Graduate Texts in Mathematics, 13. Springer-Verlag, New York, 1992.


\bibitem{BB}
M. Bestvina and N. Brady.
Morse theory and finiteness properties of groups.
{\it Invent. Math.} {\bf 129} (1997), no. 3, 445--470.

\bibitem{Billingsley}
P. Billingsley. {\it Convergence of Probability Measures}. Second edition.
John Wiley \& Sons, Inc., New York, 1999.

\bibitem{Boca}
F.-P. Boca. {\it Rotation $C^*$-algebras and Almost Mathieu Operators}. Theta Series in Advanced Mathematics, 1. The Theta Foundation, Bucharest, 2001.

\bibitem{Bowen10}
L. Bowen. Measure conjugacy invariants for actions of countable sofic groups.
{\it J.\ Amer.\ Math.\ Soc.} {\bf 23}  (2010), 217--245.

\bibitem{Bowen11}
L. Bowen. Entropy for expansive algebraic actions of residually finite groups. {\it Ergod.\ Th.\ Dynam. Sys.} {\bf 31} (2011), no. 3, 703--718.

\bibitem{BL}
L. Bowen and H. Li. Harmonic models and spanning forests of residually finite groups.  {\it J.\ Funct.\ Anal.} {\bf 263} (2012), no. 7, 1769--1808.


\bibitem{BCFM}
M. Braverman, A. L. Carey, M. Farber, and V. Mathai. $L^2$ torsion without the determinant class condition and extended $L^2$ cohomology. {\it Commun. Contemp. Math.} {\bf 7} (2005), no. 4, 421--462.

\bibitem{Brown}
K. S. Brown. {\it Cohomology of Groups}. Corrected reprint of the 1982 original. Graduate Texts in Mathematics, 87. Springer-Verlag, New York, 1994.

\bibitem{CFM}
A. L. Carey, M. Farber, and V. Mathai. Determinant lines, von Neumann algebras and $L^2$ torsion. {\it J. Reine Angew. Math.} {\bf 484} (1997), 153--181.

\bibitem{carmat}
A. L. Carey and V. Mathai.
$L^2$-acyclicity and $L^2$-torsion invariants. In: {\it Geometric and Topological Invariants of Elliptic Operators (Brunswick, ME, 1988)}, pp. 91--118,
Contemp. Math., 105, Amer. Math. Soc., Providence, RI, 1990.

\bibitem{carmat2}
A. L. Carey and V. Mathai.
$L^2$-torsion invariants.
{\it J. Funct. Anal.} {\bf 110} (1992), no. 2, 377--409.

\bibitem{CC}
T. Ceccherini-Silberstein and M. Coornaert. {\it Cellular Automata and Groups}. Springer Monographs in Mathematics. Springer-Verlag, Berlin, 2010.


\bibitem{CL}
N.-P. Chung and H. Li. Homoclinic groups, IE groups, and expansive algebraic actions. Preprint, 2011.

\bibitem{Danilenko}
A. I. Danilenko. Entropy theory from the orbital point of view. {\it Monatsh. Math.} {\bf 134} (2001), no. 2, 121--141.

\bibitem{Cohen}
M. M. Cohen.
{\it A Course in Simple-Homotopy Theory}.
Graduate Texts in Mathematics, Vol. 10. Springer-Verlag, New York-Berlin, 1973.


\bibitem{mixedmot}
C. Deninger. Deligne periods of mixed motives, K-theory and the entropy of certain $\Zb^n$-actions.
{\it J. Amer. Math. Soc.} {\bf 10} (1997), no. 2, 259--281.

\bibitem{Deninger06}
C. Deninger. Fuglede-Kadison determinants and entropy
for actions of discrete amenable groups.
{\it J.\ Amer.\ Math.\ Soc.} {\bf 19} (2006), no. 3, 737--758.

\bibitem{Deninger09}
C. Deninger. Mahler measures and Fuglede--Kadison determinants.  {\it M\"{u}nster J. Math.}  {\bf 2}  (2009), 45--63.

\bibitem{deningerregulator}
C. Deninger. Regulators, entropy and infinite determinants.  In: {\it Regulators}, pp. 117--134, Contemp. Math., 571, Amer. Math. Soc., Providence, RI, 2012.

\bibitem{DS}
C. Deninger and K. Schmidt. Expansive algebraic actions of discrete
residually finite amenable groups and their entropy.
{\it Ergod.\ Th.\ Dynam.\ Sys.} {\bf 27} (2007), no. 3, 769--786.

\bibitem{DLMSY}
J. Dodziuk, P. Linnell, V. Mathai, T. Schick, and S. Yates.
Approximating $L^2$-invariants and the Atiyah conjecture.
Dedicated to the memory of J\"{u}rgen K. Moser.
{\it Comm. Pure Appl. Math.} {\bf 56} (2003), no. 7, 839--873.

\bibitem{DM}
J. Dodziuk and V. Mathai. Approximating $L^2$-invariants of amenable covering spaces: a combinatorial approach. {\it J. Funct. Anal.} {\bf 154} (1998), no. 2, 359--378.

\bibitem{EG}
S. Eilenberg and T. Ganea.
On the Lusternik-Schnirelmann category of abstract groups.
{\it Ann. of Math. (2)} {\bf 65} (1957), 517--518.



\bibitem{Elek03}
G. Elek. On the analytic zero divisor conjecture of Linnell.
{\it Bull. London Math. Soc.}  {\bf 35}  (2003),  no. 2, 236--238.

\bibitem{ES}
G. Elek and E. Szab\'o. Hyperlinearity, essentially free actions and $L^2$-invariants. The sofic property.
{\it Math. Ann.} {\bf 332} (2005), no. 2, 421--441.

\bibitem{FK}
B. Fuglede and R. V. Kadison. Determinant theory in finite factors.
{\it Ann. of Math. (2)}  {\bf 55} (1952), 520--530.

\bibitem{GK}
F. P. Gantmacher and M. G. Krein. {\it Oscillation Matrices and Kernels and Small Vibrations of Mechanical Systems}. Revised edition. Translation based on the 1941 Russian original. AMS Chelsea Publishing, Providence, RI, 2002.

\bibitem{Geoghegan}
R. Geoghegan. {\it Topological Methods in Group Theory}. Graduate Texts in Mathematics, 243. Springer, New York, 2008.


\bibitem{HS}
U. Haagerup and H. Schultz.
Brown measures of unbounded operators affiliated with a finite von Neumann algebra.
{\it Math. Scand.} {\bf 100} (2007), no. 2, 209--263.

\bibitem{hellow}
H. Helson and D. Lowdenslager.
Prediction theory and Fourier series in several variables.
{\it Acta Math.} {\bf 99}  (1958), 165--202.

\bibitem{JB}
C. R. Johnson and W. W. Barrett. Spanning-tree extensions of the Hadamard-Fischer inequalities. {\it Linear Algebra Appl.} {\bf 66} (1985), 177--193.

\bibitem{Kadison}
R. V. Kadison. Diagonalizing matrices. {\it Amer. J. Math.} {\bf 106} (1984), no. 6, 1451--1468.


\bibitem{KR2}
R. V. Kadison and J. R. Ringrose. {\it Fundamentals of the Theory of Operator Algebras. Vol. II. Advanced Theory.}
Graduate Studies in Mathematics, 16. American Mathematical Society, Providence, RI, 1997.


\bibitem{KR1}
R. V. Kadison and J. R. Ringrose. {\it Fundamentals of the Theory of Operator Algebras. Vol. I. Elementary Theory.}
Graduate Studies in Mathematics, 15. American Mathematical Society, Providence, RI, 1997.

\bibitem{KL11}
D. Kerr and H. Li. Entropy and the variational principle for actions of sofic groups. {\it Invent. Math.} {\bf 186} (2011), no. 3, 501--558.


\bibitem{KK}
A. K. Kelmans and B. N. Kimelfeld.
Multiplicative submodularity of a matrix's principal minor as a function of the set of its rows, and some combinatorial applications.
{\it Discrete Math.} {\bf 44} (1983), no. 1, 113--116.



\bibitem{krop}
P. H. Kropholler, C. Martinez-P\'erez, and B. E. A. Nucinkis. Cohomological finiteness conditions for elementary amenable groups.
{\it J. Reine Angew. Math.} {\bf 637} (2009), 49--62.

\bibitem{Lam}
T. Y. Lam. {\it Lectures on Modules and Rings}.
Graduate Texts in Mathematics, 189. Springer-Verlag, New York, 1999.

\bibitem{Lang}
S. Lang. {\it Algebra}. Revised third edition. Graduate Texts in Mathematics, 211. Springer-Verlag, New York, 2002.

\bibitem{Li}
H. Li. Compact group automorphisms, addition formulas and Fuglede-Kadison determinants. {\it Ann. of Math. (2)} {\bf 176} (2012), no. 1, 303--347.



\bibitem{LSV10}
D. Lind, K. Schmidt, and E. Verbitskiy.
Entropy and growth rate of periodic points of algebraic $\Zb^d$-actions. In: {\it Dynamical Numbers: Interplay between Dynamical Systems and Number Theory}, pp. 195--211, Contemp. Math., 532, Amer. Math. Soc., Providence, RI, 2010.

\bibitem{LSV11}
D. Lind, K. Schmidt, and E. Verbitskiy.
Homoclinic points, atoral polynomials, and periodic points of algebraic $\Zb^d$-actions. {\it Ergod.\ Th.\ Dynam. Sys.} to appear.

\bibitem{LSW}
D. Lind, K. Schmidt, and T. Ward. Mahler measure and
entropy for commuting automorphisms of compact groups.
{\it Invent.\ Math.}  {\bf 101}  (1990), 593--629.

\bibitem{LW}
E. Lindenstrauss and B. Weiss. Mean topological dimension.
{\it Israel J.\ Math.} {\bf 115} (2000), 1--24.

\bibitem{linnik}
I. Ju. Linnik. A multidimensional analogue of G. Szeg\H{o}'s limit theorem. (Russian)
{\it Izv. Akad. Nauk SSSR Ser. Mat.} {\bf 39} (1975), no. 6, 1393--1403, 1439.

\bibitem{Luck94}
W. L\"{u}ck. Approximating $L^2$-invariants by their finite-dimensional analogues.
{\it Geom. Funct. Anal.}  {\bf 4}  (1994),  no. 4, 455--481.

\bibitem{Luck02}
W. L\"{u}ck. $L^2$-invariants of regular coverings of compact manifolds and CW-complexes. In: {\it Handbook of Geometric Topology}, pp. 735--817, North-Holland, Amsterdam, 2002.

\bibitem{Luck}
W. L\"{u}ck. {\it $L^2$-Invariants: Theory and Applications to Geometry and $K$-theory}.
Springer-Verlag, Berlin, 2002.

\bibitem{luckrordam}
W. L\"{u}ck and M. R{\o}rdam.
Algebraic K-theory of von Neumann algebras.
{\it K-Theory} {\bf 7} (1993), no. 6, 517--536.

\bibitem{LR}
W. L\"{u}ck and M. Rothenberg.
Reidemeister torsion and the K-theory of von Neumann algebras.
{\it K-Theory} {\bf 5} (1991), no. 3, 213--264.

\bibitem{lsweg}
W. L\"{u}ck, R. Sauer, and C. Wegner.
$L^2$-torsion, the measure-theoretic determinant conjecture, and uniform measure equivalence.
{\it J. Topol. Anal.} {\bf 2} (2010), no. 2, 145--171.

\bibitem{Lyons05}
R. Lyons. Asymptotic enumeration of spanning trees.  {\it Combin. Probab. Comput.} {\bf 14} (2005), no. 4, 491--522.

\bibitem{Lyons10}
R. Lyons. Identities and inequalities for tree entropy. {\it Combin. Probab. Comput.} {\bf 19} (2010), no. 2, 303--313.

\bibitem{Milnor66}
J. Milnor. Whitehead torsion.
{\it Bull. Amer. Math. Soc.} {\bf 72} (1966), 358--426.

\bibitem{Milnor68}
J. Milnor. Infinite cyclic coverings. In: {\it Conference on the Topology of Manifolds (Michigan State Univ., E. Lansing, Mich., 1967)}, pp. 115--133. Prindle, Weber \& Schmidt, Boston, Mass., 1968.

\bibitem{JMO}
J. Moulin Ollagnier. {\it Ergodic Theory and Statistical Mechanics.}
Lecture Notes in Math., 1115. Springer, Berlin, 1985.

\bibitem{OW}
D. S. Ornstein and B. Weiss. Entropy and isomorphism theorems for
actions of amenable groups.  {\it J. Analyse Math.} {\bf 48} (1987),
1--141.

\bibitem{Osborne}
M. S. Osborne. {\it Basic Homological Algebra}. Graduate Texts in Mathematics, 196. Springer-Verlag, New York, 2000.


\bibitem{Passman}
D. S. Passman. {\it The Algebraic Structure of Group Rings}.
Pure and Applied Mathematics. Wiley-Interscience [John Wiley \& Sons], New York-London-Sydney, 1977.

\bibitem{Peters}
J. Peters. Entropy on discrete abelian groups.
{\it Adv. in Math.} {\bf 33} (1979), no. 1, 1--13.

\bibitem{PT}
J. Peterson and A. Thom.
Group cocycles and the ring of affiliated operators.
{\it Invent.\ Math.} {\bf 185} (2011), no. 3, 561--592.

\bibitem{Rosenberg}
J. Rosenberg. {\it Algebraic K-theory and its Applications}. Graduate Texts in Mathematics, 147. Springer-Verlag, New York, 1994.


\bibitem{Schick}
T. Schick. $L^2$-determinant class and approximation of $L^2$-Betti numbers.
{\it Trans. Amer. Math. Soc.}  {\bf 353}  (2001),  no. 8, 3247--3265.

\bibitem{Schmidt}
K. Schmidt. {\it Dynamical Systems of Algebraic Origin}.
Progress in Mathematics, 128. Birkh\"{a}user Verlag, Basel, 1995.



\bibitem{Simon}
B. Simon. {\it Orthogonal Polynomials on the Unit Circle. Part 1. Classical theory.} American Mathematical Society Colloquium Publications, 54, Part 1. American Mathematical Society, Providence, RI, 2005.

\bibitem{Solomyak}
R. Solomyak. On coincidence of entropies for two classes of dynamical systems. {\it Ergod.\ Th.\ Dynam.\ Sys.}
 {\bf 18} (1998), no. 3, 731--738.


\bibitem{Szego}
G. Szeg\H{o}. Ein Grenzwertsatz \"{u}ber die Toeplitzschen Determinanten einer reellen positiven Funktion.  {\it Math. Ann.} {\bf 76} (1915), no. 4, 490--503.

\bibitem{Takesaki}
M. Takesaki. {\it Theory of Operator Algebras. I.} Encyclopaedia of Mathematical Sciences, 124. Operator Algebras and Non-commutative Geometry, 5. Springer-Verlag, Berlin, 2002.






\bibitem{Turaev}
V. Turaev. Reidemeister torsion in knot theory.
{\it Uspekhi Mat. Nauk}, {\bf 41} (1986), no. 1, 97--147.
Translated in {\it Russian Math. Surveys} {\bf 41} (1986), no. 1, 119--182.

\bibitem{turaevbook}
V. Turaev. {\it Torsions of $3$-dimensional Manifolds.}
Progress in Mathematics, 208. Birkh\"auser Verlag, Basel, 2002.

\bibitem{Wall65}
C. T. C. Wall. Finiteness conditions for CW-complexes. {\it Ann. of Math. (2)} {\bf 81} (1965), 56--69.

\bibitem{Wall66}
C. T. C. Wall. Finiteness conditions for CW complexes. II. {\it Proc. Roy. Soc. Ser. A} {\bf 295} (1966), 129--139.

\bibitem{Walters}
P. Walters. {\it An Introduction to Ergodic Theory.} Graduate Texts in
Mathematics, 79. Springer-Verlag, New York, Berlin, 1982.

\bibitem{weg}
C. Wegner.
$L^2$-invariants of finite aspherical CW-complexes.
{\it Manuscripta Math.} {\bf 128} (2009), no. 4, 469--481.

\bibitem{yup}
S. A. Yuzvinski\u{\i}. Computing the entropy of a group of endomorphisms. (Russian)  {\it Sibirsk. Mat. \^{Z}.}  {\bf 8}  (1967), 230--239.
Translated in {\it Siberian Math. J.} {\bf 8} (1967), 172--178.

\end{thebibliography}
\end{document}